\def\@tocline#1#2#3#4#5#6#7{\relax
  \ifnum #1>\c@tocdepth 
  \else
    \par \addpenalty\@secpenalty\addvspace{#2}%
    \begingroup \hyphenpenalty\@M
    \@ifempty{#4}{%
      \@tempdima\csname r@tocindent\number#1\endcsname\relax
    }{%
      \@tempdima#4\relax
    }%
    \parindent\z@ \leftskip#3\relax \advance\leftskip\@tempdima\relax
    \rightskip\@pnumwidth plus4em \parfillskip-\@pnumwidth
    #5\leavevmode\hskip-\@tempdima
      \ifcase #1
       \or\or \hskip 1em \or \hskip 2em \else \hskip 3em \fi%
      #6\nobreak\relax
    \dotfill\hbox to\@pnumwidth{\@tocpagenum{#7}}\par
    \nobreak
    \endgroup
  \fi}
\newtheorem*{rep@thm}{\rep@title}
\newcommand{\newrepthm}[2]{%
\newenvironment{rep#1}[1]{%
 \def\rep@title{#2 \ref{##1}}%
 \begin{rep@thm}}%
 {\end{rep@thm}}}
\newtheorem{prop}{Proposition}[section]
\newtheorem{thm}[prop]{Theorem}
\newtheorem{cor}[prop]{Corollary}
\newtheorem{conj}[prop]{Conjecture}
\newtheorem{lem}[prop]{Lemma}
\theoremstyle{definition}
\newtheorem{defn}[prop]{Definition}
\newtheorem{remark}[prop]{Remark}
\newtheorem{que}[prop]{Question}
\theoremstyle{remark}
\newtheorem{example}[prop]{Example}
\numberwithin{equation}{section}
\newcommand{\moins}{\,-\!\!\!\!-\,}
\newcommand{\moinss}{-\!\!\!-}
\newcommand{\mk}{\mathfrak}
\newcommand{\Out}{\textnormal{Out}}
\newcommand{\Aut}{\textnormal{Aut}}
\newcommand{\Nil}{\textnormal{Nil}}
\newcommand{\Nilc}{\textnormal{Nilc}}
\newcommand{\Par}{\mathsf{Ve}}
\newcommand{\GL}{\mathrm{GL}}
\newcommand{\SL}{\mathrm{SL}}
\newcommand{\FT}{\textnormal{FT}}
\newcommand{\lp}{(\!(}
\newcommand{\rp}{)\!)}
\newcommand{\eps}{\varepsilon}
\DeclareMathOperator{\Ker}{Ker}
\DeclareMathOperator{\Hom}{Hom} 
\DeclareMathOperator{\Isom}{Isom}
\newcommand{\bd}{\partial} 
\begin{document}

\title{Commability and focal locally compact groups}
%
\author{Yves Cornulier}
\address{Laboratoire de Math\'ematiques\\
B\^atiment 425, Universit\'e Paris-Sud 11\\
91405 Orsay\\ France}
\email{yves.cornulier@math.u-psud.fr}

\date{February 2, 2015 with incorporation of errata in \S 2, December 6, 2017}
\keywords{Compacting automorphisms, locally compact groups, Gromov-hyperbolic groups, focal groups, commability, millefeuille spaces, quasi-isometric classification, topological FC-center}
%

\subjclass[2010]{Primary 20F67; Secondary 20E08, 20F65, 22D05, 22D45, 53C30, 57M07, 57S20, 57S30}









\begin{abstract}
We introduce the notion of commability between locally compact groups, namely the equivalence relation generated by cocompact inclusions and quotients by compact normal subgroups. We give a classification of focal hyperbolic locally compact groups up to commability. In the mixed case, it involves a real parameter, which is shown to be a quasi-isometry invariant.
\end{abstract}

\maketitle

\section{Introduction}

Classically, two groups are called {\em commensurable} if they have isomorphic finite index subgroups, and {\em commensurable up to finite kernel} if they have finite index subgroups that are isomorphic after modding out by a finite normal subgroup. It is easy to check that these are transitive (and hence equivalence) relations. In the setting of locally compact groups, it is natural to consider the same relations by only considering open finite index subgroups and topological isomorphisms. However, another variant is to replace finite index open subgroups by cocompact closed subgroups, and compact kernels. This is a natural setting, in view of the fact that cocompact closed embeddings and quotients by compact normal subgroups are coarse equivalences. A difference with the previous setting, however, is that the intersection of two closed cocompact subgroups may not be cocompact: for instance consider the lattices $\mathbf{Z}$ and $\sqrt{2}\mathbf{Z}$ in $\mathbf{R}$. A consequence is that the proof that these relations are transitive falls down, and they are actually not transitive (this follows from \cite[Corollary 10]{MSW}). It is still natural to consider the equivalence relations they generate. 

We abbreviate locally compact group into {\em LC-group}. For a homomorphism between LC-groups, write {\em copci} as a (pronounceable) shorthand for {\em continuous proper with cocompact image}. A copci homomorphism $f:G\to H$ obviously factors as the composition $G\to G/\Ker(f)\to H$ of the quotient map by a compact normal subgroup and an injective copci, and the latter can be viewed as a topological isomorphism onto a cocompact closed subgroup.

\begin{defn}\label{d_commable}Let us say that two LC-groups $G,H$ are {\em commable} if there exist an integer $k$ and a sequence (called commability) of copci homomorphisms

\begin{equation} G=G_0\moins G_1\moins G_2\moins\dots\moins G_k=H,\label{ggih}\end{equation}
where each sign $\moinss$ denotes an arrow in either direction.

If all the above homomorphisms are required to be injective, we call $G$ and $H$ {\em strictly commable}. If all $G_i$ are required to belong to a certain class $\mathcal{C}$ of groups, we say that $G$ and $H$ are commable (or strictly commable) {\em within} $\mathcal{C}$.
\end{defn}

For instance, for discrete groups, commensurability up to finite kernels (resp.\ commensurability) is the same as commability (resp.\ strict commability) within discrete groups. On the other hand, many discrete groups, including semidirect products $\mathbf{Z}^2\rtimes\mathbf{Z}$ occurring as lattices in SOL, are commable but not commensurable.

Here are three (related) classical motivations for the study of commability:
\begin{itemize}
\item The quasi-isometric classification. Indeed, two commable locally compact groups are always quasi-isometric, and many known examples of quasi-isometric groups are actually commable by construction. A natural question, asked in an early version of \cite{CQI} is to find two compactly generated locally compact groups (e.g., discrete) that are quasi-isometric but not commable. Carette and Tessera checked that if $\Gamma_1,\Gamma_2$ are non-commensurable cocompact lattices in, say $\SL_2(\mathbf{C})$, then $\Gamma_1\ast\mathbf{Z}$ and $\Gamma_2\ast\mathbf{Z}$ are not commable although they are quasi-isometric.
\item The so-called study of ``locally compact envelopes". In the setting introduced by Furman \cite{Fur}, it consists, given some finitely generated group $\Gamma$, of classifying locally compact groups in which $\Gamma$ admits an embedding as a cocompact lattice. However, in the setting of \cite{Fur}, it seems that part of the methods essentially use only the quasi-isometry class and might be modified to give a full classification of the whole quasi-isometry class (within locally compact groups). In the realm of solvable groups, further examples were recently obtained by Dymarz \cite{DyE}.
\item The study of ``model spaces". Indeed if $G=\Isom(X)$ for some proper metric space $X$ with a cocompact isometry group, then a copci homomorphism $H\to G$ is the same as a continuous proper cocompact isometric action of $H$ on $X$. Thus two $\sigma$-compact LC-groups $H_1,H_2$ admit copci homomorphisms into the same LC-group if and only if they have a common ``model space"; this is a strong form of commability.
Certain quasi-isometry classes of groups can be described as those
groups admitting a continuous proper cocompact isometric action on a certain ``model space" $X$. Here we rather emphasize the group $G$, because the space $X$ is sometimes less canonical than the group itself.
\end{itemize}

Section \ref{opr} focuses on generalities about commability, especially related to the relevance of the necessity of modding out by compact normal subgroups. It therefore involves a general discussion on the polycompact radical $\mathsf{W}(G)$ of a locally compact group $G$, namely the union of its compact normal subgroups. In many cases, this subgroup is compact, but it can also fail to be compact, in which case its closure is also non-compact; it can also even fail to be closed; however we use a difficult result of Trofimov to show:

\begin{thm}\label{wclo}If $G$ is a {\em compactly generated} locally compact group, then $\mathsf{W}(G)$ is a closed subgroup.
\end{thm}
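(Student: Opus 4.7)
The plan is to reduce, by means of the Cayley--Abels construction, to a statement about closed vertex-transitive subgroups of automorphism groups of connected locally finite graphs, and then invoke a deep structural theorem of Trofimov.

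First, one handles the identity component. Since $G^0$ is a compactly generated connected LC-group, Montgomery--Zippin presents it as a projective limit of connected Lie groups. For each Lie quotient, the sum of the compact ideals of the Lie algebra yields a unique maximal compact connected normal subgroup, and a standard Iwasawa-type argument promotes this to a (closed, compact) maximal compact normal subgroup $M$ of $G^0$. Being characteristic in $G^0$, $M$ is normal in $G$; quotienting by $M$ and using that $\mathsf{W}$ is well-behaved under quotients by compact normal subgroups (as developed in Section \ref{opr}), one may assume $G^0$ has no nontrivial compact normal subgroup. By Yamabe's theorem $G^0$ is then a connected Lie group, and $\mathsf{W}(G)\cap G^0$ is trivial, so $\mathsf{W}(G)$ injects into the totally disconnected quotient $G/G^0$; a little more bookkeeping reduces the question to the case where $G$ itself is totally disconnected.

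Assume now $G$ is compactly generated and totally disconnected. The Cayley--Abels construction supplies a continuous, proper, vertex-transitive action $\pi\colon G\to\Aut(X)$ on a connected locally finite graph $X$, with compact open kernel $K_0\subseteq\mathsf{W}(G)$. Any compact normal subgroup $K$ of $G$ acts with finite orbits on $V(X)$ (vertex-stabilisers in $\pi(G)$ are compact open, and $K$ is compact); conversely, any compact normal subgroup $K'$ of $\pi(G)$ has compact preimage $\pi^{-1}(K')$ (an extension of the compact $K_0$ by the compact $K'$), which is compact normal in $G$. It follows that $\mathsf{W}(G)=\pi^{-1}(\mathsf{W}(\pi(G)))$, so it suffices to prove $\mathsf{W}(H)$ is closed for every closed vertex-transitive subgroup $H\le\Aut(X)$ of a connected locally finite graph.

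This is where Trofimov's theorem enters: for a connected locally finite vertex-transitive graph $X$, Trofimov constructs a canonical $\Aut(X)$-invariant equivalence relation on $V(X)$ with finite classes such that, for every closed vertex-transitive $H\le\Aut(X)$, the polycompact radical $\mathsf{W}(H)$ coincides with the kernel of the $H$-action on the resulting quotient graph. Being the kernel of a continuous action on a discrete set, this kernel is automatically closed, so $\mathsf{W}(H)$ is closed; pulling back by $\pi$ then gives the theorem. The main obstacle is precisely this invocation of Trofimov's theorem: the preceding reductions and the translation into a graph-theoretic question are routine, but the identification of $\mathsf{W}(H)$ with the pointwise stabiliser of an invariant combinatorial structure on $X$ is the deep combinatorial heart of the proof, and is the ``difficult result of Trofimov'' that the author alludes to.
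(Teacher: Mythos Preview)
Your overall architecture matches the paper's: reduce to the totally disconnected case, then use the Cayley--Abels graph to invoke Trofimov. But there are two genuine problems.

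\medskip
\textbf{The Trofimov step is misformulated.} You claim Trofimov produces an $\Aut(X)$-invariant equivalence relation with finite classes such that $\mathsf{W}(H)$ equals the kernel of the $H$-action on the quotient. But that kernel is automatically \emph{compact}: it is a closed subgroup of $\prod_{\text{blocks}}\mathrm{Sym}(\text{block})$, a product of finite groups. Your statement would therefore force $\mathsf{W}(H)$ --- and hence $\mathsf{W}(G)=\pi^{-1}(\mathsf{W}(H))$ --- to be compact for every compactly generated totally disconnected $G$. This is false: the paper itself points to a $4$-dimensional $p$-adic Lie group whose center is isomorphic to $\mathbf{Q}_p$, giving a compactly generated totally disconnected $G$ with $\mathsf{W}(G)$ noncompact. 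What Trofimov actually yields (and what the paper uses) is that the topological FC-center $\mathsf{B}(\Aut(X))$ is closed; pulling back gives $\mathsf{B}(G)$ closed, and then one needs the separate fact $\mathsf{W}(G)=\mathsf{B}(G)\cap G^\sharp$ (the paper's Proposition~\ref{pwy}(\ref{wy2}) / Lemma~\ref{wge}) to conclude that $\mathsf{W}(G)$ is closed.

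\medskip
\textbf{The reduction to the totally disconnected case is not ``a little bookkeeping.''} Knowing that $\mathsf{W}(G)\cap G^\circ=\{1\}$ gives an injection $\mathsf{W}(G)\hookrightarrow G/G^\circ$, but this alone says nothing about closedness: the image need not be $\mathsf{W}(G/G^\circ)$ (preimages of compact normal subgroups of $G/G^\circ$ contain $G^\circ$ and are typically noncompact), and there is no a~priori reason the closure $\overline{\mathsf{W}(G)}$ meets $G^\circ$ only trivially. The paper does this reduction for $\mathsf{B}$ rather than $\mathsf{W}$, using the Wu--Yu structure theory (Proposition~\ref{pwy}(\ref{wy4})): under $\mathsf{W}(G^\circ)^\circ=\{1\}$ one gets a topological direct product decomposition $\overline{\mathsf{B}(G)}=\mathsf{B}(G)^\circ\times\overline{\mathsf{B}(G)}^\sharp$, and then shows the projection of the totally disconnected factor to $G/G^\circ$ is proper. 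This is substantive and cannot be waved away.

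\medskip
In short, both halves of your argument need to be rerouted through $\mathsf{B}(G)$ and the Wu--Yu results, exactly as the paper does; the direct attack on $\mathsf{W}$ does not go through as written.
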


Examples of compactly generated locally compact groups with $\mathsf{W}(G)$ not compact are easy to exhibit (e.g., finitely generated groups with an infinite, locally finite central subgroup); we provide, in \S\ref{spc}, a more difficult example of a pair of cocompact lattices $\Gamma_1,\Gamma_2$ in the same compactly generated locally compact group (thus $\Gamma_1$ and $\Gamma_2$ are finitely generated) such that $\mathsf{W}(\Gamma_1)$ is finite (actually trivial) and $\mathsf{W}(\Gamma_2)$ is infinite.

We now especially focus on the study of commability in the context of focal LC-groups, which we abbreviate here as {\em focal LC-groups}. Since in this paper we favor an ``algebraic" point of view rather than geometric, we use the following definition for focal groups.

\begin{defn}\label{defocal}
Let $\alpha$ be a self-homeomorphism of a Hausdorff topological space $X$. We say that $\alpha$ is {\em compacting} if there exists a compact subset $\Omega\subset X$ (called vacuum subset) such that for every compact subset $K$ of $X$, there exists $n\ge 1$ such that $\alpha^n(K)\subset\Omega$. 

We say that an LC-group $G$ is a {\em focal LC-group} if it is topologically isomorphic to a topological semidirect product $N\rtimes\Lambda$ with $\Lambda\in\{\mathbf{Z},\mathbf{R}\}$ and $N$ noncompact, such that the action by conjugation of each positive element of $\Lambda$ induces a compacting automorphism of $N$.
\end{defn}

\begin{example}\label{exvi}If $(V_i)_{i=1,\dots,n}$ are finite-dimensional vector spaces over nondiscrete locally compact normed fields $\mathbf{K}_i$, and $\phi_i$ is an automorphism of $V_i$ with spectral radius $<1$, then $\phi:(v_1,\dots,v_k)\mapsto (\phi_1(v_1),\dots,\phi_k(v_k))$ is a compacting automorphism of $V=\prod_i V_i$. (It is actually {\em contracting}, in the sense that every neighborhood of zero is a vacuum subset.)
\end{example}

It was established in \cite{CCMT} that an LC-group is focal if and only if it is non-elementary Gromov-hyperbolic and amenable.

Given a focal LC-group $G$, we are mainly interested in the description of those locally compact groups $H$ that are commable to $G$. This question splits into the internal and external parts, according to whether $H$ is focal or not.

If $G=N\rtimes\Lambda$ is a focal group, a straightforward argument shows that the kernel of the modular homomorphism $\Delta_G$ is $N$; in particular, $N$ is a characteristic subgroup of $G$, and is also the kernel of any nontrivial homomorphism into $\mathbf{R}$. Denote by $N^\circ$ the identity component of $N$.

\begin{defn}We say that the focal group $G$ is 
\begin{itemize}
\item of {\em connected type} if $N/N^\circ$ is compact;
\item of {\em totally disconnected type} if $N^\circ$ is compact;
\item of {\em mixed type} otherwise, i.e.\ if both $N^\circ$ and $N/N^\circ$ are noncompact.
\end{itemize}
\end{defn}

The type of a focal group is a commability invariant. 
The main purpose of this paper is to classify commable focal groups LC-groups up to commability within focal LC-groups.

For every focal LC-group $\mathsf{W}(G)$ is compact and is thus the maximal compact normal subgroup of $G$; if $\mathsf{W}(G)=1$ we say that $G$ is W-faithful.

We say that a focal LC-group $G$ is focal-universal if $\mathsf{W}(G)=1$ and for every focal group $H$ commable to $G$ within focal groups, there is a copci homomorphism $H\to G$. 

\begin{thm}
If $G$ is a focal LC-group of connected type, then its commability class within focal groups contains, up to isomorphism, a unique focal-universal LC-group $\hat{G}$. Thus, a focal LC-group $H$ is commable to $G$ within focal groups if and only if $H$ admits a copci homomorphism into $\hat{G}$. 
\end{thm}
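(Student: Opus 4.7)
I would construct $\hat G$ explicitly, then verify that every focal LC-group commable to $G$ embeds copci into it, and finally that $\hat G$ is unique up to isomorphism.

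\emph{Construction.} Write $G=N\rtimes\Lambda$. The connected-type hypothesis gives that $N^{\circ}$ is cocompact in $N$, so $N^{\circ}\rtimes\Lambda\hookrightarrow G$ is copci, and I replace $G$ by $N^{\circ}\rtimes\Lambda$. The structure theory of connected LC-groups with a compacting automorphism (in the spirit of Siebert, as already exploited in \cite{CCMT}) identifies $N^{\circ}$ with a simply connected nilpotent real Lie group $\mathfrak{N}$ (with Lie algebra $\mathfrak n$), and identifies $\phi|_{N^{\circ}}$ with a Lie algebra automorphism all of whose eigenvalues have modulus $<1$. If $\Lambda=\mathbf{R}$, stop. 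If $\Lambda=\langle\phi\rangle\cong\mathbf{Z}$, I pass to the cocompact subgroup $\langle\phi^2\rangle$: since $\phi^2$ is a square in $\GL(\mathfrak n)$, it admits a real logarithm $D$, a derivation whose eigenvalues have strictly positive real part. I set
\[
\hat G := \mathfrak{N}\rtimes_{e^{tD}}\mathbf{R},
\]
with copci embedding $\mathfrak{N}\rtimes\langle\phi^2\rangle\hookrightarrow\hat G$ sending $(n,\phi^{2k})\mapsto(n,k)$. Focality of $\hat G$ is immediate from $e^D=\phi^2$ being compacting; W-faithfulness follows because $\mathsf{W}(\hat G)$ lies in the kernel $\mathfrak{N}$ of the modular map, and the simply connected nilpotent Lie group $\mathfrak{N}$ has no nontrivial compact subgroup.

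\emph{Universality.} I induct on the length of a commability chain of focal groups from $G$ to $H$. Given a copci $H_1\to\hat G$ and a next arrow $H_1\moins H_2$: if it points $H_2\to H_1$, I compose. If it points $H_1\to H_2$ (the enlargement case), I apply the structural analysis to $H_2$, obtaining its own simply connected nilpotent Lie group $\mathfrak{N}_2$ and derivation. Quasi-isometric rigidity of the Lie data in the Heintze-type setting (used in \cite{CCMT}) yields $\mathfrak{N}_2\cong\mathfrak{N}$, and I extend the map using two facts: every Lie algebra isomorphism between simply connected nilpotent Lie groups integrates uniquely, and $\Aut(\mathfrak{N})$ is a real Lie group so that a discrete one-parameter subgroup compatible with an $\mathbf{R}$-flow extends canonically. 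The characterization in the theorem statement then follows: $H$ commable to $G$ is equivalent to $H$ admitting a copci map to $\hat G$, since such a map makes $H$ and $\hat G$ (hence $G$) commable, and conversely universality supplies the map.

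\emph{Uniqueness and main obstacle.} If $\hat G_1,\hat G_2$ are both focal-universal in the class, universality yields copci maps in both directions, which are injective by W-faithfulness. Their composition $f\colon\hat G_1\to\hat G_1$ is a copci injection; decomposing along the characteristic extension $1\to\mathfrak{N}_1\to\hat G_1\to\mathbf{R}\to 1$, it induces cocompact injections on each factor. On $\mathbf{R}$ this is nonzero scalar multiplication, hence an isomorphism; on the simply connected nilpotent Lie group $\mathfrak{N}_1$ an injective Lie endomorphism with closed cocompact image is forced to be onto by a dimension count. Thus $f$, and therefore both original maps, are isomorphisms. I expect the universality step to be the main obstacle: the delicate point is transporting the Lie-theoretic data $(\mathfrak{N},D)$ faithfully across every copci arrow, including arrows that enlarge $N$ by a compact profinite factor or replace a $\mathbf{Z}$-generator by a continuous $\mathbf{R}$-flow. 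The structural rigidity furnished by the connected-type hypothesis is precisely what makes this transport possible, and is what ultimately forces $\hat G$ to be canonical.
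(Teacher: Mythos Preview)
Your candidate $\hat G=\mathfrak N\rtimes_{e^{tD}}\mathbf R$ is simply a Heintze group, and Heintze groups are almost never focal-universal. Take the most basic example $G=\mathbf R^{d-1}\rtimes_{e^{-t}}\mathbf R$: your construction returns $G$ itself, but $G$ sits as a closed cocompact subgroup in the group of affine similarities $\mathbf R^{d-1}\rtimes(\mathrm O(d-1)\times\mathbf R^*_+)$, which is a strictly larger W-faithful focal group. So your $\hat G$ is not focal-universal, and the theorem fails for your candidate.

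The failure shows up precisely in your induction at the enlargement step. If $H_1=\mathfrak N\rtimes\mathbf R$ and $H_2=\mathfrak N\rtimes(K\times\mathbf R)$ with $K$ compact acting faithfully on $\mathfrak N$ and commuting with the $\mathbf R$-flow, then $H_1\to H_2$ is copci, but there is no copci $H_2\to\hat G$: such a map would be injective by W-faithfulness of both sides, yet $\hat G=\mathfrak N\rtimes\mathbf R$ has no nontrivial compact subgroups at all, so $K$ has nowhere to go. Your phrase ``extend the map using \dots\ $\Aut(\mathfrak N)$ is a real Lie group so that a discrete one-parameter subgroup compatible with an $\mathbf R$-flow extends canonically'' does not address this; the issue is not extending a flow but accommodating an entire compact factor.

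The paper's construction is quite different and specifically designed to absorb these compact pieces. One works in $\Out(N)$ (with $N=\Nilc(G)$), extracts from the image $\mathsf P_G$ of $G$ a canonical one-parameter subgroup $[\mathsf P_G]$ (the ``purely positive real'' part), proves that $[\mathsf P_G]$ is invariant under passing to cocompact subgroups, and then defines $\hat G$ as the preimage in $\Aut(N)$ of a maximal compact subgroup of $C([\mathsf P_G])/[\mathsf P_G]$ containing the image of $G$. The point is that any focal overgroup of $G$ has image in $\Out(N)$ centralizing $[\mathsf P_G]$ and compact modulo it, so after conjugation it lands inside this maximal compact. That is the idea your proposal is missing.
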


\begin{example}
If $G$ is a closed cocompact subgroup in $\Isom(X)$, where $X$ is a rank~1 symmetric space of noncompact type and suppose that $G$ fixes a point $\omega$ on the boundary $\partial X$. Then $G$ is focal of connected type and $\hat{G}$ can be identified to the stabilizer $\Isom(X)_\omega$. For instance, when $X$ is a real $d$-dimensional hyperbolic space ($d\ge 2$), we can choose $G$ so that it can be identified to the group of affine homotheties of the Euclidean space $\mathbf{R}^{d-1}$, and then $\hat{G}$ can be identified to the group of affine similarities of $\mathbf{R}^{d-1}$.

Another example is when $G=G_\lambda$ ($\lambda\ge 1$) is the semidirect product $\mathbf{R}^2\rtimes\mathbf{R}$, where $t\in\mathbf{R}$ acts on $\mathbf{R}^2$ by $t\cdot (x,y)=(e^tx,e^{\lambda t}y)$. If $\lambda=1$ this is a particular instance of the previous case; now assume $\lambda>1$. Then $\hat{G}$ can be identified to the semidirect product $\mathbf{R}^2\rtimes (\mathbf{R}\times\{\pm 1\}^2)$, where the action of $\mathbf{R}$ is the same, and the action of $(\eps_1,\eps_2)\in \{\pm 1\}^2$ is given by $(\eps_1,\eps_2)\cdot (x,y)=(\eps_1x,\eps_2y)$. Thus $G_\lambda$ has index 4 in $\widehat{G_\lambda}$ for $\lambda>1$ (while $G_1$ has infinite index, and actually positive codimension, in $\widehat{G_1}$).
\end{example}

This is, in a sense, the best possible situation. Indeed, it can be checked (Corollary \ref{nouni}) that no focal LC-group not of connected type is focal universal. Still, the commability classification can be made, using suitable invariants. 

First, for $G$ focal of totally disconnected type, the range of its modular function is the multiplicative group generated by some integer $s_G\ge 2$. Define $q_G$ as the minimal root of $s_G$, that is, the minimal integer $\ge 2$ having $s_G$ as an integral power.

\begin{example}
Fix a prime $p$. Let $\phi$ be an automorphism of $\mathbf{Q}_p^k$ of spectral radius $<1$, an consider the semidirect product $G=G_\phi=\mathbf{Q}_p^k\rtimes_\phi\mathbf{Z}$, where the positive generator of $\mathbf{Z}$ acts by $\phi$ (this is a particular case of Example \ref{exvi} with $n=1$). Then $s_G=p^\ell$, where $\ell$ is the $p$-valuation of the determinant of $\phi$, and $q_G=p$.
\end{example}

\begin{prop}\label{inqtd}
Two focal LC-groups $G,H$ of totally disconnected type are commable within focal groups if and only if $q_G=q_H$ (equivalently, $s_G$ and $s_H$ have a common integral power). 
\end{prop}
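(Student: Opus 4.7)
The plan is to prove the two implications of the equivalence separately.

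\emph{Necessity.} Assume $G$ and $H$ are commable within focal groups via a chain $G = G_0\moins G_1\moins\cdots\moins G_k = H$. By the commability invariance of type stated above, every $G_i$ is focal of totally disconnected type, so that $\Delta_{G_i}(G_i)=\langle s_{G_i}\rangle$ is a discrete infinite cyclic subgroup of $\mathbf{R}_{>0}$. It suffices to show $q_A = q_B$ for each copci step $f\colon A\to B$ in the chain; dividing out by the compact $\Ker f$ (which preserves $s$), I may assume $A$ is closed cocompact in $B$. Since $A\not\subset N_B$ (else $B/N_B$ would be compact) and $N_A$ is characterized as the kernel of any nontrivial continuous homomorphism $A\to\mathbf{R}$, I get $N_A = A\cap N_B$; consequently the projection $A\to B/N_B$ identifies $A/N_A$ with a finite-index subgroup $c\mathbf{Z}_B$ of $\mathbf{Z}_B = B/N_B$, and the conjugation of a generator of $\mathbf{Z}_A = A/N_A$ on $N_A$ is the restriction to $N_A$ of an automorphism $\phi$ of $N_B$ that differs from $\alpha_B^c$ by an inner automorphism of $N_B$. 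Since $N_A$ and $N_B$ are unimodular (as kernels of their own modular functions), Weil's formula applied to the compact homogeneous space $N_B/N_A$ gives $\mathrm{mod}(\phi|_{N_A})=\mathrm{mod}(\phi)=\mathrm{mod}(\alpha_B^c)=s_B^c$, whence $s_A = s_B^c$ and $q_A = q_B$. Propagating along the chain gives $q_G = q_H$.

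\emph{Sufficiency.} Conversely, assume $q_G = q_H = q$. The strategy is to comme each of $G$ and $H$ to a standard focal td model depending only on $q$, the conclusion then following by concatenation. For $q = \prod_p p^{a_p}$, I take the model $\mathcal{G}_q := \bigl(\prod_{p\mid q}\mathbf{F}_p\lp t\rp^{a_p}\bigr)\rtimes\mathbf{Z}$, where the positive generator of $\mathbf{Z}$ acts diagonally by multiplication by $t$; a direct check shows $\mathcal{G}_q$ is W-faithful focal of td type with $s_{\mathcal{G}_q}=q$. It then suffices to produce, for every focal td $G$ with $q_G = q$, a commability within focal td groups from $G$ to $\mathcal{G}_q$. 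Writing $G = N\rtimes_\alpha\mathbf{Z}$ with $s_G = q^n$, I proceed in two moves. First, enlarge $N$ to a compact-by-td overgroup $\hat N\supset N$ carrying an automorphism $\beta$ with $\beta^n = \alpha$, so that $G\hookrightarrow\hat G:=\hat N\rtimes_\beta\mathbf{Z}$ via $(x,m)\mapsto(x,nm)$ is a copci into a focal td group with $s_{\hat G}=q$; for the linear building blocks of Example~\ref{exvi} this is accomplished by Weil-restriction along a ramified extension $\mathbf{K}_i(\pi^{1/n})$ of each local field. Second, realize $\hat G$ and $\mathcal{G}_q$ as closed cocompact subgroups of a common focal td group of modulus $q$, constructed by combining their noncompact compacting pieces over the shared $\mathbf{Z}$-direction.

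\emph{Main obstacle.} The essential difficulty lies in the root-extraction step for general focal td $G$: one must produce $\hat N$ so that $\hat G$ genuinely remains focal (i.e., $\beta$ is really compacting on $\hat N$, $\hat N$ is noncompact, and $\mathsf{W}(\hat G) = 1$). For focal td groups built from the linear actions on local-field vector spaces of Example~\ref{exvi}, the Weil-restriction trick works directly; the general case needs a structural input reducing compacting automorphisms of td LC-groups to this semisimple linear picture up to compact factors. Granting that, the second move above is an explicit product-and-quotient construction analogous to the definition of $\mathcal{G}_q$, and the proposition follows by applying the whole scheme symmetrically to $H$ and concatenating through $\mathcal{G}_q$.
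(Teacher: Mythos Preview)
Your necessity argument is correct and essentially equivalent to the paper's: the paper obtains $\Delta_A=\Delta_B\circ f$ for amenable $B$ directly from a known formula, while you compute the same relation via Weil's formula on $N_B/N_A$. Either way one gets $s_A=s_B^c$ and hence $q_A=q_B$.

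The sufficiency, however, has a genuine gap at exactly the place you flag. Your scheme requires, for an arbitrary focal totally disconnected $G=N\rtimes_\alpha\mathbf{Z}$ with $s_G=q^n$, an enlargement $\hat N\supset N$ carrying a compacting $\beta$ with $\beta^n=\alpha$. You note this works for the linear examples via Weil restriction, and then appeal to an unspecified ``structural input reducing compacting automorphisms of td LC-groups to this semisimple linear picture up to compact factors''. No such reduction is available in the generality you need: the structure theory of contraction groups (Gl\"ockner--Willis) applies to \emph{contracting} automorphisms and describes them as products of $p$-adic Lie and torsion pieces, but does not give you an $n$-th root of a general compacting automorphism in any enlarged group. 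Your second move (``combining their noncompact compacting pieces over the shared $\mathbf{Z}$-direction'') is also not a construction: a fibre product of $\hat G$ and $\mathcal{G}_q$ over $\mathbf{Z}$ contains neither factor cocompactly, and no other combination is specified.

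The paper's route avoids both difficulties by a single observation you are missing: any focal totally disconnected $G$ with $s_G=m$ is an ascending HNN extension over a compact open subgroup $K\subset N$ with $[K:\alpha(K)]=m$, so its Bass--Serre tree is $(m+1)$-regular and the action yields a copci $G\to\FT_m$. This replaces your root-extraction entirely. The remaining point is that $\FT_{q^{n_1}}$ and $\FT_{q^{n_2}}$ are commable within focal groups, which the paper handles by noting that $\FT_q^{[n]}$ (the index-$n$ subgroup of $\FT_q$ containing $\Ker\Delta$) has $s=q^n$ and hence embeds copci into $\FT_{q^n}$; choosing $n\ge n_1,n_2$ gives $G_i\nearrow\FT_{q^{n_i}}\nwarrow\FT_q^{[n]}$ for $i=1,2$. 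So the correct ``model'' is not your $\mathcal{G}_q$ but the tree-group $\FT_q$, and the universal property you need is not root-extraction on $N$ but the Bass--Serre realisation of an ascending HNN extension.
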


If $G$ is focal of mixed type, then $G/G^\circ$ is focal of totally disconnected type and we define $q_G=q_{G/G^\circ}$. The group $G/G^\circ$ can be thought of as the totally disconnected side of $G$. Actually, $G$ also admits a connected side, which can also be viewed as a quotient $G/G^\sharp$ of $G$, which is focal of connected type. Here $G^\sharp$ is the elliptic radical of $G$, that is, the largest closed normal subgroup in which every compact subset is contained in a compact subgroup. It is no surprise that both the commability classes of $G/G^\circ$ and $G/G^\sharp$ are commability invariants of $G$. Interestingly, there is a third numerical invariant, which relates both sides by comparing the speed of contraction in the connected and totally disconnected parts. 

The modular functions of $G/G^\circ$ and $G/G^\sharp$ define, by composition, homomorphisms $\Delta_G^{\textnormal{td}}$, $\Delta_G^{\textnormal{con}}:G\to\mathbf{R}_+$, and there is a unique number $\varpi_G>0$ such that $\Delta_G^{\textnormal{td}}=(\Delta_G^{\textnormal{con}})^{\varpi_G}$; see Definition \ref{dvarpi} for more details. (By extension, it is natural to define $\varpi_G=0$ if $G$ is of connected type and $\varpi_G=+\infty$ if $G$ is totally disconnected type.)

\begin{example}
Fix a prime $p$; let $s$, $t$ be
elements of norm in $\mathopen]0,1\mathclose[$ of $\mathbf{R}$ and $\mathbf{Q}_p$ respectively. Consider the semidirect product $G=G_{s,t}=(\mathbf{R}\times\mathbf{Q}_p)\rtimes\mathbf{Z}$, where the positive generator of $\mathbf{Z}$ acts on the ring $\mathbf{R}\times\mathbf{Q}_p$ by multiplication by $(s,t)$ (this is a particular case of Example \ref{exvi} with $n=2$). Then $\varpi_G=v_p(t)/\log_p(|s|)$. If we do the same with $\mathbf{R}$ replaced by $\mathbf{C}$, then $\varpi_G=v_p(t)/2\log_p(|s|)$ (the point being that the multiplication by $s$ multiplies the Haar measure of $\mathbf{C}$ by $|s|^2$).  
\end{example}

\begin{thm}
Let $G,H$ be focal LC-groups of mixed type. Then $G$ and $H$ are commable within focal groups if and only if the following three conditions are fulfilled
\begin{enumerate}
\item $G/G^\sharp$ and $H/H^\sharp$ are commable (i.e., $\widehat{G/G^\sharp}$ and $\widehat{H/H^\sharp}$ are isomorphic);
\item $G/G^\circ$ and $H/H^\circ$ are commable (i.e., $q_G=q_G$);
\item $\varpi_G=\varpi_H$.
\end{enumerate}
\end{thm}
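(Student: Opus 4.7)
The plan is to split the argument in two: first a necessity direction verifying that each of the three conditions is an invariant of commability within focal groups; and second a sufficiency direction, in which matching invariants are used to build an explicit commability chain via a common intermediate focal group.

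For necessity, let $f:G_1\to G_2$ be a copci homomorphism between focal LC-groups of mixed type. Continuity combined with compact kernel and cocompact image implies that $f$ restricts to a copci homomorphism $G_1^\circ\to G_2^\circ$; for the elliptic radical $G^\sharp$, one uses its intrinsic characterization as the largest closed normal subgroup in which every compact subset is contained in a compact subgroup, a property preserved under both copci inclusions and compact-kernel quotients. Passing to quotients thus yields copci maps between the corresponding connected-type and totally-disconnected-type focal quotients, which proves that (1) and (2) are commability invariants. For (3), any copci homomorphism satisfies $\Delta_{G_2}\circ f=\Delta_{G_1}$; applied to the induced copci maps on both quotients, this shows that $\Delta_G^{\textnormal{td}}$ and $\Delta_G^{\textnormal{con}}$ transfer along a commability chain (\ref{ggih}), so their comparison exponent $\varpi_G$ is an invariant as well.

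For sufficiency, I would construct a common ``millefeuille'' focal LC-group $M$ of mixed type admitting copci maps from both $G$ and $H$ up to commability. A preliminary step normalizes the acting group $\Lambda$ to $\mathbf{R}$ on both sides: in mixed type this should be accessible because the connected side already carries an $\mathbf{R}$-action, and the $\mathbf{Z}$-action on the totally disconnected side can be extended after a suitable copci enlargement. Next, using condition (1) to identify the focal-universal connected-type objects $\widehat{G/G^\sharp}\simeq\widehat{H/H^\sharp}$, and using condition (2) together with Proposition \ref{inqtd} to fix a common W-faithful totally-disconnected-type focal group $T$ commable to both $G/G^\circ$ and $H/H^\circ$, I would form the fibered semidirect product $M=(N_c\times N_t)\rtimes\mathbf{R}$ in which $\mathbf{R}$ acts on the connected part $N_c$ via the universal $\widehat{G/G^\sharp}$-action and on the totally disconnected part $N_t$ via the $T$-action rescaled by a factor depending on $\varpi_G$. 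This rescaling is precisely where hypothesis (3) enters: the equality $\varpi_G=\varpi_H$ is what forces the two copies of the acting $\mathbf{R}$ (one coming from each side) to be identified, so that a single $M$ serves simultaneously for $G$ and $H$.

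The main obstacle will be verifying that $G$ (and by symmetry $H$) is strictly commable to this $M$. The delicate point is to dismantle $G$ into its connected and totally disconnected halves without losing the coupling of the $\mathbf{R}$-action, since this coupling is exactly what $\varpi_G$ encodes. I expect this to require a splitting-type lemma stating that a focal LC-group of mixed type is commable, within focal groups, to an explicit fibered product of its connected and totally disconnected focal quotients over a common acting one-parameter subgroup. Proving this splitting when $N^\circ$ is nonabelian or the compacting automorphism is nondiagonalizable is the step I expect to demand the most care, and it will presumably rely on the structure theory of focal groups of mixed type together with the commability classifications already established in the pure connected and pure totally disconnected cases.
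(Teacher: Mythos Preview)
Your necessity argument is broadly on track and matches the paper's approach (which invokes Lemma~\ref{facgo} to show that copci homomorphisms descend to copci homomorphisms on both $G/G^\circ$ and $G/G^\sharp$, and observes that the restricted modular functions are unchanged under commability). One caution: the claim that $f$ automatically restricts to a copci map $G_1^\circ\to G_2^\circ$, or that $f(G_1^\sharp)\subset G_2^\sharp$ follows directly from the intrinsic characterization, is too quick; these require genuine structural input (Proposition~\ref{compde}, Lemma~\ref{incsha}, Proposition~\ref{ebgh}), and you should cite or reproduce that work.

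Your sufficiency argument, however, contains a real error. You propose to normalize $\Lambda$ to $\mathbf{R}$ and to build $M=(N_c\times N_t)\rtimes\mathbf{R}$. This cannot work: by Lemma~\ref{l_sg} (and the preceding lemma), any focal group not of connected type has $\Lambda=\mathbf{Z}$, and more fundamentally a connected group $\mathbf{R}$ cannot act nontrivially by continuous automorphisms on a totally disconnected group $N_t$ (orbit maps $\mathbf{R}\to N_t$ are continuous, hence constant). So the ``copci enlargement'' you envision to extend the $\mathbf{Z}$-action on the totally disconnected side to an $\mathbf{R}$-action does not exist within focal groups of mixed or totally disconnected type.

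The paper's route is both simpler and avoids this obstruction. Rather than dismantling $G$ and reassembling, one uses the \emph{diagonal} map $G\to (G/G^\sharp)\times(G/G^\circ)$, composed with the copci embedding of $G/G^\sharp$ into its focal-universal group $H$. The image lands, cocompactly, in the fibered product
\[
H\stackrel{\varpi}{\times}A=\{(x,y)\in H\times A:\Delta_H(x)^\varpi=\Delta_A(y)\}
\]
of Definition~\ref{dhpa}, where $A=G/G^\circ$ and the acting group is $\mathbf{Z}$ (since $\Delta_A$ has discrete image). The parameter $\varpi$ enters exactly here as the exponent aligning the two modular functions. One then uses the totally disconnected classification (Corollary~\ref{tdckfo2}) to replace $A$ by a common $\FT_{q^n}$, yielding a single $H[\varpi,q]^{[n]}$ receiving both $G$ and $H$. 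The ``splitting lemma'' you anticipate needing is thus unnecessary: the two quotient maps already furnish the required copci embedding into a product, with no decomposition of $N$ needed.
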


The three above results concern commability within focal groups. It turns out that for most focal groups, the commability class actually consists of focal groups and thus commability within focal groups and commability are just the same.

\begin{prop}\label{inconfo}
A focal LC-group $G$ is commable to a non-focal LC-group exactly in the following two cases:
\begin{itemize}
\item $G$ is of totally disconnected type;
\item $G$ admits a copci homomorphism into the group of isometries of a rank 1 symmetric space of noncompact type.
\end{itemize}
\end{prop}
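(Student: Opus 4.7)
My plan is to treat the two directions separately.

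For the ``if'' direction, I provide direct witnesses of commability to a non-focal LC-group. If $G$ is of totally disconnected type, standard structure theory for td-focal groups (or results from \cite{CCMT}) yields a copci embedding of $G$ into $\Aut(T)$ for a suitable biregular tree $T$; the latter is non-amenable (it contains free subgroups acting loxodromically on the tree) and therefore non-focal. If $G$ admits a copci map into $\Isom(X)$ for some rank-$1$ symmetric space $X$ of noncompact type, then $\Isom(X)$ is a simple Lie group of noncompact type, hence non-amenable and not focal.

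For the ``only if'' direction, suppose $G$ is focal and commable to a non-focal LC-group $H$. Commability preserves the QI class, so $H$ is non-elementary Gromov-hyperbolic; since $H$ is not focal, the characterization of focal groups from \cite{CCMT} forces $H$ to be non-amenable. I would then invoke the structure theorem for non-elementary, non-amenable hyperbolic LC-groups (again from \cite{CCMT}): the stabilizer of a boundary point of $H$ is a cocompact closed focal subgroup $G' \leq H$, and $H$ acts transitively (indeed $2$-transitively, possibly after passing to an extension of index at most $2$) on $\partial H$. A commability-chain-chasing argument then shows that $G'$ is commable to $G$ within focal groups, so the question reduces to classifying the focal groups arising as boundary stabilizers of non-amenable hyperbolic LC-groups acting $2$-transitively on their boundary.

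The dichotomy then emerges from the classification of $2$-transitive LC-group actions on compact metrizable hyperbolic boundaries: $\partial H$ must be either a sphere endowed with a Carnot--Carath\'eodory-type metric (whereupon $H$ is, up to commability, the isometry group of a rank-$1$ symmetric space $X$, by classical rigidity) or totally disconnected (the tree case). In the first situation, $G'$ admits a copci map into $\Isom(X)$; invoking the focal-universal theorem for connected type (which gives a universal $\hat{G}$ in the commability class of $G$ within focal groups), this propagates to a copci map $G \to \hat{G} \hookrightarrow \Isom(X)$. In the second situation, $\partial G$ (homeomorphic to $\partial H$ via the quasi-isometry) is totally disconnected; since the boundary of a focal group $N\rtimes\Lambda$ can be canonically identified with the one-point compactification of $N$, this forces $N$ to be totally disconnected. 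In particular $N^\circ$ is trivial, so $G$ is of totally disconnected type. The mixed case is excluded because then $\partial G$ would carry both nontrivial connected components and a nontrivial totally disconnected quotient, incompatible with either alternative.

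The main difficulty I anticipate is the classification step for $2$-transitive boundary actions, which relies on nontrivial structural results about hyperbolic LC-groups from \cite{CCMT}. A subsidiary technical obstacle is organizing the commability chain between $G$ and $H$ to produce a single non-focal group directly receiving a focal group commable to $G$, which requires careful bookkeeping of the back-and-forth structure of commability within the focal class.
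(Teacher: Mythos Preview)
Your ``if'' direction is fine and matches the paper. The ``only if'' direction, however, has a genuine gap: you invoke a structure theorem asserting that for any non-amenable hyperbolic LC-group $H$, boundary-point stabilizers are cocompact focal subgroups and the boundary action is (2-)transitive. This is false in general. A cocompact lattice $\Gamma$ in $\Isom(\mathbb{H}^n)$ is non-focal hyperbolic and commable to a focal group, yet $\Gamma$ has no cocompact amenable subgroup, its boundary-point stabilizers are finite, and its boundary action has only countable orbits. The transitivity statement from \cite{CCMT} requires the presence of a cocompact amenable (focal) subgroup, and nothing in your setup guarantees that the \emph{endpoint} $H$ of the commability chain has one.

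The paper's route avoids this by not working at $H$ at all. In any commability chain from $G$ to $H$, focality is preserved by backward arrows and by forward arrows into focal targets; hence there is a \emph{first} forward arrow $G_i\to G_{i+1}$ with $G_i$ focal and $G_{i+1}$ non-focal. Now $G_{i+1}$ does contain a cocompact focal (hence amenable) subgroup, namely the image of $G_i$, and one can apply \cite[Proposition~5.10]{CCMT} directly to this single copci step: either $G_{i+1}$ is compact-by-(totally disconnected), forcing $G_i$ (and hence $G$, by type-invariance along the focal segment $G_0,\dots,G_i$) to have totally disconnected type; or $G_{i+1}$ is an S1-group, so $G_i$ admits a copci into some $\Isom(X)_\omega$. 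In the latter case the paper checks case-by-case that $\Isom(X)_\omega$ is focal-universal, so the copci upgrades along the focal segment to a copci $G\to\Isom(X)_\omega\hookrightarrow\Isom(X)$. No classification of 2-transitive boundary actions is needed; your proposed route through such a classification is both heavier and, as stated, rests on a false premise about $H$.
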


To complete the commability classification of focal LC-groups, it is therefore enough to consider the groups in Proposition \ref{inconfo}. An easy result (Proposition \ref{tdck}) contrasting with Proposition \ref{inqtd} is that all focal LC-groups of totally disconnected type are commable.
To deal with the connected type, an argument based on quasi-isometries is needed, namely Mostow's result that non-homothetic symmetric spaces of rank 1 and noncompact type are pairwise non-quasi-isometric. An even simpler argument based on the topology of the boundary shows that focal groups of different types are not quasi-isometric and hence not commable. Using this and Proposition \ref{inconfo}, we deduce

\begin{prop}
Let $G,H$ be focal LC-groups. Then $G$ and $H$ are commable if and only if either
\begin{itemize}
\item $G$ and $H$ are commable within focal groups, or
\item $G$ and $H$ both have totally disconnected type.
\end{itemize}
\end{prop}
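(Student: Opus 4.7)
The ``if'' direction is immediate: commability within focal groups is a particular case of commability, and the second alternative follows from Proposition~\ref{tdck}, which asserts that all focal LC-groups of totally disconnected type lie in a single commability class.

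For the converse, assume $G$ and $H$ are commable focal LC-groups. Since commability entails quasi-isometry, and since focal LC-groups of different types are pairwise non-quasi-isometric (by the boundary-topology observation mentioned just before the statement, which distinguishes the three types), $G$ and $H$ share a common type. The totally-disconnected case is exactly the second bullet, so from now on I assume both $G$ and $H$ are either of connected type or of mixed type.

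The plan is to split along Proposition~\ref{inconfo}. If neither $G$ nor $H$ admits a copci homomorphism into $\Isom(X)$ for some rank~1 symmetric space $X$ of noncompact type (a hypothesis automatic in the mixed-type case, by Proposition~\ref{inconfo}), then no LC-group commable to $G$ or $H$ can be non-focal. Hence every commability sequence joining $G$ to $H$ runs entirely through focal groups, and $G$ and $H$ are commable within focal groups.

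It remains to treat the case in which $G$ and $H$ are both of connected type and, say, $G$ admits a copci homomorphism $G\to\Isom(X)$ for some rank~1 symmetric space $X$ of noncompact type. By transitivity of commability (and the fact that $\Isom(X)$ acts continuously, properly, and cocompactly on $X$), $H$ is commable, hence quasi-isometric, to $X$. I would then invoke Mostow's QI-rigidity for rank~1 symmetric spaces of noncompact type, in its LC-group formulation, to produce a copci homomorphism $H\to\Isom(X)$. Amenability of the focal group $H$ confines it, after conjugation, into a minimal parabolic $P=\Isom(X)_\omega$, and cocompactness in $\Isom(X)$ forces cocompactness in $P$; the same applies to $G$. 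Since $P$ is focal — indeed it can be identified with the focal-universal $\hat{G}$, as in the example following the focal-universal theorem — we obtain a commability $G\moinss P\moinss H$ entirely within focal groups, completing the proof.

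The main obstacle is the very last step: promoting the mere quasi-isometry between $H$ and $X$ into an honest copci homomorphism $H\to\Isom(X)$, and then replacing the non-focal bridge $\Isom(X)$ by its focal minimal parabolic $P$. This is where the heavy input of Mostow's QI-rigidity combines with the standard description of amenable cocompact closed subgroups of rank~1 semisimple Lie groups; the rest is essentially bookkeeping organized around Proposition~\ref{inconfo}.
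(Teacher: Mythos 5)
Your proof is correct, but at the decisive step it takes a genuinely different (and heavier) route than the paper. The reduction to a common type via boundary topology, the totally disconnected case via Proposition~\ref{tdck}, and the observation that if $G$ is neither of totally disconnected type nor admits a copci map into some $\Isom(X)$ then by Proposition~\ref{inconfo} its whole commability class is focal, all match the intended argument (one small attribution slip: the fact that a mixed-type focal group admits no copci homomorphism into $\Isom(X)$ comes from Proposition~\ref{specalt}, or from the boundary/type argument, rather than from Proposition~\ref{inconfo} itself). The divergence is in the remaining connected-type case: you upgrade the quasi-isometry $H\sim X$ to a copci homomorphism $H\to\Isom(X)$ by invoking the full locally compact QI-rigidity of rank~1 symmetric spaces --- a theorem of Tukia, Pansu and Chow (not Mostow), whose LC-group formulation the paper deliberately does not use here: it is quoted only afterwards, with a pointer to \cite{CQI}, as what is needed for the finer description of the commability classes. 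The paper instead applies Proposition~\ref{inconfo} symmetrically: since $H$ is commable to the non-focal group $\Isom(X)$, it already admits a copci homomorphism into $\Isom(Y)$ for some rank~1 symmetric space $Y$, and then only Mostow's much weaker statement that non-homothetic rank~1 symmetric spaces are pairwise non-quasi-isometric is needed to force $Y$ to be homothetic to $X$; from there the endgame is the same as yours (amenability plus cocompactness places both images in boundary-point stabilizers, which are conjugate and focal, giving $G\nearrow P\nwarrow H$). Your route is shorter to state but imports a deep external rigidity theorem; the paper's route stays within what is already established plus a classical non-QI statement, which is why it is the one used for this proposition.
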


This completes the commability classification, but still it does not gives a full description of commability classes in the special cases.
Indeed, this requires a locally compact version of known quasi-isometric results. This results in: if $X$ is a symmetric space of rank 1 and noncompact type, then a locally compact group is quasi-isometric to $X$ if and only if admits a copci homomorphism into $\Isom(X)$. See the discussion in \cite{CQI}, which also includes a discussion in the totally disconnected type case. 

We finally come to the fact that the invariant $\varpi$ is a quasi-isometry invariant.

\begin{thm}\label{th_mii}
If $G_1,G_2$ are quasi-isometric focal groups, then $\varpi(G_1)=\varpi(G_2)$.
\end{thm}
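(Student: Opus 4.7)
The plan is to reduce the theorem to focal groups of mixed type, and then identify $\varpi$ with a genuine quasi-isometry invariant of the visual boundary.

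\emph{First reduction.} The trichotomy (connected / totally disconnected / mixed) among focal groups is itself a quasi-isometry invariant, as already noted in the introduction: it is visible from the local topology of the visual boundary of the Gromov-hyperbolic focal group (connected components versus total disconnection). In the two pure cases, $\varpi$ is declared to be $0$ or $+\infty$ according to the type, so the statement is automatic. We may therefore assume that both $G_1$ and $G_2$ are of mixed type.

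\emph{Singling out the parabolic point.} Each focal LC-group $G$ has, by hyperbolicity, a well-defined ideal boundary $\partial G$, and by the focal structure a distinguished boundary point $\omega_G$ fixed by every positive element of $\Lambda$ (any other fixed point is ruled out by the compacting condition applied to $\Lambda^{-1}$). A quasi-isometry $\phi:G_1\to G_2$ extends to a quasi-symmetric homeomorphism $\partial\phi:\partial G_1\to\partial G_2$ conjugating the coarse actions; since $\omega_{G_i}$ is the unique point fixed by the cocompact coarse action of $G_i$ on its boundary, one gets $\partial\phi(\omega_{G_1})=\omega_{G_2}$. Hence $\phi$ is asymptotically adapted to the Busemann function toward $\omega_{G_i}$ and induces coarse maps between corresponding horospheres.

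\emph{Extracting $\varpi$.} Identify $\partial G_i\setminus\{\omega_{G_i}\}$ with $N_i$, equipped with a visual quasi-metric for which translation by the positive direction $t_i\in\Lambda_i$ is a genuine homothety. The projections $N_i\to N_i/N_i^\sharp$ (to the connected factor, corresponding to $G_i/G_i^\sharp$) and $N_i\to N_i/N_i^\circ$ (to the totally disconnected factor, corresponding to $G_i/G_i^\circ$) carry induced visual quasi-metrics whose scaling exponents under $t_i$ are $\log\Delta^{\textnormal{con}}_{G_i}(t_i)$ and $\log\Delta^{\textnormal{td}}_{G_i}(t_i)$ respectively; by Definition \ref{dvarpi}, $\varpi_{G_i}$ is their ratio. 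The key observation is that the decomposition of $N_i$ into the connected fibers of $N_i\to N_i/N_i^\circ$ and the totally disconnected transversal is canonical at the quasi-symmetric level: the fibers are exactly the connected components of $\partial G_i\setminus\{\omega_{G_i}\}$, and the transversal is the topological quotient. Thus $\partial\phi$ respects both projections and induces quasi-symmetries between the corresponding connected-type boundaries and totally-disconnected-type boundaries separately; the scaling exponents of the two visual quasi-metrics are then preserved up to a common multiplicative stretch factor (the conformal distortion of $\partial\phi$), whence their ratio $\varpi$ is preserved.

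\emph{Main obstacle.} The delicate point is the final step, where one must justify that $\partial\phi$ behaves compatibly with both projections and that the common conformal stretch cancels in the ratio. The identification of the connected-component decomposition is topologically soft, but the quasi-symmetric rigidity on the totally disconnected factor — really a quotient, not a metric subspace — needs care. A more robust route, which I would favour, is to characterise $\log\Delta^{\textnormal{con}}(t_i)$ as an exponential volume growth rate of connected horoball slices, and $\log\Delta^{\textnormal{td}}(t_i)$ as a branching/entropy rate of the totally disconnected quotient tree; both are coarse invariants of the ambient focal group (up to a common renormalisation of the Busemann function), so their ratio $\varpi$ is a bona fide quasi-isometry invariant.
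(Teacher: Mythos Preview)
Your reduction to the mixed type and the use of connected components of $\partial G\smallsetminus\{\omega\}$ to access $G/G^\sharp$ are correct and match the paper's first step. But the core of your argument, the ``Extracting $\varpi$'' paragraph, has a genuine gap.

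First, a minor issue: your justification that $\partial\phi(\omega_{G_1})=\omega_{G_2}$ via ``conjugating the coarse actions'' does not work as stated. A quasi-isometry $G_1\to G_2$ does not conjugate the $G_1$-action to the $G_2$-action; it only transports the $G_1$-action to a quasi-action of $G_1$ on $\partial G_2$, whose fixed point need not be $\omega_{G_2}$. The paper instead shows (Proposition~\ref{bdmi}) that in mixed type $\omega$ is the unique point of $\partial G$ with disconnected complement, so it is fixed by any self-homeomorphism, which gives the conclusion cleanly.

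The main gap is the assertion that the induced visual quasi-metrics on $N_i/N_i^\sharp$ and $N_i/N_i^\circ$ have ``scaling exponents under $t_i$'' equal to $\log\Delta^{\textnormal{con}}_{G_i}(t_i)$ and $\log\Delta^{\textnormal{td}}_{G_i}(t_i)$. This is false in general: the metric scaling of a compacting automorphism on a visual quasi-metric is governed by its minimal dilation (the smallest Lyapunov exponent $\lambda$ in the paper's notation), not by the volume scaling $\delta$ that defines the modular function. The two coincide for the tree factor but typically differ for a Heintze group with non-conformal action on $N^\circ$. Consequently the ratio you compute is not $\varpi$. Your acknowledged ``more robust route'' via horoball volume growth and branching entropy is a reasonable heuristic, but you do not actually carry it out, and making the decomposition of horosphere volume into a connected and a totally disconnected contribution QI-invariant is precisely the nontrivial point.

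The paper's proof takes a different route altogether: it uses the QI-invariance of the vanishing of $L^p$-cohomology in degree one. From \cite{CoTe} one has $H^1_p(G)\neq 0$ iff $p>p_0(G)=\log\delta/\log\lambda$, so $p_0$ is a QI-invariant. The identity $p_0(G)=(1+\varpi(G))\,p_0(G/G^\sharp)$ (Lemma~\ref{p0c}) then expresses $\varpi(G)$ in terms of $p_0(G)$ and $p_0(G/G^\sharp)$; the latter is also QI-invariant because $G/G^\sharp$ is QI-invariant up to quasi-isometry (via the connected-component argument on the boundary, which you do have). Thus $\varpi$ is recovered as a ratio of two $L^p$-cohomology thresholds, each separately QI-invariant, rather than from any direct metric reading on the boundary.
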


The proof uses the topology of the boundary of those groups, as well as a double use of the invariance of the vanishing of $L^p$-cohomology in degree 1. On the other hand, T.~Dymarz \cite{Dy12} recently proved that the non-power integer $q_G$ is a quasi-isometry invariant of the focal group of mixed type $G$. Combined with the previous theorem, this provides some partial results towards the quasi-isometry classification of focal groups.

\begin{conj}\label{fconj}
Let $G$ be a focal LC-group. Then every LC-group quasi-isometric to $G$ is commable to $G$.
\end{conj}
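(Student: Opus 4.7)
The plan is to split the conjecture into two parts: first, to establish that any locally compact group $H$ quasi-isometric to a focal LC-group $G$ is itself focal; second, to combine this with the commability classification of focal groups proved in the paper, using the fact that each invariant distinguishing commability classes (the type, the invariant $q_G$, and the invariant $\varpi_G$) is a quasi-isometry invariant.

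Granting the first step, the second is essentially assembled from results already present in the paper. The type (connected, totally disconnected, mixed) is a QI invariant via the topology of the boundary, as noted after Proposition \ref{inconfo}. For the connected type, $G/G^\sharp$ admits a copci homomorphism into $\Isom(X)$ for a rank $1$ symmetric space $X$ of noncompact type, and by Pansu--Mostow--Tukia--Chow rigidity $X$ is determined up to homothety by the QI class; together with the LC version of the envelope result of \cite{CQI}, this yields commability of $H/H^\sharp$ with $G/G^\sharp$, and the focal-universal theorem lifts this to $G$ and $H$. For the totally disconnected type, all focal groups are mutually commable. For the mixed type, one combines $q_G$ (a QI invariant by Dymarz \cite{Dy12}) with $\varpi_G$ (Theorem \ref{th_mii}) and the rank $1$ rigidity on the connected side, then invokes the mixed-type classification to conclude.

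The real work is thus the first step. By \cite{CCMT}, showing $H$ is focal amounts to showing $H$ is non-elementary Gromov-hyperbolic and amenable. After reducing to $H$ compactly generated (which is forced by being QI to a compactly generated space), hyperbolicity passes across the QI, and non-elementarity is inherited from $G$ via the boundary. The difficulty is amenability, which is not a QI invariant in general. The natural approach is dynamical: the QI $f\colon G\to H$ induces a boundary homeomorphism $\partial f\colon\partial G\to\partial H$, and in the focal case $G$ acts on $\partial G$ fixing a unique attractor $\omega$. One would like to promote this to a fixed point for the full $H$-action on $\partial H$, exploiting the fact that the quasi-action of $H$ on $\partial G$ induced by $f$ coarsely commutes with the contracting dynamics of $G$. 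Since a non-elementary hyperbolic LC-group fixing a point at infinity must be focal (by the trichotomy elementary / focal / general type, general-type groups exhibiting north--south dynamics with discrete free subgroups and hence no global fixed point), this would finish the reduction.

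The principal obstacle is that the $H$-action on $\partial G\simeq\partial H$ is only a quasi-action with bounded error, so pointwise fixation of $\omega$ is not automatic. A concrete route is to work with the Busemann horofunction centered at $\omega$: on $G$ it agrees, up to bounded error, with the logarithm of the modular function $\Delta_G$, hence with a genuine continuous homomorphism $G\to\mathbf{R}$. If one can show that any self-QI of $G$ coarsely preserves this horofunction (the Tukia-type analysis of uniformly quasi-Möbius groups on the boundary of a focal group, using the visual metric structure, is the main tool here), then the quasi-action of $H$ on $G$ produces a coarse homomorphism $H\to\mathbf{R}$, which under standard arguments promotes to a continuous proper homomorphism. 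Its kernel then inherits the compacting dynamics, forcing the focal structure and in particular amenability. Executing this rigorously — in particular, ruling out that $H$ is a non-amenable group with an ``amenable-looking'' quasi-action, and handling the $p$-adic and mixed analytic structure on $\partial G$ in the non-connected cases — is where one expects the bulk of the work, and where new ideas beyond those developed in the paper seem required.
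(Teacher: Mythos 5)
Your proposal does not prove the statement, and indeed the statement is a conjecture that the paper itself leaves open: the paper only records that it holds in totally disconnected type (via Carette's construction, Question \ref{qtre}) and that the mixed-type case reduces to the connected-type case using Theorem \ref{th_mii} together with Dymarz' QI-invariance of $q$. The more serious problem is that your first step is false as stated. Focality is \emph{not} a quasi-isometry invariant: Proposition \ref{inconfo} describes exactly the focal groups that are commable, hence quasi-isometric, to non-focal groups, namely all groups of totally disconnected type (quasi-isometric to regular trees, hence to nonabelian free groups and to full automorphism groups of trees) and the special connected-type groups (quasi-isometric to rank~$1$ symmetric spaces, hence to their full isometry groups and to uniform lattices therein). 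For such $G$, an LC-group $H$ quasi-isometric to $G$ need not be focal at all, yet the conjecture still asserts it is commable to $G$. Consequently the dynamical scheme of promoting the boundary fixed point of $G$ to a fixed point for $H$ cannot work in general: a general-type hyperbolic group has no fixed point on its boundary, and the non-QI-invariance of amenability that you flag as a technical hurdle is in fact the structural reason the reduction fails, not something to be engineered around.

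The second step also does not follow from the paper in the connected case. You assert that $G/G^\sharp$ admits a copci homomorphism into $\Isom(X)$ for a rank~$1$ symmetric space $X$ and then invoke Mostow--Pansu--Tukia--Chow rigidity; but by Proposition \ref{inconfo} such an embedding exists only for the special focal groups. A generic connected-type focal group is commable to a purely real Heintze group (Corollary \ref{c_comh}) that is not associated to any symmetric space, and for these the assertion ``quasi-isometric implies commable (equivalently, isomorphic focal-universal hulls)'' is a long-standing open problem, which the paper cites only as known in some explicit instances. So both the reduction to focality and the connected-type endgame are genuine gaps. The portion of your outline that does match the paper is the mixed-type bookkeeping: combining the QI-invariance of the type (Proposition \ref{bdmi}), of $q_G$ (Dymarz), and of $\varpi_G$ (Theorem \ref{th_mii}) with Proposition \ref{micom} reduces the mixed case to the connected case, which is exactly what Section \ref{qivpi} establishes --- but it does not resolve the conjecture.
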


Conjecture \ref{fconj} holds for $G$ if $G$ has totally disconnected type, the proof making use of a surprisingly involved construction of
Carette \cite{car}; see Question \ref{qtre}. In Section \ref{qivpi}, we show, using Theorem \ref{th_mii} and Dymarz' QI-invariance of $q$ theorem, that the conjecture in the mixed type reduces to the connected type, as well as some explicit instances where the latter holds. Conjecture \ref{fconj} is more thoroughly discussed in \cite{CQI}.

Further consequences of Theorem \ref{th_mii} are developed in Section~\ref{qivpi}.

\setcounter{tocdepth}{1}    
\tableofcontents

\noindent {\bf Acknowledgements.} I thank Tullia Dymarz and Pierre de la Harpe, as well as the referee, for useful remarks and corrections.

\section{On the polycompact radical}\label{opr}


\noindent {\footnotesize Errata for this section have been incorporated after publication; they are mentioned in footnotes.}

\subsection{Generalities}

The following definition summarizes and extends notions from the introduction.

\begin{defn}\label{dpr}
Let $G$ be a locally compact group. Define its {\em polycompact radical} (or {\em polyfinite radical} if $G$ is discrete) and denote by $\mathsf{W}(G)$ the union of all compact normal subgroups of $G$. If $\mathsf{W}(G)=\{1\}$ we say that $G$ is W-{\em faithful}. 
Let $\mathsf{B}(G)$ (sometimes referred as topological FC-center) be the set of elements in $G$ whose conjugacy class has a compact closure.
\end{defn}

Note that $\mathsf{W}(G)$ is compact if and only if it has a compact closure, and if this holds then $G/\mathsf{W}(G)$ is W-faithful. The letter $\mathsf{B}$ stands for bounded, since it corresponds to elements of $G$ inducing a bounded inner automorphism $\alpha$, namely such that $\{g^{-1}\alpha(g)\mid g\in G\}$ has a compact closure.

Recall that a locally compact group $G$ is {\em elliptic} (or {\em locally elliptic}) if every compact subset is contained in a compact subgroup. An element $g\in G$ is {\em elliptic} if it is contained in some compact subgroup.

\begin{defn}
The {\em elliptic radical} of $G$ is the union $G^\sharp$ of all its normal closed elliptic subgroups. 
\end{defn}

By a theorem of Platonov \cite{Pl66}, $G^\sharp$ is a closed elliptic normal subgroup.

\begin{example}\label{yamabe}
By a theorem of Yamabe \cite[Theorem 4.6]{MZ}, if $G$ is connected-by-compact (i.e.\ $G/G^\circ$ is compact, where $G^\circ$ is the identity component), then $\mathsf{W}(G)=G^\sharp$ is compact and $G/\mathsf{W}(G)$ is a Lie group with finitely many connected components. In this case, the subgroup $\mathsf{B}(G)$ is closed and described by Tits in \cite{Tits}.
\end{example}

The class of elliptic LC-groups is stable under taking extensions. In particular, $G/G^\sharp$ is W-faithful for every locally compact group $G$. Also, for any locally compact group $G$ we have the inclusion $\mathsf{W}(G)\subset \mathsf{B}(G)\cap G^\sharp$. Of course $\mathsf{B}(G)$ need not be contained in $G^\sharp$, but is not much larger, by the following fact, relying on results of Wu-Yu \cite{WY}:

\begin{prop}\label{pwy}
Let $G$ be a locally compact group.
\begin{enumerate}[(1)]
\item\label{wy1} the quotient $\overline{\mathsf{B}(G)}/\overline{\mathsf{B}(G)}^\sharp$ is abelian and isomorphic to the direct product of a vector group (i.e., isomorphic to $\mathbf{R}^d$ for some $d$) and a discrete torsion-free abelian group:
\item\label{wy2} the set of elliptic elements in $\mathsf{B}(G)$ is equal to $\mathsf{W}(G)$. In particular, $\mathsf{B}(G)\cap G^\sharp=\mathsf{W}(G)$ and if $\mathsf{B}(G)$ is closed, then so is $\mathsf{W}(G)$.
\item 

\hspace{2cm}{\rm ---This item and the next one are erased}\footnote{The published version of this proposition was flawed in its items (3) and (4). It included the assertion $\overline{\mathsf{B}(G)}=\overline{\mathsf{B}(G)}^\circ\overline{\mathsf{W}(G)}$ for which $G=\mathbf{Z}$ is a trivial counterexample, and more anecdotally contained the assertion that $\overline{\mathsf{B}(G)}^\circ$ is nilpotent, for which $G=\mathrm{SO}(3)$ is a counterexample. The proof of the first wrong statement relied on a misquotation of \cite{WY}. The current (\ref{wy3}), (\ref{wy4}) (\ref{wyz}) are essentially borrowed from correct statements in (3) and (4) of the published version: namely the only new statement is the last one in (\ref{wy3}), correcting an incorrect one. We have shifted the enumeration to avoid ambiguity. While my initial proof of these facts was roughly correct, the proof of the important Theorem \ref{wclo2} made use of a fake assertion in this proposition; it is corrected here. I have also slightly completed (\ref{wy2}) and added (\ref{wya}) and (\ref{wyb}); (\ref{wya}) is used in the corrected proof of Theorem \ref{wclo2}.
}---
\item


\item\label{wy3} 


The intersection $\overline{\mathsf{B}(G)}^\circ\cap\overline{\mathsf{B}(G)}^\sharp$ is equal to $\mathsf{W}(G^\circ)^\circ$ and in particular is compact and connected. Modulo their compact intersection $\mathsf{W}(G^\circ)^\circ$, $\overline{\mathsf{B}(G)}^\circ$ and $\overline{\mathsf{B}(G)}^\sharp$ generate their topological direct product.
The quotient $\overline{\mathsf{B}(G)}^\circ/\mathsf{W}(G^\circ)^\circ$ is a vector group, and the quotient $\overline{\mathsf{B}(G)}/\overline{\mathsf{B}(G)}^\circ\overline{\mathsf{B}(G)}^\sharp$ is discrete torsion-free abelian.



\item\label{wy4} We have $\overline{\mathsf{W}(G)}=\overline{\mathsf{B}(G)}^\sharp=\overline{\mathsf{B}(G)}\cap G^\sharp$.
 
\item\label{wyz} We have $\overline{\mathsf{B}(G)}^\circ=\mathsf{B}(G)^\circ$.
 
\item\label{wya}We have $\overline{\mathsf{B}(G)}=\mathsf{B}(G)\overline{\mathsf{B}(G)}^\sharp=\mathsf{B}(G)\overline{\mathsf{W}(G)}$, and more generally $\overline{\mathsf{B}(G)}=\mathsf{B}(G)U$ for every open subgroup $U$ of $\overline{\mathsf{B}(G)}^\sharp$. Also the converse of the implication in (\ref{wy2}) holds: if $\mathsf{W}(G)$ is closed, so is $\mathsf{B}(G)$.

\item\label{wyb}We have $\overline{\mathsf{B}(G)}\cap G^0\subset\mathsf{B}(G)$.
 




\end{enumerate}
\end{prop}




\begin{proof}
Define $H=\overline{\mathsf{B}(G)}$. Then $\mathsf{B}(H)$ contains $\mathsf{B}(G)$ and hence is dense in $H$, which is the condition under which the results of \cite{WY} can be applied.

By \cite[Theorem 4]{WY}, the set $E$ of elliptic elements in $H$ is a closed subgroup and $H/E$ is abelian with the given structure, proving (\ref{wy1}), as well as the fact that the set of elliptic elements in $\overline{\mathsf{B}(G)}$ is equal to $\overline{\mathsf{B}(G)}\cap G^\sharp$, which is part of (\ref{wy2}).


Clearly $\mathsf{B}(G)\cap G^\sharp\supset \mathsf{W}(G)$; conversely, if an element $g$ belongs to $\mathsf{B}(G)\cap G^\sharp$, let $\overline{C_g}$ be the closure of its conjugacy class; then $L$ is compact and contained in $G^\sharp$, so the closed subgroup it generates is compact and normal in $G$, and hence $g\in \mathsf{W}(G)$. This proves (\ref{wy2}).

Let us now prove (\ref{wy3}). 
To prove the intersection property and the assertion on $\overline{\mathsf{B}(G)}^\circ/\mathsf{W}(G^\circ)^\circ$, can suppose that $\mathsf{W}(G^\circ)^\circ$ is trivial, and we have to check that $\overline{\mathsf{B}(G)}^\circ$ is a vector group, and that $\overline{\mathsf{B}(G)}^\circ\cap\overline{\mathsf{B}(G)}^\sharp=\{1\}$. 
Then $\overline{\mathsf{B}(G)}^\circ$ is compact-by-(vector group). Since $\mathsf{W}(G^\circ)^\circ$ is trivial, $\overline{\mathsf{B}(G)}^\circ$ is actually profinite-by-(vector group); it is also pro-Lie and connected. Hence if by contradiction the profinite kernel is nontrivial, then $\overline{\mathsf{B}(G)}^\circ$ has a quotient that is (nontrivial finite)-by-(vector group) and connected. This would contradict simple connectedness of the vector group. So $\overline{\mathsf{B}(G)}^\circ$ is a vector group. Hence it indeed has trivial intersection with $\overline{\mathsf{B}(G)}^\sharp$, which consists of elliptic elements.

To complete the topological direct product assertion, since $\overline{\mathsf{B}(G)}^\circ$ is contained in $G^\circ$, it is enough to prove that the quotient map $\pi:G\to G/G^\circ$ is proper in restriction to $\overline{\mathsf{B}(G)}^\sharp$. Showing this amounts to checking that the inverse image of some compact open subgroup $K\subset G/G^\circ$ is compact. Indeed, this inverse image is $\overline{\mathsf{B}(G)}^\sharp\cap\pi^{-1}(K)$, and is contained in $(\pi^{-1}(K))^\sharp$. Since $\pi^{-1}(K)$ is connected-by-compact, its elliptic radical is compact. Hence $\overline{\mathsf{B}(G)}^\sharp\cap\pi^{-1}(K)$ is indeed compact.

Then $\overline{\mathsf{B}(G)}/\overline{\mathsf{B}(G)}^\circ\overline{\mathsf{B}(G)}^\sharp$ is discrete abelian, as a consequence of (\ref{wy1}). It is torsion-free because it embeds into the product $\overline{\mathsf{B}(G)}/\overline{\mathsf{B}(G)}^\circ\times\overline{\mathsf{B}(G)}/\overline{\mathsf{B}(G)}^\sharp$, which is torsion-free.

For (\ref{wy4}), first note that the equality $\overline{\mathsf{B}(G)}^\sharp=\overline{\mathsf{B}(G)}\cap G^\sharp$ is trivial. The inclusion $\overline{\mathsf{W}(G)}\subset G^\sharp\cap\overline{\mathsf{B}(G)}$ is clear (since $G^\sharp$ is closed). For the reverse inclusion, we can suppose that $\mathsf{W}(G^\circ)^\circ$ is trivial. So, by (\ref{wy3}), the topological direct product $\overline{\mathsf{B}(G)}^\circ\times\overline{\mathsf{B}(G)}^\sharp$ stands as an open subgroup in $\overline{\mathsf{B}(G)}$. Since the action by conjugation preserves this direct product decomposition, we deduce
\begin{equation}\label{eqpro}\overline{\mathsf{B}(G)}^\circ\times\overline{\mathsf{B}(G)}^\sharp=\overline{\mathsf{B}(G)\cap \overline{\mathsf{B}(G)}^\circ}\times\overline{\mathsf{B}(G)\cap \overline{\mathsf{B}(G)}^\sharp}.\end{equation}

So $\mathsf{B}(G)\cap \overline{\mathsf{B}(G)}^\sharp$ is dense in $\overline{\mathsf{B}(G)}^\sharp$. On the other hand, by (\ref{wy2}), it is equal to $\mathsf{W}(G)$. This shows that $\overline{\mathsf{W}(G)}$ is equal to $\overline{\mathsf{B}(G)}^\sharp$.


Let us show (\ref{wyz}). We can suppose that $\mathsf{W}(G^0)$ is trivial. We need the following fact: In a vector group standing as normal subgroup in a larger topological group, the set of elements with bounded conjugacy class is clearly a vector subspace, and hence is closed.

We deduce from (\ref{eqpro}) that $\mathsf{B}(G)\cap \overline{\mathsf{B}(G)}^\circ$ is dense in the vector group $\overline{\mathsf{B}(G)}^\circ$. Using the above fact, we deduce that 
$\mathsf{B}(G)\cap \overline{\mathsf{B}(G)}^\circ=\overline{\mathsf{B}(G)}^\circ$. So $\overline{\mathsf{B}(G)}^\circ\subset\mathsf{B}(G)$, and the equality immediately follows.

Let us show (\ref{wya}). Since $\overline{\mathsf{B}(G)}^\circ\overline{\mathsf{B}(G)}^\sharp$ is open (by (\ref{wy3})) in $\overline{\mathsf{B}(G)}$, we have $\overline{\mathsf{B}(G)}=\mathsf{B}(G)\overline{\mathsf{B}(G)}^\circ\overline{\mathsf{B}(G)}^\sharp=\mathsf{B}(G)\overline{\mathsf{B}(G)}^\sharp$, the latter equality holding since $\overline{\mathsf{B}(G)}^\circ$ is contained in $\mathsf{B}(G)$ (by (\ref{wyz})). To show that $\mathsf{B}(G)U=\mathsf{B}(G)\overline{\mathsf{B}(G)}^\sharp$, it is enough to check that $\overline{\mathsf{B}(G)}^\sharp\subset\mathsf{B}(G)U$, which holds because $\mathsf{B}(G)\cap \overline{\mathsf{B}(G)}^\sharp=\mathsf{W}(G)$ is dense in $\overline{\mathsf{B}(G)}^\sharp=\overline{\mathsf{W}(G)}$ (by (\ref{wy2}) and (\ref{wy4})). The last assertion immediately follows.

Let us show (\ref{wyb}). We can suppose that $\mathsf{W}(G^0)$ is trivial. So $\overline{\mathsf{B}(G)}^\sharp$ is totally disconnected; hence $\overline{\mathsf{B}(G)}^\sharp\cap G^0=\{1\}$. As we have checked in the proof of (\ref{wy3}), the projection $G\to G/G^\circ$ is proper in restriction to $\overline{\mathsf{B}(G)}^\sharp$. Hence $G^0$ and $\overline{\mathsf{B}(G)}^\sharp$ generate their topological direct product in $G$.
For $x\in G^0\cap\overline{\mathsf{B}(G)}$, since $\overline{\mathsf{B}(G)}=\mathsf{B}(G)\overline{\mathsf{B}(G)}^\sharp$ (by (\ref{wya})), we can write $x= yz$ with $y\in\mathsf{B}(G)$ and $z\in\overline{\mathsf{B}(G)}^\sharp$. In this topological direct product, $y$ decomposes as $y=xz^{-1}$. Since the projections of this decomposition stabilize $\mathsf{B}(G)$ (since they are continuous and $G$-equivariant), we deduce that $x\in \mathsf{B}(G)$. 
\end{proof}

\begin{remark} (See also Theorem \ref{wclo2}.) The subgroups $\mathsf{W}(G)$ and $\mathsf{B}(G)$ may be non-closed, as was initially observed by Tits \cite[Proposition 3]{Tits}; here we follow the similar \cite[\S 6, Example 1]{WY}: let $R=\mathbf{Z}/n\mathbf{Z}$ for some $n\ge 3$ and $R^\times$ its group of units and $G=R^{(\mathbf{N})}\rtimes (R^\times)^\mathbf{N}$; in this example $\mathsf{W}(G)=\mathsf{B}(G)\neq G$ is dense (here, $R^{(\mathbf{N})}$ is the direct sum of copies of $R$ indexed by $\mathbf{N}$).

We have $\mathsf{W}(\mathbf{Q}_p)=\mathbf{Q}_p$, which in this case is closed and noncompact. An example with $\mathsf{W}(G)$ closed and non-compact, in which $G$ is in addition compactly generated also exists, namely a suitable 4-dimensional $p$-adic Lie group whose center is isomorphic to $\mathbf{Q}_p$. (See also the examples in \S\ref{spc}.)
\end{remark}

\begin{lem}\label{wge}
--{\rm Now erased}\footnote{In the published version, Lemma \ref{wge} was a textual redundant copy of Proposition \ref{pwy}(\ref{wy2}).}--
\end{lem}

Proposition \ref{pwy}(\ref{wy2}) allows us to prove the following proposition, which is less obvious than it may seem at first sight.

\begin{prop}\label{ebgh}
Let $f:G\to H$ be a copci homomorphism between locally compact groups. Then $f^{-1}(\mathsf{W}(H))=\mathsf{W}(G)$.
\end{prop}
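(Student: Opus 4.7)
The plan is to reduce the statement about $\mathsf{W}$ to the corresponding statement about the topological FC-center $\mathsf{B}$, which behaves well under copci maps, and then to pass back from $\mathsf{B}$ to $\mathsf{W}$ via Lemma~\ref{wge}.

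The main auxiliary step is the equality $f^{-1}(\mathsf{B}(H))=\mathsf{B}(G)$. For the inclusion $f^{-1}(\mathsf{B}(H))\subset\mathsf{B}(G)$, properness of $f$ does the job: the $G$-conjugacy class of $g$ maps into the $H$-conjugacy class of $f(g)$, so if the latter has compact closure $\overline C$, the former lies in the compact set $f^{-1}(\overline C)$. For the reverse inclusion, cocompactness is used: writing $H=C\cdot f(G)$ with $C\subset H$ compact, every $H$-conjugate of $f(g)$ is of the form $c\,f(aga^{-1})c^{-1}$ with $c\in C$ and $a\in G$, so if the $G$-conjugacy class of $g$ is precompact with closure $K_g$, the $H$-conjugacy class of $f(g)$ lies in the compact set $C\cdot f(K_g)\cdot C^{-1}$.

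The inclusion $f^{-1}(\mathsf{W}(H))\subset\mathsf{W}(G)$ then bypasses the auxiliary lemma entirely: if $f(g)\in K$ for some compact normal subgroup $K\trianglelefteq H$, then $f^{-1}(K)$ is compact (by properness) and normal in $G$, since $K$ is normal in \emph{all} of $H$ (not merely in $f(G)$), so $f(axa^{-1})=f(a)f(x)f(a)^{-1}\in K$ whenever $x\in f^{-1}(K)$ and $a\in G$; hence $f^{-1}(K)$ is a compact normal subgroup of $G$ containing~$g$.

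The reverse inclusion $\mathsf{W}(G)\subset f^{-1}(\mathsf{W}(H))$ is the more delicate direction, because the image $f(K)$ of a compact normal subgroup $K\trianglelefteq G$ is a compact subgroup of $H$ that is in general only normalized by $f(G)$, not by $H$; the trick is to avoid constructing a compact normal subgroup of $H$ explicitly. Given $g\in K\subset\mathsf{W}(G)$, the $G$-conjugacy class of each element of $K$ is contained in $K$, so $K\subset\mathsf{B}(G)$, and the preservation of $\mathsf{B}$ proved above gives $f(K)\subset\mathsf{B}(H)$. Moreover every element of $f(K)$ is an elliptic element of $H$, being contained in the compact subgroup $f(K)$. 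Lemma~\ref{wge}, applied to $H$, states exactly that an elliptic element lying in $\mathsf{B}(H)$ automatically belongs to $\mathsf{W}(H)$, so $f(K)\subset\mathsf{W}(H)$ and in particular $f(g)\in\mathsf{W}(H)$. The only place where I expect any real effort is the auxiliary equality for $\mathsf{B}$, and even there the argument is routine bookkeeping with the two defining features of a copci homomorphism.
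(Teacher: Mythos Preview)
Your proof is correct and follows essentially the same approach as the paper: both use properness for the inclusion $f^{-1}(\mathsf{W}(H))\subset\mathsf{W}(G)$, and for the reverse both exploit cocompactness to show $f(K)\subset\mathsf{B}(H)$ for a compact normal $K\trianglelefteq G$, then invoke ellipticity together with Lemma~\ref{wge}. The only difference is that you first record the full auxiliary equality $f^{-1}(\mathsf{B}(H))=\mathsf{B}(G)$ (of which only the inclusion $f(\mathsf{B}(G))\subset\mathsf{B}(H)$ is actually needed), whereas the paper applies the cocompactness argument directly to $f(K)$.
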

\begin{proof}
By properness, $f^{-1}(\mathsf{W}(H))\subset \mathsf{W}(G)$. So we have to prove the other inclusion, namely $f(\mathsf{W}(G))\subset \mathsf{W}(H)$. This amounts to showing that if $K$ is a compact normal subgroup of $G$, then $f(K)\subset \mathsf{W}(H)$. Indeed, writing $H=Mf(G)$ with $M$ a symmetric compact subset, we see that for every $h\in H$, $hf(K)h^{-1}\subset Mf(K)M$ and hence $f(K)\subset \mathsf{B}(H)$. Since $f(K)$ also consists of elliptic elements, by Proposition \ref{pwy}(\ref{wy2}), it follows that $f(K)\subset \mathsf{W}(H)$. 
\end{proof}

We now show the following theorem, which entails Theorem \ref{wclo}. In the case of
$\mathsf{B}(G)$ in totally disconnected groups, this result was already
noticed in \cite{Mol}, by observing that it follows from Trofimov's results and
Cayley-Abels graphs.

\begin{thm}\label{wclo2}If $G$ is a {\em compactly generated} locally compact group, then $\mathsf{B}(G)$ and $\mathsf{W}(G)$ are closed subgroups. In particular, there exists a compact normal subgroup $M$ of $G$ such that $\mathsf{W}(G/M)$ is discrete.
\end{thm}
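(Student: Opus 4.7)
My plan is to reduce first to showing that $\mathsf{B}(G)$ is closed, since Lemma~\ref{wge} then immediately yields closedness of $\mathsf{W}(G)$. The existence of $M$ making $\mathsf{W}(G/M)$ discrete will follow from closedness of $\mathsf{W}(G)$ as follows: $\mathsf{W}(G)$ is the directed union of compact normal subgroups of $G$ (the product of two such is again one), so once it is closed and hence locally compact, Proposition~\ref{pwy}(\ref{wy3}) gives that $\mathsf{W}(G)^\circ$ is compact (being contained in $\mathsf{W}(G^\circ)^\circ$); van Dantzig then furnishes a compact open subgroup $U\subset \mathsf{W}(G)$, and compactness of $U$ together with the directed structure of the union forces $U\subset M$ for some compact normal $M\triangleleft G$ from the union. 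Then $\mathsf{W}(G)/M$ is discrete, and Proposition~\ref{ebgh} applied to the copci quotient $G\to G/M$ identifies this quotient with $\mathsf{W}(G/M)$.

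For the core claim I would treat the totally disconnected case first, via a Cayley--Abels graph $X$: a locally finite connected graph on which $G$ acts continuously, properly, and vertex-transitively with compact-open vertex stabilizers. A direct computation, using this action and the characterization of relative compactness in $\Aut(X)$ as finiteness of all vertex orbits, shows that $g\in\mathsf{B}(G)$ iff $g$ acts on $X$ with bounded displacement, i.e., $\sup_{v\in V(X)}d_X(gv,v)<\infty$. The theorem of Trofimov invoked in \cite{Mol} states that the subgroup of $\Aut(X)$ consisting of bounded-displacement automorphisms is closed; pulling back to $G$ gives closedness of $\mathsf{B}(G)$ in the TD case.

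To handle a general compactly generated $G$, I would first apply Yamabe (Example~\ref{yamabe}) to $G^\circ$: the compact subgroup $K=\mathsf{W}(G^\circ)$ is characteristic in $G^\circ$, hence normal in $G$, and since $K$ is compact the quotient map $\pi\colon G\to G/K$ is proper, which yields $\mathsf{B}(G)=\pi^{-1}(\mathsf{B}(G/K))$. Thus one may assume $\mathsf{W}(G^\circ)=1$, and then Proposition~\ref{pwy}(\ref{wy4}) supplies the decomposition $\mathsf{B}(G)=\mathsf{B}(G)^\circ\times \mathsf{W}(G)$ with $\mathsf{B}(G)^\circ$ a closed vector subgroup. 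Closedness of $\mathsf{B}(G)$ is therefore equivalent to closedness of $\mathsf{W}(G)$, which I would obtain by transferring from the TD quotient $G/G^\circ$: every compact normal subgroup of $G$ maps to a compact normal subgroup of $G/G^\circ$, so $\mathsf{W}(G)$ lies in the preimage of the (already-closed) $\mathsf{W}(G/G^\circ)$, and the direct-product decomposition localises the remaining argument inside this closed preimage.

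The most delicate step is this last transfer from $G/G^\circ$ back to $G$: the quotient $G\to G/G^\circ$ is not copci when $G^\circ$ is noncompact, so Proposition~\ref{ebgh} does not apply directly, and the structural control of $\overline{\mathsf{B}(G)}$ furnished by Proposition~\ref{pwy}(\ref{wy4}) must be leveraged to bridge the closures between the connected and totally disconnected parts.
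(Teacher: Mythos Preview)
Your overall architecture coincides with the paper's: reduce to closedness of $\mathsf{B}(G)$ via Lemma~\ref{wge}, settle the totally disconnected case through a Cayley--Abels graph and Trofimov, then pass to the general case by killing $\mathsf{W}(G^\circ)$ and invoking Proposition~\ref{pwy}(\ref{wy4}). Your derivation of the ``in particular'' clause is essentially the paper's Baire argument in different clothing; note however that ``compactness of $U$ together with the directed structure'' does not by itself force $U$ into a single compact normal subgroup---you still need Baire to get one term of the union open in $U$, exactly as in the paper.

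The genuine gap is the final transfer step, which you flag as delicate but do not carry out. Knowing that $\mathsf{W}(G)\subset \pi^{-1}\big(\mathsf{W}(G/G^\circ)\big)$ only places $\mathsf{W}(G)$, and hence its closure $M=\overline{\mathsf{W}(G)}$, inside a closed set; it does nothing toward showing $\mathsf{W}(G)=M$. What you are missing is the following: since $M$ is topologically characteristic in $\overline{\mathsf{B}(G)}$ (Proposition~\ref{pwy}(\ref{wy3})), it is normal in $G$, so the $G$-conjugacy class of any $m\in M$ stays inside $M$. The decisive observation is then that the restriction $j\colon M\to G/G^\circ$ is \emph{proper}. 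Properness yields, for $m\in M$, that the $G$-conjugacy class of $m$ is relatively compact if and only if the $G/G^\circ$-conjugacy class of $j(m)$ is; thus $M\cap\mathsf{B}(G)=j^{-1}\big(\mathsf{B}(G/G^\circ)\big)$, which is closed by the totally disconnected case. Since $M\cap\mathsf{B}(G)$ is dense in $M$, it equals $M$, whence $M\subset\mathsf{B}(G)$ and $\mathsf{B}(G)=\mathsf{B}(G)^\circ\times M$ is closed.

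Properness of $j$ is not free, and the paper uses the closedness of $\mathsf{B}(G/G^\circ)$ a second time to get it: given a compact subset $A$ of $G/G^\circ$, its intersection with $j(M)\subset\mathsf{B}(G/G^\circ)$ lies in some compact open subgroup $\Omega$ of $\mathsf{B}(G/G^\circ)$; the inverse image $\Omega'$ of $\Omega$ in $G$ is connected-by-compact (here one uses that $G^\circ$ is a Lie group once $\mathsf{W}(G^\circ)=1$), and $j^{-1}(\Omega)=M\cap\Omega'$ is a closed elliptic subgroup of a connected-by-compact group, hence compact. Your sketch does not reach this point, and the phrase ``localises the remaining argument inside this closed preimage'' does not substitute for it.
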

\begin{proof}
We first check\footnote{In the published version, this was asserted without proof; these first eight lines have been added afterwards.} that if $G$ is a $\sigma$-compact locally compact group, then there is an ascending sequence $(W_n)$ of compact normal subgroups of $G$ whose union is $\mathsf{W}(G)$.

For $s\in G$, write $s^G=\{g^{-1}sg:g\in G\}$. Write $G=\bigcup M_n$ with $M_n$ a compact subset, $M_n$ contained in the interior of $M_{n+1}$ (so every compact subset of $G$ is contained in $M_n$ for some $n$). Write $K_n=\{s\in \mathsf{W}(G):s^G\subset M_n\}$. Then $K_n$ is a closed, conjugation-invariant subset of $G$, $K_n\subset K_{n+1}$ and $\bigcup K_n=\mathsf{W}(G)$. Then $K_n$ is a compact subset of $\mathsf{W}(G)$, which is contained in $G^\sharp$ by Proposition \ref{pwy}(\ref{wy4}). Hence the closure $W_n$ of the subgroup generated by $K_n$ is compact, $W_n\subset W_{n+1}$ and $\bigcup W_n=\mathsf{W}(G)$.


Let us next check the following independent claim: if $G$ is a $\sigma$-compact locally compact group such that $\mathsf{W}(G)$ is closed, then
\begin{enumerate}[(1)]
\item\label{iii1} any compact subset of $\mathsf{W}(G)$ is contained in a compact open subgroup of $\mathsf{W}(G)$ that is normal in $G$;
\item\label{iii2} there exists a compact normal subgroup $M$ of $G$ such that $\mathsf{W}(G/M)$ is discrete.
\end{enumerate}

To prove (\ref{iii1}), write, as above, $\mathsf{W}(G)$ as a union of an ascending sequence $(W_n)$ of compact normal subgroups of $G$. Since $\mathsf{W}(G)$ is closed, by the Baire theorem, $W_n$ is open in $\mathsf{W}(G)$ for all $n$ large enough. It follows that every compact subset of $\mathsf{W}(G)$ is contained in $W_n$ for $n$ large enough.


To prove (\ref{iii2}) let $K$ be the normal subgroup generated by some compact neighborhood of the identity in $\mathsf{W}(G)$. Then $K$ is compact (by (\ref{iii1})) and open in $\mathsf{W}(G)$. So $\mathsf{W}(G/K)=\mathsf{W}(G)/K$ is discrete in $G/K$. Hence, the first assertion of the theorem implies the second one.

Let us now prove the first assertion, namely that $\mathsf{B}(G)$ and $\mathsf{W}(G)$ are closed if $G$ is compactly generated. By Proposition \ref{pwy}(\ref{wy2}), it is enough to deal with $\mathsf{B}(G)$. We first check it when $G$ is totally disconnected.

Trofimov \cite{Tro} deals with the case of the automorphism group of a vertex-transitive finite valency connected graph (this is the hard case). Therefore, if $G$ is a locally compact group with a proper transitive action $j:G\to\Aut(X)$ on such a graph $X$, then $\mathsf{B}(G)=j^{-1}(\mathsf{B}(\Aut(X))$ is closed as well. Abels \cite[Beispiel~5.2]{Abe} observed that any compactly generated totally disconnected locally compact group admits such an action (now known as Cayley-Abels graph).

So the first statement is proved when $G$ is totally disconnected (as already
noticed in \cite{Mol}). We conclude that $\mathsf{W}(G)$ is closed by proving the following claim\footnote{The published proof is based on a similar (correct) claim, but its proof contains a gap, as it relied on a wrong assertion of late Proposition \ref{pwy}(4).}: {\it for every locally compact group $G$, if $\mathsf{W}(G/G^\circ)$ is closed in $G/G^\circ$, then $\mathsf{W}(G)$ is closed in $G$}.


Let $G$ be as in the claim. As we have already checked, there exists a compact normal subgroup of $G/G^\circ$ that is open in $\mathsf{W}(G/G^\circ)$. Let $V$ be its inverse image in $G$. So $V$ is normal in $G$ and $\mathsf{W}(G/V)$ is discrete. 

It follows that $\overline{\mathsf{W}(G)}\cap V$ is open in $\overline{\mathsf{W}(G)}$. It is also contained in $V^\sharp$; since $V$ is connected-by-compact, $V^\sharp$ is compact, and hence contained in $\mathsf{W}(G)$. Thus $\mathsf{W}(G)$ is an open subgroup of its closure, and hence is closed.
 
To conclude that $\mathsf{B}(G)$ is also closed, we apply Proposition \ref{pwy}(\ref{wya}).
\end{proof}

\begin{remark}
If $f:G\to H$ is a copci homomorphism and $H$ has a compact polycompact radical, then so does $G$, as a consequence of Proposition \ref{ebgh}.

Nevertheless, to have a compact polycompact radical is not stable under commability. Indeed, consider the infinite alternating group $\textnormal{Alt}_\infty$ and the direct sum $\bigoplus_{n\ge 2}\mathbf{Z}/n\mathbf{Z}$; the former is W-faithful while the latter is equal to its polyfinite radical. By \cite[Lemma 5.1]{BDHM}, $\Gamma$ and $\Lambda$ possess isometric left-invariant proper distances; thus if $X$ is the underlying metric space, then both groups are cocompact in $\Isom(X)$.

More interestingly, there exist {\em finitely generated} groups that are quasi-isometric and actually commable (and cocompact in the same locally compact group), one with finite and the other with infinite polyfinite radical; see \S\ref{spc}.
\end{remark}

\begin{lem}\label{wssc}
Let $G,H$ be commable groups. Assume that $\mathsf{W}(G)=\mathsf{W}(H)=\{1\}$ and that every LC-group commable to $G$ and $H$ has a compact polycompact radical. Then $G$ and $H$ are strictly commable.
\end{lem}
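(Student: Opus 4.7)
The plan is to take an arbitrary commability chain between $G$ and $H$ and replace it term by term with its ``W-faithful quotient,'' using Proposition \ref{ebgh} to guarantee injectivity and the hypothesis on polycompact radicals to guarantee that quotients can be formed.

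Fix a commability
\[
G = G_0 \moins G_1 \moins \cdots \moins G_k = H.
\]
Each intermediate $G_i$ is commable to both $G$ and $H$ (truncate the chain); so by hypothesis $\mathsf{W}(G_i)$ has compact closure. But $\overline{\mathsf{W}(G_i)}$ is itself a compact normal subgroup, hence contained in $\mathsf{W}(G_i)$, so $\mathsf{W}(G_i)$ is already closed and compact. Define $\bar G_i := G_i/\mathsf{W}(G_i)$; since $\mathsf{W}(G) = \mathsf{W}(H) = \{1\}$ we have $\bar G_0 = G$ and $\bar G_k = H$.

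Next, for each edge in the chain, consider the copci homomorphism $f$ (in whichever direction) between $G_i$ and $G_{i+1}$; say $f:G_i\to G_{i+1}$. By Proposition \ref{ebgh}, $f^{-1}(\mathsf{W}(G_{i+1})) = \mathsf{W}(G_i)$, which in particular gives $f(\mathsf{W}(G_i)) \subseteq \mathsf{W}(G_{i+1})$, so $f$ descends to a well-defined continuous homomorphism $\bar f:\bar G_i\to \bar G_{i+1}$. The same equality shows $\bar f$ is injective: if $\bar f(\bar x)=1$ then any lift $x$ satisfies $f(x)\in \mathsf{W}(G_{i+1})$, so $x\in \mathsf{W}(G_i)$ and $\bar x = 1$. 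For properness, let $\bar K$ be compact in $\bar G_{i+1}$; since the projection $p_{i+1}:G_{i+1}\to \bar G_{i+1}$ has compact kernel it is proper, so $K := p_{i+1}^{-1}(\bar K)$ is compact. Then $\bar f^{-1}(\bar K) = p_i(f^{-1}(K))$, which is compact since $f^{-1}(K)$ is. For cocompactness, if $G_{i+1}=Mf(G_i)$ with $M$ compact, then $\bar G_{i+1}=p_{i+1}(M)\bar f(\bar G_i)$. Thus $\bar f$ is an \emph{injective} copci homomorphism. The same argument (with roles swapped) handles arrows pointing the other way.

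Assembling the resulting chain yields a strict commability
\[
G = \bar G_0 \moins \bar G_1 \moins \cdots \moins \bar G_k = H,
\]
as required. There is no serious obstacle; the only delicate point is verifying that $\bar f$ is proper (handled by lifting compacts through the compact-kernel quotient $p_{i+1}$), and that each $\mathsf{W}(G_i)$ is genuinely compact (not merely relatively compact), which is immediate once one notes that $\mathsf{W}(G_i)$ always contains its closure.
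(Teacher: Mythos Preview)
Your proof is correct and follows exactly the same approach as the paper: mod out each term of a commability chain by its polycompact radical, using Proposition~\ref{ebgh} to see that each copci map descends to an injective copci map between the quotients. The paper compresses this into a single sentence (``any copci homomorphism $A\to B$ between LC-groups with compact polycompact radicals factors through an injective copci homomorphism $A/\mathsf{W}(A)\to B/\mathsf{W}(B)$''), while you have simply unpacked the verification that the induced map is injective, proper, and cocompact.
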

\begin{proof}
It follows from Proposition \ref{ebgh} that any copci homomorphism $A\to B$ between LC-groups with compact polycompact radicals factors through an injective copci homomorphism $A/\mathsf{W}(A)\to B/\mathsf{W}(B)$. The result immediately follows.
\end{proof}

\begin{remark}
The condition ``every LC-group commable to $G$ and $H$ has a compact polycompact radical" is hard to check in general. However, it is true in the following three cases:
\begin{itemize}
\item hyperbolic locally compact groups;
\item locally compact groups with at least two ends;
\item locally compact groups of polynomial growth.
\end{itemize}
\end{remark}

Let us now go back to Definition \ref{d_commable}. For the sake of readability, it is convenient to use arrows such as 
$\nearrow$ and $\nwarrow$ to denote copci homomorphisms. For instance, we say that $G,H$ are commable through $\nearrow\nwarrow\nearrow$ if there exist LC-groups $G_1,G_2$ and copci homomorphisms $G\nearrow G_1\nwarrow G_2\nearrow H$.

\begin{example}
Let $\Gamma$ be a cocompact subgroup of $\SL_2(\mathbf{R})$ and $\widetilde{\Gamma}$ its inverse image in $\widetilde{\SL}_2(\mathbf{R})$. Then $\widetilde{\Gamma}$ and $\Gamma\times\mathbf{Z}$ are commable through $\nearrow\nwarrow\nearrow\nwarrow$.

Namely, denoting by $T$ the upper triangular group in $\SL_2(\mathbf{R})$ with positive diagonal entries, there are (injective) copci homomorphisms \[\widetilde{\Gamma}\to \widetilde{\SL}_2(\mathbf{R})\leftarrow T\times\mathbf{Z}\to \SL_2(\mathbf{R})\times\mathbf{Z}\leftarrow \Gamma\times\mathbf{Z}.\]
\end{example}

\subsection{An example for polyfinite radicals}\label{spc}

We show here that among compactly generated locally compact groups, to have compact polycompact radical is not a commability invariant (and hence not a quasi-isometry invariant), and actually exhibit two finitely generated groups that are cocompact lattices in the same locally compact group, one of which has a trivial polyfinite radical, and the other one having an infinite one.

If $R$ is commutative ring (associative with unit), let $H_3(R)$ denote the Heisenberg group over $R$, namely those upper triangular $3\times 3$-matrices with $1$ on the diagonal. If $t,u\in R^\times$ are invertible elements, define the automorphism $\varphi(u,v,uv)$ of $H_3(R)$ defined by the assignment
\[\begin{pmatrix}1 & x & z\\0 & 1 & y\\0 & 0 & 1\end{pmatrix}\mapsto \begin{pmatrix}1 & ux & uvz\\0 & 1 & vy\\0 & 0 & 1\end{pmatrix}.\]
We also define $Z(R)$ as the subgroup of $H_3(R)$ consisting of the matrices such as above with $(x,y)=(0,0)$; it is the center of $H_3(R)$.

We henceforth assume that $R$ is a commutative $\mathbf{F}_p[t,t^{-1}]$-algebra ($t$ being an indeterminate), and we denote
\[s=t^2+t^{-2},\quad u_1=1+t,\quad u_2=1+t^{-1};\]
we also define, for $s,u_1,u_2$ in $R^\times$, the elements
\[\alpha_1=\varphi(s,s^{-1}u_2,u_2),\;\beta_1=\varphi(s,s^{-1}u_1,u_1),\;\alpha_2=\varphi(s,s^{-1}u_2u_1,u_2u_1),\;\beta_2=\varphi(s,s^{-1},1).\] 
and, for $i=1,2$
\[\Gamma_i=H_3(\mathbf{F}_p[t,t^{-1}])^2\rtimes_{(\alpha_i,\beta_i)}\mathbf{Z},\]
where the notation means that the action is given by $n\cdot (A,B)=(\alpha_i^n(A),\beta_i^n(B))$, for $(A,B)\in H_3(\mathbf{F}_p[t,t^{-1}])^2$.

\begin{thm}
The polyfinite radical $\mathsf{W}(\Gamma_1)$ is trivial, while $\mathsf{W}(\Gamma_2)$ is infinite; $\Gamma_1$ and $\Gamma_2$ are finitely generated and are isomorphic to cocompact lattices in the same locally compact group.
\end{thm}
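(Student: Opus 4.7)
The plan has three parts: compute $\mathsf{W}(\Gamma_i)$ directly, set up local completions, and construct the common ambient LC-group (which is the main obstacle).

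Since $\mathbf{Z}$ is torsion-free, any finite subgroup of $\Gamma_i=H_3(R)^2\rtimes\mathbf{Z}$ lies in $H_3(R)^2$, and if normal is $(\alpha_i,\beta_i)$-invariant. For $\Gamma_1$, the automorphism $\alpha_1=\varphi(s,s^{-1}u_2,u_2)$ scales the three coordinates by elements of $R^\times$ each of which has infinite multiplicative order: the $t$-adic valuations are $v_0(s)=-2$, $v_0(s^{-1}u_2)=1$, $v_0(u_2)=-1$, and an analogous statement holds for $\beta_1$ using the valuation at $t=-1$ (where $u_1$ vanishes). Thus any element of $H_3(R)$ with finite $\alpha_1$-orbit must be trivial, and similarly for $\beta_1$, so the only finite $(\alpha_1,\beta_1)$-invariant subgroup of $H_3(R)^2$ is trivial, and $\mathsf{W}(\Gamma_1)=1$. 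For $\Gamma_2$, the crucial observation is that $\beta_2=\varphi(s,s^{-1},1)$ has trivial $z$-scaling and hence fixes $Z(H_3(R))=\{(0,0,z):z\in R\}$ pointwise. The subgroup $\{1\}\times Z(H_3(R))\subset H_3(R)^2$ is central in $H_3(R)^2$ and fixed by $(\alpha_2,\beta_2)$; every finitely generated subgroup of it is therefore finite and normal in $\Gamma_2$. Hence $\mathsf{W}(\Gamma_2)\supseteq\{1\}\times Z(H_3(R))\cong(R,+)$ is infinite.

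For the common ambient group I would first pass to local completions. Let $S$ be the finite set of places of $\mathbf{F}_p(t)$ at which one of $t,s,u_1,u_2$ is not a unit (namely $t=0$, $t=\infty$, $t=-1$, and the roots of $t^4+1$). Set $\hat R:=\prod_{v\in S}\mathbf{F}_p(t)_v$ and $\hat N:=H_3(\hat R)^2$. Standard function-field theory gives that $R$ (with the needed inverses of $s,u_1,u_2$ adjoined) embeds discretely and cocompactly in $\hat R$, so $H_3(R)^2\subset\hat N$ is cocompact, and $\alpha_i,\beta_i$ extend continuously to $\hat\alpha_i,\hat\beta_i\in\Aut(\hat N)$.

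The delicate step is realizing both $\Gamma_i$ as cocompact lattices in one LC-group $G$. Comparison of eigenvalue multisets on $\hat N^{\mathrm{ab}}=\hat R^4$ shows that $\theta_i:=(\hat\alpha_i,\hat\beta_i)$ are not mutually conjugate in $\Aut(\hat N)$: the multisets $\{s,s,s^{-1}u_1,s^{-1}u_2\}$ and $\{s,s,s^{-1},s^{-1}u_1u_2\}$ disagree at most places of $S$. Consequently the semidirect products $\hat N\rtimes_{\theta_i}\mathbf{Z}$ are non-isomorphic, and the correct ambient must be $G=\hat N\rtimes A$ for a locally compact abelian group $A\supsetneq\mathbf{Z}$, of rank one modulo its maximal compact subgroup, that contains both $\theta_i$ as elements generating cocompact cyclic subgroups of $A$. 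The construction of such an $A$ exploits the specific algebraic relations among $s,u_1,u_2$: in characteristic $2$ the identity $(u_1u_2)^2=s$ provides the Frobenius-type symmetry needed to realize both $\theta_i$ inside an enlargement of the scaling torus of $\hat N$. This is the main obstacle of the argument; once $G$ is constructed, the finite generation of each $\Gamma_i$ follows automatically, since a cocompact lattice in a compactly generated LC-group is finitely generated.
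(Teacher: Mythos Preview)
Your computation of the polyfinite radicals is correct and matches the paper's argument. For $\Gamma_2$ you identify the same infinite central subgroup $\{1\}\times Z(H_3(R))$; for $\Gamma_1$ you show (via valuations) that none of the scaling factors is a root of unity, which forces every $(\alpha_1,\beta_1)$-invariant finite subgroup to be trivial. The paper proves the slightly stronger fact that the FC-center of $\Gamma_1$ is trivial, but by the same mechanism.

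The construction of the common ambient group, however, has a genuine gap, and the direction you are heading will not close it. Two issues. First, enlarging the set of places beyond $\{0,\infty\}$ forces you to enlarge the coefficient ring (as you note), but then the cocompact lattice you obtain is $H_3(R')^2\rtimes\mathbf{Z}$ for the larger ring $R'$, not the original $\Gamma_i$; the original $H_3(\mathbf{F}_p[t,t^{-1}])^2$ has infinite index in $H_3(R')^2$, so this does not embed $\Gamma_i$ cocompactly. Second, your speculative use of the identity $(u_1u_2)^2=s$ only holds for $p=2$ and is in any case not what makes the construction work.

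The paper's actual mechanism is much more concrete and uses only the two places $0$ and $\infty$. The key observation you are missing is the involution $t\leftrightarrow t^{-1}$ between $\mathbf{F}_p\lp t\rp$ and $\mathbf{F}_p\lp t^{-1}\rp$: it fixes $s$ and \emph{swaps} $u_1$ and $u_2$. After using it to identify both local factors with $\mathbf{F}_p\lp t\rp$, each $G_i=H_3(\mathbf{F}_p\lp t\rp)^4\rtimes\mathbf{Z}$ with the cyclic generator acting by a $4$-tuple of automorphisms. A direct comparison (after one transposition of factors) shows that the two $4$-tuples differ, in each coordinate, by $\psi(u_1^{\pm 1})=\varphi(1,u_1^{\pm1},u_1^{\pm1})$. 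Since $u_1=1+t$ lies in the compact $1$-sphere $P\subset\mathbf{F}_p\lp t\rp^\times$, enlarging the acting group from $\mathbf{Z}$ to $\mathbf{Z}\times P^4$ (with $P^4$ acting via $\psi$ coordinatewise) produces a single locally compact group $M$ containing both $G_1$ and $G_2$ cocompactly. So the abelian enlargement $A$ you are looking for is simply $\mathbf{Z}\times P^4$, compact-by-$\mathbf{Z}$, and its existence comes from the involution trick together with the fact that $u_1\in P$, not from any Frobenius relation.
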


\begin{remark}
The proof will also show that
\begin{itemize}
\item the group $\Gamma_1$ is {\em icc}, that is, the only finite conjugacy class is $\{1\}$, which also means that its FC-center $\mathsf{B}(G)$ (Definition \ref{dpr}) is trivial, while
\item the group $\Gamma_2$ has an infinite (locally finite) center.
\end{itemize}
\end{remark}

Let us proceed to the proof, which is not particularly hard, part of the difficulty being to exhibit the group itself. 

The easiest part is that $\mathsf{W}(\Gamma_2)$ is infinite: indeed, it contains (and is actually equal to) the subgroup $\{1\}\times Z(\mathbf{F}_p[t,t^{-1}])\subset H_3(\mathbf{F}_p[t,t^{-1}])^2$, which is an infinite locally finite central subgroup (it is central in $\Gamma_2$ because the action of $\beta_2$ on the center is trivial). 

The polyfinite radical $\mathsf{W}(\Gamma_1)$ is trivial; actually every nontrivial conjugacy class in $\Gamma_1$ is infinite, i.e., the FC-center $\mathsf{B}(\Gamma_1)$ is trivial (see Definition \ref{dpr}). To see this, argue as follows: since the centralizer of $H_3(\mathbf{F}_p[t,t^{-1}])^2$ in $\Gamma_1$ is itself, if we had $\mathsf{B}(\Gamma_1)$ nontrivial, the intersection $\mathsf{B}(\Gamma_1)\cap H_3(\mathbf{F}_p[t,t^{-1}])^2$ would be nontrivial as well, but it is clear from the given action, and more precisely from the fact that none of $s$, $s^{-1}u_2$, $u_2$, $s^{-1}u_1$, $u_1$ is a root of unity, that for every nontrivial element of $H_3(\mathbf{F}_p[t,t^{-1}])^2$, the set of its conjugates is infinite.

Now let us come to the embedding. We first embed each $\Gamma_i$ in a locally compact group $G_i$ defined as
\[G_i=H_3\big(\mathbf{F}_p\lp t\rp\big)^2\times H_3\big(\mathbf{F}_p\lp t^{-1}\rp\big)^2\rtimes_{(\alpha_i,\beta_i)}\mathbf{Z}.\]
The natural discrete embedding of $\Gamma_i$ into $G_i$ is induced by the ring diagonal embedding of $\mathbf{F}_p[t,t^{-1}]$ into $\mathbf{F}_p\lp t\rp\times \mathbf{F}_p\lp t^{-1}\rp$, and is cocompact for standard reasons. 

That each $G_i$ is compactly generated is easily checked, since we can go up and down in the $x$ and $y$ directions, using that none of the elements $s^{\pm 1}$, $s^{-1}u_i$, $s^{-1}u_1u_2$ has modulus 1, and get the $z$-direction by performing commutators (the routine argument is left to the reader). It follows that the cocompact lattices $\Gamma_i$ are finitely generated.

Let us now describe each $G_i$ in a more convenient way, using the obvious ``involutive" isomorphism $\mathbf{F}_p\lp t\rp\simeq \mathbf{F}_p\lp t^{-1}\rp$, which exchanges $u_1$ and $u_2$ and ``fixes" $s$: we have 
\[G_i=H_3\big(\mathbf{F}_p\lp t\rp\big)^4\rtimes_{(\alpha_i,\beta_i,\gamma_i,\delta_i)}\mathbf{Z},\]
where
\[\gamma_1=\beta_1,\;\delta_1=\alpha_1,\;\gamma_2=\alpha_2,\delta_2=\beta_2.\]

Let $P$ be the 1-sphere in $\mathbf{F}_p\lp t\rp$; this is a compact group under multiplication; note that $u_1\in P$. Also define, for $v\in P$, $\psi(v)=\varphi(1,v,v)$.
Let us embed $G_i$ cocompactly in a larger group defined as
\[M_i\;=\;  G_i\rtimes P^4 \;=\; H_3\big(\mathbf{F}_p\lp t\rp\big)^4\rtimes \Big( \mathbf{Z}\times P^4\Big)\]
here the action of $\mathbf{Z}$ is still given by $(\alpha_i,\beta_i,\delta_i,\gamma_i)$,  while the action of $(v_1,v_2,v_3,v_4)\in P^4$ on $H_3\big(\mathbf{F}_p\lp t\rp\big)^4$ is given on the $k$-th component ($1\le k\le 4$) by $\psi(v_k)$. Note that this is a well-defined action of the abelian group $\mathbf{Z}\times P^4$, because the two action commute.

The conclusion of the proof is given by the following claim: $M_1$ and $M_2$ are isomorphic as topological groups. This is not hard to see: indeed, the automorphisms $(\alpha_1,\beta_1,\gamma_1,\delta_1)$ and $(\alpha_2,\beta_2,\delta_2,\gamma_2)$ of $H_3\big(\mathbf{F}_p\lp t\rp\big)^4$ differ by an element of $P^4$, namely the element $(\psi(u_1^{-1}),\psi(u_1),\penalty0\psi(u_1),\psi(u_1^{-1}))$. Thus both groups of automorphisms isomorphic to $Z\times P^4$ are equal (as subgroups of automorphisms of $H_3\big(\mathbf{F}_p\lp t\rp\big)^4$) and $M_1$ and $M_2$ are isomorphic (note that the isomorphism thus involves switching the 3rd and 4th components, as we have switched $\delta_2$ and $\gamma_2$).

\section{Generalities on focal groups}

Let us set up some generalities about focal group. We provide, insofar as possible, proofs that avoid using hyperbolicity or large-scale geometry arguments.

When an LC-group $G$ is focal, we usually write $G=N\rtimes\Lambda$ as in Definition \ref{defocal}, where $\Lambda\in\{\mathbf{Z},\mathbf{R}\}$ compacts $N$.

\begin{lem}\label{fow}
Every focal LC-group $G$ has a compact polycompact radical.

If $H$ is an LC-group and $W$ a compact normal subgroup, then $H$ is focal if and only if $H/W$ is focal.
\end{lem}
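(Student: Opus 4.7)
The plan is to handle the two assertions of the lemma separately. For the compactness of $\mathsf{W}(G)$, I write $G = N \rtimes \Lambda$ as in Definition~\ref{defocal} and fix a vacuum subset $\Omega$ for the compacting automorphism $\alpha$ of $N$ induced by a positive element $t \in \Lambda$. Any compact normal subgroup $K$ of $G$ projects to a compact (hence trivial) subgroup of $\Lambda \in \{\mathbf{Z},\mathbf{R}\}$, so $K \subset N$; being $\alpha$-invariant we have $K = \alpha^n(K)$ for all $n$, and the compacting property forces $K \subset \Omega$ once $n$ is large enough. Thus $\mathsf{W}(G) \subset \Omega$, and its closure is a compact normal subgroup, hence by definition contained in $\mathsf{W}(G)$; so $\mathsf{W}(G)$ is closed and compact.

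For the forward direction of the second assertion, assume $H = N \rtimes \Lambda$ is focal and $W$ is compact normal in $H$. The first part yields $W \subset \mathsf{W}(H) \subset N$, and $W$ is $\Lambda$-invariant, so $H/W = (N/W) \rtimes \Lambda$ as topological groups with $N/W$ noncompact. Compactingness of the induced action of positive $\Lambda$ on $N/W$ is transported through the proper quotient $p : N \to N/W$ (compact fibres $W$): for any compact $\bar K \subset N/W$, $p^{-1}(\bar K)$ is compact in $N$, some $\alpha^n$ sends it into $\Omega$, and hence $\bar\alpha^n(\bar K) \subset p(\Omega)$ is contained in a fixed compact set.

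For the converse direction, suppose $H/W = \bar N \rtimes \Lambda$ is focal. Let $N \subset H$ be the preimage of $\bar N$: it is closed, normal, noncompact, and $H/N \cong \Lambda$. To realize $H$ as a topological semidirect product $N \rtimes \Lambda$ I need a closed subgroup of $H$ isomorphic to $\Lambda$ projecting isomorphically onto $H/N$. For $\Lambda = \mathbf{Z}$ this is immediate by lifting a positive generator to some $\tau \in H$. For $\Lambda = \mathbf{R}$ the task reduces to splitting the extension $1 \to W \to L \to \mathbf{R} \to 1$, where $L$ is the preimage in $H$ of $\{1\} \rtimes \mathbf{R} \subset H/W$. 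Once a splitting is obtained in either case, compactingness of the resulting action on $N$ is verified exactly as in the forward direction, this time lifting a vacuum $\bar\Omega$ for $\bar\alpha$ to the compact set $p^{-1}(\bar\Omega) \subset N$.

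The main obstacle is the splitting of $1 \to W \to L \to \mathbf{R} \to 1$ in the $\mathbf{R}$ case. My plan is to first observe that $L^\circ$ cannot be contained in $W$: otherwise $L^\circ$ would be compact and $L/L^\circ$ a $\sigma$-compact totally disconnected group surjecting onto $\mathbf{R}$, which is impossible since any continuous homomorphism from such a group to $\mathbf{R}$ has at most countable image. Hence $L^\circ \to \mathbf{R}$ is surjective, and combining Yamabe's theorem (cf.\ Example~\ref{yamabe}) with the Mal'cev--Iwasawa decomposition of the connected LC-group $L^\circ$ lets one extract a one-parameter subgroup of $L^\circ$ projecting isomorphically onto $\mathbf{R} = L/W$. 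A shorter alternative is to invoke the characterization from \cite{CCMT} of focal LC-groups as non-elementary Gromov-hyperbolic amenable groups, since amenability and non-elementary hyperbolicity are both preserved under compact-kernel quotients and under compact extensions; but this conflicts with the paper's stated preference to avoid large-scale arguments, so I would keep it as a fallback.
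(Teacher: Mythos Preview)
Your argument for the first assertion is correct and essentially identical to the paper's: both show that any compact normal subgroup lies in $N$ and then is forced into a vacuum subset $\Omega$ by the compacting automorphism. Your extra line deducing that $\mathsf{W}(G)$ is actually closed (not merely relatively compact) is a clean addition.

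For the second assertion, the paper's proof consists of the single sentence ``both verifications are straightforward.'' You have unpacked this honestly, and your treatment is correct. The forward direction and the $\Lambda=\mathbf{Z}$ case of the converse are indeed routine. For the $\Lambda=\mathbf{R}$ case you correctly isolate the one genuine point: producing a section of $1\to W\to L\to\mathbf{R}\to 1$. Your argument that $L^\circ$ cannot lie in $W$ (else a $\sigma$-compact totally disconnected group would surject onto $\mathbf{R}$) is valid, and once $L^\circ\to\mathbf{R}$ is surjective with compact kernel, extracting a one-parameter subgroup mapping onto $\mathbf{R}$ via the exponential map of the connected (pro-)Lie group $L^\circ$ is standard. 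One small point worth making explicit: when $\Lambda=\mathbf{Z}$ and you lift the generator to an arbitrary $\tau\in H$, conjugation by $\tau$ induces on $\bar N$ not $\bar\alpha$ itself but an inner twist of $\bar\alpha$; this is still compacting (cf.\ \cite[Lemma~6.16]{CCMT}, used elsewhere in the paper), so your lifting-of-vacuum argument goes through unchanged.

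In short: your proof is correct and, for the second statement, considerably more detailed than the paper's. The CCMT-characterization shortcut you mention as a fallback is presumably what makes the author comfortable calling the verification ``straightforward,'' but your structural route is self-contained and in keeping with the section's stated preference for avoiding large-scale arguments.
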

\begin{proof}
Clearly $\mathsf{W}(G)\subset N$. If $\Omega$ is a compact vacuum subset for a compacting element, then clearly every compact subgroup of $G$ contained in $N$ is contained in $\Omega$.

In the second statement, both verifications are straightforward.
\end{proof}

\begin{lem}
The subgroup $N$ is the kernel of the modular function on $G$. If $G$ is not of connected type, then $\Lambda=\mathbf{Z}$.
\end{lem}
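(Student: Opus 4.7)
The plan is to handle the two assertions in turn. For the first, I would start from the observation that $\Delta_G\colon G\to\mathbf{R}_+^*$, being a continuous homomorphism to an abelian group, is constant on conjugacy classes and hence on each $\Lambda$-orbit in $N$. Applying the compacting property to the singleton $\{n\}$ for an arbitrary $n\in N$ yields some iterate of a positive $\lambda\in\Lambda$ sending $n$ into the vacuum subset $\Omega$; therefore $\Delta_G(N)\subset\Delta_G(\Omega)$. The latter is a compact subset of $\mathbf{R}_+^*$, while $\Delta_G(N)$ is simultaneously a subgroup; since the only bounded subgroup of $\mathbf{R}_+^*$ is trivial, we conclude $N\subset\ker\Delta_G$.

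For the reverse inclusion I would exhibit $\lambda\in\Lambda$ with $\Delta_G(\lambda)\ne 1$. Since $\Lambda$ is abelian, conjugation by $\lambda$ in $G$ acts trivially on $G/N=\Lambda$ and as $c_\lambda|_N$ on $N$, so a direct computation on ``product'' sets $K\times J\subset N\rtimes\Lambda$ gives $\Delta_G(\lambda)=\mathrm{mod}_N(c_\lambda|_N)^{-1}$. The compacting property forces $\mathrm{mod}_N(c_\lambda|_N)<1$: if the modulus were at least $1$, compact subsets of $N$ of arbitrarily large Haar measure (which exist since $N$ is noncompact) would nevertheless have to be pushed into the fixed compact $\Omega$, absurd. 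So $\Delta_G(\lambda)>1$. Combined with $\Delta_G|_N=1$, this shows that $\Delta_G$ factors through $G/N=\Lambda$ as a nontrivial continuous homomorphism to $\mathbf{R}_+^*$; since $\Lambda\in\{\mathbf{Z},\mathbf{R}\}$, any such map is injective, so $\ker\Delta_G=N$ exactly.

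For the second assertion I would argue contrapositively: if $\Lambda=\mathbf{R}$, then $N/N^\circ$ is compact. The continuous conjugation action of $\Lambda$ on $N$ preserves the characteristic subgroup $N^\circ$ and descends to a continuous action of the connected group $\mathbf{R}$ on the totally disconnected quotient $N/N^\circ$. For each $x\in N/N^\circ$, the orbit map $\mathbf{R}\to N/N^\circ$ is continuous from a connected space to a totally disconnected one, hence constant; so $\Lambda$ acts trivially on $N/N^\circ$ and every $c_\lambda|_N$ preserves each $N^\circ$-coset. Let $\bar\Omega\subset N/N^\circ$ be the (compact) image of $\Omega$. If $N/N^\circ$ were noncompact, one could pick $x\in N$ whose coset lies outside $\bar\Omega$; then compacting applied to $\{x\}$ supplies $n\ge 1$ with $c_\lambda^n(x)\in\Omega$, whose coset modulo $N^\circ$ would both lie in $\bar\Omega$ and coincide with $xN^\circ$, a contradiction.

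The only mildly delicate point is the strict inequality $\mathrm{mod}_N(c_\lambda|_N)<1$, which uses the noncompactness of $N$; everything else is bookkeeping with the compacting property together with the elementary fact that continuous maps from a connected space to a totally disconnected one are constant.
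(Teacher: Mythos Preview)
Your proof is correct and follows essentially the same strategy as the paper's: for the first assertion, both use conjugation-invariance of $\Delta_G$ together with the vacuum set to get $N\subset\ker\Delta_G$, then noncompactness of $N$ to rule out $\ker\Delta_G=G$; for the second, both exploit that a connected group $\mathbf{R}$ can only act trivially on a totally disconnected group. Your treatment of the second assertion is slightly more direct than the paper's---you work immediately with $N/N^\circ$, whereas the paper first reduces to the totally disconnected type and then mods out by $\mathsf{W}(G)$ (invoking Lemma~\ref{fow}) to make $N$ itself totally disconnected---but the underlying mechanism is identical.
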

\begin{proof}
If $\Delta$ is the modular function, since $N$ is compacted, the closure of $\Delta(G)$ is compact, and therefore is trivial. So $\Ker(\Delta)$ contains $N$; hence it is either $N$ or $G$, but the latter case is excluded since $N$ is noncompact.

If by contradiction $G$ is of totally disconnected type and $\Lambda=\mathbf{R}$, then modding out by $\mathsf{W}(G)$ (see Lemma \ref{fow}), we can suppose that $\mathsf{W}(G)$ is trivial, so $N$ is totally disconnected and hence centralizes $\Lambda$. This contradicts that $N$ is noncompact and compacted by $\Lambda$.
If $G$ is not of connected type, it follows from the definition that $G/N^\circ=(N/N^\circ)\rtimes\Lambda$ is focal of totally disconnected type, so by the previous case we get $\Lambda=\mathbf{Z}$.
\end{proof}

\begin{prop}[{\cite[Theorem A.5]{CoTe}}]\label{compde}
Let $N$ be an LC-group such that $\mathsf{W}(N^\circ)=\{1\}$. Suppose that $N$ has a compacting automorphism. Then the subgroups $N^\circ$ and $N^\sharp$ (the elliptic radical) generate their topological direct product, which is open of finite index in $N$.
\end{prop}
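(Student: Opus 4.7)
The plan is to prove the conclusion in three stages: first, that $N^\circ$ is a simply connected nilpotent Lie group; second, that $N^\circ \cap N^\sharp = \{1\}$ and the two subgroups commute elementwise; and third, that the resulting product $N^\circ N^\sharp$ is open of finite index in $N$. The third stage is the main obstacle.

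For the first stage, I would exploit that $N^\circ$ is characteristic, so $\alpha|_{N^\circ}$ remains a compacting automorphism. Yamabe's theorem as recalled in Example \ref{yamabe}, combined with the hypothesis $\mathsf{W}(N^\circ) = \{1\}$, shows $N^\circ$ is a connected Lie group with no nontrivial compact normal subgroup. I expect to then upgrade \emph{compacting} to \emph{contracting} on $N^\circ$: the decreasing intersection $\bigcap_{n \geq 0} \alpha^{-n}(\overline{\Omega \cap N^\circ})$ is an $\alpha$-stable compact subset of $N^\circ$, and any nontriviality of it would yield, after a normalization argument, a nontrivial compact subgroup of $N^\circ$ characteristic for the action of $\alpha$, contradicting the hypothesis. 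Siebert's theorem then forces $N^\circ$ to be simply connected nilpotent, and in particular $N^\circ$ contains no nontrivial compact subgroups.

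The second stage is short: an element of $N^\sharp$ lies in some compact subgroup of $N$, whose intersection with $N^\circ$ is a compact subgroup of $N^\circ$, hence trivial by stage one; so $N^\circ \cap N^\sharp = \{1\}$. Both subgroups being characteristic, hence normal, the commutator $[N^\circ, N^\sharp]$ lies in $N^\circ \cap N^\sharp = \{1\}$, giving elementwise commutation. The multiplication map $N^\circ \times N^\sharp \to N$ is therefore an injective group homomorphism onto the closed subgroup $N^\circ N^\sharp$; using that both factors are closed with trivial intersection and that $N$ is $\sigma$-compact, I would then verify it is a topological isomorphism onto its image.

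The third stage is where I expect the main work. Consider the totally disconnected quotient $\bar N = N/(N^\circ N^\sharp)$ with its induced compacting automorphism $\bar\alpha$, and show $\bar N$ is finite. Discreteness requires ruling out any nontrivial compact open subgroup $V$ produced by van Dantzig; the idea is to stabilize $V$ under $\bar\alpha$ by taking an appropriate intersection of $\bar\alpha$-translates, argue that the resulting $\bar\alpha$-stable compact subgroup lifts to a closed normal elliptic subgroup of $N$ and hence is contained in $N^\sharp$, forcing its image in $\bar N$ to be trivial. Once $\bar N$ is known discrete, for each $k$ the fixed-point subgroup $\mathrm{Fix}(\bar\alpha^k)$ is $\bar\alpha$-invariant with trivial dynamics, hence compact by the compacting property, and therefore finite; combined with the observation that every element of $\bar N$ has eventually periodic $\bar\alpha$-orbit (landing in the finite vacuum $\Omega$), a short finite generation argument yields that $\bar N$ itself is finite. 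This completes the openness and finite-index claim.
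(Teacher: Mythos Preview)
The paper does not prove this proposition; it is quoted from \cite[Theorem~A.5]{CoTe}. So there is no in-paper argument to compare against, only the question of whether your sketch is sound.

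Stages~1 and~2 are broadly right, with two caveats. In Stage~1 your intersection $\bigcap_{n\ge 0}\alpha^{-n}(\overline{\Omega\cap N^\circ})$ is increasing, not decreasing; you want $\bigcap_{n\ge 0}\alpha^{n}(\Omega')$ after first arranging a vacuum set with $\alpha(\Omega')\subset\Omega'$. In Stage~2 you assert that $N^\circ N^\sharp$ is closed without justification; you then need this to form the quotient $\bar N$ in Stage~3, but closedness is a consequence of the openness you are trying to prove there, so the argument is circular as written.

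The genuine gap is in Stage~3. You claim that an $\bar\alpha$-stable compact subgroup $W$ of $\bar N=N/(N^\circ N^\sharp)$ ``lifts to a closed normal elliptic subgroup of $N$'' and therefore lies in $N^\sharp$. But the preimage of $W$ in $N$ contains $N^\circ$, which by Stage~1 is a nontrivial simply connected nilpotent Lie group; so that preimage is never elliptic. If instead you mean to lift $W$ via a section to a compact $K\subset N$, you need a splitting result you have not invoked, and even then $\bar\alpha$-stability of $W$ gives no normality in $N$ --- yet membership in $N^\sharp$ requires containment in a \emph{normal} elliptic subgroup. For a concrete test case, take $N=\mathbf{R}\rtimes_\varepsilon\mathbf{Z}_2$ with $\varepsilon:\mathbf{Z}_2\to\{\pm 1\}$ the parity character and $\alpha(x,y)=(x/2,y)$: here $N^\sharp=\{0\}\times 2\mathbf{Z}_2$, so $N^\circ N^\sharp$ has index~$2$, while the compact subgroup $\{0\}\times\mathbf{Z}_2$ is $\alpha$-stable but neither normal in $N$ nor contained in $N^\sharp$. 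Your mechanism for forcing $W=\{1\}$ therefore does not go through. The argument in \cite{CoTe} instead works in $N/N^\circ$, produces an open connected-by-compact subgroup of $N$, splits off a compact totally disconnected complement $K$ to $N^\circ$ there, observes that the conjugation map $K\to\Aut(N^\circ)$ has finite image (the target being a Lie group), and then manufactures the required normality in $N$ from the $\alpha$-dynamics --- a step your outline does not address.
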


\begin{lem}\label{focoty}
Let $G=N\rtimes\Lambda$ be a focal LC-group. Denote by $\pi$ the projection $G\to\Lambda$. 
Then for every closed cocompact subgroup $H\subset G$, the intersection $H\cap N$ is cocompact in $N$, the projection $\pi(H)$ is closed (and cocompact) in $\Lambda$, and $H$ is focal of the same type as $G$. 
\end{lem}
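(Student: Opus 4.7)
The plan is to establish the three conclusions in order: $\pi(H)$ is closed and cocompact in $\Lambda$, then $H \cap N$ is cocompact in $N$, and finally $H$ is focal of the same type as $G$. The identification of $N$ as the kernel of the modular function (previous lemma) will be a constant guide.

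Cocompactness of $\pi(H)$ is immediate from writing $G = K_0 H$ with $K_0$ compact and applying $\pi$. For closedness of $\pi(H)$ in $\Lambda$, the case $\Lambda = \mathbf{Z}$ is trivial, while $\Lambda = \mathbf{R}$ (which forces $G$ to be of connected type) is the crux. Pick $h_0 \in H$ with $\pi(h_0)>0$, using that $\pi(H)$ is a nontrivial cocompact subgroup of $\mathbf{R}$; conjugation by $h_0$ is a compacting automorphism of $N$ with some vacuum $\Omega$. I will show that $NH$ is closed in $G$, so that $\pi(H) = NH/N$ is closed in $\mathbf{R}$. The idea is that any convergent net $n_k h_k \to g$ in $G$ can be brought into bounded range by iteratively conjugating by $h_0$ and absorbing a factor $h_0^{m_k} \in H$, after which the compacting dynamics on $N$ trap the $N$-coordinate in a compact set and force $g \in NH$. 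I expect this closedness claim to be the main technical obstacle of the lemma. Once it is in hand, $NH = \pi^{-1}(\pi(H))$ is closed in $G$, so $N/(H \cap N) = NH/H$ is a closed subset of the compact space $G/H$, hence compact, yielding cocompactness of $H \cap N$ in $N$.

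For the focal structure on $H$: $H \cap N$ is closed, noncompact (because $N$ is noncompact and $N/(H \cap N)$ is compact), and normal in $H$ since $N$ is normal in $G$. The quotient $H/(H \cap N) \cong \pi(H)$ is isomorphic to $\mathbf{Z}$ or to $\mathbf{R}$. The extension $1 \to H \cap N \to H \to \pi(H) \to 1$ splits: by freeness of $\mathbf{Z}$ in the discrete case, and in the $\mathbf{R}$-case (connected type of $G$) by lifting the ambient section $s \colon \mathbf{R} \to G$ of $\pi$ to a section of $\pi|_H$ after an $N$-valued correction, which is solvable using Proposition~\ref{compde}. A positive lift $h \in H$ of a generator induces, by restriction, a compacting automorphism of $H \cap N$ with vacuum $\Omega \cap (H \cap N)$, so $H = (H \cap N) \rtimes \pi(H)$ is focal. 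Finally, the type of $H$ matches that of $G$: cocompactness of $H \cap N$ in $N$, combined with the direct product description of Proposition~\ref{compde} (after modding out the compact $\mathsf{W}$-radical), forces the compactness status of $N^\circ$ and of $N/N^\circ$ to transfer to the corresponding subgroups of $H \cap N$, since the presence of a compacting automorphism prevents a totally disconnected cocompact subgroup of ``lattice type'' from living inside a connected $N$.
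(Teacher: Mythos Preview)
Your overall architecture matches the paper's: first closedness of $\pi(H)$, then cocompactness of $H\cap N$, then a splitting $H=(H\cap N)\rtimes\pi(H)$, then the type comparison via Proposition~\ref{compde} after killing $\mathsf{W}(G)$. Two points deserve more care.

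\textbf{The closedness step.} Your sketch ``conjugate by $h_0$ and absorb a factor $h_0^{m_k}\in H$'' does not close. If $n_kh_k\to g$ and you left-multiply by $h_0^{m_k}$ with $m_k$ chosen so that $h_0^{m_k}n_kh_0^{-m_k}\in\Omega$, then the second factor $h_0^{m_k}h_k$ has $\pi$-value $m_k\pi(h_0)+\pi(h_k)$, which is unbounded as soon as the $n_k$ are unbounded in $N$; so you have traded unboundedness in $N$ for unboundedness in $\Lambda$ and lost convergence. The paper's one-line argument (``conjugating by an element of $\Lambda$'') hides the same issue. A way that actually works is to conjugate \emph{inside $H$}, using cocompactness: write $G=KH$ with $K$ compact; given $h_n\in H$ with $\pi(h_n)\to\lambda$, pick $\tau_n\in\Lambda$ with $s(\tau_n)\nu_ns(-\tau_n)\in\Omega$ and write $s(\tau_n)=k_n\ell_n$ with $k_n\in K$, $\ell_n\in H$; then $\ell_nh_n\ell_n^{-1}\in H$ still has $\pi$-value $\pi(h_n)$, and a short computation shows it lies in $K^{-1}\Omega K\cdot K^{-1}s(\pi(h_n))K$, hence is bounded. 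A subsequential limit lies in $H$ and has $\pi$-value~$\lambda$.

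\textbf{The splitting when $\pi(H)\cong\mathbf{R}$.} ``Correcting the ambient section by an $N$-valued map, solvable via Proposition~\ref{compde}'' is a cocycle problem you have not actually solved. The paper avoids this entirely: once $\Lambda=\mathbf{R}$ (hence connected type) one may kill $\mathsf{W}(G)$ and work with Lie groups; if $H^\circ\nsubseteq N$ then $H$ contains a one-parameter subgroup $P$ on which $\log\circ\Delta_G$ is an isomorphism onto $\mathbf{R}$, and $H=(H\cap N)\rtimes P$ for free. If $H^\circ\subset N$, then $\pi(H)$ is countable and hence (by the closedness already proved) discrete, reducing to the $\mathbf{Z}$-case. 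This dichotomy is both simpler and more robust than constructing a section by hand.
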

\begin{proof}
Let us first show that $\pi(H)$ is closed (note that this is trivial if $\Lambda=\mathbf{Z}$). If $(h_n)$ is in $H$ and $\pi(h_n)$ converges, write $h_n=\nu_n\pi(h_n)$ with $\nu_n\in N$. Conjugating by an element of $\Lambda$, we can arrange $\nu_n$ (and hence $h_n$) to be bounded without modifying $\pi(h_n)$. 

We freely use that the set of compactions of $N$ is stable under multiplication by inner automorphisms \cite[Lemma 6.16]{CCMT}.

If $\Lambda=\mathbf{Z}$ or more generally if the projection of $H$ in $\Lambda$ has a discrete image, then $(H\cap N)\rtimes \langle\sigma\rangle$ is open of finite index in $H$ for any $\sigma\in H\smallsetminus\Ker(\Delta_G)$; it is focal.

If $\Lambda=\mathbf{R}$, we discuss. If $H^\circ\nsubseteq N$, then $H$ has a 1-parameter subgroup $P$ on which $\log\circ\Delta_G$ is an isomorphism onto $\mathbf{R}$. Thus $H\cap N$ is cocompact in $N$ and $H=(H\cap N)\rtimes P$ is focal. 

If $\Lambda=\mathbf{R}$ and $H^\circ\subset N$, then the image of $N$ in $\Lambda=\mathbf{R}$ is countable, and since it is also closed by the beginning of the proof, it is discrete and this case was covered.

It remains to check that $H$ has the same type as $G$. In view of Lemma \ref{fow}, we can suppose that $\mathsf{W}(G)=\{1\}$.
\begin{itemize}
\item If $G$ is of totally disconnected type, then $H$ is totally disconnected, hence of totally disconnected type.
\item If $G$ is of connected type, it is a Lie group, and hence $H$ is also a Lie group. So $N\cap H$ is a Lie group with a compacting automorphism, hence it has finitely many components. So $H$ is of connected type as well.
\item If $H$ is of connected type, then $N\cap H$ is virtually connected. Since it is cocompact in $N$, it follows that $N$ is connected-by-compact, hence virtually connected (since $\mathsf{W}(N)=\{1\}$), so $G$ is of connected type as well.
\item If $H$ is of totally disconnected type, we use that $N$ has a characteristic open subgroup of finite index $N'$ with a characteristic direct decomposition $E\times M$, with $E$ totally disconnected and $M$ connected (see Proposition \ref{compde}). So the projection of $H\cap N'$ on $E$ is finite. Since it is cocompact, it follows that $M$ is compact, and thus $G$ has totally disconnected type. 
\end{itemize}
\end{proof}

\begin{remark}
The last statement in Lemma \ref{focoty} can also be proved using geometric arguments based on hyperbolicity and the topology of the boundary (see Proposition \ref{bdmi}): indeed the boundary is a Cantor space exactly in the totally disconnected type and is a topological sphere exactly in the connected type.
\end{remark}

\begin{lem}\label{l_sg}
Let $G$ be a focal LC-group. There exists a unique integer $s_G\ge 1$ such that the image $I_G$ of $\Delta_{G/G^\circ}$ is the multiplicative cyclic subgroup of $\mathbf{Q}^*$ generated by $s_G$. We have $s_G=1$ if and only if $G$ is of connected type.
\end{lem}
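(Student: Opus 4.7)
My plan is to use the structure $G = N\rtimes\Lambda$ and compute $\Delta_{G/G^\circ}$ explicitly from it, using the facts established above that $N = \Ker(\Delta_G)$ and that $\Lambda = \mathbf{Z}$ whenever $G$ is not of connected type. First I dispense with the connected-type case: here $N/N^\circ$ is compact, and $G/G^\circ$ is either the compact group $N/N^\circ$ itself (when $\Lambda = \mathbf{R}$, so $G^\circ = N^\circ\rtimes\mathbf{R}$) or the extension $(N/N^\circ)\rtimes\mathbf{Z}$ (when $\Lambda = \mathbf{Z}$). In both subcases the modular function is trivial, since compact groups are unimodular and every automorphism of a compact group preserves its normalized Haar measure; so $I_G = \{1\}$ and $s_G = 1$.

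In the non-connected case, $\Lambda = \mathbf{Z}$ and $G^\circ = N^\circ$, so $G/G^\circ = \bar N\rtimes\mathbf{Z}$ where $\bar N := N/N^\circ$ is noncompact and totally disconnected and the generator $\sigma$ acts by the compacting automorphism $\bar\alpha$ induced from the conjugation on $N$. This quotient is itself focal of totally disconnected type, so its modular function vanishes on $\bar N$, and everything reduces to computing $\Delta_{G/G^\circ}(\sigma) = \mathrm{mod}(\bar\alpha)^{-1}$. The technical step, which I would handle explicitly, is to produce a compact open subgroup $V\subset\bar N$ with $\bar\alpha(V)\subset V$: starting from any compact open subgroup $V_0$ containing a vacuum $\Omega$ for $\bar\alpha$ and choosing $n$ with $\bar\alpha^n(V_0)\subset V_0$, the finite intersection $V := \bigcap_{i=0}^{n-1}\bar\alpha^{-i}(V_0)$ does the job. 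For such a $V$, the standard module formula on totally disconnected groups gives $\mathrm{mod}(\bar\alpha) = [V:\bar\alpha(V)]^{-1}$, so $s_G := [V:\bar\alpha(V)]$ is a positive integer, and $I_G$ is the cyclic subgroup of $\mathbf{Q}^\times$ it generates.

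The main subtlety, and the step I would handle with most care, is to verify $s_G\ge 2$, i.e.\ that $\bar\alpha$ is strictly volume-contracting. The argument I have in mind is measure-theoretic: if instead $\bar\alpha$ preserved Haar measure, then for any compact $K\subset\bar N$ and any $m$ with $\bar\alpha^m(K)\subset\Omega$ one would get $\mu(K) = \mu(\bar\alpha^m(K))\le\mu(\Omega)$; but noncompactness of $\bar N$ provides compact sets of arbitrarily large measure, a contradiction. The uniqueness of $s_G\ge 1$ is then automatic (the generators of a cyclic subgroup of $\mathbf{R}_+^\times$ come in inverse pairs $\{c,c^{-1}\}$), and the equivalence $s_G = 1\Leftrightarrow G$ of connected type follows from the above dichotomy.
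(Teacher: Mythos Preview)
Your proof is correct and follows essentially the same route as the paper's: split into the connected-type case (where $G/G^\circ$ is compact or compact-by-$\mathbf{Z}$, hence unimodular) and the non-connected case (where $G/G^\circ=\bar N\rtimes_{\bar\alpha}\mathbf{Z}$ is itself focal totally disconnected), then compute the modular function via a compact open $V\subset\bar N$ with $\bar\alpha(V)\subset V$, obtaining $s_G=[V:\bar\alpha(V)]$. You are more explicit than the paper in constructing $V$; the paper simply posits such a $K$.

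The one genuine variation is your argument for $s_G\ge 2$. The paper argues that if $[K:\sigma(K)]=1$ then $\sigma(K)=K$, whence $N=\bigcup_{n\ge 0}\sigma^{-n}(K)=K$ is compact, contradicting focality; this uses that $K$ can be taken to be a vacuum set. You instead argue measure-theoretically: if $\bar\alpha$ preserved Haar measure, compacting would force every compact set to have measure at most $\mu(\Omega)$, contradicting noncompactness of $\bar N$. Both are short and valid; yours has the mild advantage of not needing $V$ itself to be a vacuum set. A small point worth tightening in your write-up is the existence of a compact open $V_0$ containing a prescribed vacuum $\Omega$; the quickest fix is to observe that any compact open subgroup is itself a vacuum set once enlarged by finitely many $\bar\alpha$-iterates, or simply to quote that for compacting automorphisms of totally disconnected groups one may always take the vacuum to be a compact open subgroup.
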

\begin{proof}
The uniqueness is clear.

If $G$ has connected type, then $G/G^\circ$ is either compact or compact-by-$\mathbf{Z}$, so $G/G^\circ$ is unimodular and we have $s_G=1$.

Otherwise, $G/G^\circ$ is focal of totally disconnected type, and hence we can reduce to when $G=N\rtimes_\sigma\mathbf{Z}$ is focal and totally disconnected. 
Then if $K$ is an open compact subgroup of $N$ such that $\sigma(K)\subset K$, the image of the modular map is the multiplicative subgroup generated by the index $s_G=[K:\sigma(K)]$. We have $s_G\ge 2$, since otherwise $K$ would have finite index in $N$, which would be compact, contradicting the definition of focal. 
\end{proof}

\begin{lem}\label{incsha}
Let $f:H\to G$ be a continuous homomorphism between focal LC-groups with cocompact image. Then $f(H^\sharp)\subset G^\sharp$.
\end{lem}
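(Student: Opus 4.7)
The plan is to reduce to the case $\mathsf{W}(H)=\mathsf{W}(G)=1$ and then to use the quotient $\pi:G\to G/G^\sharp$ to obtain the conclusion in a single step. Two preliminary observations are useful. First, $f(N_H)\subset N_G$: the map $\chi:=\Delta_G\circ f:H\to\mathbf{R}_+$ is a continuous character invariant under $H$-conjugation (since $\mathbf{R}_+$ is abelian), and for every $n\in N_H$ the element $\sigma^kn\sigma^{-k}$ lies in a fixed compact vacuum $\Omega\subset N_H$ for $k$ large, so $\chi(N_H)\subset\chi(\Omega)$ is a bounded subgroup of $\mathbf{R}_+$, hence trivial. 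Second, $f(\mathsf{W}(H))\subset\mathsf{W}(G)$: writing $G=Kf(H)$ with $K$ compact, we have $gf(\mathsf{W}(H))g^{-1}\subset Kf(\mathsf{W}(H))K^{-1}$ for all $g\in G$, so $f(\mathsf{W}(H))\subset\mathsf{B}(G)$, and combined with ellipticity, Lemma \ref{wge} gives the inclusion. Thus $f$ descends to a continuous cocompact $\bar f:H/\mathsf{W}(H)\to G/\mathsf{W}(G)$, and since the elliptic radical commutes with this quotient we may assume $\mathsf{W}(H)=\mathsf{W}(G)=1$, in which case $H^\sharp=N_H^\sharp$ and $G^\sharp=N_G^\sharp$.

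Now consider $\bar f:=\pi\circ f:H\to Q:=G/G^\sharp$; it suffices to prove $\bar f(H^\sharp)=1$. If $G$ is of totally disconnected type, $N_G$ is itself elliptic (the sets $\sigma^{-n}(K)$ for a $\sigma$-stable compact open $K$ exhaust it), so $G^\sharp=N_G$ and $Q=\Lambda_G\in\{\mathbf{Z},\mathbf{R}\}$ has no nontrivial compact subgroup; the pointwise-elliptic image $\bar f(H^\sharp)$ is thus trivial. Otherwise $Q=(N_G/N_G^\sharp)\rtimes\Lambda_G$ is focal of connected type with trivial polycompact radical: by Proposition \ref{compde} its $N$-part is virtually the simply connected nilpotent Lie group $N_G^\circ$, and any compact normal subgroup of $Q$ lifts to a closed elliptic normal subgroup of $G$, hence is contained in $G^\sharp$. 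The problem therefore reduces to proving that, whenever $G$ is focal of connected type with $\mathsf{W}(G)=1$ and $f:H\to G$ is a continuous homomorphism with cocompact image from a focal $H$, one has $f(H^\sharp)=1$.

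In this connected-type setting $G$ is a Lie group (Example \ref{yamabe}), $N_G^\circ$ is simply connected nilpotent, $N_G/N_G^\circ$ is finite, and any compact subgroup of $G$ lies in $N_G$ (by the first observation) and meets $N_G^\circ$ trivially (no torsion in a simply connected nilpotent Lie group), hence is finite. So $f(H^\sharp)$, consisting of elliptic elements, is a finite subgroup of $G$. The conjugation action of $f(H)$ on $f(H^\sharp)$ factors through $\Aut(f(H^\sharp))$ (finite), so its kernel $C\subset f(H)$ has finite index in $f(H)$ and is still cocompact in $G$, and $f(H^\sharp)\subset Z_G(C)$. To conclude I would show $Z_G(C)=1$: by Lemma \ref{focoty}, $C$ is itself focal of connected type with $C\cap N_G$ cocompact in $N_G$, hence $(C\cap N_G)^\circ=N_G^\circ$ (the only connected cocompact subgroup of a simply connected nilpotent Lie group), and there exists $c\in C\setminus N_G$ whose conjugation on $N_G$ is compacting. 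An element $z\in Z_G(C)\cap N_G$ then commutes with $N_G^\circ$ and is fixed by $\mathrm{conj}_c|_{N_G}$; the fixed points of this compacting automorphism lie in $\Omega$, and a coset-by-coset analysis in $N_G=N_G^\circ\cdot(N_G/N_G^\circ)$ combined with the fact that $\mathrm{conj}_c$ acts contractingly on the abelian Lie group $Z(N_G^\circ)$ forces $z=1$. Similarly, a putative $z=n_0\lambda_0\in Z_G(C)\setminus N_G$ would yield, via commutation with $C\cap N_G\supset N_G^\circ$, the identity $\mathrm{conj}_{\lambda_0}=\mathrm{conj}_{n_0^{-1}}$ on $N_G^\circ$; iterating gives a contradiction between the contracting left-hand side and the inner (non-contracting) right-hand side, since $N_G^\circ\neq 1$ in connected type. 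The main obstacle is precisely this final triviality of $Z_G(C)$, which hinges on the interplay between the compacting $\Lambda$-action and the nilpotent Lie structure of $N_G^\circ$.
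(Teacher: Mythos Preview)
Your reduction to the case where $G$ is W-faithful of connected type is sound, and your argument that $f(H^\sharp)$ is then finite is correct (it injects into the finite group $N_G/N_G^\circ$). The problem is what follows. You pass to the centralizer $C$ of $f(H^\sharp)$ inside $f(H)$ and attempt to prove $Z_G(C)=1$; you yourself flag this as ``the main obstacle,'' and indeed the coset-by-coset sketch is not a proof. Among other things, Lemma~\ref{focoty} does not give $(C\cap N_G)^\circ=N_G^\circ$; that requires the later Lemma~\ref{nilc}. Even granting this, the case $z\notin N_G^\circ$ is not handled: nothing you wrote excludes a nontrivial finite-order element of $N_G\smallsetminus N_G^\circ$ that centralizes $N_G^\circ$ and is fixed by $\mathrm{conj}_c$.

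The paper avoids this entirely by working with the \emph{normalizer} rather than the centralizer. After reducing to $G^\sharp=\{1\}$, one observes that $K=\overline{f(H^\sharp)}$ is a compact subgroup of the virtually connected Lie group $\Ker(\Delta_G)$. Since $H^\sharp$ is normal in $H$, the normalizer $G'=N_G(K)$ contains $f(H)$ and is therefore closed and cocompact, with $K\subset\mathsf{W}(G')$. Proposition~\ref{ebgh} applied to the inclusion $G'\hookrightarrow G$ then gives $\mathsf{W}(G')\subset\mathsf{W}(G)=\{1\}$, so $K=\{1\}$. You had all the ingredients for this: once you know $f(H^\sharp)$ is finite and normal in $f(H)$, the same normalizer-plus-Proposition~\ref{ebgh} argument finishes in one line. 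Passing to the centralizer throws away the normality of $f(H^\sharp)$ in $f(H)$, which is precisely the structural fact that makes the short proof work.
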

\begin{proof}
Composing with the projection, we can suppose that $G^\sharp=\{1\}$. Thus $G$ is a W-faithful Lie group, $\Ker(\Delta_G)$ is a virtually connected Lie group, and we need to show that $H^\sharp\subset\Ker(f)$. Since $K=\overline{f(H^\sharp)}$ is elliptic, it is contained in $\Ker(\Delta_G)$ and hence is compact. Let $G'$ be the normalizer of $K$. Then $G'$ contains $f(H)$ which is cocompact, and is closed; we have $K\subset \mathsf{W}(G')$. On the other hand, by Proposition \ref{ebgh}, we have $\mathsf{W}(G')\subset \mathsf{W}(G)=\{1\}$. Hence $K=\{1\}$. 
\end{proof}

\begin{lem}\label{facgo}
Let $f:H\to G$ be a copci homomorphism between focal LC-groups. Then
\begin{enumerate}[(1)]
\item\label{i_ke} $f(H)\cap\Ker(\Delta_G)$ is cocompact in $\Ker(\Delta_G)$;
\item\label{i_noco} if $G,H$ are not of connected type,
the factored map $\bar{f}:H/H^\circ\to G/G^\circ$ is a copci homomorphism between focal LC-groups of totally disconnected type.
\item\label{i_notd} if $G,H$ are not of totally disconnected type,
the factored map $\hat{f}:H/H^\sharp\to G/G^\sharp$ is a copci homomorphism between focal LC-groups of totally disconnected type.
\end{enumerate}
\end{lem}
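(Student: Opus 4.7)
The plan for part (1) is to factor $f = i \circ q$, where $q : H \to H/\Ker(f)$ is the quotient by the compact normal kernel (compact by properness of $f$) and $i$ is an injective copci identifying $H/\Ker(f)$ with the closed cocompact subgroup $f(H) \subset G$. Applying Lemma \ref{focoty} to the closed cocompact inclusion $f(H) \subset G$ then yields directly that $f(H) \cap \Ker(\Delta_G)$ is cocompact in $\Ker(\Delta_G)$.

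For part (2), well-definedness of $\bar f$ is immediate from continuity of $f$ (the connected group $H^\circ$ maps into the closed connected $G^\circ$). The target $G/G^\circ$ is focal of totally disconnected type because, since $G$ is not of connected type, an earlier lemma forces $\Lambda = \mathbf{Z}$, whence $G = N_G \rtimes \mathbf{Z}$ with $G^\circ = N_G^\circ$; the induced automorphism on $N_G/N_G^\circ$ remains compacting, and $N_G/N_G^\circ$ is noncompact by the type assumption. The same applies to $H/H^\circ$, and cocompactness of the image $f(H)G^\circ/G^\circ$ in $G/G^\circ$ is clear from cocompactness of $f(H)$. The real content is properness of $\bar f$. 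I would reduce (via factoring out $\Ker(f)$) to the injective case $H \subset G$ closed cocompact; then the kernel of $\bar f$ is $(H \cap G^\circ)/H^\circ = (N_H \cap N_G^\circ)/N_H^\circ$, where $N_H = H \cap N_G$ is cocompact in $N_G$ by (1). Applying Proposition \ref{compde} to $N_G$ (after modding out the compact subgroup $\mathsf{W}(N_G^\circ)$), $N_G$ has an open finite-index subgroup $N_G^\circ \times N_G^\sharp$; combining this with $\alpha^k$-invariance of $N_H$ (for some $k \ge 1$, from $H$'s cocompact projection to $G/N_G = \mathbf{Z}$) forces $N_H$ to contain $N_G^\circ$ up to a finite-index adjustment, making $(N_H \cap N_G^\circ)/N_H^\circ$ compact and the image of $\bar f$ closed.

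Part (3) follows an identical template with $G^\sharp$ replacing $G^\circ$: well-definedness of $\hat f$ is Lemma \ref{incsha}. I note that the stated conclusion ``focal LC-groups of totally disconnected type'' appears to be a typo and should read ``of connected type'': $G^\sharp$ contains the elliptic radical $N^\sharp$ of $N$, and modulo compact groups Proposition \ref{compde} gives $N/N^\sharp \simeq N^\circ$ connected-by-compact, so $G/G^\sharp$ is of connected type. The remainder of the argument mirrors (2), exploiting that $G/G^\sharp$ is a focal Lie group where the structure of closed cocompact subgroups is manageable.

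The main obstacle throughout is the properness assertion, namely compactness of the kernels of $\bar f$ and $\hat f$ together with closedness of their images. Both depend crucially on the rigidity of closed cocompact subgroups of focal groups: Proposition \ref{compde} and the resulting direct-product decomposition at the level of $N$ is the key structural input, and the $\alpha^k$-invariance of $f(H) \cap N_G$ coming from cocompactness of the projection of $f(H)$ to $G/N_G$ is precisely what prevents pathological ``diagonal'' cocompact subgroups and lets one identify identity components (respectively elliptic radicals) before and after $f$.
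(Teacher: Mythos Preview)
Your treatment of part~(1) via Lemma~\ref{focoty} is fine and essentially what the paper does, and your observation that the conclusion of~(3) should read ``connected type'' is correct.

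The gap is in part~(2), at exactly the point you flag as the ``real content''. You assert that $\alpha^k$-invariance of $N_H$, combined with the direct product decomposition $N_G^\circ\times N_G^\sharp$ from Proposition~\ref{compde}, ``forces $N_H$ to contain $N_G^\circ$ up to a finite-index adjustment''. This is the entire crux, and you give no argument for it. Cocompactness of $N_H$ in $N_G$ together with $\alpha^k$-invariance does \emph{not} by itself prevent $N_H$ from sitting diagonally; nothing in your sketch explains why $N_H\cap N_G^\circ$ is cocompact in $N_G^\circ$, or why $(N_H\cap N_G^\circ)/N_H^\circ$ is compact. You also treat compactness of the kernel and closedness of the image as a single conclusion, but these are separate facts requiring separate arguments.

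The paper's proof supplies precisely the two ideas you are missing. First, instead of merely factoring out $\Ker(f)$, it mods out $\mathsf{W}(G)$ and $\mathsf{W}(H)$ via Proposition~\ref{ebgh}, so that $G^\circ$ becomes a genuine Lie group and both groups are W-faithful. Then $H^\sharp\cap f^{-1}(G^\circ)$ is a closed totally disconnected subgroup mapping properly into a Lie group, hence discrete; since it is compacted it is finite, and by W-faithfulness it is trivial. This yields $f^{-1}(G^\circ)=H^\circ$ outright, so $\bar f$ is injective. Second, for the closed image, the paper uses Lemma~\ref{incsha} (which you reserve only for part~(3)): knowing $f(H^\sharp)\subset G^\sharp$, cocompactness of $f(H^\circ\times H^\sharp)$ in $G^\circ\times G^\sharp$ projects to cocompactness of $f(H^\circ)$ in $G^\circ$, whence $G^\circ f(H)$ is closed. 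Your sketch would become correct if you inserted exactly these two steps, but as written the properness claim is unsupported.
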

\begin{proof}
For (\ref{i_ke}), this follows from the fact that $\Delta_G(f(H))=\Delta_H(H)$ is closed.

For (\ref{i_noco}), we have to check is that $\bar{f}$ is copci. Since the LC-groups involved are $\sigma$-compact, we have to check that $\bar{f}$ has a compact kernel and a closed cocompact image. That the image is cocompact is clear.

We can suppose that $\mathsf{W}(H)=\mathsf{W}(G)=\{1\}$ (using Proposition \ref{ebgh}), and thus $f$ is injective and $G^\circ$ is a Lie group. In particular, any closed totally disconnected subgroup of $f^{-1}(G^\circ)$ is discrete.

Recall (Proposition \ref{compde}) that $\Ker(\Delta_H)$ has an open characteristic subgroup of finite index $U_H$ with a characteristic decomposition $H^\circ\times H^\sharp$ with $H^\sharp$ totally disconnected. So $f^{-1}(G^\circ)=H^\circ\times (H^\sharp\cap f^{-1}(G^\circ))$. We deduce from the previous lines that $H^\sharp\cap f^{-1}(G^\circ)$ is discrete; since it admits a compacting automorphism, it is finite. Since $\mathsf{W}(H)=\{1\}$, it is thus trivial, and we deduce that $f^{-1}(G^\circ)=H^\circ$. So $\bar{f}$ is injective.

Finally, to check that $\bar{f}$ has a closed image amounts to showing that $G^\circ f(H)$ is closed; to check this it is enough to show that $f(H)\cap G^\circ=f(H^\circ)$ is cocompact in $G^\circ$. 

By Lemma \ref{incsha}, we have $f(H^\sharp)\subset G^\sharp$.
 Since $f(H^\sharp\times H^\circ)$ is cocompact in $G^\sharp\times G^\circ$, we deduce that $f(H^\circ)$ is cocompact in $G^\circ$.

The statement (\ref{i_notd}) is proved along the same lines.
\end{proof}

\section{The $q$-invariant and commability classification in totally disconnected type}

\subsection{The $q$-invariant}

We say that a positive integer is a non-power if it is not an integral power of a smaller positive integer (thus non-power integers are $1,2,3,5,6,7,10,\dots$). Each integer $n\ge 1$ is an integral power of a unique non-power integer denoted by $\sqrt[\max]{n}$.

\begin{defn}
Let $G$ be a focal LC-group, and let $s_G$ be the generator of the image of $\Delta_{G/G^\circ}$ as in Lemma \ref{l_sg}. We define $q_G=\sqrt[\max]{s_G}$ and call it the q-invariant of $G$.
\end{defn}

\begin{prop}\label{p_q}
Let $G$ be a focal LC-group. Then $q_G$ is the unique non-power integer such that the image of $\Delta_{G/G^\circ}$ has finite index in the multiplicative group $\{q_G^n:n\in\mathbf{Z}\}$. We have $q_G=1$ if and only $G$ has connected type. The number $q_G$ is an invariant of the commability class of $G$ within focal groups.
\end{prop}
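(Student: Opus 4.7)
My plan is to treat the three assertions in order, with the first two essentially definitional and the bulk of the work lying in the commability invariance.

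For assertion (1), I would begin by writing $s_G = q_G^k$ for some $k \geq 1$, which is possible by definition of $q_G = \sqrt[\max]{s_G}$; this presents $I_G = \langle q_G^k\rangle$ as a subgroup of index $k$ in $\langle q_G\rangle$, establishing existence. For uniqueness, given any non-power integer $q$ for which $I_G$ has finite index in $\langle q\rangle$, the generator $s_G \geq 1$ of $I_G$ must be a non-negative integer power of $q$; the defining property of $\sqrt[\max]{\cdot}$ then forces $q = q_G$. Assertion (2) follows from Lemma \ref{l_sg}: $\sqrt[\max]{1} = 1$ while $\sqrt[\max]{s} \geq 2$ for $s \geq 2$, so $q_G = 1 \Leftrightarrow s_G = 1 \Leftrightarrow G$ is of connected type.

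For (3), commability within focal groups is generated by copci homomorphisms between focal LC-groups, so it suffices to show $q_H = q_G$ whenever $f : H \to G$ is copci with $G, H$ focal. First I would verify that $f$ preserves type: factor $f$ as the composition of the quotient $H \to H/\Ker(f)$ (focal of the same type as $H$ by Lemma \ref{fow}) followed by the cocompact closed embedding $H/\Ker(f) \hookrightarrow G$ (type-preserving by Lemma \ref{focoty}). If $G$ and $H$ are of connected type, assertion (2) already gives $q_G = q_H = 1$, so I may assume neither is.

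By Lemma \ref{facgo}(ii), the induced map $\bar f : H/H^\circ \to G/G^\circ$ is then a copci homomorphism between focal LC-groups of totally disconnected type. The key computational input is that copci homomorphisms preserve the modular function (trivial on compact normal kernels, and restricting correctly to a closed cocompact subgroup of a $\sigma$-compact LC-group), so $\Delta_{G/G^\circ}\circ \bar f = \Delta_{H/H^\circ}$, which already yields $I_H \subset I_G$. Using cocompactness of $\bar f(H/H^\circ)$ in $G/G^\circ$, the induced continuous surjection from the compact coset space $(G/G^\circ)\big/\bar f(H/H^\circ)$ onto $I_G/I_H$ exhibits the latter as a continuous image of a compact space into the discrete cyclic group $I_G$, hence finite. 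Thus $s_H$ is a positive integer power of $s_G = q_G^k$, and the uniqueness in (1) yields $q_H = q_G$.

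The main obstacle, in my view, is the modular function identity $\Delta_{G/G^\circ}\circ \bar f = \Delta_{H/H^\circ}$ for copci $\bar f$; it rests on standard but slightly delicate facts about Haar measures on closed cocompact subgroups (and has already been tacitly used in Lemma \ref{facgo}(i)). Everything else amounts to an organized application of the lemmas already at hand.
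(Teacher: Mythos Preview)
Your approach is essentially the same as the paper's: reduce to a single copci $f:H\to G$ between focal groups, dispose of the connected-type case, pass to $\bar f:H/H^\circ\to G/G^\circ$ via Lemma \ref{facgo}, use the modular identity $\Delta_{G/G^\circ}\circ\bar f=\Delta_{H/H^\circ}$ to get $I_H\subset I_G$ of finite index, and conclude $q_H=q_G$.

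There is one point where your justification is not quite right, and it is precisely the one you flag as the main obstacle. The identity $\Delta_G|_H=\Delta_H$ for a closed cocompact subgroup $H$ of a $\sigma$-compact LC-group $G$ is \emph{false} in general: take $G=\SL_2(\mathbf{R})$ and $H$ the upper-triangular subgroup; then $G/H$ is compact, $G$ is unimodular, but $H$ is not. What makes the identity hold here is that focal LC-groups are amenable, which guarantees a $G$-invariant measure on $G/H$ and hence $\Delta_G|_H=\Delta_H$; this is exactly what the paper invokes via \cite[Corollary B.1.7]{BHV}. So replace ``restricting correctly to a closed cocompact subgroup of a $\sigma$-compact LC-group'' by an appeal to amenability, and your argument goes through.
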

\begin{proof}
The first two statement immediately follow from the definition and Lemma \ref{l_sg}. Let us prove the last one. Let $f:H\to G$ be a copci homomorphism between focal groups and let us show $q_H=q_G$. 

Recall that $H$ and $G$ have the same type (Lemma \ref{focoty}), and in connected type we have $q_H=q_G=1$. 
 
Assume that $G$ and $H$ have totally disconnected type.
Since $G$ is amenable, we have $\Delta_H=\Delta_G\circ f$ \cite[Corollary B.1.7]{BHV}. It follows that $\Delta_H(H)$ is cocompact in $\Delta_G(G)$; since these are cyclic groups, it has finite index, say $k$, and therefore we obtain that $s_H=s_G^k$. So $q_H=q_G$.

If $G$ and $H$ have mixed type, then $\bar{f}:H/H^\circ\to G/G^\circ$ is copci by Lemma \ref{facgo}, and since $q_G=q_{G/G^\circ}$ and $q_H=q_{H/H^\circ}$ we can reduce to the totally disconnected type.
\end{proof}

Define, for every $m\ge 2$, the group $\FT_m$ as the stabilizer of a boundary point in the automorphism group of an $(m+1)$-regular tree.

We begin with the standard observation:
\begin{lem}\label{ftrm}
The group $\FT_m$ is focal of totally disconnected type, and its q-invariant is $\sqrt[\max]{m}$ (which in particular is $m$ if $m$ is a non-power integer).
\end{lem}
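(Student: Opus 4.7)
The plan is to realize $\FT_m$ as a semidirect product $N\rtimes\langle\tau\rangle$ by choosing a geodesic pointing to the fixed boundary end $\omega$ of the regular tree $T$, reading the compacting decomposition off the horospherical structure, and then extracting $s_{\FT_m}$ from a vertex-stabilizer index.

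First I would fix a bi-infinite geodesic $\gamma=(v_n)_{n\in\mathbf{Z}}$ with $v_n\to\omega$ as $n\to-\infty$, let $b:V(T)\to\mathbf{Z}$ be the Busemann function toward $\omega$ normalized by $b(v_n)=n$, and observe that every element of $\FT_m$ shifts $b$ by a constant, giving a continuous homomorphism $\pi:\FT_m\to\mathbf{Z}$. Choosing a hyperbolic translation $\tau$ along $\gamma$ with $\pi(\tau)=1$ (so $\tau v_n=v_{n+1}$), one obtains $\FT_m=N\rtimes\langle\tau\rangle$ with $N=\Ker(\pi)$.

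Next I would set $K_n=\mathrm{Stab}_{\FT_m}(v_n)$; since $K_n$ fixes both $v_n$ and $\omega$, it fixes every vertex $v_m$ with $m\le n$, so $K_n\subset K_{n-1}$, and a standard transitivity argument for tree stabilizers gives $[K_{n-1}:K_n]=m$. Conjugation reads $\tau K_n\tau^{-1}=K_{n+1}$. To verify that $\tau$ is compacting on $N$ with vacuum $K_0$, it suffices to show $N=\bigcup_{n\ge 0}K_{-n}$: given $g\in N$, the rays $(v_n)_{n\le 0}$ and $(gv_n)_{n\le 0}$ are two geodesic rays converging to $\omega$ with matching Busemann values, so they eventually coincide, forcing $gv_{-n}=v_{-n}$ for all large $n$, i.e.\ $g\in K_{-n}$. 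Since the $K_{-n}$ form a strictly ascending chain, $N$ is noncompact, hence $\FT_m$ is focal; total disconnectedness of the type is immediate from the inclusion $\FT_m\subset\Aut(T)$.

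Finally, for the $q$-invariant, I normalize a left Haar measure by $\mu(K_0)=1$ and compute $\mu(\tau K_0\tau^{-1})=\mu(K_1)=1/m$; the standard identity then gives $\Delta_{\FT_m}(\tau)=m$, so the image of $\Delta_{\FT_m}=\Delta_{\FT_m/\FT_m^\circ}$ is the cyclic group $m^{\mathbf{Z}}$. By Lemma \ref{l_sg} this forces $s_{\FT_m}=m$, and therefore $q_{\FT_m}=\sqrt[\max]{m}$. The main obstacle is the compacting verification, which reduces to the elementary fact that two geodesic rays in $T$ with the same endpoint eventually coincide; once that is in hand, every remaining step is a direct computation with vertex stabilizers.
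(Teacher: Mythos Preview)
Your proof is correct and follows essentially the same route as the paper's: both pick a bi-infinite geodesic through $\omega$, a translation of length~1 along it, decompose $\FT_m$ as $N\rtimes\mathbf{Z}$, exhibit $N$ as the ascending union of vertex stabilizers to verify the compacting property, and read off $s_{\FT_m}=m$ from the index $[K_0:\tau K_0\tau^{-1}]=m$. The only cosmetic differences are your use of the Busemann function to define $\pi$ and the explicit Haar-measure computation of $\Delta(\tau)$, where the paper simply invokes the index formula from Lemma~\ref{l_sg}; your indexing also runs toward $\omega$ at $-\infty$ rather than $+\infty$, which is immaterial.
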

\begin{proof}
Let $T$ be the $(m+1)$-regular tree, $\omega$ a boundary point and $x_0$ a fixed vertex, and write $G=\FT_m$ as the stabilizer of $\omega$ in $\Aut(T)$. Consider the geodesic ray $(x_0,x_1,\dots)$ ending at $\omega$.
Let $G_{x_0}$ be the stabilizer of $x_0$, this is a compact open subgroup (actually fixing all $x_n$ for $n\ge 0$). Extend this ray to a bi-infinite geodesic $(x_n)_{n\in\mathbf{Z}}$, and let $h$ be a loxodromic isometry along this axis, mapping $x_n$ to $x_{n-1}$ for all $n\in \mathbf{Z}$. Let $N$ be the subgroup of those elements in $G$ fixing $x_n$ for large enough $n$, it is not compact since $m+1\ge 3$. Then $G=N\rtimes\langle h\rangle$, and $hG_{x_0}h^{-1}=G_{x_{-1}}\subset G_{x_0}$ and $N=\bigcup_{n\ge 0}h^{-n}G_{x_0}h^n$, thus $h$ acts on $N$ as a compacting automorphism and thus $G$ is focal. We have $[G_{x_0}:G_{x_{-1}}]=m$ and thus $s_G=m$, so $q_G=\sqrt[\max]{m}$.
\end{proof}

\begin{prop}\label{copfo}
Let $G$ be an LC-group and $q\ge 2$ a non-power integer. Then the following are equivalent:
\begin{enumerate}[(i)]
\item\label{i_coft} $G$ admits a copci homomorphism into $\FT_{q^k}$ for some integer $k\ge 1$;
\item\label{i_foq} $G$ is focal of totally disconnected type and $q_G=q$.
\end{enumerate}
More generally, the following are equivalent, if $m\ge 2$ is any integer.
\begin{enumerate}[(i')]
\item\label{i_coft2} $G$ admits a copci homomorphism into $\FT_{m^k}$ for some integer $k\ge 1$;
\item\label{i_foq2} $G$ is focal of totally disconnected type and $s_G$ is a power of $m$.
\end{enumerate}

\end{prop}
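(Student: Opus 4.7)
The plan begins with a reduction: the special case $(\ref{i_coft}) \Leftrightarrow (\ref{i_foq})$ follows from the primed equivalence by taking $m = q$, since for $G$ focal of totally disconnected type and $q$ a non-power integer, ``$s_G$ is a power of $q$'' is the same as $q_G = \sqrt[\max]{s_G} = q$. So it suffices to establish $(\ref{i_coft2}) \Leftrightarrow (\ref{i_foq2})$.

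For $(\ref{i_coft2}) \Rightarrow (\ref{i_foq2})$, suppose $f : G \to \FT_{m^k}$ is copci. By Lemma \ref{ftrm}, $\FT_{m^k}$ is focal of totally disconnected type with $s_{\FT_{m^k}} = m^k$. Using Lemma \ref{fow} to absorb the compact kernel $\ker f$, we may treat $f$ as a closed cocompact embedding, and then Lemma \ref{focoty} yields that $G \simeq f(G)$ is focal of totally disconnected type. Amenability of $\FT_{m^k}$ forces $\Delta_G = \Delta_{\FT_{m^k}} \circ f$, so $\Delta_G(G) = \Delta_{\FT_{m^k}}(f(G))$ is cocompact in $\Delta_{\FT_{m^k}}(\FT_{m^k}) = \langle m^k \rangle$. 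As a closed cocompact subgroup of an infinite discrete cyclic group, it has finite index, so $s_G = m^{kj}$ for some $j \ge 1$, a power of $m$.

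For the converse $(\ref{i_foq2}) \Rightarrow (\ref{i_coft2})$, write $s_G = m^\ell$; the goal is to construct a copci homomorphism $G \to \FT_{m^\ell}$. By Lemma \ref{fow} one may assume $\mathsf{W}(G) = \{1\}$ and write $G = N \rtimes \langle \sigma \rangle$ with $N$ totally disconnected. Choose a compact open $K \subset N$ with $L := \sigma K \sigma^{-1} \subsetneq K$ of index $s_G$; this exhibits $G$ as the ascending HNN extension of $K$ associated to the monomorphism $\phi : K \to K$, $k \mapsto \sigma k \sigma^{-1}$. Its Bass--Serre tree $T$ has vertex set $G/K$ and oriented edge set $G/L$, where the edge $gL$ runs from $gK$ to $g\sigma K$. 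A direct coset count shows that every vertex has $[K : L] = s_G$ outgoing and $1$ incoming incident edges, so $T$ is $(s_G + 1)$-regular, and $G$ fixes the end $\omega$ obtained by iterating the canonical incoming edge at each vertex. This produces a continuous homomorphism $f : G \to \Aut(T)_\omega = \FT_{s_G} = \FT_{m^\ell}$.

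To conclude, one checks that $f$ is copci. Properness is immediate since the stabilizer in $G$ of the base vertex $K$ is $K$ itself, which is compact. For cocompactness, both $G$ and $\FT_{s_G}$ act transitively on $V(T)$ (the former by construction, the latter by a standard argument combining $\sigma$-axis translation with horosphere transitivity, visible from the description in Lemma \ref{ftrm}), so for any $x \in \FT_{s_G}$ there exists $g \in G$ with $g^{-1}x \in \mathrm{Stab}_{\FT_{s_G}}(K)$. Since this stabilizer is compact, $\FT_{s_G} = f(G) \cdot \mathrm{Stab}_{\FT_{s_G}}(K)$ is cocompact over $f(G)$. The main delicate point I anticipate is the verification that the Bass--Serre construction indeed produces the $(s_G+1)$-regular tree with the claimed $G$-fixed end; this is classical but requires careful bookkeeping of HNN edge orientations.
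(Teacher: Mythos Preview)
Your argument is correct and follows essentially the same route as the paper: the forward direction uses that $\FT_{m^k}$ is focal totally disconnected with $s=m^k$ (Lemma~\ref{ftrm}) together with Lemmas~\ref{fow} and~\ref{focoty} and the modular-function compatibility (which the paper packages as Proposition~\ref{p_q}), while the backward direction realizes $G$ as an ascending HNN extension over a compact open $K\subset N$ and reads off the Bass--Serre tree. The only organizational difference is that you deduce the unprimed equivalence from the primed one, whereas the paper proves the unprimed case and remarks that the primed case is identical.

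One small omission worth fixing: for $G$ to genuinely be the ascending HNN extension of $K$ (equivalently, for $K$ and $\sigma$ to generate $G$), you need $\bigcup_{n\ge 0}\sigma^{-n}K\sigma^{n}=N$, not merely $\sigma K\sigma^{-1}\subsetneq K$; the paper states this hypothesis explicitly. Such a $K$ always exists for a compacting automorphism of a totally disconnected group, but you should say so. Also, the reduction to $\mathsf{W}(G)=\{1\}$ is harmless but unnecessary here, since the HNN/Bass--Serre construction works verbatim without it.
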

\begin{proof}
Suppose (\ref{i_coft}). By Lemma \ref{ftrm}, $\FT_{q^k}$ is focal of totally disconnected type and its q-invariant is $q$. Hence $G$ is also focal of totally disconnected type, and $q_G=q$ by Proposition \ref{p_q}.

Conversely, suppose (\ref{i_foq}). Write $G=N\rtimes_\sigma\mathbf{Z}$, and let $K$ be a compact open subgroup of $G$ such that $\sigma(K)\subset K$ and $\bigcup_{n\ge 0}\sigma^{-n}(K)=N$. We have $[K:\sigma(K)]=s_G=q^k$ for some $k$. Thus $G$ is the ascending HNN-extension of $K$ endowed with its endomorphism $\sigma$. The Bass-Serre tree is $(q^k+1)$-regular, and $G$ fixes a boundary point; this action is proper and vertex-transitive, so it defines a copci homomorphism $G\to\FT_{q^k}$. (This construction is already used in \cite[Lemma 6.7]{CCMT}.)

The more general equivalence is proved by exactly the same argument.
\end{proof}

If $G$ is a focal LC-group of the form $N\rtimes\mathbf{Z}$ (e.g., not of connected type), and $n\ge 1$, define $G^{[n]}$ as the open normal subgroup $N\rtimes(n\mathbf{Z})$. It is the unique normal subgroup of $G$ containing $\Ker(\Delta_G)$ such that the quotient is cyclic of order $n$.

\begin{cor}\label{tdckfo2}
Let $G_1,G_2$ be focal LC-groups of totally disconnected type. The following are equivalent:
\begin{enumerate}
\item\label{fccck1} $G_1$ and $G_2$ are commable within focal groups;
\item\label{fccck4} $q_{G_1}=q_{G_2}$ (i.e., $\Delta(G_1)$ and $\Delta(G_2)$ are commensurate subgroups of $\mathbf{Q}^*$)
\item\label{fccck2} there is a commation within focal groups $G_1\nearrow\nwarrow\nearrow\nwarrow G_2$;
\item\label{fccck8} there exists a non-power integer $q\ge 2$ and an integer $n\ge 1$, such that for $i=1,2$ there is a commation within focal groups $G_i\nearrow\nwarrow \FT_{q}^{[n]}$;
\item\label{fccck6} there exists a non-power integer $q\ge 2$ such that for $i=1,2$, there is a commation within focal groups with $G_i\nearrow\nwarrow\nearrow \FT_q$;
\item\label{fccck3} there is a commation within focal groups $G_1\nwarrow\nearrow\nwarrow\nearrow G_2$;
\item\label{fccck7} there exists an integer $m\ge 2$, such that for $i=1,2$ there is a commation within focal groups $G_i\nwarrow\nearrow \FT_m$.
\end{enumerate}
\end{cor}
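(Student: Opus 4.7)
My plan is to close a cycle among the seven conditions. The implications (3), (4), (5), (6), (7) $\Rightarrow$ (1) are immediate, since each explicitly exhibits a commation within focal groups (concatenating the two legs labelled by $i=1,2$ where the condition is stated separately). The implication (1) $\Rightarrow$ (2) is precisely Proposition~\ref{p_q}. The substance is thus to derive each of (3)--(7) from (2). Set $q=q_{G_1}=q_{G_2}$, use Lemma~\ref{l_sg} to write $s_{G_i}=q^{m_i}$ with $m_i\ge 1$, and let $L=\mathrm{lcm}(m_1,m_2)$. The essential tool throughout is Proposition~\ref{copfo}: for a focal LC-group $H$ of totally disconnected type with $s_H=q^k$, it produces a copci homomorphism $H\to\FT_{q^k}$.

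For (6) and (7), the natural move is to pass to the open finite-index subgroup $G_i^{[L/m_i]}\subset G_i$. Writing $G_i=N_i\rtimes_{\sigma_i}\mathbf{Z}$ and taking any compact open $K\subset N_i$ with $\sigma_i(K)\subset K$, telescoping gives $[K:\sigma_i^{L/m_i}(K)]=(q^{m_i})^{L/m_i}=q^L$, so $s_{G_i^{[L/m_i]}}=q^L$ and Proposition~\ref{copfo} yields a copci $G_i^{[L/m_i]}\to\FT_{q^L}$. Chaining the two sides gives
$$G_1\nwarrow G_1^{[L/m_1]}\nearrow\FT_{q^L}\nwarrow G_2^{[L/m_2]}\nearrow G_2,$$
which is (6), while either half alone yields (7) with $m=q^L$.

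For (3), (4), (5) I would use the dual construction with $\FT_q^{[L]}$: note that $\FT_q^{[L]}$ is a finite-index subgroup of $\FT_q^{[m_i]}$, and $\FT_q^{[m_i]}\to\FT_{q^{m_i}}$ is copci by Proposition~\ref{copfo} (applied with $s_{\FT_q^{[m_i]}}=q^{m_i}$); composing gives a copci $\FT_q^{[L]}\to\FT_{q^{m_i}}$. Together with the copci $G_i\to\FT_{q^{m_i}}$, this assembles into
$$G_1\nearrow\FT_{q^{m_1}}\nwarrow\FT_q^{[L]}\nearrow\FT_{q^{m_2}}\nwarrow G_2,$$
which is (3). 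The left half alone is condition (4) with $n=L$ and $H_i=\FT_{q^{m_i}}$; splicing in the finite-index inclusion $\FT_q^{[m_i]}\subset\FT_q$ gives $G_i\nearrow\FT_{q^{m_i}}\nwarrow\FT_q^{[m_i]}\nearrow\FT_q$, which is (5).

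I do not expect any serious obstacle, since the structural input is already encoded in Proposition~\ref{copfo}. The only verifications are that each intermediate group is focal (immediate for $G_i^{[k]}$ and $\FT_q^{[n]}$: both have the form $N\rtimes k\mathbf{Z}$ with $N$ noncompact and $\sigma^k$ still compacting) and that composites of copci homomorphisms are copci (stability of compact kernels and cocompact closed images under composition). The whole argument is divisibility bookkeeping between $m_1$, $m_2$ and $L$, with Proposition~\ref{copfo} providing the passage to the tree models $\FT_{q^k}$.
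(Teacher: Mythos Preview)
Your proof is correct and follows essentially the same route as the paper: the trivial implications (3)--(7)$\Rightarrow$(1), then (1)$\Rightarrow$(2) via Proposition~\ref{p_q}, then constructing all the explicit commations from (2) by applying Proposition~\ref{copfo} to embed into the tree models $\FT_{q^k}$ and using the finite-index subgroups $G_i^{[\cdot]}$ and $\FT_q^{[\cdot]}$. The only difference is bookkeeping: you take $L=\mathrm{lcm}(m_1,m_2)$ whereas the paper takes $n=\max(n_1,n_2)$; your choice is arguably cleaner, since the divisibility $m_i\mid L$ makes the inclusions $\FT_q^{[L]}\subset\FT_q^{[m_i]}$ and the index computations transparent.
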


\begin{proof}
Obviously any of the conditions (\ref{fccck2}),  (\ref{fccck8}), (\ref{fccck6}), (\ref{fccck3}), (\ref{fccck7}) implies (\ref{fccck1}).

By Proposition \ref{p_q}, (\ref{fccck1}) implies (\ref{fccck4}). 

Some of the remaining implications are immediate:
\begin{itemize}
\item
Composing with the inclusion $\FT_{q}^{[n]}\to \FT_q$, we obtain the implication (\ref{fccck8})$\Rightarrow$(\ref{fccck6}).
\item (\ref{fccck7})$\Rightarrow$(\ref{fccck3}) and (\ref{fccck8})$\Rightarrow$(\ref{fccck2}) are clear.
\end{itemize}

Assume (\ref{fccck4}). Then by Proposition \ref{copfo}, there are copci homomorphisms $G_i\to \FT_{q^{n_i}}$ for some $n_1,n_2$ (which can be chosen as $q^{n_i}=s_{G_i}$). 
If $n=\max(n_1,n_2)$, then there is (by the second equivalence in Proposition \ref{copfo}) a copci homomorphism $\FT_q^{[n_i]}\to \FT_{q^{n_i}}$, which induces a copci homomorphism $\FT_q^{[n]}\to \FT_{q^{n_i}}^{[n-n_i]}\subset \FT_{q^{n_i}}$ and thus we have copci homomorphisms $G_i\to \FT_{q^{n_i}}\leftarrow \FT_q^{[n]}$, so (\ref{fccck8}) holds.

Also assuming (\ref{fccck4}), taking the same $n$, we have copci homomorphisms $G_i\leftarrow G_i^{[n-n_i]}\to \FT_{q^n}$, proving (\ref{fccck7}). 
\end{proof}

\begin{lem}\label{copft}
For any two distinct integers $m,n\ge 2$, there is no copci homomorphism $f:\FT_n\to\FT_m$.
\end{lem}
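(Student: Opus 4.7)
The plan is to rule out copci $f:\FT_n\to\FT_m$ with $n\neq m$ via an obstruction coming from the primitive action of $S_n$ on $n$ points. First I apply the modular-function argument from the proof of Proposition \ref{p_q}: amenability of $\FT_m$ forces $\Delta_{\FT_m}\circ f=\Delta_{\FT_n}$, so $n^{\mathbf{Z}}$ has finite index in $m^{\mathbf{Z}}$, whence $n=m^k$ for a positive integer $k$; the assumption $n\neq m$ gives $k\ge 2$, so it suffices to rule this out. The map $f$ is automatically injective because $\mathsf{W}(\FT_n)=\{1\}$: the intersection of the conjugates of a vertex stabilizer in $\FT_n$ is trivial.

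Next I analyze the induced proper cocompact action of $\FT_n$ on $T_{m+1}$. Let $h'\in\FT_n$ be a loxodromic element acting on $T_{n+1}$ with translation length $1$ along an axis ending at $\omega$, and let $K_n$ be the stabilizer of a vertex $v_0^{(n)}$ on this axis. Since $\Delta_{\FT_m}(f(h'))=n=m^k$, the element $f(h')$ is loxodromic on $T_{m+1}$ with translation length $k$ along an axis ending at $\omega$. Now $f(K_n)$ is compact and hence fixes a ray to $\omega$; after sliding the base vertex along this ray to where it meets the axis of $f(h')$, and adjusting the base vertex of $T_{n+1}$ accordingly, I can arrange that $f^{-1}(K_m^{v_0})=K_n$ for a vertex $v_0$ of $T_{m+1}$ on the axis of $f(h')$, so that $f^{-1}(K_m^{f(h')v_0})=h'K_nh'^{-1}=K_n^{v_1^{(n)}}$ with $v_1^{(n)}=h'(v_0^{(n)})$ adjacent to $v_0^{(n)}$ in $T_{n+1}$.

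On the axis of $f(h')$ there are $k-1$ intermediate vertices $v_1,\dots,v_{k-1}$ between $v_0$ and $v_k:=f(h')v_0$, yielding a chain of vertex stabilizers $K_m^{v_0}\supsetneq K_m^{v_1}\supsetneq\cdots\supsetneq K_m^{v_k}$ with each step of index $m$. Pulling back by the injection $f$ gives a chain of compact open subgroups of $\FT_n$ from $K_n$ down to $K_n^{v_1^{(n)}}$ whose successive indices are each at most $m$ and whose product equals $n=m^k$; this forces each index to be exactly $m$, producing $k-1\ge 1$ strict intermediate compact open subgroups between $K_n$ and $K_n^{v_1^{(n)}}$.

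The hard part, which supplies the contradiction, is to show no such intermediate subgroup can exist. The natural projection $\pi:K_n\twoheadrightarrow S_n$ describing the action of $K_n$ on the $n$ children of $v_0^{(n)}$ in $T_{n+1}$ is surjective (any permutation of the children extends to an automorphism of $T_{n+1}$ fixing $v_0^{(n)}$ and $\omega$), and its kernel (the pointwise stabilizer of the children) is contained in $K_n^{v_1^{(n)}}$ because $v_1^{(n)}$ is one of those children. Any closed subgroup $H$ with $K_n^{v_1^{(n)}}\subset H\subset K_n$ therefore contains $\ker\pi$, so $H=\pi^{-1}(\pi(H))$ with $\pi(H)$ a subgroup of $S_n$ containing $\pi(K_n^{v_1^{(n)}})=\mathrm{Stab}_{S_n}(v_1^{(n)})=S_{n-1}$. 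Since $S_n$ is $2$-transitive on $n$ points (for $n\ge 2$), $S_{n-1}$ is maximal in $S_n$, so $\pi(H)\in\{S_{n-1},S_n\}$ and $H\in\{K_n^{v_1^{(n)}},K_n\}$. This rules out the strict intermediates, forcing $k=1$ and hence $n=m$.
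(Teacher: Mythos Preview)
Your argument has a genuine gap at the step where you assert that, after adjusting basepoints, you ``can arrange that $f^{-1}(K_m^{v_0})=K_n$''. This is precisely the step on which the contradiction hinges, and it is not justified. What you actually know is that $f(K_n)$ is compact and therefore contained in some vertex stabilizer $K_m^{v_0}$ with $v_0$ on the axis of $f(h')$; this gives only the inclusion $K_n\subset L_0:=f^{-1}(K_m^{v_0})$. The subgroup $L_0$ is compact open in $\FT_n$, but there is no reason it should be a vertex stabilizer for the action on $T_{n+1}$: compact open subgroups of the horospherical kernel of $\FT_n$ are far more general than vertex stabilizers (for instance, the kernel of $K_n\to S_n$ is compact open of index $n!$, not a vertex stabilizer when $n\ge 3$). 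Sliding $v_0$ along the axis changes $L_0$ by steps of index $m$, while sliding $v_0^{(n)}$ changes $K_n$ by steps of index $n=m^k$; there is no mechanism forcing the two filtrations $\{L_j\}$ and $\{K_n^{v_j^{(n)}}\}$ to coincide at any level. Without $L_0=K_n$ and $L_k=K_n^{v_1^{(n)}}$, the maximality of $S_{n-1}$ in $S_n$ is not applicable to your chain $L_0\supsetneq L_1\supsetneq\cdots\supsetneq L_k$, and the existence of an index-$m$ subgroup in an arbitrary compact open $L_0$ is no contradiction (e.g.\ for $n=4$, $m=2$, the preimage of $A_4$ under $K_4\to S_4$ has index~$2$).

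Everything else in your write-up is correct: the modular-function reduction to $n=m^k$, the injectivity of $f$, the translation length of $f(h')$, the fact that $[L_0:L_k]=n$ via the Haar-measure scaling under conjugation by $h'$, and hence that each successive index in the pulled-back chain equals exactly $m$, and finally the maximality of $S_{n-1}$ in $S_n$ for $n\ge 2$. But these pieces do not assemble into a proof without the unproved identification $L_0=K_n$.

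For comparison, the paper's proof takes a completely different and much shorter route. After the same reduction to $n=m^k$ with $k\ge 2$ (so $n\ge m^2\ge 2m$), it invokes Bertrand's postulate to pick a prime $p$ with $m<p<2m\le n$, and then observes that $\FT_n$ contains an element of order $p$ (a $p$-cycle in the copy of $S_n$ inside any vertex stabilizer), whereas every torsion element of $\FT_m$ lies in some vertex stabilizer, an inverse limit of iterated wreath products of $S_m$, whose torsion orders involve only primes $\le m$. This torsion obstruction bypasses any need to match up tree filtrations.
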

\begin{proof}
We can suppose $m<n$. Considering the ranges of the modular function, the condition implies that $n$ is an integral power of $m$. In particular $n\ge m^2\ge 2m$.

By Chebyshev's Theorem (Bertrand's postulate), there exists a prime $p$ in $\mathopen]m,2m\mathclose[$. Thus $m<p\le n$.

Since $\mathsf{W}(\FT_n)=\{1\}$, $f$ should be injective. We conclude by the observation that $\FT_n$ contains an element of order $p$, while $\FT_m$ has none.
\end{proof}

A strengthening of Lemma \ref{copft} will be given in Proposition \ref{gimp}.

\begin{remark}\label{rsous}
For any integers $\ell\ge 2$, $m,n\ge 1$, the groups $\FT_{\ell^m}$ and $\FT_{\ell^n}$ are commable through $\nwarrow\nearrow$. Indeed, both contain $\mathbf{Q}_\ell\rtimes_{\ell^{nm}}\mathbf{Z}$ as a closed cocompact subgroup.
\end{remark}

\begin{que}
If $G_1$ and $G_2$ are focal LC-groups of totally disconnected type and are commable, are they always commable through $G_1\nwarrow\nearrow\nwarrow G_2$?
\end{que}

\section{Commability in connected type}\label{sct}

We now study commability between focal LC-groups of connected type; this is technically the most involved case.

We will prove the following theorem (see Corollary \ref{efou}).

\begin{thm}\label{classcomcon}
Let $G$ be a focal LC-group of connected type. Then there exists a W-faithful focal LC-group $\hat{G}$ satisfying the following: for every LC-group $H$, the group $H$ is commable to $G$ within focal groups if and only if there is a copci homomorphism $H\to \hat{G}$. Moreover, $\hat{G}$ is unique up to topological isomorphism.
\end{thm}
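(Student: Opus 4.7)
The plan is to construct $\hat{G}$ algebraically as a canonical ``similarity extension'' of a W-faithful representative of $G$, verify that it is focal, W-faithful, and contains $G$ cocompactly, and then deduce the universal property from the fact that this construction is a commability invariant.

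By Lemma \ref{fow}, I may replace $G$ by $G/\mathsf{W}(G)$ and assume $G$ is W-faithful. Writing $G = N \rtimes_\sigma \Lambda$ with $\Lambda \in \{\mathbf{R},\mathbf{Z}\}$ and $N = \Ker \Delta_G$, Yamabe's theorem (Example \ref{yamabe}) together with Proposition \ref{compde} shows that $N^\circ$ is a simply connected nilpotent Lie group of finite index in $N$. A Lie-theoretic input I need is that the compacting action $\sigma$ embeds in a canonical one-parameter $\mathbf{R}$-flow $\{\sigma_t\}_{t \in \mathbf{R}}$ of compacting automorphisms of $N$: automatic for $\Lambda = \mathbf{R}$, and for $\Lambda = \mathbf{Z}$ following from the fact that the Lie-algebra derivation induced by a contracting automorphism has eigenvalues of positive real part, so its diagonalizable Jordan part yields a canonical logarithm. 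I then define
\[
\hat{G} \;=\; N \rtimes \bigl(\{\sigma_t\}_{t\in\mathbf{R}} \cdot K\bigr),
\]
where $K$ is a maximal compact subgroup of the centralizer of $\{\sigma_t\}$ in $\Aut(N)$. Routine checks verify that $\hat{G}$ is focal of connected type, is W-faithful, and receives a copci embedding of $G$.

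The heart of the proof is \emph{commability invariance}: if $f: H \to G$ is a copci between W-faithful focal LC-groups of connected type, then $\hat{H} \cong \hat{G}$ canonically. From Lemma \ref{facgo} one extracts that $f$ restricts to a copci between the nilradicals and in turn to an isomorphism $N_H^\circ \cong N_G^\circ$ (any injective copci between simply connected nilpotent Lie groups is an isomorphism, by the open-cocompact argument in connected Lie groups). The flow $\{\sigma^H_t\}$ then pulls back to one differing from $\{\sigma^G_t\}$ by a positive time rescaling composed with an element of the centralizer, so the two centralizers in $\Aut(N^\circ)$ and their maximal compact subgroups coincide, giving $\hat{H} \cong \hat{G}$. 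Iterating through a commation within focal groups -- reducing each step to the W-faithful case via Proposition \ref{ebgh} -- shows that any focal $H$ commable to $G$ satisfies $\hat{H} \cong \hat{G}$, producing the desired copci $H \to \hat{G}$. Uniqueness follows because two focal-universal candidates $\hat{G}, \hat{G}'$ admit mutual copci embeddings by their universal properties, which are injective by W-faithfulness (Proposition \ref{ebgh}) and mutually inverse after comparison of nilradicals and modular ranges.

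The main obstacle is commability invariance, and within that the rigidity claim that the one-parameter flow of compacting automorphisms is uniquely determined up to a single positive real rescaling. This requires a Lie-theoretic analysis of hyperbolic derivations on nilpotent Lie algebras, together with careful handling of the finite quotient $N/N^\circ$ to ensure the comparison is canonical up to the centralizer of the flow and not merely up to some larger ambiguity.
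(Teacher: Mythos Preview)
Your overall architecture is close to the paper's, but there is a genuine structural error that makes the argument fail as written. You claim that, after passing to the W-faithful quotient, Yamabe together with Proposition~\ref{compde} implies that $N^\circ=(\Ker\Delta_G)^\circ$ is a simply connected \emph{nilpotent} Lie group. Proposition~\ref{compde} says no such thing: it only gives a virtual direct decomposition $N^\circ\times N^\sharp$, and in connected type with $\mathsf{W}(G)=1$ this reduces to $N$ being virtually connected. The identity component $N^\circ$ can very well fail to be nilpotent. Take $G=\mathbf{R}^{d-1}\rtimes(\mathrm{O}(d-1)\times\mathbf{R}_+)$, the full affine similarity group of $\mathbf{R}^{d-1}$: this is W-faithful focal of connected type, yet $\Ker\Delta_G=\mathbf{R}^{d-1}\rtimes\mathrm{O}(d-1)$ is not nilpotent. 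Consequently your later step ``$f$ restricts to an isomorphism $N_H^\circ\cong N_G^\circ$'' also breaks: for the copci inclusion $H=\mathbf{R}^{d-1}\rtimes\mathbf{R}_+\hookrightarrow G$ the two groups $N_H^\circ=\mathbf{R}^{d-1}$ and $N_G^\circ=\mathbf{R}^{d-1}\rtimes\mathrm{O}(d-1)$ are not isomorphic, so there is no canonical identification of $\Aut(N_H)$ with $\Aut(N_G)$ on which to compare the flows.

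The paper repairs exactly this by replacing $\Ker\Delta_G$ with the nilpotent radical $\Nilc(G)$, which \emph{is} simply connected nilpotent and, crucially, is shown to satisfy $\Nilc(H)=\Nilc(G)$ for every closed cocompact $H\subset G$ (Lemma~\ref{nilc}, via Lemma~\ref{nira}). One then works in $\Out(\Nilc(G))$ rather than $\Aut(N)$: the image $\mathsf{P}_G$ of $G$ there is compact-by-$\mathbf{Z}$, and your ``canonical one-parameter flow'' is made precise as $[\mathsf{P}_G]=\Par(\mathsf{P}_G^{\textnormal r})$, obtained by first stripping off the elliptic part of each element (Lemma~\ref{dec_ri}) and then taking the vector hull (Lemma~\ref{l_par}). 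Your construction $\hat G=N\rtimes(\{\sigma_t\}\cdot K)$ with $K$ maximal compact in the centralizer of the flow is then exactly the paper's Corollary~\ref{efou}, once $N$ is read as $\Nilc(G)$ and the centralizer is taken in $\Out(\Nilc(G))$. So the strategy is right; what is missing is the passage from $\Ker\Delta_G$ to $\Nilc(G)$ and the accompanying argument that this smaller subgroup, unlike $\Ker\Delta_G$, is genuinely a commability invariant.
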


Thanks to Lemma \ref{wssc}, most of the study of commability for focal LC-groups can be carried out in the context of W-faithful focal LC-groups, and therefore the W-faithfulness assumption will often be done in the next lemmas.

Let $N$ be a simply connected nilpotent Lie group and $\mk{n}$ its Lie algebra. The tangent map at the identity induces a canonical isomorphism $\Aut(N)\to\Aut(\mk{n})$. We say that $\alpha\in\Aut(N)$ is purely positive real if all its eigenvalues are real, and that a subgroup of $\Aut(N)$ is purely positive real if all its elements are purely positive real. Note that the product of two commuting purely positive real elements is purely positive real.

Recall that a topological group is called a vector group if it is isomorphic to $\mathbf{R}^k$ for some $k\ge 0$.

\begin{lem}\label{l_par}
Let $\mathbb{G}$ be a real linear algebraic group and $G=\mathbb{G}(\mathbf{R})$. For any purely real subgroup $A\subset G$, closed and connected in the Lie topology, there exists a unique minimal vector group $\Par(A)\subset\Aut(N)$ containing $A$ and contained in the Zariski closure of $A$; the inclusion of $A$ in $\Par(A)$ is cocompact. 
\end{lem}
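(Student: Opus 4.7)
The goal is to show that the ``purely real'' eigenvalue hypothesis on the closed connected Lie subgroup $A\subset G=\mathbb{G}(\mathbf{R})$ forces $A$ to sit cocompactly inside a canonical minimal vector subgroup $\Par(A)$ of the Zariski closure $H=\overline{A}^Z$ of $A$ in $\mathbb{G}$. The plan combines the structure theory of abelian real linear algebraic groups with a Lie-algebraic argument enforcing abelianness of $A$.

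First I would replace $H$ by its Zariski-connected identity component (which still contains the Lie-connected group $A$) and establish that $A$ is abelian. The hypothesis that every element of $A$ has only real eigenvalues in the ambient faithful representation $V$ forces the Lie algebra $\mathfrak{a}\subset\mathfrak{gl}(V)$ to consist of $\mathbf{R}$-triangularizable elements of real spectrum. By Lie's theorem one may simultaneously $\mathbf{R}$-triangularize $\mathfrak{a}$; commutators of elements of $\mathfrak{a}$ are then strictly upper triangular (hence nilpotent), while also lying in $\mathfrak{a}$; exponentiating and invoking the real-spectrum condition on $A$ (a non-trivial nilpotent bracket would produce unipotent directions in $A$ that must still be accommodated), one concludes $[\mathfrak{a},\mathfrak{a}]=0$. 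Thus $A$ is abelian, and by Zariski density so is $H$.

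Next, by the structure theory of Zariski-connected commutative real linear algebraic groups, $H$ decomposes uniquely as a direct product $H=S_{\mathrm{sp}}\times S_{\mathrm{an}}\times U$ with $S_{\mathrm{sp}}$ an $\mathbf{R}$-split torus, $S_{\mathrm{an}}$ an $\mathbf{R}$-anisotropic torus (compact) and $U$ a connected unipotent algebraic group. The projection of $A$ to $S_{\mathrm{an}}$ must be trivial, since a non-trivial connected subgroup of a compact anisotropic torus acts with non-real eigenvalues on any faithful representation, contradicting the hypothesis. Lie-connectedness of $A$ also kills its image in the component group $S_{\mathrm{sp}}/S_{\mathrm{sp}}^\circ$ (a finite 2-torsion group), so $A\subset V_0:=S_{\mathrm{sp}}^\circ\times U$; the latter is a vector group, since $S_{\mathrm{sp}}^\circ\cong\mathbf{R}^k$ via $\log$ and $U\cong\mathbf{R}^m$ via the exponential on the unipotent algebraic group.

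I would then define $\Par(A)$ as the smallest closed subgroup of $V_0$ that contains $A$ and is cut out by algebraic characters of $H$ (equivalently, the real-algebraic hull of $A$ inside $V_0$); being carved out of the vector group $V_0$ by character relations which, in additive coordinates, are $\mathbf{R}$-linear, it is itself a vector group. For uniqueness and minimality, any vector subgroup $V\subset H$ containing $A$ must lie inside $V_0$ (vector-to-compact and vector-to-discrete maps are trivial) and, being Lie-closed and algebraically defined as the common kernel of the characters vanishing on it, contains $\Par(A)$. Cocompactness of $A$ in $\Par(A)$ follows because by construction $A$ is Zariski-dense in $\Par(A)$, so $\Par(A)/A$ is a connected real Lie group on which every algebraic character is trivial --- hence it carries no non-trivial homomorphism to $\mathbf{R}$ and is compact. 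The main obstacle is the first step: extracting abelianness of $A$ from only the real-eigenvalue hypothesis, since real-spectrum is not Zariski-closed and the interaction with the Jordan decomposition requires care.
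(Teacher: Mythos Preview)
There are two genuine problems.

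\textbf{Abelianness does not follow.} Your first step attempts to deduce that $A$ is abelian from the real-spectrum hypothesis, but the conclusion is simply false: the group of upper triangular $3\times 3$ real matrices with positive diagonal is closed, Lie-connected, every element has only positive real eigenvalues, and it is not abelian --- so it cannot sit inside any vector group. Your sketch argues that a nontrivial nilpotent bracket would ``produce unipotent directions in $A$'', but unipotent elements have all eigenvalues equal to $1$, which is real; there is no contradiction with the hypothesis. (You also invoke Lie's theorem before establishing solvability.) In fact the paper's own proof asserts that $B^\circ$ is abelian without justification; in every application in the paper $A$ is already known to be abelian, and the lemma tacitly needs that hypothesis.

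\textbf{The wrong $\Par(A)$.} Granting abelianness, the paper's argument is much shorter than yours: the Zariski closure $B$ still consists of real-eigenvalue matrices, so $B^\circ$ is a connected abelian Lie group whose elements have only positive real eigenvalues, hence has no nontrivial compact subgroup and is therefore a vector group; one then sets $\Par(A)$ equal to the $\mathbf{R}$-linear span of $A$ inside $B^\circ$, and cocompactness is the elementary fact that a closed subgroup of $\mathbf{R}^k$ is cocompact in its span. You instead take the \emph{algebraic} hull of $A$ in $V_0$ (the common kernel of the algebraic characters vanishing on $A$), and this is generally strictly larger: for $A=\{(e^t,e^{\sqrt{2}t}):t\in\mathbf{R}\}\subset(\mathbf{R}_{>0})^2$ the $\mathbf{R}$-span is $A$ itself, while your hull is all of $(\mathbf{R}_{>0})^2$, in which $A$ is \emph{not} cocompact (the quotient is $\mathbf{R}$). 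Your cocompactness argument breaks exactly here: Zariski density of $A$ kills the algebraic characters of $\Par(A)/A$, but continuous homomorphisms from a vector group to $\mathbf{R}$ are arbitrary $\mathbf{R}$-linear functionals, not just algebraic characters, so they can survive. Your minimality argument has the same flaw: an arbitrary vector subgroup $V\supset A$ (for instance $V=A$ in the example above) need not be cut out by characters and need not contain your algebraic hull.
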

\begin{proof}
We can fix a real embedding of $\mathbb{G}$ into $\GL_n$ and thus interpret $G$ as a closed subgroup of $\GL_n(\mathbf{R})$.

The Zariski closure $B$ of $A$ consists of matrices with only real eigenvalues. In particular, its identity component $B^\circ$ in the ordinary topology consists of matrices with only positive real eigenvalues. It follows that $B^\circ$ is a connected abelian Lie group and has no nontrivial compact subgroup and therefore $B^\circ$ is a vector group. The existence and uniqueness result is now clear, namely $\Par(A)$ corresponds to the vector subspace generated by $A$; cocompactness follows.
\end{proof}

\begin{lem}\label{dec_ri}
Every automorphism $\alpha\in\Aut(N)$ can be written in a unique way as $\alpha=\alpha_{\textnormal{r}}\alpha_{\textnormal{i}}$ where $\alpha_{\textnormal{r}}$ is purely positive real, $\alpha_{\textnormal{i}}$ is elliptic (belongs to a compact subgroup of $\Aut(N)$) and $[\alpha_{\textnormal{r}},\alpha_{\textnormal{i}}]=1$. Moreover, $\alpha_{\textnormal{r}}$ and $\alpha_{\textnormal{i}}$ belong to the Zariski closure of the subgroup generated by $\alpha$; in particular the centralizer of $\alpha_{\textnormal{d}}$ and $\alpha_{\textnormal{i}}$ each contain the centralizer of $\alpha$.
\end{lem}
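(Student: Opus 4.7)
The plan is to deduce the statement from the real (multiplicative) Jordan decomposition in $\GL(\mk{n})$, exploiting the fact that, via the tangent functor, $\Aut(N)$ is identified with the real linear algebraic group $\Aut(\mk{n})\subset\GL(\mk{n})$, cut out by the polynomial equations expressing compatibility with the bracket.

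For existence, I would apply the real Jordan decomposition to $\alpha$ viewed in $\GL(\mk{n})$, writing $\alpha=\alpha_e\alpha_h\alpha_u$ with pairwise commuting factors, where $\alpha_e$ is elliptic (diagonalisable over $\mathbf{C}$ with eigenvalues of modulus one, equivalently contained in some compact subgroup of $\GL(\mk{n})$), $\alpha_h$ is diagonalisable over $\mathbf{R}$ with strictly positive eigenvalues, and $\alpha_u$ is unipotent. The standard construction produces each factor as a polynomial in $\alpha$, so all three lie in the Zariski closure of $\langle\alpha\rangle$. Since the defining equations of $\Aut(\mk{n})$ in $\GL(\mk{n})$ are polynomial, the Borel--Chevalley principle yields $\alpha_e,\alpha_h,\alpha_u\in\Aut(\mk{n})$. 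Setting $\alpha_{\textnormal{r}}=\alpha_h\alpha_u$ and $\alpha_{\textnormal{i}}=\alpha_e$, the product $\alpha_h\alpha_u$ has only positive real eigenvalues (its two commuting factors having eigenvalues $>0$ and all equal to $1$, respectively), so $\alpha_{\textnormal{r}}$ is purely positive real, while $\alpha_{\textnormal{i}}=\alpha_e$ is elliptic. Commutativity is inherited from the pairwise commutation of the Jordan factors.

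For uniqueness, suppose $\alpha=\beta\gamma$ with $\beta$ purely positive real, $\gamma$ elliptic, and $[\beta,\gamma]=1$. I would decompose $\beta$ further via Jordan as $\beta=\beta_h\beta_u$: since $\beta$ has only positive real eigenvalues, $\beta_h$ is hyperbolic and $\beta_u$ unipotent, and both are polynomials in $\beta$ so commute with $\gamma$. Thus $\alpha=\gamma\cdot\beta_h\cdot\beta_u$ is a factorisation into pairwise commuting elliptic, hyperbolic, and unipotent elements; by uniqueness of the real Jordan decomposition of $\alpha$, we get $\gamma=\alpha_e$, $\beta_h=\alpha_h$, $\beta_u=\alpha_u$, whence $\beta=\alpha_{\textnormal{r}}$ and $\gamma=\alpha_{\textnormal{i}}$. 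The remaining assertions follow at once: each Jordan component being a polynomial in $\alpha$, both $\alpha_{\textnormal{r}}$ and $\alpha_{\textnormal{i}}$ lie in the Zariski closure of $\langle\alpha\rangle$, and any element commuting with $\alpha$ commutes with every polynomial in $\alpha$, hence with $\alpha_{\textnormal{r}}$ and $\alpha_{\textnormal{i}}$.

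The main (essentially standard) obstacle is verifying that the $\GL(\mk{n})$-Jordan components of an element of $\Aut(\mk{n})$ still preserve the Lie bracket, i.e.\ that they remain in $\Aut(\mk{n})$; this is precisely the content of the general fact that the Jordan decomposition is intrinsic to real algebraic groups, applied to the polynomial defining equations of $\Aut(\mk{n})$.
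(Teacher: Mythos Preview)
Your proof is correct and follows essentially the same route as the paper: both establish existence via the real multiplicative Jordan decomposition in $\GL(\mk{n})$ and then argue uniqueness by confronting two decompositions. The only stylistic difference is that you invoke the standard three-factor decomposition $\alpha=\alpha_e\alpha_h\alpha_u$ and its uniqueness as a black box, whereas the paper rebuilds it by hand (first $\alpha=\alpha_{\textnormal{s}}\alpha_{\textnormal{u}}$, then splitting $\alpha_{\textnormal{s}}$ into a real-diagonalizable and a unit-circle part, and finally separating the sign); your uniqueness argument, reducing an arbitrary decomposition $\alpha=\beta\gamma$ to a commuting elliptic--hyperbolic--unipotent factorisation and appealing to uniqueness of the latter, is a clean variant of the paper's computation with $\beta_{\textnormal{r}}^{-1}\alpha_{\textnormal{r}}=\beta_{\textnormal{i}}\alpha_{\textnormal{i}}^{-1}$. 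One small wording issue: saying the factors are ``polynomials in $\alpha$'' is the right intuition for the centralizer claim, but it does not by itself put them in the Zariski closure of $\langle\alpha\rangle$; for that you should (and in effect do) appeal to the intrinsic nature of the Jordan decomposition in real algebraic groups, which is exactly what the paper uses.
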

\begin{proof}[Proof (sketched)]
We work in the Zariski closure of the subgroup generated by $\alpha$, which is abelian. We have a (unique) similar decomposition $\alpha=\alpha_{\textnormal{s}}\alpha_{\textnormal{u}}$ where $\alpha_{\textnormal{s}}$ is $\mathbf{C}$-diagonalizable and $\alpha_{\textnormal{u}}$ is unipotent, and a (non-unique) decomposition $\alpha_{\textnormal{s}}=\alpha_{\textnormal{d}}\alpha_{\textnormal{k}}$ where $\alpha_{\textnormal{d}}$ is diagonalizable purely real and $\alpha_{\textnormal{k}}$ is $\mathbf{C}$-diagonalizable with all eigenvalues on the unit circle. Finally write $\alpha_{\textnormal{d}}=\alpha_{\textnormal{p}}\alpha_{\pm}$ where $\alpha_{\textnormal{p}}$ is purely real and $\alpha_{\pm}$ has only eigenvalues in $\{-1,1\}$. Finally set $\alpha_{\textnormal{r}}=\alpha_{\textnormal{p}}\alpha_{\textnormal{u}}$ and $\alpha_{\textnormal{i}}=\alpha_{\pm}\alpha_{\textnormal{k}}$. 
This shows the existence result as well as the additional statement.

Now let $\alpha=\beta_{\textnormal{r}}\beta_{\textnormal{i}}$ be another decomposition as in the statement of the lemma. So $\beta_{\textnormal{r}}^{-1}\alpha_{\textnormal{r}}=\beta_{\textnormal{i}}\alpha_{\textnormal{i}}^{-1}$. Since by assumption $[\beta_{\textnormal{r}},\beta_{\textnormal{i}}]=1$, we see that $\alpha$ centralizes both $\beta_{\textnormal{r}}$ and $\beta_{\textnormal{i}}$. Because of the additional feature of $\alpha_{\textnormal{r}}$ and $\alpha_{\textnormal{i}}$, they also centralize $\beta_{\textnormal{r}}$ and $\beta_{\textnormal{i}}$. So $\beta_{\textnormal{r}}^{-1}\alpha_{\textnormal{r}}$ has all its eigenvalues positive real, while 
$\beta_{\textnormal{i}}\alpha_{\textnormal{i}}^{-1}$ has all its eigenvalues on the unit circle; thus both are unipotent; since the latter is $\mathbf{C}$-diagonalizable, it is the identity, and thus we deduce $\alpha_{\textnormal{i}}=\beta_{\textnormal{i}}$ and $\alpha_{\textnormal{r}}=\beta_{\textnormal{r}}$.
\end{proof}

If $G$ is a connected Lie group, let $\Nil(G)$ be its connected nilpotent radical, that is, its largest nilpotent normal connected subgroup; this is a closed characteristic subgroup. If $G$ is any LC-group such that $G^\circ$ is 
a Lie group (e.g., $G$ is a W-faithful focal LC-group), we define $\Nilc(G)=\Nil(G^\circ)$.

\begin{lem}\label{nira}
Let $G=N\rtimes K$ be a connected Lie group, where $N$ is simply connected nilpotent and $K$ is compact. Assume that $K$ acts faithfully on $N$. Then 
\begin{enumerate}
\item every cocompact nilpotent subgroup of $G$ is contained in $N$;
\item for every closed connected cocompact subgroup of $G$ with a decomposition $N_1\rtimes K_1$ with $N_1$ nilpotent and $K_1$ compact, we have $N_1=N$.
\end{enumerate}
\end{lem}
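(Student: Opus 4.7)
The strategy is to deduce (2) from (1), then prove (1). For (2): since $K_1$ is compact, $N_1$ is cocompact in $H = N_1 \rtimes K_1$, and hence cocompact in $G$. Invoking (1) yields $N_1 \subset N$, so $N_1$ is a closed connected cocompact subgroup of the simply connected nilpotent $N$; the quotient $N/N_1$ is then a compact simply connected nilpotent Lie group, which must be trivial, so $N_1 = N$.

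For (1), let $\pi: G \to K$ denote the projection and suppose for contradiction that $H$ is closed cocompact nilpotent in $G$ with $\pi(H) \neq 1$. A preliminary step I would establish first is that $H \cap N$ is cocompact in $N$: this reduces to showing $\pi(H)$ is closed in $K$ (using cocompactness of $H$ together with the compactness of $K$ and the faithful $K$-action to rule out sequences $h_n \in H$ with $\pi(h_n)$ converging while representatives escape to infinity); granted that, $NH$ is closed in $G$ and $N/(H \cap N) \cong NH/H$ is closed—hence compact—in the compact space $G/H$. Consequently, $\log(H \cap N)$ is cobounded, and thus Zariski dense, in $\mathfrak{n}$.

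Pick $h \in H$ with $k := \pi(h) \neq 1$, and write $h = n_h k$. Then $\mathrm{Ad}(h)|_\mathfrak{n} = \mathrm{Ad}(n_h) \circ \mathrm{Ad}(k)|_\mathfrak{n}$; since inner automorphisms of a nilpotent Lie algebra act trivially on each graded piece $\mathfrak{n}^i/\mathfrak{n}^{i+1}$ of the lower central series, the operators $\mathrm{Ad}(h)$ and $\mathrm{Ad}(k)$ coincide on these subquotients and therefore share the same eigenvalues on $\mathfrak{n}$. Because the $K$-action is faithful and semisimple, $\mathrm{Ad}(k)$ is a nontrivial elliptic operator, which must admit an eigenvalue different from $1$; so does $\mathrm{Ad}(h)|_\mathfrak{n}$.

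To derive the contradiction, consider the coboundary map $T: m \mapsto [h,m] = \mathrm{Ad}(h)(m) m^{-1}$, which preserves $H \cap N$ and whose iterates compute nested commutators of $h$ with $m$. Since $H$ is nilpotent of some class $c$, $T^c$ vanishes identically on $H \cap N$. Transport $T$ to $\mathfrak{n}$ via $\log$: using that Baker--Campbell--Hausdorff is a finite polynomial expression in a nilpotent Lie algebra, $\log \circ T \circ \exp$ is a polynomial endomorphism of $\mathfrak{n}$ with linear (degree-one) part $\mathrm{Ad}(h)|_\mathfrak{n} - I$, whence its $c$-th iterate has linear part $(\mathrm{Ad}(h)|_\mathfrak{n} - I)^c$. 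This polynomial vanishes on the Zariski dense set $\log(H \cap N)$, hence identically; comparing linear parts yields $(\mathrm{Ad}(h)|_\mathfrak{n} - I)^c = 0$, so $\mathrm{Ad}(h)|_\mathfrak{n}$ is unipotent, contradicting the previous paragraph. The main obstacle in this plan is carefully justifying the cocompactness of $H \cap N$ in $N$—more precisely, the closedness of $\pi(H)$ in $K$—which is not automatic from continuity of $\pi$ alone.
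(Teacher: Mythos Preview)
Your deduction of (2) from (1) is correct and matches the paper exactly.

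For (1), your route is genuinely different from the paper's. The paper argues by successive reductions: after assuming (without loss of generality) that $\pi(\Gamma)$ is dense in $K$, it mods out by $[N,N]$ to make $N$ a vector group, observes that $K$ is then forced to be abelian, reduces further to a $2$-dimensional irreducible piece, and finishes by inspecting the Zariski-closed subgroups of $\mathbf{R}^2\rtimes\mathrm{SO}(2)$. Your eigenvalue/iterated-commutator argument is more uniform and conceptually cleaner: it pits the unipotence of $\mathrm{Ad}(h)|_{\mathfrak n}$ (forced by nilpotence of $H$) against the semisimplicity coming from $K$, without any case analysis.

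However, the gap you flag is real and your sketch for closing it does not work. Closedness of $\pi(H)$ in $K$ does \emph{not} follow from cocompactness, compactness of $K$, and faithfulness alone; there is no sequential argument of the kind you outline that rules out $\pi(H)$ being, say, a dense line in a torus. (One cannot produce a counterexample here precisely because the lemma is true, but that is circular.)

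The clean fix is to replace $H$ by its Zariski closure $\bar H^Z$ at the outset. Since $G=N\rtimes K$ is naturally the group of real points of a real linear algebraic group and $\pi$ is a morphism with unipotent kernel, the image $\pi(\bar H^Z)$ is Zariski closed in $K$, hence closed in the ordinary topology; moreover $\bar H^Z$ remains nilpotent and cocompact. Your argument then runs verbatim with $\bar H^Z$ in place of $H$. In fact it simplifies: one gets $\bar H^Z\cap N$ cocompact \emph{and} Zariski closed in $N$, hence equal to $N$, so the iterated-commutator identity $(\mathrm{Ad}(h)|_{\mathfrak n}-I)^c=0$ holds on all of $\mathfrak n$ without invoking Zariski density of a subset. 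Note that the paper's proof also appeals to the Zariski closure of $\Gamma$ at its final step, so this is not foreign to the intended toolkit.
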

\begin{proof}
Let $\Gamma$ be a cocompact nilpotent subgroup of $G$. We can suppose that the projection of $\Gamma$ on $K$ is dense, so we have to show $K=1$. Assume the contrary. The action of $K$ on $N$ is nontrivial, so its action on $N/[N,N]$ is nontrivial as well; so we can mod out and thus assume $N$ is a vector group. Since $K$ is a compact connected Lie group with a dense nilpotent subgroup, it is abelian and in particular, modding out once more, we can suppose that $N$ is 2-dimensional with a nontrivial action of $K$. We can also mod out by the kernel of the $K$-action, and hence we are reduced to the case of $G=\mathbf{R}^2\rtimes \textnormal{SO}(2)$.
The Zariski connected subgroups therein are either contained in $\mathbf{R}^2$, or compact, or $G$ itself. Since the Zariski closure of $G$ is nilpotent and cocompact, the only possibility is that $\Gamma\subset\mathbf{R}^2$, contradicting that $\Gamma$ has a dense projection to $\textnormal{SO}(2)$.

Let us now prove the second statement. By the previous assertion, $N_1\subset N$. Hence $N_1$ is a closed connected cocompact subgroup of $N$, so $N_1=N$.
\end{proof}

\begin{lem}\label{nilc}
Let $G$ be a W-faithful focal LC-group and let $M$ be the kernel of the modular map $\Delta:G\to\mathbf{R}_+$. Define $N=\Nilc(G)$. Then $N$ is a simply connected nilpotent Lie group; it is a closed cocompact subgroup of the Lie group $M^\circ$; the action of any $\xi\in\Delta^{-1}(\mathopen]1,+\infty\mathclose[)$ on $N$ is contracting. 
Moreover, any closed cocompact subgroup $H$ of $G$ contains $N$ and indeed satisfies $\Nilc(H)=\Nilc(G)$.
\end{lem}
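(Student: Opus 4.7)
The plan is to identify $N = \Nilc(G)$ with $M^\circ$ itself and then to propagate this identification to closed cocompact subgroups.

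\emph{First}, I would establish that $G^\circ$ and $M^\circ$ are Lie groups with trivial polycompact radicals. Since $\mathsf{W}(G) = \{1\}$, any compact normal subgroup of $G^\circ$ is characteristic in $G^\circ$, hence normal in $G$, hence trivial; by Yamabe's theorem $G^\circ$ is a Lie group. The earlier lemma identifies $M = \Ker(\Delta_G)$ with the focal factor $\widetilde N$, so $M^\circ \subset G^\circ$ is a closed connected Lie subgroup. By Example \ref{yamabe} (Yamabe), $\mathsf{W}(M^\circ)$ is compact; being characteristic in $M^\circ$, which is characteristic in $M$, which is normal in $G$, it is normal in $G$, hence $\mathsf{W}(M^\circ) \subset \mathsf{W}(G) = \{1\}$. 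By Definition \ref{defocal}, conjugation by any positive $\lambda \in \Lambda$ is a compacting automorphism of $\widetilde N$ and restricts to one of $M^\circ$.

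\emph{Second}, I would show that a connected Lie group $L$ with $\mathsf{W}(L) = \{1\}$ and a compacting automorphism $\alpha$ is simply connected nilpotent and $\alpha$ is then contracting. The induced automorphism on any nontrivial semisimple Levi quotient would preserve the Killing form and fail to be compacting, so $L$ is solvable. If $d\alpha$ had an eigenvalue of modulus $\ge 1$, one could extract an $\alpha$-invariant 1-parameter subgroup on which no iterate contracts into a compact set, contradicting the existence of a vacuum; hence $\alpha$ is contracting. Siebert's theorem then gives that $L$ is simply connected nilpotent. Applied to $L = M^\circ$, this yields the first assertion.

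\emph{Third}, I would identify $N = \Nil(G^\circ)$ with $M^\circ$. Since $M^\circ$ is connected nilpotent and normal in $G^\circ$, one has $M^\circ \subset N$. Conversely, $N$ is nilpotent hence amenable, so $\Delta_G(N)$ has compact closure in $\mathbf{R}_+$ and is therefore trivial; this gives $N \subset M$ and, by connectedness, $N \subset M^\circ$. Hence $N = M^\circ$, which is trivially cocompact in $M^\circ$. For a general $\xi \in \Delta^{-1}(\mathopen]1,+\infty\mathclose[)$, writing $\xi = \mu\lambda$ with $\mu \in M$ and $\lambda \in \Lambda_+$, conjugation by $\xi$ is an inner automorphism composed with conjugation by $\lambda$; by \cite[Lemma 6.16]{CCMT} it is still compacting on $M^\circ$, hence contracting by the eigenvalue argument of the second step.

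\emph{Finally}, consider a closed cocompact subgroup $H \subset G$. Proposition \ref{ebgh} yields $\mathsf{W}(H) = \{1\}$, and Lemma \ref{focoty} says $H$ is focal of the same type as $G$. Amenability gives $\Delta_H = \Delta_G|_H$, so $\Ker(\Delta_H) = H \cap M$, cocompact in $M$ by Lemma \ref{focoty}. Applying the previous steps to $H$ identifies $\Nilc(H) = (H \cap M)^\circ$, a closed connected subgroup of $M^\circ$. Using Proposition \ref{compde} to decompose $M$ and $H \cap M$ into their connected and totally disconnected factors, cocompactness of $H \cap M$ in $M$ descends to cocompactness of $(H \cap M)^\circ$ in $M^\circ$. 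Since $M^\circ$ is simply connected nilpotent, hence diffeomorphic to $\mathbf{R}^n$ with closed connected subgroups diffeomorphic to $\mathbf{R}^k$ and quotients diffeomorphic to $\mathbf{R}^{n-k}$, cocompactness forces equality. Hence $\Nilc(H) = N$, and in particular $N \subset H$. The main obstacle is this last descent step, requiring careful tracking of how Proposition \ref{compde}'s direct-product decompositions for $M$ and $H \cap M$ are compatible under the inclusion so that connected parts are matched.
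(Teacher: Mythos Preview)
Your second step contains a genuine error: it is \emph{not} true that a connected Lie group $L$ with $\mathsf{W}(L)=\{1\}$ admitting a compacting automorphism must be nilpotent. Take $L=\mathbf{R}^2\rtimes\mathrm{SO}(2)$ with the automorphism $\alpha(v,k)=(\tfrac12 v,k)$: this is compacting (any compact set is eventually pushed into a neighbourhood of $\{0\}\times\mathrm{SO}(2)$), and $\mathsf{W}(L)=\{1\}$ since $\mathrm{SO}(2)$ is not normal, yet $L$ is not nilpotent. Your eigenvalue argument fails precisely here: $d\alpha$ has eigenvalue $1$ on the $\mathfrak{so}(2)$ direction, but the corresponding one-parameter subgroup is the compact circle $\mathrm{SO}(2)$, so there is no contradiction with the vacuum property. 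Consequently your identification $N=M^\circ$ in the third step is wrong in general; the correct statement (which the lemma asserts) is only that $N$ is \emph{cocompact} in $M^\circ$, with a compact complement $K$ as in the decomposition $M^\circ=N\rtimes K$ of \cite[Proposition~6.9]{CCMT}. The group $G=(\mathbf{R}^2\rtimes\mathrm{SO}(2))\rtimes\mathbf{R}$, with $\mathbf{R}$ acting by scaling on $\mathbf{R}^2$, is a W-faithful focal group exhibiting $N=\mathbf{R}^2\subsetneq M^\circ$.

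This error propagates: in your final step the claim $\Nilc(H)=(H\cap M)^\circ$ fails for the same reason, and ``cocompactness forces equality'' between simply connected nilpotent groups is no longer the situation you are in. The paper's argument for the ``moreover'' clause is genuinely different: it uses the $\xi$-invariant decompositions $G^{\circ\circ}=\Nilc(G)\rtimes K$ and $H^{\circ\circ}=\Nilc(H)\rtimes L$ (with $K,L$ compact acting faithfully), and then invokes Lemma~\ref{nira} --- a separate argument showing that in a group of the form $N\rtimes K$ with $K$ compact acting faithfully on simply connected nilpotent $N$, every cocompact nilpotent subgroup already lies in $N$. That lemma is exactly what handles the compact complement you have overlooked.
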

\begin{proof}
It is a general elementary fact that in any LC-group, every nilpotent normal subgroup is contained in the kernel of the modular function. So $N=\Nil(M^\circ)$. 

If $\xi\in\Delta^{-1}(\mathopen]1,+\infty\mathclose[)$, then using \cite[Prop.~6.15 and Theorem 7.3(iv)]{CCMT}, $\xi$ acts as a compaction on $M$ and thus on $M^\circ$. By \cite[Proposition 6.9]{CCMT}, we deduce that $N$ is simply connected, is cocompact in $M^\circ$ and that $\xi$ acts as a contraction on $N$. 

If $G$ is focal, write $G^{\circ\circ}$ for $G^\circ\cap\Ker(\Delta_G)=\Ker(\Delta_G)^\circ$.

If $H$ is closed and cocompact in $G$, it follows from Lemma \ref{facgo} that $f(H^{\circ\circ})$ is closed cocompact in $G^{\circ\circ}$. Choose $\xi$ as above inside $H$. Then there are, by \cite[Proposition 6.9]{CCMT}, $\xi$-invariant decomposition $G^{\circ\circ}=\Nil(G^{\circ\circ})\rtimes K$ and $H^{\circ\circ}=\Nil(H^{\circ\circ})\rtimes L$ with $K,L$ compact. The action of $K$ and $L$ respectively are faithful because their kernel is normal and $\mathsf{W}(G)=\mathsf{W}(H)=\{1\}$. By Lemma \ref{nira}, we obtain that $\Nil(H^{\circ\circ})=\Nil(G^{\circ\circ})$. Clearly, $\Nil(H^{\circ\circ})=\Nilc(H)$ and $\Nil(G^{\circ\circ})=\Nilc(G)$.
\end{proof}

Let $G$ be a W-faithful focal LC-group and $N=\Nilc(G)$.  Note that $\Out(N)$ is naturally the group of real points of a real linear algebraic group\footnote{It follows from basic Galois cohomology --~namely, the vanishing of the Galois cohomology in degree one of unipotent groups in characteristic zero~-- that the canonical homomorphism $\Out(N(K))\to(\Out(N))(K)$ is an isomorphism for every field $K$ containing a definition field of $N$, where $\Out(N)$ in the right-hand term is defined, as the algebraic group quotient of $\Aut(N)$ by the Zariski-closed subgroup of inner automorphisms, which is unipotent. Here we identify $\Out(N)$ with $(\Out(N))(\mathbf{R})$ and the previous remark shows this identification is harmless.}. Let $\mathsf{P}_G$ be the image of $G$ in $\Out(N)$. Let $\mathsf{P}_G^{\textnormal{r}}$ denote the set of $\alpha_{\textnormal{r}}$ when $\alpha$ ranges over $\mathsf{P}_G$.

If $\xi\in G$, let $\alpha(\xi)$ be the outer automorphism it induces on $N$.

\begin{lem}\label{l_bg}
Let $G$ be a W-faithful focal LC-group. The subgroup $\mathsf{P}_G\subset\Out(N)$ is closed and for every $\xi\notin\Ker(\Delta)$ it contains the subgroup $\langle\alpha(\xi)\rangle$ as an infinite cyclic discrete cocompact subgroup; besides, the subgroup $\mathsf{P}_G^{\textnormal{r}}\subset\Out(N)$ is closed and contains the infinite cyclic subgroup $\langle\alpha_{\textnormal{r}}\rangle$ as an infinite cyclic discrete cocompact subgroup for all $\alpha\notin\Ker(\Delta)$. .
\end{lem}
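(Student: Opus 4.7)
The plan is to exploit a ``modulus'' homomorphism $\delta\colon\Out(N)\to\mathbf{R}_+^*$, defined as the absolute value of the determinant on the Lie algebra $\mathfrak{n}$ of $N$. This is well-defined on the quotient $\Out(N)$ because inner automorphisms of the nilpotent group $N$ act unipotently on $\mathfrak{n}$. Since $\Ker(\Delta_G)/N$ is compact (by Lemma~\ref{nilc} combined with the definition of connected type) and $\Nilc(G)=N$ acts on itself by inner automorphisms, a standard computation with modular functions of the extension $N\triangleleft G$ yields $\delta\circ\phi=\Delta_G^{\pm 1}$. In particular $\delta(\phi(\xi))\ne 1$ for $\xi\notin\Ker(\Delta_G)$, reflecting the contracting action on $N$ from Lemma~\ref{nilc}. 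Moreover $K:=\phi(\Ker(\Delta_G))$ is compact, since $N\subset\Ker(\phi)$ and $\Ker(\Delta_G)/N$ is compact; and because $\xi$ normalizes $\Ker(\Delta_G)$, one obtains the structural decomposition $\mathsf{P}_G=K\cdot\langle\phi(\xi)\rangle$.

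With this setup, the first assertions all follow from a single $\delta$-argument. For closedness of $\mathsf{P}_G$: if $k_n\phi(\xi)^{m_n}\to\beta$, then $\delta(k_n)\Delta_G(\xi)^{\pm m_n}\to\delta(\beta)$; since $\delta(K)$ is compact and $\Delta_G(\xi)\ne 1$, the integers $m_n$ are bounded, so after extracting a subsequence with constant value $m$ we get $k_n\to\beta\phi(\xi)^{-m}\in K$, hence $\beta\in\mathsf{P}_G$. Discreteness of $\langle\phi(\xi)\rangle$: a convergent subsequence $\phi(\xi)^{n_k}\to 1$ forces $\Delta_G(\xi)^{n_k}\to 1$ and thus $n_k=0$ eventually. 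Cocompactness is immediate from the surjection $K\twoheadrightarrow\mathsf{P}_G/\langle\phi(\xi)\rangle$.

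For the second assertion, the inclusion $\langle\phi(\xi)_{\textnormal{r}}\rangle\subset\mathsf{P}_G^{\textnormal{r}}$ is immediate from Lemma~\ref{dec_ri}, whose uniqueness clause gives $(\phi(\xi)^n)_{\textnormal{r}}=\phi(\xi)_{\textnormal{r}}^n$. The principal technical obstacle is showing that $\mathsf{P}_G^{\textnormal{r}}$ is itself a subgroup, because $\alpha\mapsto\alpha_{\textnormal{r}}$ is multiplicative only on commuting pairs. My plan is to work inside the Zariski closure $\overline{\mathsf{P}_G}$ of $\mathsf{P}_G$ in the algebraic group $\Out(N)$: elements of $K$ are elliptic and hence have trivial real part, while $\phi(\xi)_{\textnormal{r}}$ lies in the Zariski closure of $\langle\phi(\xi)\rangle$ and therefore normalizes $\overline K$. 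Using the compatibility of the Jordan decomposition with Zariski-closed normalizers and with the elliptic structure of the compact group $\overline K(\mathbf{R})$, one identifies $\mathsf{P}_G^{\textnormal{r}}$ with the image of $\mathsf{P}_G$ under projection to an $\mathbf{R}$-split subgroup of $\overline{\mathsf{P}_G}$, thereby endowing it with an abelian subgroup structure.

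Once $\mathsf{P}_G^{\textnormal{r}}$ is known to be a subgroup, its closedness and the discreteness and cocompactness of $\langle\phi(\xi)_{\textnormal{r}}\rangle$ follow by repeating the $\delta$-argument: since $\phi(\xi)_{\textnormal{i}}$ is elliptic we have $\delta(\phi(\xi)_{\textnormal{i}})=1$, hence $\delta(\phi(\xi)_{\textnormal{r}})=\delta(\phi(\xi))\ne 1$, and $\mathsf{P}_G^{\textnormal{r}}$ decomposes as $K'\cdot\langle\phi(\xi)_{\textnormal{r}}\rangle$ with $K':=\mathsf{P}_G^{\textnormal{r}}\cap\Ker(\delta)$ a compact subgroup, to which the arguments of Part~1 apply verbatim. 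The crux of the whole proof is therefore the algebraic-group analysis identifying $\mathsf{P}_G^{\textnormal{r}}$ as a subgroup, everything else reducing to the determinantal obstruction provided by $\delta$.
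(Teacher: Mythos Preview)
Your treatment of the first assertion via the determinant homomorphism $\delta$ is correct and is essentially a concrete unpacking of the paper's argument: the paper observes that the map $G/N\to\Out(N)$ is proper because its restriction to the discrete cocompact cyclic subgroup $\langle\xi N\rangle$ is proper, and your $\delta$-computation is one way to verify this properness. One minor point: writing $\mathsf{P}_G=K\cdot\langle\phi(\xi)\rangle$ tacitly assumes $\Lambda=\mathbf{Z}$; when $\Lambda=\mathbf{R}$ you need $K\cdot\phi(\Lambda)$, but your argument adapts with $m_n\in\mathbf{R}$ in place of integers.

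The genuine gap is in the second assertion, precisely at the step you flag as the crux. You only establish that $\phi(\xi)_{\textnormal{r}}$ \emph{normalizes} $\overline{K}$, which gives a semidirect product $\overline{\mathsf{P}_G}=C\rtimes P$ with $C$ compact and $P$ purely positive real. The quotient homomorphism to $P$ exists, but it need not coincide with $\alpha\mapsto\alpha_{\textnormal{r}}$: for $\alpha=cp$, the factorization $\alpha=p\cdot c$ has commuting factors (as required by Lemma~\ref{dec_ri}) only when $c$ and $p$ commute. Your appeal to ``compatibility of the Jordan decomposition with Zariski-closed normalizers'' does not supply this. The paper closes this gap by invoking \cite[Proposition~6.9]{CCMT}, which gives that $G/N$ is the \emph{direct} product of a cyclic group and a compact group; thus $K$ and $\langle\phi(\xi)\rangle$ centralize each other, the Zariski closure splits as a genuine direct product $(LK)\times P$, and the projection to $P$ is exactly $\alpha\mapsto\alpha_{\textnormal{r}}$. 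With centralizing in hand one gets immediately $(k\phi(\xi)^n)_{\textnormal{r}}=\phi(\xi)_{\textnormal{r}}^n$, so $\mathsf{P}_G^{\textnormal{r}}=\langle\phi(\xi)_{\textnormal{r}}\rangle$ is visibly a closed subgroup and your $\delta$-argument (or the paper's properness argument) finishes the job. Incidentally, your $K'=\mathsf{P}_G^{\textnormal{r}}\cap\Ker(\delta)$ is not merely compact but trivial, since a compact purely positive real element is the identity.
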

\begin{proof}
The natural homomorphism $G\to\Out(N)$ is continuous and factors through $G/N$, which contains the infinite cyclic subgroups $\langle\alpha\rangle$ and $\langle\alpha_{\textnormal{r}}\rangle$ as cocompact discrete subgroups; in restriction to those, it is proper, and therefore the homomorphism $G/N\to\Out(N)$ is proper and in particular has a closed image. 

The group $G/N$ is the direct product of a cyclic group and a compact group \cite[Proposition 6.9]{CCMT}. Therefore $\mathsf{P}_G$ is generated by a compact group $K$ and a cyclic group centralizing each other. It follows that the Zariski closure $S$ of $\mathsf{P}_G$ is generated by $K$ and an abelian group $A$ contained in the centralizer of $K$. According to Lemma \ref{dec_ri}, we can write $A=L\times P$ where $L$ is compact and $P$ is purely positive real. It follows that $S$ is the direct product of the compact group $LK$ and $P$, and $\rho:\alpha\mapsto\alpha_{\textnormal{r}}$ is the projection onto $P$. We see that elements of $P$ are exactly those elements in $S$ that are purely positive real. In particular $\mathsf{P}_G^{\textnormal{r}}\subset P$.  
Since $\rho$ is proper and $\langle\alpha\rangle$ is discrete infinite cyclic and cocompact in $\mathsf{P}_G$, we deduce that $\mathsf{P}_G^{\textnormal{r}}$ is closed and
contains the infinite cyclic subgroup $\langle\alpha_{\textnormal{r}}\rangle$ as an infinite cyclic discrete cocompact subgroup for all $\alpha\notin\Ker(\Delta)$. 
\end{proof}

Since $\mathsf{P}_G^{\textnormal{r}}$ is closed by Lemma \ref{l_bg}, we can introduce (see  Definition \ref{l_par} for the definition of $\Par$):

\begin{defn}
Define the closed subgroup $[\mathsf{P}_G]\subset\Out(N)$ to be equal to $\Par(\mathsf{P}_G^{\textnormal{r}})$.
\end{defn}

\begin{lem}\label{l_bgc}
The subgroup $[\mathsf{P}_G]\subset \Out(N)$ is isomorphic to $\mathbf{R}$ and centralizes $\mathsf{P}_G$.  Moreover for any closed cocompact subgroup $H\subset G$ we have $[\mathsf{P}_H]=[\mathsf{P}_G]$.
\end{lem}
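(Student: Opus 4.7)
My plan is to read both statements off from the Zariski-closure description of $\mathsf{P}_G$ established inside the proof of Lemma \ref{l_bg}. Recall that the Zariski closure $S$ of $\mathsf{P}_G$ in $\Out(N)$ splits as a direct product $S = LK \times P$, with $LK$ compact, $P$ a vector group consisting of purely positive real elements, and the projection $\rho \colon S \to P$ coinciding with $\alpha \mapsto \alpha_{\textnormal{r}}$ on $\mathsf{P}_G$. In particular $\mathsf{P}_G^{\textnormal{r}} \subset P$, and Lemma \ref{l_bg} furthermore tells us that $\mathsf{P}_G^{\textnormal{r}}$ contains a nontrivial infinite cyclic discrete cocompact subgroup $\langle \alpha(\xi)_{\textnormal{r}}\rangle$ (taking any $\xi \notin \Ker(\Delta_G)$, which exists because $G$ is focal).

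A closed subgroup of a finite-dimensional vector group that contains a rank-one discrete cocompact subgroup must itself have rank one, so it is either $\mathbf{Z}$ or $\mathbf{R}$; in either case the minimal vector subgroup of $P$ containing it, which by Lemma \ref{l_par} is precisely $\Par(\mathsf{P}_G^{\textnormal{r}}) = [\mathsf{P}_G]$, is one-dimensional. This gives $[\mathsf{P}_G] \cong \mathbf{R}$. Moreover, since $P$ is a direct factor of $S$, it centralizes $S$ and hence centralizes $\mathsf{P}_G$; this centralization is inherited by the subgroup $[\mathsf{P}_G] \subset P$.

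For the invariance statement, let $H \subset G$ be a closed cocompact subgroup. Applying Proposition \ref{ebgh} to the inclusion gives $\mathsf{W}(H) = \{1\}$, and by Lemma \ref{focoty} the group $H$ is focal of connected type; thus $H$ satisfies the hypotheses of the previous arguments. By Lemma \ref{nilc} we have $\Nilc(H) = \Nilc(G) = N$, so $\mathsf{P}_H$ and $\mathsf{P}_G$ are both subgroups of the same $\Out(N)$ with $\mathsf{P}_H \subset \mathsf{P}_G$. The uniqueness in Lemma \ref{dec_ri} makes the assignment $\alpha \mapsto \alpha_{\textnormal{r}}$ intrinsic, so $\mathsf{P}_H^{\textnormal{r}} \subset \mathsf{P}_G^{\textnormal{r}}$ and hence $[\mathsf{P}_H] \subset [\mathsf{P}_G]$; applying the first part to $H$ yields $[\mathsf{P}_H] \cong \mathbf{R}$, and two one-dimensional vector subgroups one inside the other must coincide. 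The main (minor) obstacle is simply to verify that the structural results of Lemmas \ref{l_bg}, \ref{nilc} and \ref{dec_ri} indeed allow one to compare $\mathsf{P}_H$ and $\mathsf{P}_G$ inside a single ambient algebraic group $\Out(N)$; once this is done, the conclusion is immediate.
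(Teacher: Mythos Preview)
Your argument is correct and follows essentially the same line as the paper's proof: both deduce $[\mathsf{P}_G]\cong\mathbf{R}$ from the fact that $\mathsf{P}_G^{\textnormal{r}}$ sits in the vector group $P$ with an infinite cyclic cocompact subgroup, and both exploit the decomposition $S=LK\times P$ for the centralization (the paper actually omits this step, so your explicit treatment is a small improvement). For the invariance under passing to a closed cocompact $H$, the paper takes a slightly more direct route: it observes that $[\mathsf{P}_G]=\Par(\langle\alpha(\xi)_{\textnormal{r}}\rangle)$ for \emph{any} single $\xi\notin\Ker(\Delta)$, and then simply chooses $\xi\in H$ to obtain $[\mathsf{P}_H]=[\mathsf{P}_G]$ immediately, whereas you argue via the inclusion $\mathsf{P}_H^{\textnormal{r}}\subset\mathsf{P}_G^{\textnormal{r}}$ and compare dimensions. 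Both approaches are equally valid and of comparable length.
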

\begin{proof}
By Lemma \ref{l_par}, we have $\Par(\mathsf{P}_G^{\textnormal{r}})\subset P$ and the inclusion $\mathsf{P}_G^{\textnormal{r}}\subset\Par(\mathsf{P}_G^{\textnormal{r}})$ is cocompact. By Lemma \ref{l_bg}, $\mathsf{P}_G^{\textnormal{r}}$ has an infinite cyclic discrete cocompact subgroup. Thus $\Par(\mathsf{P}_G^{\textnormal{r}})$ is a vector group with an infinite cyclic discrete cocompact subgroup, hence is isomorphic to $\mathbf{R}$. 

Finally observe that $[\mathsf{P}_G]=\Par(\langle\alpha_{\textnormal{r}}\rangle)$ for $\alpha=\alpha(\xi)$ and any $\xi\notin\Ker(\Delta)$. Picking $\xi\in H$, we obtain the same description for $[\mathsf{P}_H]$ and thus $[\mathsf{P}_G]=[\mathsf{P}_H]$.
\end{proof}

If $U_1,U_2$ are groups and $\phi:U_1\to U_2$ is an isomorphism, it induces an isomorphism $\Aut(U_1)\to\Aut(U_2)$ given by $\alpha\mapsto \phi\circ\alpha\circ\phi^{-1}$, which maps inner automorphism of $U_1$ onto inner automorphisms of $U_2$ and thus factors through an isomorphism $\phi_*:\Out(U_1)\to\Out(U_2)$.

\begin{lem}\label{cckfo}
Let $G_1,G_2$ be W-faithful focal LC-groups of connected type. Define $N_i=\Nilc(G_i)$. Equivalences:
\begin{enumerate}
\item\label{fock1} $G_1$ and $G_2$ are commable; 
\item\label{fock2} there exists a W-faithful focal LC-group $G$ and copci homomorphisms $G_1\to G\leftarrow G_2$;
\item\label{fock3} there exists an isomorphism $\phi:N_1\to N_2$ such that $\phi_*([\mathsf{P}_{G_1}])=[\mathsf{P}_{G_2}]$.  
\end{enumerate}
\end{lem}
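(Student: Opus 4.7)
The plan is to prove the cycle of implications (2) $\Rightarrow$ (1) $\Rightarrow$ (3) $\Rightarrow$ (2), with (2) $\Rightarrow$ (3) also obtainable directly as a short corollary. The implication (2) $\Rightarrow$ (1) is immediate from the definition of commability. For (2) $\Rightarrow$ (3): W-faithfulness of $G_1$ and $G_2$ together with Proposition \ref{ebgh} forces the copci maps $G_i \to G$ to have trivial kernel, so each $G_i$ embeds as a closed cocompact subgroup of $G$; Lemma \ref{nilc} identifies $\Nilc(G_1) = \Nilc(G) = \Nilc(G_2) =: N$ inside $G$, and Lemma \ref{l_bgc} yields $[\mathsf{P}_{G_1}] = [\mathsf{P}_G] = [\mathsf{P}_{G_2}]$ in $\Out(N)$, so $\phi = \mathrm{id}_N$ works.

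For (1) $\Rightarrow$ (3), the first step is to reduce to a chain of injective copci homomorphisms through W-faithful focal LC-groups of connected type. Focal LC-groups have compact polycompact radical (Lemma \ref{fow}), Lemma \ref{focoty} preserves focality and connected type under closed cocompact subgroups, and Lemma \ref{fow} handles quotients by compact normal subgroups; together with Proposition \ref{ebgh} the hypotheses of Lemma \ref{wssc} are met. Along each injective copci step $A \hookrightarrow B$ in the refined chain, Lemma \ref{nilc} identifies $\Nilc(A) = \Nilc(B)$ as the same closed subgroup of $B$, and Lemma \ref{l_bgc} identifies $[\mathsf{P}_A] = [\mathsf{P}_B]$ inside $\Out(\Nilc(B))$. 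Composing these identifications along the chain produces the desired isomorphism $\phi: N_1 \to N_2$ with $\phi_*([\mathsf{P}_{G_1}]) = [\mathsf{P}_{G_2}]$.

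The substantive implication is (3) $\Rightarrow$ (2). After identifying $N_1 = N_2 =: N$ via $\phi$, we have a common one-parameter subgroup $L := [\mathsf{P}_{G_1}] = [\mathsf{P}_{G_2}] \subset \Out(N)$, with both $\mathsf{P}_{G_i}$ lying in the centralizer $Z_{\Out(N)}(L)$ and containing $L$ cocompactly (Lemma \ref{l_bgc}). The conjugation action of $G_i$ on $N$ yields homomorphisms $\tilde\rho_i : G_i \to \Aut(N)$ lifting the structural map $G_i \to \mathsf{P}_{G_i} \subset \Out(N)$. The plan is to build a host $G = N \rtimes A$, where $A \subset \Aut(N)$ is a closed subgroup into which the images of $\tilde\rho_1$ and $\tilde\rho_2$ both embed cocompactly, and whose image in $\Out(N)$ is a closed subgroup of $Z_{\Out(N)}(L)$ containing $\mathsf{P}_{G_1} \cup \mathsf{P}_{G_2}$ cocompactly (a natural candidate being $\overline{\mathsf{P}_{G_1} \cdot \mathsf{P}_{G_2}}$, which contains $L$ cocompactly). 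Focality of $G$ then follows because any $\xi \in A$ whose image in $\Out(N)$ is a nontrivial element of the purely-positive-real locus of $L$ acts as a compacting automorphism on $N$, a property transported from $G_i$ via Lemma \ref{nilc}; W-faithfulness is imposed by a final quotient by the polycompact radical.

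The main obstacle lies in this last construction: arranging that $\tilde\rho_1$ and $\tilde\rho_2$ fit compatibly inside a single extension $N \rtimes A$, and verifying that the resulting maps $G_i \to G$ are copci. The compatibility amounts to a splitting problem for $\mathrm{Inn}(N) \to \Aut(N) \to \Out(N)$, which by the footnote preceding Lemma \ref{l_bg} is tractable because the kernel is the real points of a unipotent algebraic group in characteristic zero. Once a suitable $A$ is in place, copci-ness of $G_i \to G$ should follow from properness of $G_i/N \to \Out(N)$ (Lemma \ref{l_bg}) combined with cocompactness of $\mathsf{P}_{G_i}$ inside the image of $A$ in $\Out(N)$.
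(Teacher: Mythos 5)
Your treatment of (2)$\Rightarrow$(1), (2)$\Rightarrow$(3) and (1)$\Rightarrow$(3) is essentially the paper's (reduce to a single copci arrow and apply Lemmas \ref{nilc} and \ref{l_bgc}), but the substantive implication (3)$\Rightarrow$(2) is not actually proved: you state a plan, explicitly defer its ``main obstacle'', and the plan as sketched would fail. Asking for a closed $2$-ended subgroup of the centralizer $C$ of $L=[\mathsf{P}_{G_1}]=[\mathsf{P}_{G_2}]$ in $\Out(N)$ that literally contains $\mathsf{P}_{G_1}\cup\mathsf{P}_{G_2}$ is in general impossible, and your candidate $\overline{\mathsf{P}_{G_1}\cdot\mathsf{P}_{G_2}}$ need not even be a subgroup, while the closed subgroup generated by $\mathsf{P}_{G_1}$ and $\mathsf{P}_{G_2}$ need not be compact modulo $L$. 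Concretely, for $N=\mathbf{R}^n$ one has $\Out(N)=\GL_n(\mathbf{R})$ and $L$ the positive scalars; if $\mathsf{P}_{G_1}=\mathbf{R}_{>0}\,\mathrm{O}(n)$ and $\mathsf{P}_{G_2}=g\bigl(\mathbf{R}_{>0}\,\mathrm{O}(n)\bigr)g^{-1}$ for a generic $g$, the subgroup they generate is not $2$-ended, so the resulting $N\rtimes A$ is not focal. (A smaller slip: $\mathsf{P}_{G_i}$ need not contain $L$; what Lemma \ref{l_bg} gives is that the image of $\mathsf{P}_{G_i}$ in $C/L$ is relatively compact.)

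The missing idea --- and the heart of the paper's argument --- is a conjugation step inside $C$: since $C$ is Zariski closed it has finitely many ordinary connected components, hence so does $C/L$; therefore there is a maximal compact subgroup $L_0/L$ of $C/L$, and since the images of the $\mathsf{P}_{G_i}$ in $C/L$ are relatively compact there exist $g_i\in C$ with $g_i\mathsf{P}_{G_i}g_i^{-1}\subset L_0$, the inclusion being cocompact because both groups are $2$-ended. The host is then the preimage of $L_0$ under the natural map from $N\rtimes\Aut(N)$ to $\Out(N)$; each $G_i$, being W-faithful and of the form $N\rtimes Q_i$, embeds into $N\rtimes\Aut(N)$, which also disposes of your ``splitting problem'' without needing any section of $\Aut(N)\to\Out(N)$, and conjugating by lifts of the $g_i$ yields the two copci homomorphisms into a single W-faithful focal group. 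Without the passage to a maximal compact subgroup of $C/L$ and the conjugation of the $\mathsf{P}_{G_i}$, the construction you outline does not go through, so as written the proposal leaves the key implication unproved.
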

\begin{proof}
(\ref{fock2}) trivially implies (\ref{fock1}).

Suppose (\ref{fock1}) and let us prove (\ref{fock3}). Since the condition in (\ref{fock3}) is transitive, we can suppose that there exists a copci homomorphism $\phi:G_1\to G_2$. By Lemma \ref{nilc}, $\phi$ restricts to an isomorphism $N_1\to N_2$ and thus also induces an isomorphism $\phi_*:\Out(N_1)\to\Out(N_2)$ mapping $\mathsf{P}_{G_1}$ to $\mathsf{P}_{G_2}$. Since $[\mathsf{P}_{G_i}]$ is canonically defined from $\mathsf{P}_{G_i}$ within $\Out(N_i)$, it follows that $\phi_*([\mathsf{P}_{G_1}])=[\mathsf{P}_{G_2}]$.

Suppose (\ref{fock3}) and let us show (\ref{fock2}). Identifying $N_1$ and $N_2$ through $\phi$, we can suppose that $N_1=N_2$ and $[\mathsf{P}_{G_1}]=[\mathsf{P}_{G_2}]$. Let $C$ be the centralizer of $B=[\mathsf{P}_{G_1}]=[\mathsf{P}_{G_2}]$. By Lemma \ref{l_bgc}, $\mathsf{P}_{G_1}$ and $\mathsf{P}_{G_2}$ are both contained in $C$. Since $C$ is Zariski-closed, it has finitely many ordinary connected components and $C/B$ as well. So if $L/B$ is a maximal compact normal subgroup of $C/B$, then for each $i=1,2$ there exists $g_i\in C$ such that $g_i\mathsf{P}_{G_i}g_i^{-1}\subset L$. Since $\mathsf{P}_{G_i}$ and $L$ are both 2-ended, the latter inclusion is cocompact. Therefore if $G$ is the inverse image of $L$ in $\Aut(N)$ if we identify $G_1$ and $G_2$ with their image in $\Aut(N)$ and if we lift $g_i$ to an element $\gamma_i\in\Aut(N)$, we have the cocompact inclusion $\gamma_iG_i\gamma_i^{-1}\subset L$ and $L$ is W-faithful focal.
\end{proof}

We say that a focal LC-group $G$ is focal-universal if it is W-faithful, and if for every focal LC-group $H$ commable to $G$, there is a copci homomorphism $H\to G$.

\begin{prop}\label{cu}
Let $G$ be a W-faithful focal LC-group of connected type and $N=\Nilc(G)$. The following statement are equivalent:
\begin{enumerate}
\item\label{un1} $G$ is focal-universal;
\item\label{un2} we have $[\mathsf{P}_G]\subset \mathsf{P}_G$ and $\mathsf{P}_G/[\mathsf{P}_G]$ is a maximal compact subgroup in $C([\mathsf{P}_G])/[\mathsf{P}_G]$, where $C(\cdot)$ denotes the centralizer in $\Out(N)$;
\item\label{un3} the image of $G$ in $\Out(N)$ is maximal among closed 2-ended subgroups.
\item\label{un15} $G$ is focal-universal within $\Aut(N)$, in the sense that, identifying $G$ with its image in $\Aut(N)$, it is not strictly contained as a cocompact subgroup in any larger closed subgroup of $\Aut(N)$;
\item\label{un4} every copci homomorphism $G\to G_1$ into a W-faithful focal LC-group is a topological isomorphism.
\end{enumerate}
\end{prop}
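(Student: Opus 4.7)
The plan is to prove the five conditions equivalent via the cycle
(2) $\Rightarrow$ (3) $\Rightarrow$ (\ref{un15}) $\Rightarrow$ (\ref{un4}) $\Rightarrow$ (1) $\Rightarrow$ (2).
The central tool will be Lemma \ref{cckfo}, which reduces commability of W-faithful focal LC-groups of connected type to the conjugacy class of the pair $(N,[\mathsf{P}_G])$ in $\Out(N)$; combined with Lemma \ref{nilc} (so that $N=\Nilc(G)$ is preserved by any copci homomorphism) and Lemma \ref{l_bgc} (so that $[\mathsf{P}_G]$ is a copy of $\mathbf{R}$ centralizing $\mathsf{P}_G$, also preserved under cocompact inclusions), this will let us do all the work inside $\Out(N)$.

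First I would handle (2) $\Rightarrow$ (3). By Lemma \ref{l_bgc}, $[\mathsf{P}_G]$ centralizes $\mathsf{P}_G$; if $[\mathsf{P}_G]\not\subset \mathsf{P}_G$, the closed subgroup $\mathsf{P}_G\cdot [\mathsf{P}_G]$ is 2-ended and strictly contains $\mathsf{P}_G$, contradicting (3). Once $[\mathsf{P}_G]\subset\mathsf{P}_G$, any closed 2-ended $H\supset\mathsf{P}_G$ must satisfy $H^\circ=[\mathsf{P}_G]\cong\mathbf{R}$ (connected 2-ended Lie group), so $H/[\mathsf{P}_G]$ is compact. Conjugation of $H$ on $[\mathsf{P}_G]$ factors through $\Aut(\mathbf{R})=\mathbf{R}^*$ with compact image in $\{\pm1\}$, so $C_H([\mathsf{P}_G])$ has index at most $2$ in $H$; an index-$2$ element would invert some $\alpha_{\textnormal{r}}$ of compacting type, contradicting the preservation of the modular character on $\Out(N)$. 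Thus $H\subset C([\mathsf{P}_G])$ and (2) gives $H=\mathsf{P}_G$. The converse (3) $\Rightarrow$ (2) follows by the same dictionary: any compact extension of $\mathsf{P}_G/[\mathsf{P}_G]$ inside $C([\mathsf{P}_G])/[\mathsf{P}_G]$ lifts to a closed 2-ended supergroup of $\mathsf{P}_G$.

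Next, (3) $\Leftrightarrow$ (\ref{un15}) comes from Lemma \ref{cckfo}: a cocompact inclusion $G\subset G'$ inside $\Aut(N)$ between W-faithful focal groups of connected type descends to a cocompact inclusion $\mathsf{P}_G\subset\mathsf{P}_{G'}$ in $\Out(N)$, and conversely any closed 2-ended $H\supset\mathsf{P}_G$ lifts to a W-faithful focal overgroup $G'\supset G$ inside $\Aut(N)$ (taking $G'$ to be the preimage of $H$ intersected with the appropriate component, using $N\subset G$). Then (\ref{un15}) $\Leftrightarrow$ (\ref{un4}) is immediate: a copci homomorphism into a W-faithful focal $G_1$ of connected type realizes, after identifying $\Nilc$'s by Lemma \ref{nilc}, a cocompact closed inclusion inside a common $\Aut(N)$; the image of $G$ can be made to land in $\Aut(N)$ because $G/Z(N)\hookrightarrow\Aut(N)$ for W-faithful focal $G$. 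The equivalence (\ref{un4}) $\Leftrightarrow$ (1) uses Lemma \ref{cckfo} again: if $H$ is focal commable to $G$, one can realize both $H$ and $G$ inside a common W-faithful focal overgroup $G'$ in $\Aut(N)$; (\ref{un4}) forces $G=G'$, so $H\hookrightarrow G$ copci. Conversely, the universal property (1) applied to an overgroup $G_1$ gives copci maps in both directions, whose composition is a copci endomorphism of $G$; this endomorphism restricts to an isomorphism of the simply-connected nilpotent $N$ (the only copci endomorphism available there), and using the compact-by-$\mathbf{R}$ structure of $G/N$ one concludes both maps are isomorphisms.

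The main obstacle I expect is the implication (\ref{un4}) $\Rightarrow$ (\ref{un15}) (equivalently the reduction of any copci homomorphism to an inclusion inside $\Aut(N)$), and the corresponding careful handling of the kernel of the conjugation map $G\to\Aut(N)$, which is $Z(N)$ under the W-faithful hypothesis but requires verifying that this center is exactly captured by $N$; the argument here rests on the fact that in a W-faithful focal group of connected type, $N=\Nilc(G)$ contains its own centralizer, a fact one extracts from Proposition \ref{compde} and the contracting dynamics on $N$ provided by any $\xi\in\Delta^{-1}(\mathopen]1,+\infty\mathclose[)$. The remaining piece --- ruling out the index-$2$ inverter in the analysis of 2-ended supergroups --- is secondary but needs the observation that such an inverter would reverse the modular character, incompatible with $\mathsf{P}_G$ being the image of a focal (and hence "half-line" oriented) group.
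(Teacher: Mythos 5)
Your overall architecture is reasonable: you run the cycle in the opposite direction from the paper (which proves (\ref{un3})$\Rightarrow$(\ref{un2}), (\ref{un15})$\Rightarrow$(\ref{un3}), (\ref{un4})$\Rightarrow$(\ref{un15}), (\ref{un1})$\Rightarrow$(\ref{un4}) and closes with (\ref{un2})$\Rightarrow$(\ref{un1}) via the construction in Lemma \ref{cckfo}), and several of your arrows are fine: (\ref{un4})$\Rightarrow$(\ref{un1}) via Lemma \ref{cckfo} is clean, (\ref{un3})$\Rightarrow$(\ref{un2}) is the easy lifting argument, and your (\ref{un1})$\Leftrightarrow$(\ref{un4}) matches the paper's dimension/$\pi_0$ count in spirit. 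You also correctly flag the kernel-of-conjugation issue ($C_G(N)=Z(N)$) needed to move between $G$ and its image, which the paper handles implicitly by its standing identification.

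The genuine gap is in your (\ref{un2})$\Rightarrow$(\ref{un3}). Your argument hinges on the claim that any closed 2-ended subgroup $H\subset\Out(N)$ containing $\mathsf{P}_G$ satisfies $H^\circ=[\mathsf{P}_G]$, and hence normalizes $[\mathsf{P}_G]$. That claim is false already for $H=\mathsf{P}_G$ itself whenever the compact part of $\mathsf{P}_G$ has positive dimension (e.g.\ for the similarity group, $\mathsf{P}_G^\circ=\mathbf{R}_{>0}\cdot\mathrm{SO}(n)\supsetneq[\mathsf{P}_G]$). What your proof actually needs is that an \emph{arbitrary} closed 2-ended overgroup $H$ of $\mathsf{P}_G$ normalizes (in fact centralizes) $[\mathsf{P}_G]$, so that $H/[\mathsf{P}_G]$ is a compact subgroup of $C([\mathsf{P}_G])/[\mathsf{P}_G]$ to which the maximality in (\ref{un2}) applies; this is not automatic, since $[\mathsf{P}_G]=\Par(\langle\alpha_{\textnormal{r}}\rangle)$ is canonically attached to $\mathsf{P}_G$, not to $H$, and $H$ need not normalize $\mathsf{P}_G$. (Your module/eigenvalue argument ruling out an inverter is fine, but it only controls the action \emph{after} normality of $[\mathsf{P}_G]$ in $H$ is known.) The natural repair is to realize $H$ as $\mathsf{P}_{G_1}$ for a focal overgroup $G_1\supset G$ (preimage construction) and invoke Lemmas \ref{l_bg} and \ref{l_bgc} to get $[\mathsf{P}_{G_1}]=[\mathsf{P}_G]$ centralized by $H$ --- but that is essentially the machinery by which the paper instead proves (\ref{un2})$\Rightarrow$(\ref{un1}) directly, building the overgroup inside $C([\mathsf{P}_G])$ so that the centralizing property holds by construction. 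As written, your chain never establishes any implication \emph{from} (\ref{un2}) to the other conditions; in particular (\ref{un2})$\Rightarrow$(\ref{un1}), the implication actually used in Corollary \ref{efou}, is not proved. Secondarily, your (\ref{un15})$\Leftrightarrow$(\ref{un4}) step still needs the closedness of the image of $G_1$ in $\Aut(N)$ and the identification of the fiber over $\Out(N)$ inside $G_1$ with the one inside $G$; you flag this, and it is fixable along the lines you indicate, but it is not yet a proof.
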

\begin{proof}
That (\ref{un3}) implies (\ref{un2}) is essentially clear: if $L/[\mathsf{P}_G]$ is a compact subgroup of $C([\mathsf{P}_G])/[\mathsf{P}_G]$ containing $\mathsf{P}_G[\mathsf{P}_G]/[\mathsf{P}_G]$, then $L$ is closed 2-ended and contains $\mathsf{P}_G$ so by (\ref{un3}) we have $\mathsf{P}_G=\mathsf{P}_G[\mathsf{P}_G]=L$, i.e.\ $[\mathsf{P}_G]\subset \mathsf{P}_G$ and $\mathsf{P}_G/[\mathsf{P}_G]$ is maximal compact in $C([\mathsf{P}_G])$.

Suppose (\ref{un15}) and let us show (\ref{un3}). Let the image $\mathsf{P}_G$ of $G$ in $\Out(N)$ be contained in a closed 2-ended subgroup $L$, and let $G_1$ be its inverse image in $\Aut(N)$. Then $G\subset G_1$ is cocompact so by (\ref{un15}), we have $G=G_1$ and hence $\mathsf{P}_G=L$.

Suppose (\ref{un4}) and let us show (\ref{un15}). Suppose that $G\subset G_1\subset\Aut(N)$. Then $G_1$ is W-faithful: indeed if $W$ is compact normal in $G_1$, then $W$ is mapped injectively into $\Out(N)$, and hence acts faithfully on $N$ which implies $[W,N]\neq 1$, hence $W\cap N\neq 1$, which is a contradiction unless $W=1$. So from (\ref{un4}) it follows that $G_1=G$.

Suppose (\ref{un1}) and let us show (\ref{un4}). Let $i:G\to G_1$ be a homomorphism as in (\ref{un4}). Since $G$ is W-faithful, $i$ is injective. 
Since $G$ is focal-universal, there also exists a copci homomorphism $j:G_1\to G$, and since $G_1$ is W-faithful, $j$ is also injective. The existence of $i$ and $j$ imply $\dim(G)=\dim(G_1)$ and then $|\pi_0(G)|=|\pi_0(G_1)|$, and finally that $i$ and $j$ are topological isomorphisms.

Finally suppose (\ref{un2}) and let us show (\ref{un1}). Let $G_1$ be commable to $G$. Then Lemma \ref{cckfo} shows that there exists a focal LC-group $G_3$ with copci homomorphisms $G_1\to G_3\leftarrow G$. 
But following the proof of 
(\ref{fock3})$\Rightarrow$(\ref{fock2}) of Lemma \ref{cckfo}, we see that the maximality of $\mathsf{P}_G$ implies that by construction the homomorphism $G\to G_3$ is an isomorphism. Thus we have a homomorphism $G_1\to G$ with the desired properties.
\end{proof}

\begin{cor}\label{efou}
Every focal LC-group of connected type $G$ admits a copci homomorphism into a W-faithful focal-universal LC-group $\hat{G}$. Moreover, $\hat{G}$ is uniquely defined up to isomorphism by the weaker requirement that it is focal-universal and that $G$ is commable to $\hat{G}$.
\end{cor}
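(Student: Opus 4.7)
The plan is to derive the corollary from Proposition \ref{cu} and Lemma \ref{cckfo}: existence via a maximal-compact extension in $\Out(\Nilc(G))$ combined with the construction in Lemma \ref{cckfo}, and uniqueness from the characterization (\ref{un4}) of focal-universality. The W-faithful reduction is handled up front using Lemma \ref{fow}.

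First I would reduce to the W-faithful case. Since $\mathsf{W}(G)$ is compact by Lemma \ref{fow}, the quotient $G_0 := G/\mathsf{W}(G)$ is a W-faithful focal LC-group of connected type and $G \to G_0$ is copci, so any copci $G_0 \to \hat{G}$ composes with it to give the desired copci $G \to \hat{G}$; hence one may replace $G$ by $G_0$. Next, set $N := \Nilc(G_0)$ and consider the image $\mathsf{P}_{G_0} \subset \Out(N)$, a closed 2-ended subgroup by Lemma \ref{l_bg}. Writing $B := [\mathsf{P}_{G_0}] \cong \mathbf{R}$ and $C := C_{\Out(N)}(B)$, Lemma \ref{l_bgc} yields $\mathsf{P}_{G_0} \subset C$ with $\mathsf{P}_{G_0}/B$ compact. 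Since $C$ is Zariski-closed in $\Out(N)$, the quotient $C/B$ is a Lie group with finitely many connected components; I would then choose a maximal compact subgroup $\hat{\mathsf{P}}/B$ of $C/B$ containing the compact subgroup $\mathsf{P}_{G_0}/B$. The construction carried out in the proof of Lemma \ref{cckfo}(\ref{fock3})$\Rightarrow$(\ref{fock2}) would then yield a W-faithful focal LC-group $\hat{G}$ with $\mathsf{P}_{\hat{G}} = \hat{\mathsf{P}}$ together with a copci homomorphism $G_0 \to \hat{G}$.

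To verify focal-universality of $\hat{G}$ I invoke condition (\ref{un2}) of Proposition \ref{cu}. Since $G_0$ is W-faithful, the copci $G_0 \to \hat{G}$ has kernel contained in $\mathsf{W}(G_0) = \{1\}$ by Proposition \ref{ebgh}, so $G_0$ embeds cocompactly in $\hat{G}$; Lemma \ref{l_bgc} then gives $[\mathsf{P}_{\hat{G}}] = [\mathsf{P}_{G_0}] = B$. By construction, $B = [\mathsf{P}_{\hat{G}}] \subset \mathsf{P}_{\hat{G}}$ and $\mathsf{P}_{\hat{G}}/[\mathsf{P}_{\hat{G}}] = \hat{\mathsf{P}}/B$ is a maximal compact subgroup of $C([\mathsf{P}_{\hat{G}}])/[\mathsf{P}_{\hat{G}}] = C/B$, which is exactly (\ref{un2}).

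For uniqueness, suppose $\hat{G}$ and $\hat{G}'$ are both focal-universal and commable to $G$. Then they are commable to each other (within focal groups, since focal-universality includes W-faithfulness and focal groups form a commability-stable class here), so focal-universality of $\hat{G}$ furnishes a copci $\hat{G}' \to \hat{G}$; as $\hat{G}'$ is focal-universal and $\hat{G}$ is a W-faithful focal LC-group, Proposition \ref{cu}(\ref{un4}) applied to $\hat{G}'$ forces this homomorphism to be a topological isomorphism. The main subtlety will be in parsing the construction within the proof of Lemma \ref{cckfo}, where the focal LC-group associated to the subgroup $\hat{\mathsf{P}} \subset \Out(N)$ is recovered (via an inverse image in $\Aut(N)$) rather than constructed abstractly; once this construction is accepted, the combination of the standard Lie-theoretic fact that compact subgroups of a finite-component Lie group extend to maximal compact subgroups, together with Lemma \ref{l_bgc}'s automatic matching of the $[\cdot]$-invariant, makes the verification of (\ref{un2}) essentially immediate.
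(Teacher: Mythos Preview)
Your argument is correct and follows essentially the same route as the paper: reduce to the W-faithful case, take a maximal compact subgroup of $C([\mathsf{P}_G])/[\mathsf{P}_G]$ containing the image of $\mathsf{P}_G$, pull back to $\Aut(N)$, and verify focal-universality via Proposition~\ref{cu}(\ref{un2}), with uniqueness from (\ref{un4}). One small notational slip: you write ``$\mathsf{P}_{G_0}/B$ compact'', but $B=[\mathsf{P}_{G_0}]$ need not be contained in $\mathsf{P}_{G_0}$, so this should read $\mathsf{P}_{G_0}B/B$ (as the paper does with $\mathsf{P}_G[\mathsf{P}_G]/[\mathsf{P}_G]$); this does not affect the argument.
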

\begin{proof}Again, we denote by $C(\cdot)$ the centralizer in $\Out(N)$.
We can suppose that $G$ is W-faithful; set $N=\Nilc(G)$. Being Zariski closed, $C([\mathsf{P}_G])$ has finitely many ordinary connected components and hence $C([\mathsf{P}_G])/[\mathsf{P}_G]$ as well. Let $L$ be a maximal compact subgroup of $C([\mathsf{P}_G])/[\mathsf{P}_G]$ containing $\mathsf{P}_G[\mathsf{P}_G]/[\mathsf{P}_G]$ and $\hat{G}$ the inverse image of $L$ in $\Aut(N)$. Then $[\mathsf{P}_G]=[\mathsf{P}_{\hat{G}}]$. By Proposition \ref{cu}, $\hat{G}$ is focal-universal.

The uniqueness follows from the definition of being focal-universal, and (\ref{un4}) of Proposition \ref{cu}.
\end{proof}

\begin{cor}\label{ckco}
Any two commable focal LC-groups of connected type are commable within focal groups through $\nearrow\nwarrow$.\qed
\end{cor}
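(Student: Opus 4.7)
The plan is to reduce to the W-faithful setting and then directly invoke Lemma \ref{cckfo}, which already supplies a commation of exactly the desired shape $\nearrow\nwarrow$ within W-faithful focal LC-groups of connected type.

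First, I would pass from the given commable groups $G_1,G_2$ to their W-faithful quotients $G_i' := G_i/\mathsf{W}(G_i)$. By Lemma \ref{fow}, each $\mathsf{W}(G_i)$ is compact and $G_i'$ is again focal (and W-faithful by construction), so the quotient map $\pi_i : G_i \to G_i'$ is copci. Writing $G_i = N_i \rtimes \Lambda_i$ and using that $\mathsf{W}(G_i) \subset N_i$, a direct check on the identity component $N_i^\circ$ and on $N_i/N_i^\circ$ shows that this quotient preserves the type, so $G_1'$ and $G_2'$ are W-faithful focal LC-groups of connected type. They remain commable, since each $G_i$ is commable to $G_i'$ via $\pi_i$ and commability is an equivalence relation.

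Next, I would apply the equivalence (\ref{fock1})$\Leftrightarrow$(\ref{fock2}) of Lemma \ref{cckfo} to the pair $(G_1',G_2')$; this produces a W-faithful focal LC-group $H$ together with copci homomorphisms $f_i : G_i' \to H$. Composing with the quotient maps, each $f_i \circ \pi_i : G_i \to H$ is copci, giving the desired commation
\[ G_1 \;\nearrow\; H \;\nwarrow\; G_2. \]

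Since the genuinely substantive content, namely the construction of the bridge group $H$ from the common centralizer of $[\mathsf{P}_{G_1'}] = [\mathsf{P}_{G_2'}]$ in $\Out(\Nilc(G_i'))$, is already encapsulated in Lemma \ref{cckfo}, I do not anticipate any real obstacle in this assembly. The only bookkeeping beyond that lemma is the routine verification that modding out by $\mathsf{W}$ preserves the type, and that composition of copci maps is copci.
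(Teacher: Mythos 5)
Your proposal is correct and is essentially the argument the paper intends: the \qed points to the immediately preceding results, namely taking the focal-universal group $\hat{G}$ of Corollary \ref{efou} (equivalently Theorem \ref{classcomcon}) as the middle vertex of $G_1\nearrow\hat{G}\nwarrow G_2$, and that corollary is itself deduced from Lemma \ref{cckfo} via the same reduction to W-faithful quotients you perform. The only cosmetic difference is that your bridge group $H$ coming from Lemma \ref{cckfo}(\ref{fock1})$\Rightarrow$(\ref{fock2}) need not be the canonical focal-universal one, which is irrelevant for the statement; your bookkeeping (copci quotient by the compact $\mathsf{W}(G_i)$, preservation of connected type, composition of copci maps) is all sound.
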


If $G$ is a W-faithful focal LC-group of connected type and $N=\Nilc(G)$. Let $\check{G}$ be the inverse image of $[\mathsf{P}_G]$ in $\Aut(N)$. Call it the purely real core of $G$. Note that if $G$ is focal-universal, then $\check{G}$ is contained in $G$, and is normal and cocompact. 

\begin{defn}
A {\em Heintze group} is a Lie group of the form $N\rtimes\mathbf{R}$, where $N$ is a simply connected nilpotent Lie group and the action of positive elements of $\mathbf{R}$ on $N$ is contracting. It is {\em purely real} if all complex eigenvalues of the action of $\mathbf{R}$ on the Lie algebra of $N$ are real.
\end{defn}

\begin{remark}
Heintze \cite{Hein} proved that a connected Lie group of dimension $\ge 2$ admits a left-invariant Riemannian metric of negative sectional curvature if and only if it is a Heintze group.
\end{remark}

\begin{prop}\label{chec}
Let $G$ be a W-faithful focal LC-group of connected type. If $G$ is focal-universal, or more generally if $[\mathsf{P}_G]\subset \mathsf{P}_G$ then $\check{G}$ is the unique purely real Heintze closed cocompact subgroup in $G$. 
\end{prop}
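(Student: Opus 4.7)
The plan is to split the proof into uniqueness and existence. As a preliminary, I would identify $\check G$ with a subgroup of $G$: the kernel of the natural map $\pi \colon G \to \Out(N)$ equals $N$ (writing $g = n\lambda$ with $n \in N$, $\lambda \in \Lambda$; if $c_g$ is inner then some $\lambda \in \Lambda$ centralizes $N$, but the compacting $\Lambda$-action forces $\lambda = 1$), so $\mathsf{P}_G \cong G/N$, and under the hypothesis $[\mathsf{P}_G] \subset \mathsf{P}_G$ the preimage $\check G := \pi^{-1}([\mathsf{P}_G])$ is a closed cocompact subgroup of $G$ fitting in an extension $1 \to N \to \check G \to [\mathsf{P}_G] \cong \mathbf{R} \to 1$ of connected Lie groups.

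For uniqueness, I would take any purely real Heintze closed cocompact subgroup $H = N' \rtimes \mathbf{R}$ of $G$: Lemma \ref{focoty} shows $H$ is focal of connected type, and Lemma \ref{nilc} gives $\Nilc(H) = \Nilc(G) = N$, forcing $N' = N \subset H$. The contracting $\mathbf{R}$-action on $N$ has no nontrivial inner component (since $c_n$ fixes $n$, hence cannot be contracting for $n \neq 1$), so it descends to an injection $\mathbf{R} \hookrightarrow \mathsf{P}_H$. The image is a connected 1-parameter subgroup of $\mathsf{P}_H \subset [\mathsf{P}_H] = [\mathsf{P}_G]$ (Lemma \ref{l_bgc}), hence equals $[\mathsf{P}_G]$ by dimension. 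Combined with $N \subset H$, this yields $H = \check G$.

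For existence, I would split the extension $N \to \check G \to \mathbf{R}$ at the Lie algebra level. Picking $X$ in $\mathrm{Lie}(\check G)$ lifting the generator of $\mathrm{Lie}([\mathsf{P}_G]) \cong \mathbf{R}$, the derivation $D_X := \mathrm{ad}(X)|_{\mathfrak n}$ on $\mathfrak n = \mathrm{Lie}(N)$ has $\mathbf{R}$-split semisimple image in $\mathrm{Der}(\mathfrak n)/\mathrm{ad}(\mathfrak n)$ with positive real eigenvalues, by the definition of $\Par$ in Lemma \ref{l_par}. Different choices $X + Y$ ($Y \in \mathfrak n$) modify $D_X$ by the nilpotent inner derivation $\mathrm{ad}_{\mathfrak n}(Y)$, and I would use this freedom to arrange $D_{X+Y}$ itself to be $\mathbf{R}$-split semisimple.

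The main obstacle is this last adjustment, which I would handle via Jordan decomposition in the real algebraic group $\Aut(N)$: its surjection to $\Out(N)$ has unipotent kernel $\mathrm{Inn}(N) \cong N/Z(N)$, so any $\mathbf{R}$-split semisimple class in $\Out(N)$ admits an $\mathbf{R}$-split semisimple lift in $\Aut(N)$; infinitesimally this produces the required $Y$. Then $\phi(t) := \exp(t(X+Y))$ is a continuous homomorphic section $\mathbf{R} \to \check G$, so $\check G \cong N \rtimes \mathbf{R}$ with $\mathbf{R}$ acting by $\mathbf{R}$-diagonalizable positive real automorphisms. Contracting for $t > 0$ follows since the eigenvalues of $\phi(t)$ on $\mathfrak n$ match those of the contracting lift $\alpha(g)_{\mathrm{r}}^{t/t_0}$ ($g \notin \Ker\Delta$) in the same $[\mathsf{P}_G]$-class, as two $\mathbf{R}$-split semisimple lifts in a unipotent extension have equal eigenvalues on $\mathfrak n$.
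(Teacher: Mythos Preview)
Your uniqueness argument is essentially the paper's own (very short) proof. The existence part is where you diverge: the paper writes only ``Clearly $\check G$ satisfies these properties'', whereas you work considerably harder --- and in doing so introduce two problems.

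\medskip

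\textbf{Gap in the kernel computation.} Your claim that $\ker\pi = N$ is argued by writing $g = n\lambda$ with $n\in N$ and $\lambda\in\Lambda$. But here $N = \Nilc(G)$, and in general $G$ does \emph{not} decompose as $N\rtimes\Lambda$: by Lemma~\ref{nilc} one has $\Ker(\Delta_G)^\circ = N\rtimes K$ with $K$ compact, possibly nontrivial. You must also exclude that an element with nontrivial $K$-component acts on $N$ by an inner automorphism. This does hold: the image of $K$ in $\Aut(N)$ is compact, inner automorphisms of the nilpotent group $N$ are unipotent, and a unipotent element of a compact linear group is trivial; combined with faithfulness of the $K$-action (which follows from $\mathsf W(G)=\{1\}$), one concludes $\ker\pi = N$. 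The paper's proof also uses this implicitly in the final step ``hence $H=\check G$''.

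\medskip

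\textbf{Error in the existence argument.} You assert that the image of $D_X$ in $\mathrm{Der}(\mathfrak n)/\mathrm{ad}(\mathfrak n)$ is $\mathbf R$-split \emph{semisimple}, citing Lemma~\ref{l_par}. This is false in general: $\Par$ only produces a vector group of \emph{purely positive real} elements (all eigenvalues positive real), not semisimple ones. Indeed, in Lemma~\ref{dec_ri} the factor $\alpha_{\textnormal r}=\alpha_{\textnormal p}\alpha_{\textnormal u}$ explicitly retains the unipotent part, so the generator of $[\mathsf P_G]$ may well have nonzero nilpotent component, and your Jordan-lifting step does not apply.

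Fortunately this detour is unnecessary. ``Purely real'' for a Heintze group only asks that the $\mathbf R$-action on $\mathfrak n$ have real eigenvalues; diagonalizability is not required. And \emph{any} lift $D_X$ already has real eigenvalues: since $D_X$ preserves the lower central filtration, its eigenvalues on $\mathfrak n$ coincide with those on the associated graded $\bigoplus_i \mathfrak n_i/\mathfrak n_{i+1}$; inner derivations act trivially on each graded piece, so this graded action depends only on the class of $D_X$ in $\mathrm{Der}(\mathfrak n)/\mathrm{ad}(\mathfrak n)$, which generates $[\mathsf P_G]$ and hence has only real eigenvalues (in every algebraic representation, since this is an intrinsic property of the semisimple part). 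This is presumably what the paper's ``clearly'' encodes.
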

\begin{proof}
Clearly $\check{G}$ satisfies these properties. Conversely, assume that $H$ satisfies these properties. By Lemma \ref{nilc}, $H$ contains $N=\Nilc(G)$. Since $H$ is a Heintze group, $H/N$ is isomorphic to $\mathbf{R}$. Since $H$ is purely real, we have $\mathsf{P}_H\subset[\mathsf{P}_G]$. Since $\mathsf{P}_H$ is isomorphic to $\mathbf{R}$, this is an equality and hence $H=\check{G}$.
\end{proof}

\begin{cor}\label{c_comh}
Every focal LC-group of connected type is commable to a purely real Heintze group, unique up to isomorphism.\qed
\end{cor}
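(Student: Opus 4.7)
The plan is to derive this corollary directly from Corollary \ref{efou} and Proposition \ref{chec}, with essentially no new substantive input. For existence, I would start with an arbitrary focal LC-group $G$ of connected type and apply Corollary \ref{efou} to obtain a copci homomorphism $G\to\hat{G}$ with $\hat{G}$ focal-universal. Since $\hat{G}$ satisfies the hypothesis of Proposition \ref{chec}, it contains a canonical purely real Heintze closed cocompact subgroup $\check{\hat{G}}$. The commation $G\nearrow\hat{G}\nwarrow\check{\hat{G}}$ then witnesses that $G$ is commable to the purely real Heintze group $\check{\hat{G}}$.

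For uniqueness, I would first observe that any purely real Heintze group $H=N\rtimes\mathbf{R}$ satisfies $[\mathsf{P}_H]\subset\mathsf{P}_H$ (indeed equality): the pure reality forces the generator of $\mathbf{R}$ to act on the Lie algebra of $N$ with positive real eigenvalues, so in the decomposition of Lemma \ref{dec_ri} the elliptic factor is trivial, whence $\mathsf{P}_H=\mathsf{P}_H^{\textnormal{r}}$ is already a one-parameter vector subgroup, equal to its own $\Par$-hull $[\mathsf{P}_H]$. Moreover, $H$ is W-faithful: by Lemma \ref{fow} any compact normal subgroup lies in the simply connected nilpotent Lie group $N=\Nilc(H)$, hence is trivial. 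Therefore $H$ copci-embeds into $\hat{H}$ as a purely real Heintze closed cocompact subgroup, and by the uniqueness clause of Proposition \ref{chec} this forces $H\cong\check{\hat{H}}$.

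Given two purely real Heintze groups $H_1,H_2$ commable to $G$, they are commable to each other, so their focal-universal envelopes $\hat{H}_1$ and $\hat{H}_2$ are focal-universal and commable. By the uniqueness part of Corollary \ref{efou}, $\hat{H}_1\cong\hat{H}_2$, and the previous paragraph yields $H_1\cong\check{\hat{H}}_1\cong\check{\hat{H}}_2\cong H_2$.

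The main (mild) obstacle is the spectral verification that purely real Heintze groups satisfy $[\mathsf{P}_H]=\mathsf{P}_H$; this is a direct consequence of the definition of purely real Heintze together with Lemma \ref{dec_ri}. Everything else is formal assembly of the two quoted results.
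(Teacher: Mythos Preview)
Your proof is correct and follows exactly the approach the paper intends: the corollary carries a \qed because it is meant as an immediate assembly of Corollary \ref{efou} (existence and uniqueness of $\hat{G}$) and Proposition \ref{chec} (existence and uniqueness of the purely real Heintze core $\check{\hat{G}}$), which is precisely what you do. Your verification that a purely real Heintze group is W-faithful and satisfies $[\mathsf{P}_H]=\mathsf{P}_H$ is a bit more than strictly needed---for the uniqueness step it suffices that $H$ embeds injectively and cocompactly into the focal-universal $\hat{H}$ and then invoke Proposition \ref{chec} for $\hat{H}$---but it is correct and does no harm.
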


\section{Commability in mixed type and the $\varpi$-invariant}

\begin{defn}\label{dvarpi}Let $G$ be a focal LC-group. Consider the modular functions of $G/(G^\circ\cap\Ker\Delta_G)$ (which is the same as $G/G^\circ$ if $G$ is not of connected type) and $G/G^\sharp$; by composition they define homomorphisms $\Delta_G^{\textnormal{td}}$, $\Delta_G^{\textnormal{con}}:G\to\mathbf{R}_+$, which we call restricted modular functions.
Since $\Hom(G,\mathbf{R})$ is 1-dimensional, if $G$ is not of totally disconnected type then $\Delta_G^{\textnormal{con}}$ is nontrivial and hence there exists a unique $\varpi=\varpi(G)\in\mathbf{R}$ such that $\log\circ\Delta_G^{\textnormal{td}}=\varpi(\log\circ\Delta_G^{\textnormal{con}})$. Because of the compacting element in $G$, necessarily $\varpi(G)\ge 0$, with equality if and only if $G$ is of connected type. If $G$ is of totally disconnected type we set $\varpi(G)=+\infty$.
\end{defn}

\begin{defn}\label{dhpa}Let $H,A$ be focal LC-groups, with $H$ of connected type with a surjective modular function and with $A$ of totally disconnected type. For $\varpi>0$, define
\[H\stackrel{\varpi}{\times}A=\{(x,y)\in H\times A\mid \Delta_H(x)^\varpi=\Delta_{A}(y)\}.\]
This is a focal LC-group of mixed type, satisfying $\varpi(H\stackrel{\varpi}{\times}A)=\varpi$. If $q\ge 2$ is an integer and $\varpi$ is a positive real number, define in particular
\[H[\varpi,q]=H\stackrel{\varpi}{\times}\FT_q.\]
\end{defn}

\begin{prop}\label{micom}
Let $G_1,G_2$ be focal LC-groups of mixed type. Equivalences:
\begin{enumerate}
\item\label{mi1} $G_1$ and $G_2$ are commable;
\item\label{gdgsv} the following three properties hold:
\begin{itemize}
	\item $G_1/G_1^\sharp$ and $G_2/G_2^\sharp$ are commable within focal groups
	\item $q_{G_1}=q_{G_2}$ (that is, $G_1/G_1^\circ$ and $G_2/G_2^\circ$ are commable within focal groups, see Corollary \ref{tdckfo2});
\item $\varpi(G_1)=\varpi(G_2)$;\end{itemize}
\item\label{mi3} there is a commation within focal groups $G_1\nearrow\nwarrow\nearrow\nwarrow G_2$;
\item\label{mi5} there exists a non-power integer $q\ge 2$, an integer $n\ge 1$, a focal-universal group of connected type $H$ and a positive real number $\varpi>0$ such that for $i=1,2$ there is a commation within focal groups $G_i\nearrow\nwarrow H[q,\varpi]^{[n]}$;
\item\label{mi6} there exists a non-power integer $q\ge 2$, a focal-universal group of connected type $H$ and a positive real number $\varpi>0$ such that for $i=1,2$, there is a commation within focal groups with $G_i\nearrow\nwarrow\nearrow H[\varpi,q]$;
\item\label{mi4} there is a commation within focal groups $G_1\nwarrow\nearrow\nwarrow\nearrow G_2$;
\item\label{mi7} there exists an integer $m\ge 2$, a focal-universal group of connected type $H$ and a positive real number $\varpi>0$ such that for $i=1,2$ there is a there is a commation within focal groups $G_i\nwarrow\nearrow H[\varpi,m]$.
\end{enumerate}
\end{prop}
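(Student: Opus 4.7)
The plan is to establish a circle of implications among the seven conditions, with the substantive content being the equivalence $(\ref{mi1})\Leftrightarrow(\ref{gdgsv})$. The trivial direction consists of the implications from each of $(\ref{mi3}),(\ref{mi4}),(\ref{mi5}),(\ref{mi6}),(\ref{mi7})$ to $(\ref{mi1})$, together with the internal implication $(\ref{mi5})\Rightarrow(\ref{mi6})$ via the open inclusion $H[\varpi,q]^{[n]}\hookrightarrow H[\varpi,q]$, and the obvious $(\ref{mi5})\Rightarrow(\ref{mi3})$ and $(\ref{mi7})\Rightarrow(\ref{mi4})$.

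For $(\ref{mi1})\Rightarrow(\ref{gdgsv})$, one has to show that each of the three listed data is a commability invariant, and it suffices to treat a single copci homomorphism $f:G_1\to G_2$. Invariance of $q_G$ follows from Proposition \ref{p_q} together with Lemma \ref{facgo}(\ref{i_noco}), which furnishes a copci map $\bar f:G_1/G_1^\circ\to G_2/G_2^\circ$ between focal groups of totally disconnected type. Commability of $G_1/G_1^\sharp$ and $G_2/G_2^\sharp$ within focal groups of connected type follows from Lemma \ref{facgo}(\ref{i_notd}) and the uniqueness in Corollary \ref{efou}. For the $\varpi$-invariant, I would use that modular functions of amenable LC-groups are preserved under copci maps (as in the proof of Proposition \ref{p_q}): applying this to both $\bar f$ and $\hat f$ gives $\Delta_{G_2}^{\textnormal{td}}\circ f=\Delta_{G_1}^{\textnormal{td}}$ and $\Delta_{G_2}^{\textnormal{con}}\circ f=\Delta_{G_1}^{\textnormal{con}}$. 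Since these homomorphisms are nontrivial, the unique constant $\varpi$ relating the logarithms of the two pulls back identically, so $\varpi(G_1)=\varpi(G_2)$.

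The main step is $(\ref{gdgsv})\Rightarrow(\ref{mi5})$. By Lemma \ref{fow} we may replace each $G_i$ by $G_i/\mathsf{W}(G_i)$, hence assume both are W-faithful. In that case the Lie-theoretic decomposition given by Proposition \ref{compde} forces $G_i^\sharp\cap G_i^\circ=\{1\}$ (a compact connected normal subgroup is trivial by W-faithfulness), so the diagonal map $\delta_i:G_i\to (G_i/G_i^\sharp)\times(G_i/G_i^\circ)$ is injective and proper. Set $H:=\widehat{G_1/G_1^\sharp}\cong\widehat{G_2/G_2^\sharp}$, $q:=q_{G_1}=q_{G_2}$, $\varpi:=\varpi(G_1)=\varpi(G_2)$. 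By Theorem \ref{classcomcon} there is a copci $G_i/G_i^\sharp\to H$, and by Corollary \ref{tdckfo2} there is an integer $n$ (chosen uniform for $i=1,2$) and copci homomorphisms $G_i/G_i^\circ\to \FT_q^{[n]}$. Composing with $\delta_i$ yields a proper injective continuous homomorphism $G_i\to H\times\FT_q^{[n]}$. The matching $\Delta_{G_i}^{\textnormal{td}}=(\Delta_{G_i}^{\textnormal{con}})^\varpi$ guarantees that the image lands in the fiber product $H[\varpi,q]^{[n]}=H\stackrel{\varpi}{\times}\FT_q^{[n]}$. Cocompactness is checked on the two factor projections: the first factor image is cocompact in $H$ because it is already cocompact in $G_i/G_i^\sharp$, and the second factor image is cocompact in $\FT_q^{[n]}$; the fiber product over an abelian group of a pair of cocompact subgroups with matching modular images is cocompact, which gives the desired copci homomorphism $G_i\to H[\varpi,q]^{[n]}$, proving $(\ref{mi5})$.

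The remaining implications $(\ref{mi5})\Rightarrow(\ref{mi7})$ and $(\ref{mi6})\Rightarrow(\ref{mi4})$ are obtained by absorbing a further open-inclusion or cocompact-inclusion arrow, exactly along the pattern of Corollary \ref{tdckfo2}; the point is that the commation $G_i\to H[\varpi,q]^{[n]}$ can be extended to $G_i\leftarrow G_i^{[m]}\to H[\varpi,q^m]$ or analogous variants by passing between the different levels of $\FT$. The main obstacle I expect is the careful bookkeeping of the integer $n$ in the construction $(\ref{gdgsv})\Rightarrow(\ref{mi5})$: one must choose $n$ so that the cyclic images of $\Delta_{G_i}^{\textnormal{td}}$ in $\Delta_H(H)^\varpi$ and in $\Delta_{\FT_q^{[n]}}(\FT_q^{[n]})$ coincide, which uses both the commensurability of the two td modular images and the existence of a compacting element in $H$ with prescribed modulus. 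Once this bookkeeping is done, verifying properness and cocompactness of the map into the fiber product is routine.
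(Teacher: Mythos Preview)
Your overall strategy and the treatment of (\ref{mi1})$\Rightarrow$(\ref{gdgsv}) are fine and match the paper. The gap is in your construction for (\ref{gdgsv})$\Rightarrow$(\ref{mi5}).

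You claim that ``by Corollary~\ref{tdckfo2} there is an integer $n$ (chosen uniform for $i=1,2$) and copci homomorphisms $G_i/G_i^\circ\to\FT_q^{[n]}$''. Corollary~\ref{tdckfo2} does not give this. What item (\ref{fccck8}) there provides is a commation $G_i/G_i^\circ\nearrow\nwarrow\FT_q^{[n]}$, i.e.\ copci maps $G_i/G_i^\circ\to\FT_{q^{n_i}}\leftarrow\FT_q^{[n]}$ through an intermediate group, not a direct arrow into $\FT_q^{[n]}$. In fact a direct copci $G_i/G_i^\circ\to\FT_q^{[n]}$ cannot exist in general: since these groups are amenable, a copci $f$ forces $\Delta_{G_i/G_i^\circ}(G_i/G_i^\circ)\subset\Delta_{\FT_q^{[n]}}(\FT_q^{[n]})$, i.e.\ $\langle q^{n_i}\rangle\subset\langle q^n\rangle$, hence $n\mid n_i$. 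With $n=\max(n_1,n_2)$ and $n_1\neq n_2$ this fails. Consequently your composed map $G_i\to H\times\FT_q^{[n]}$ simply does not exist, and the claimed direct copci $G_i\to H[\varpi,q]^{[n]}$ is false in general.

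The paper avoids this by inserting the intermediate group: one first builds the copci $G_i\to H\stackrel{\varpi}{\times}(G_i/G_i^\circ)$ (this part you have, essentially), then pushes forward along the genuine copci $G_i/G_i^\circ\to\FT_{q^{n_i}}$ to land in $H[\varpi,q^{n_i}]$, and finally uses the copci $\FT_q^{[n]}\to\FT_{q^{n_i}}$ (which goes the \emph{other} way) to obtain the $\nearrow\nwarrow$ pattern $G_i\to H[\varpi,q^{n_i}]\leftarrow H[\varpi,q]^{[n]}$. This is exactly why (\ref{mi5}) is stated as a two-arrow commation rather than a single copci. Once you make this correction, your derivation of (\ref{mi7}) from (\ref{gdgsv}) should follow the analogous pattern $G_i\leftarrow G_i^{[n-n_i]}\to H[\varpi,q^n]$, as in the proof of (\ref{fccck4})$\Rightarrow$(\ref{fccck7}) in Corollary~\ref{tdckfo2}, rather than being deduced from (\ref{mi5}).
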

\begin{proof}
Obviously any of (\ref{mi3}), (\ref{mi5}), (\ref{mi6}), (\ref{mi4}), or (\ref{mi7}) implies (\ref{mi1}).

(\ref{mi1})$\Rightarrow$(\ref{gdgsv}). When passing from $G$ to a closed cocompact subgroup or to the quotient modulo a compact normal subgroup, the restricted modular functions do not change. 
Therefore, assuming (\ref{mi1}), we obtain $\varpi(G_1)=\varpi(G_2)$. The remaining conditions of (\ref{gdgsv}) follow from Lemma \ref{facgo}.

Other immediate implications are that (\ref{mi5}) implies (\ref{mi3}) and (\ref{mi6}), and that (\ref{mi7}) implies (\ref{mi4}).

Let us now prove that (\ref{gdgsv}) implies (\ref{mi5}) and then (\ref{mi7}). 

Consider a proper cocompact embedding of $G/N_i^\sharp$ into a focal-universal group $H_i$. By (\ref{gdgsv}) and Corollary (\ref{efou}), $H_1$ and $H_2$ are isomorphic, so we write $H=H_i$. The diagonal map $G_i\to H_i\times G/G^\circ$
has its image contained and cocompact in $H\stackrel{\varpi}{\times}(G/G^\circ)$ (where again by (\ref{gdgsv}) $\varpi$ does not depend on $i$). Thus we get a copci homomorphism $G\to H\stackrel{\varpi}{\times}(G/G^\circ)$.

Consider the copci homomorphisms $G_i/G_i^\circ\to \FT_{q^{n_i}}\leftarrow \FT_{q}^{[n]}$ as in the proof of (\ref{fccck4})$\Rightarrow$(\ref{fccck8}) of Corollary \ref{tdckfo2}, where $q$ is a non-power integer. Then these extend to copci homomorphisms
\[G_i\to H\stackrel{\varpi}{\times}(G/G^\circ)\to H[\varpi,q^{n_i}]\leftarrow H[\varpi,q]^{[n]}.\]
This proves (\ref{mi5}).

Consider now the copci homomorphisms $G_i/G_i^\circ\leftarrow (G_i/G_i^\circ)^{[n-n_i]}\to \FT_{q^n}$ from the proof of (\ref{fccck4})$\Rightarrow$(\ref{fccck7}) of Corollary \ref{tdckfo2}. These extend to copci homomorphisms
\[G_i\leftarrow G_i^{[n-n_i]}\to H[\varpi,q^n],\]
proving (\ref{mi7}).
\end{proof}

In view of Corollary \ref{c_comh}, we deduce:

\begin{cor}\label{c_comix}
Every focal LC-group of mixed type $G$ is commable to an LC-group of the form $H[\varpi,q]$, for some purely real Heintze group $H$ of dimension $\ge 2$ uniquely defined up to isomorphism, a unique $\varpi\in\mathopen]0,\infty\mathclose[$, and a unique non-power integer $q\ge 2$.\qed
\end{cor}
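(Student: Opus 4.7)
The statement is essentially a synthesis of the mixed-type classification in Proposition~\ref{micom} and the connected-type classification in Corollary~\ref{c_comh}, keeping track of the three invariants governing Definition~\ref{dhpa}: the commability class of $G/G^\sharp$ within focal groups of connected type, the $\varpi$-invariant, and the $q$-invariant.

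For existence I first set $\varpi = \varpi(G) \in \mathopen]0,\infty\mathclose[$ and $q = q_G \ge 2$ (these are well defined and nondegenerate precisely because $G$ is of mixed type). The quotient $G/G^\sharp$ is a focal LC-group of connected type, and Corollary~\ref{c_comh} produces a purely real Heintze group $H$, unique up to isomorphism, commable to $G/G^\sharp$; its dimension is $\ge 2$ because $G/G^\sharp$ is noncompact, so the nilpotent part of $H$ is nontrivial. Form $L = H[\varpi,q] = H\stackrel{\varpi}{\times}\FT_q$. Then $L$ is focal of mixed type with $\varpi(L) = \varpi$ by Definition~\ref{dhpa}. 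I then identify the two invariants of $L$ via its two coordinate projections: the kernel of $L\to \FT_q$ is $\Ker(\Delta_H)\times\{1\}$, which is connected, simply connected nilpotent, hence unimodular and in particular contained in $L^\sharp = \{1\}$-part of $H$, so the induced map $L/L^\circ\to \FT_q$ exhibits $L/L^\circ$ as commable to $\FT_q$, yielding $q_L = q$ by Lemma~\ref{ftrm} and Proposition~\ref{p_q}; symmetrically the kernel of $L\to H$ is the maximal open compact subgroup of $\FT_q$, which is elliptic, so $L/L^\sharp\simeq H$ is commable (indeed isomorphic) to a purely real Heintze group commable to $G/G^\sharp$. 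Applying the implication (\ref{gdgsv})$\Rightarrow$(\ref{mi1}) of Proposition~\ref{micom}, $G$ and $L$ are commable.

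For uniqueness, assume $G$ is commable to some $L' = H'[\varpi',q']$ with $H'$ a purely real Heintze group of dimension $\ge 2$, $\varpi'\in\mathopen]0,\infty\mathclose[$ and $q'$ a non-power integer $\ge 2$. The implication (\ref{mi1})$\Rightarrow$(\ref{gdgsv}) of Proposition~\ref{micom} gives $\varpi' = \varpi(L') = \varpi(G) = \varpi$, $q' = q_{L'} = q_G = q$, and that $G/G^\sharp$ is commable within focal groups to $L'/L'^\sharp\simeq H'$ (the latter identification being read off from $L' = H'\stackrel{\varpi'}{\times}\FT_{q'}$ as in the existence step). The uniqueness clause of Corollary~\ref{c_comh} then forces $H'\simeq H$.

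The only non-formal ingredient is the identification of the three invariants of $L = H[\varpi,q]$, which I expect to be the main bookkeeping step: it boils down to checking that $\Ker(\Delta_H)$ and the maximal compact open subgroup of $\FT_q$ play the roles of $L^\circ$ and (a cocompact subgroup of) $L^\sharp$ respectively, and that the restricted modular functions $\Delta_L^{\textnormal{con}}$ and $\Delta_L^{\textnormal{td}}$ pull back to $\Delta_H$ and $\Delta_{\FT_q}$ up to the defining relation $\Delta_H^{\varpi} = \Delta_{\FT_q}$. Everything else is a direct appeal to the already-established invariance theorems.
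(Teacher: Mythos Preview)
Your approach is correct and matches the paper's implicit proof: the corollary carries a \qed, being read off directly from Proposition~\ref{micom} (the three-invariant classification in mixed type) combined with Corollary~\ref{c_comh} (the unique purely real Heintze representative in connected type).

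Two small inaccuracies in your bookkeeping, neither fatal: first, the kernel of the projection $L\to H$ is $\{1\}\times\Ker(\Delta_{\FT_q})$, which is elliptic but \emph{not} compact (it is the noncompact horospherical subgroup $N_q$ in $\FT_q=N_q\rtimes\mathbf{Z}$), and the projection $L\to H$ is not onto --- its image is the cocompact subgroup $\Delta_H^{-1}(q^{\mathbf{Z}/\varpi})\simeq N\rtimes\mathbf{Z}$ --- so $L/L^\sharp$ is only commable to $H$, not isomorphic to it; your conclusion is unaffected. Second, in the sentence identifying the kernel of $L\to\FT_q$ you wrote $L^\sharp$ where you clearly meant $L^\circ$ (indeed that kernel equals $L^\circ=N\times\{1\}$, giving the isomorphism $L/L^\circ\simeq\FT_q$ you use).
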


The corollary can also be stated for an arbitrary focal group. If $H$ is a group with a quotient homomorphism onto $\mathbf{R}$ with compact kernel, we just define $H[\infty,q]=\FT_q$. Also, if $H$ if a focal group of connected type, we define $H[0,1]=H$.
  
\begin{cor}\label{c_comix2}
Every focal LC-group $G$ is commable within focal groups to an LC-group of the form $H[\varpi,q]$, for some purely real Heintze group $H$ uniquely defined up to isomorphism, a unique $\varpi\in [0,\infty]$, and a unique non-power integer $q\ge 1$. Here, the triple $(H,\varpi,q)$ satisfies the compatibility condition: $\dim(H)=1$ if and only if $\varpi=0$ and $q=1$ if and only if $\varpi=\infty$.\qed
\end{cor}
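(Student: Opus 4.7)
The plan is to split into the three types of focal groups (connected, totally disconnected, mixed) and assemble the statement from the three already-established existence/uniqueness corollaries, using the conventions $H[0,1]=H$ and $H[\infty,q]=\FT_q$ to paste them into the single uniform form $H[\varpi,q]$. The main point is that the type of a focal LC-group is a commability invariant within focal groups, so one can read off which case of $\varpi\in\{0\}$, $\mathopen]0,\infty\mathclose[$, or $\{\infty\}$ is relevant, and then each of the three cases is covered by a previous result.

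First I would handle connected type: here $G/G^\circ$ is compact, so $\Delta_G^{\textnormal{td}}$ is trivial, giving $\varpi(G)=0$; by Corollary \ref{c_comh}, $G$ is commable within focal groups to a purely real Heintze group $H$ of dimension $\ge 2$, unique up to isomorphism, and we write this as $H[0,1]$. Next, for the totally disconnected type, $G^\circ$ is compact, hence $\varpi(G)=\infty$; by Corollary \ref{tdckfo2} (coupled with the identification $\FT_q=H[\infty,q]$ for $H=\mathbf{R}$), $G$ is commable within focal groups to $\FT_{q_G}$ where $q_G\ge 2$ is the unique non-power integer attached to $G$ by Proposition \ref{p_q}. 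Finally, for mixed type, Corollary \ref{c_comix} produces the triple $(H,\varpi,q)$ with $H$ a purely real Heintze group of dimension $\ge 2$, $\varpi\in\mathopen]0,\infty\mathclose[$, and $q\ge 2$ a non-power integer, unique in each entry, and $G$ is commable within focal groups to $H[\varpi,q]$.

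For uniqueness of the triple $(H,\varpi,q)$ across the whole statement, I would first note that the type of a focal LC-group is invariant under commability within focal groups (this is embedded in the discussion preceding Proposition \ref{inqtd} and used throughout; essentially, it follows from Lemma \ref{focoty}). Hence the value $\varpi(G)\in\{0\}\cup\mathopen]0,\infty\mathclose[\cup\{\infty\}$ is determined by the commability class, and the compatibility conditions $\dim H=1\iff\varpi=0$ and $q=1\iff\varpi=\infty$ automatically hold by construction in each case. Within a fixed type, uniqueness of the remaining parameters is given by the cited corollaries (Corollary \ref{c_comh} for $H$ in the connected case, Proposition \ref{p_q} for $q$ in the totally disconnected case, and Corollary \ref{c_comix} for all three parameters in the mixed case).

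The only mild subtlety, which I expect to be the main bookkeeping point rather than a genuine obstacle, is to confirm that the conventions $H[0,1]=H$ and $H[\infty,q]=\FT_q$ are coherent with Definition \ref{dhpa} (where $H[\varpi,q]$ is originally only defined for $\varpi>0$ and $q\ge 2$, and requires $H$ of connected type with surjective modular function). I would make explicit that $\varpi(H[\varpi,q])=\varpi$ in all three cases under these conventions, and that the purely real Heintze representative $H$ produced in the connected-type part of Corollary \ref{c_comix} (the ``connected side'' of a mixed-type group) is indeed the one whose commability class is captured by $\widehat{G/G^\sharp}$, so that all three cases are genuinely parametrized by the same datum.
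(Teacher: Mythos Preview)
Your proposal is correct and follows exactly the paper's route: the corollary carries a bare \qed, the paper having just introduced the conventions $H[\infty,q]=\FT_q$ and $H[0,1]=H$ precisely so that Corollary~\ref{c_comix} (mixed type), Corollary~\ref{c_comh} (connected type), and Corollary~\ref{tdckfo2} with Proposition~\ref{p_q} (totally disconnected type) assemble into the uniform statement via the very case split you give. One minor point: the compatibility conditions as printed appear to be swapped---your own case analysis actually yields $\dim H=1\iff\varpi=\infty$ (the totally disconnected case, where $H=\mathbf{R}$) and $q=1\iff\varpi=0$ (the connected case)---so when you assert they ``automatically hold by construction'' you are in effect verifying the corrected version rather than the one stated.
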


\section{Commability between focal and general type groups}\label{speci}

Unlike in the previous sections, we allow ourselves to use (soft) arguments related to hyperbolicity.

Recall that a hyperbolic LC-group is {\em non-elementary} if it boundary is uncountable, or equivalently contains at least 3 points. We define a {\em focal-hyperbolic} LC-group as a non-elementary hyperbolic LC-group whose action on the boundary has a fixed point. A {\em general type hyperbolic} LC-group is by definition a non-elementary hyperbolic LC-group that is not focal-hyperbolic.

In the terminology of the present paper, we have

\begin{thm}[\cite{CCMT}] An LC-group is focal-hyperbolic if and only if it is focal.\qed
\end{thm}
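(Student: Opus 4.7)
The statement is an equivalence, so I would split the proof into two implications, and treat the cases (connected, totally disconnected, mixed) separately on each side.

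For the direction \emph{focal $\Rightarrow$ focal-hyperbolic}, the plan is to exhibit, for each focal $G=N\rtimes\Lambda$, a proper geodesic Gromov-hyperbolic metric space $X$ on which $G$ acts continuously, properly, cocompactly, and isometrically, with a $G$-fixed boundary point. In the connected type, $G$ is (after modding out by $\mathsf{W}(G)$) a Heintze group $N\rtimes\mathbf{R}$ with $N$ simply connected nilpotent and $\mathbf{R}$ acting by contractions, and a left-invariant Riemannian metric on $G$ has strictly negative curvature by Heintze's theorem; the attracting fixed point of the flow is the $G$-fixed boundary point. In the totally disconnected type, the ascending HNN extension structure $N=\bigcup_n\sigma^{-n}(K)$ of a compact open subgroup $K$ produces a vertex-transitive proper action on the $(s_G+1)$-regular Bass--Serre tree, fixing an end: this is exactly the construction already used in Proposition~\ref{copfo}. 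In the mixed type, one takes a ``horocyclic product'' (millefeuille) of the connected and totally disconnected models; the product is still hyperbolic and inherits a $G$-fixed end. Non-elementarity follows from $N$ noncompact (otherwise $G$ would be two-ended).

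For the converse \emph{focal-hyperbolic $\Rightarrow$ focal}, let $\omega\in\bd G$ be the $G$-fixed boundary point. Because $G$ is non-elementary, it contains a loxodromic element $\gamma$; its pair of boundary fixed points must include $\omega$, for otherwise the $\gamma$-orbit of $\omega$ would avoid it. Using a basepoint and horofunctions, define the Busemann cocycle based at $\omega$: since $\omega$ is $G$-fixed, this cocycle reduces to a continuous homomorphism $\beta:G\to\mathbf{R}$. Its image is a closed subgroup of $\mathbf{R}$, nontrivial because $\beta(\gamma)\neq 0$; hence $\beta(G)$ is isomorphic to $\mathbf{Z}$ or $\mathbf{R}$, and I set $\Lambda:=\beta(G)$ and $N:=\ker\beta$, choosing a section to write $G=N\rtimes\Lambda$. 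Non-compactness of $N$ is forced by the existence of a boundary point distinct from $\omega$ (non-elementarity). The final point is to check that for $\xi\in\Lambda$ with $\beta(\xi)>0$, conjugation by $\xi$ defines a compacting automorphism of $N$: thinking of $N$ as a horosphere at $\omega$, $\xi$ moves horoballs strictly inward, and properness of the action together with hyperbolicity force any bounded subset of the horosphere to be eventually swept into a fixed compact vacuum neighborhood, which is exactly the compacting condition of Definition~\ref{defocal}.

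The main obstacle is the mixed type. On the forward side, verifying that the millefeuille product is genuinely Gromov-hyperbolic (rather than merely coarsely so) and that the $G$-action is geometric requires nontrivial estimates mixing the connected Riemannian and totally disconnected tree geometries at commensurate rates $\varpi_G$. On the backward side, the compacting property must be established for \emph{arbitrary} compact subsets of $N$, not only for orbits; this demands a uniform estimate on the contraction of the horospherical distance under the Busemann flow, which in the CCMT framework is obtained from the refined study of contraction groups and the decomposition of $N$ into its connected and totally disconnected pieces given by Proposition~\ref{compde}. Once this uniform contraction is available, all three cases assemble into the equivalence.
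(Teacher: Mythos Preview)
The paper does not prove this theorem at all: it is stated with a \texttt{\textbackslash qed} and attributed to \cite{CCMT}. So there is no ``paper's own proof'' to compare against; your sketch is an attempt to reconstruct the argument of \cite{CCMT}, not of the present paper.

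On the forward direction your outline is essentially the one used in \cite{CCMT}: Heintze metrics in the connected case, Bass--Serre trees in the totally disconnected case, and the millefeuille construction in the mixed case. One minor slip: after modding out by $\mathsf{W}(G)$, a W-faithful focal group of connected type is not literally a Heintze group but only contains one cocompactly (Proposition~\ref{chec}); this is harmless for hyperbolicity but worth stating correctly.

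On the backward direction there is a genuine gap. In a general Gromov-hyperbolic space, Busemann functions at a boundary point are only well defined up to a bounded additive error, so the Busemann ``cocycle'' you write down is a priori only a \emph{quasi}-morphism $G\to\mathbf{R}$, not a continuous homomorphism. Promoting it to an honest homomorphism is exactly the delicate point; in \cite{CCMT} this is handled via amenability (which must itself be established first from the fixed-point hypothesis) and a structural argument, not by the bare assertion you give. A second gap is the splitting: when the image $\Lambda$ is $\mathbf{R}$, the existence of a continuous section (a one-parameter subgroup lifting $\Lambda$) is not automatic and requires justification. Your heuristic for the compacting property is the right picture, but turning ``horoballs move strictly inward'' into the uniform statement of Definition~\ref{defocal} again uses nontrivial input from \cite{CCMT} (their Section~6), which you should cite rather than re-derive.
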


We now make free use of this theorem, writing focal for focal-hyperbolic.

If $H\to G$ is a copci homomorphism between LC-groups, $G$ is hyperbolic (resp.\ non-elementary hyperbolic) if and only if $H$ is hyperbolic (resp.\ non-elementary hyperbolic), and if $G$ is focal, so is $H$; conversely if $H$ is focal, then $G$ is either focal or of general type.

This situation was clarified in \cite{CCMT}. Define an S1-{\em group} to be an LC-group $G$ with $\mathsf{W}(G)$ compact, such that $G/\mathsf{W}(G)$ is isomorphic to an open subgroup of finite index in the isometry group of a rank 1 symmetric space (actually this index can only be 1 or 2). An S1-group is hyperbolic of general type.

\begin{prop}\label{specalt}
Let $H\to G$ be a copci homomorphism between locally compact groups, with $H$ focal and $G$ non-focal. Then either
\begin{itemize}
\item $H$ is of totally disconnected type, or
\item $H$ is of connected type, and $G$ is an S1-group.
\end{itemize} 
Conversely, if $H$ is a focal LC-group of totally disconnected type, then is admits a copci automorphism into a non-focal LC-group.
\end{prop}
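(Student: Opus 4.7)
The observations recalled just before the statement already give that $G$, being the cocompact image of the non-elementary hyperbolic group $H$, is itself non-elementary hyperbolic, hence (by CCMT) either focal or of general type; by assumption $G$ is of general type. Moreover $H$ is compactly generated directly from the description $H=N\rtimes\Lambda$, hence so is $G$. The plan for the forward direction is to invoke the Caprace--Cornulier--Monod--Tessera classification of general-type non-elementary hyperbolic LC-groups, which provides the dichotomy: either (A) $G^\circ$ is compact, or (B) $G$ is an S1-group. In case (A), the map $f$ being proper and $f(H^\circ)\subset G^\circ$, the preimage $f^{-1}(G^\circ)$ is compact and contains $H^\circ$; hence $H^\circ$ is compact and $H$ is focal of totally disconnected type, as desired.

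For case (B), write $W=\mathsf{W}(G)$, so $G/W$ is an open finite-index subgroup of $\Isom(X)$ for some rank 1 symmetric space $X$ of noncompact type. The image $\bar H=f(H)W/W$ is focal (by Lemma~\ref{fow}) and cocompact in $G/W$, and so fixes a unique point $\omega\in\partial X$. Let $P$ be the preimage in $G$ of the stabilizer of $\omega$ in $G/W$; then $P$ is closed and cocompact in $G$ (since $\partial X$ is compact), contains $f(H)$, and $f(H)$ is cocompact in $P$ (because $P/f(H)$ embeds as a closed subset of the compact space $G/f(H)$). Modulo $W$, $P$ is an open finite-index subgroup of the parabolic $\Isom(X)_\omega$, which is of the form $N\rtimes(M\times\mathbf{R})$ with $N$ simply connected nilpotent and $M$ compact; in particular $P$ is focal of connected type. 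By Lemma~\ref{focoty}, $f(H)$ is therefore focal of connected type, and by Lemma~\ref{fow} so is $H$.

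For the converse, assume $H$ is focal of totally disconnected type. Proposition~\ref{copfo} provides a copci homomorphism $H\to\FT_m$ for some integer $m\ge 2$. Realizing $\FT_m$ as the stabilizer of a boundary point in $\Aut(T)$, where $T$ is the $(m+1)$-regular tree, the inclusion $\FT_m\hookrightarrow\Aut(T)$ is copci: it is a closed embedding, hence continuous and proper, and it has cocompact image since $\Aut(T)$ acts transitively on the compact boundary $\partial T$. Composing gives a copci homomorphism $H\to\Aut(T)$, and $\Aut(T)$ is non-elementary hyperbolic without fixed point on $\partial T$, hence non-focal. The main obstacle is case (B) of the forward direction: one needs the precise CCMT dichotomy for general-type hyperbolic LC-groups, and must carefully track the compact normal subgroup $W$ when transporting the parabolic structure from $\Isom(X)$ back to $G$ in order to apply Lemmas~\ref{fow} and~\ref{focoty}.
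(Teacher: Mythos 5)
Your argument is correct and follows the same skeleton as the paper's proof: both reduce the forward direction to the CCMT dichotomy for the general-type group $G$ (compact-by-totally disconnected versus S1-group) and both obtain the converse by composing the copci map $H\to\FT_m$ of Proposition \ref{copfo} with the copci inclusion $\FT_m\hookrightarrow\Aut(T)$. Two remarks on where you diverge. First, the dichotomy you invoke should be quoted as \cite[Proposition 5.10]{CCMT}, which is stated for a general-type hyperbolic group admitting a cocompact focal (amenable) closed subgroup --- exactly the situation here, since $f(H)$ is such a subgroup; quoting it as a classification of \emph{all} general-type hyperbolic LC-groups is a stronger assertion than what is needed or cited, though no gap results because the hypothesis is available. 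Second, in the S1 case the paper argues in one line: $G/\mathsf{W}(G)$ is a Lie group, hence so is $H/\mathsf{W}(H)$, and a focal group that is Lie modulo a compact normal subgroup has $N/N^\circ$ compact, i.e.\ is of connected type. You instead pull back the stabilizer $P$ of the boundary fixed point of $f(H)$ in $\Isom(X)$, identify $P/W$ with an open finite-index subgroup of the parabolic $\Isom(X)_\omega$ (focal of connected type), and then transfer the type down to $f(H)$ and $H$ via Lemmas \ref{focoty} and \ref{fow}; this is heavier but valid, and it implicitly uses the routine fact (also used implicitly by the paper, e.g.\ in the proof of Lemma \ref{focoty}) that the type is unchanged under quotients by compact normal subgroups. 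Incidentally, cocompactness of $P$ in $G$ is most directly seen from $P\supset f(H)$; and your reason that $\Aut(T)$ is non-focal (no fixed boundary point) differs harmlessly from the paper's (unimodularity, via cocompact lattices).
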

\begin{proof}
Necessarily $G$ is of general type, so by the (elementary) \cite[Proposition 5.10]{CCMT}, either $G$ is compact-by-(totally disconnected) in which case $H$ is of totally disconnected type, or $G$ is an S1-group. In the latter case, $G/\mathsf{W}(G)$ is a Lie group and hence $H/\mathsf{W}(H)$ is a Lie group as well, so $H$ is of connected type.

For the second statement, we know that $H$ admits a copci homomorphism into $\FT_m$ for some $m\ge 2$, and the latter admits a copci homomorphism into the automorphism group of a $(m+1)$-regular tree, which is not focal (for instance, because it is admits cocompact lattices and hence is unimodular).
\end{proof}

\begin{defn}
We say that an LC-group is special focal if it is focal and admits a copci homomorphism into a non-focal LC-group.
\end{defn}

\begin{prop}
If $H\to G$ is a copci homomorphism between focal LC-groups, then $H$ is special focal if and only if $G$ is special focal. In other words, the class of special focal LC-groups is invariant under commability within focal LC-groups.
\end{prop}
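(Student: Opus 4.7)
The plan is to split on the common type of $G$ and $H$ (which is preserved by the copci $H\to G$, by Lemma \ref{focoty}) and to reduce the only nontrivial case to a property of the focal-universal envelope.

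If the common type is totally disconnected, the last clause of Proposition \ref{specalt} makes both $G$ and $H$ automatically special focal, so the equivalence holds trivially. If the common type is mixed, then Proposition \ref{specalt} forbids a focal group of mixed type from being special focal at all, so neither $G$ nor $H$ is, and again the equivalence holds. It remains to handle the connected-type case.

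Assume now that $G$ and $H$ are both focal of connected type. Since the copci $H\to G$ makes them commable, the uniqueness statement in Corollary \ref{efou} yields a canonical isomorphism $\hat H\cong\hat G$, so it suffices to show, for any focal LC-group $G$ of connected type, the equivalence ``$G$ is special focal if and only if $\hat G$ is''. The direction ``$\hat G$ special focal $\Rightarrow$ $G$ special focal'' is immediate by composing the copci $G\to\hat G$ provided by Corollary \ref{efou} with a copci $\hat G\to S$ into a non-focal S1-group.

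For the converse, take a copci $f\colon G\to S$ with $S$ non-focal; by Proposition \ref{specalt}, $S$ is an S1-group, and $S/\mathsf{W}(S)$ identifies with an open finite-index subgroup of $\Isom(X)$ for some rank 1 symmetric space $X$ of noncompact type. Composing $f$ with $S\to S/\mathsf{W}(S)\hookrightarrow\Isom(X)$ gives a copci $\bar f\colon G\to\Isom(X)$, whose image is a closed cocompact focal subgroup of $\Isom(X)$ and thus fixes some boundary point $\omega\in\partial X$; so the image lies in $\Isom(X)_\omega$, and cocompactness inside $\Isom(X)_\omega$ follows because $\Isom(X)_\omega/\bar f(G)$ embeds as a closed subspace of the compact $\Isom(X)/\bar f(G)$. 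The introductory example identifies $\Isom(X)_\omega$ as the focal-universal envelope of any closed cocompact subgroup of $\Isom(X)$ fixing $\omega$; combined with the uniqueness in Corollary \ref{efou}, this gives $\hat G\cong\Isom(X)_\omega$. The inclusion $\Isom(X)_\omega\hookrightarrow\Isom(X)$ is then a copci from $\hat G$ into a non-focal S1-group, showing that $\hat G$ is special focal. The only delicate step is the identification $\hat G\cong\Isom(X)_\omega$, obtained by the combination just indicated; the W-faithfulness and open-finite-index bookkeeping is routine.
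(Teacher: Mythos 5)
Your overall architecture is sound and essentially parallels the paper's: the forward direction is trivial by composition, the totally disconnected case is handled by Proposition \ref{specalt}, the mixed case is vacuous, and the connected case comes down to the stabilizer $\Isom(X)_\omega$ of a boundary point in the isometry group of a rank 1 symmetric space. Your bookkeeping (the image of $\bar f$ being closed, cocompact and focal in $\Isom(X)$, hence lying cocompactly in $\Isom(X)_\omega$; the reduction to ``$G$ special $\Leftrightarrow \hat G$ special'' via $\hat H\cong\hat G$) is correct and is essentially a reorganization of the argument the paper gives using Theorem \ref{classcomcon}.

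The genuine gap is the step where you ``identify $\hat G\cong\Isom(X)_\omega$'' by citing the introductory example. That example is an unproved illustration in the introduction; within the paper's logical structure its justification is precisely the first step of the proof of this proposition, namely the verification that $\Isom(X)_\omega$ is focal-universal. The paper establishes this (in sketch) by applying criterion (\ref{un2}) of Proposition \ref{cu}: writing $\Isom(X)_\omega=N\rtimes([\mathsf{P}_G]\times K)$ as a maximal parabolic in the automorphism group of a rank 1 simple adjoint group, one must check case by case that $K$ is maximal compact in $C([\mathsf{P}_G])/[\mathsf{P}_G]$ inside $\Out(N)$ (e.g.\ for real hyperbolic space, $\mathsf{P}_G/[\mathsf{P}_G]=\mathrm{O}(n)$ maximal compact in $\GL_n(\mathbf{R})/\mathbf{R}_+$). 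Without this verification your argument is circular at its only substantive point: every route of the form you propose (and of the form the paper uses) needs to know that $\Isom(X)_\omega$ coincides with its own focal-universal envelope, since commability of $G$ with $\Isom(X)_\omega$ alone only produces a copci map $G\to\widehat{\Isom(X)_\omega}$, which is of no use unless that envelope still sits copci-ly inside a non-focal group. So you should either carry out (or at least invoke and sketch) the case-by-case check of Proposition \ref{cu}(\ref{un2}) for maximal parabolics of rank 1 groups, or replace it by an independent argument; as stated, the key fact has been assumed rather than proved.
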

\begin{proof}
Trivially $G$ special implies $H$ special. For the converse, assume $H$ special. By Proposition \ref{specalt}, it is either of totally disconnected or connected type. In the first case, $G$ is also special by the same proposition. The remaining case is when $H$ is special of connected type; in this case
we only sketch a proof, since the conclusion can also be deduced from quasi-isometric rigidity arguments.

The first step is to show that for every rank 1 symmetric space $X$ and boundary point $\omega$, the stabilizer $G$ of $\omega$ in $S=\Isom(X)$ is focal-universal, using the characterization in Proposition \ref{cu}(\ref{un2}). This can be done by a case-by-case check. In the language of semisimple groups, the setting is the following: $S$ is the automorphism group of a simple adjoint Lie group of real rank 1 (which can be identified to $S^\circ$); $G$ is a maximal parabolic subgroup; it admits a Levi decomposition $G=N\rtimes A$, where $N=\Nilc(G)$ is the unipotent radical of $G$ and $A$ has the canonical decomposition $A=[\mathsf{P}_G]\times K$ ($K$ compact). In each case, we have to check that, denoting by $C([\mathsf{P}_G])$ the centralizer of $[\mathsf{P}_G]$ in $\Aut(U)$, the subgroup $K$ is maximal compact in $C([\mathsf{P}_G])/[\mathsf{P}_G]$.

For instance, for the $n$-dimensional real hyperbolic space ($n\ge 2$), this group $G$ is isomorphic to the group of affine similarities of $\mathbf{R}^n$, namely $\mathbf{R}^n\rtimes (\textnormal{O}(n)\mathbf{R}^*)$. So $N=\mathbf{R}^n$, $\Out(N)=\GL_n(\mathbf{R})$, and $\mathsf{P}_G$ is the group of linear similarities of $\mathbf{R}^n$, $[\mathsf{P}_G]$ is the group of positive scalar matrices in $\GL_n(\mathbf{R})$. So $[\mathsf{P}_G]\subset \mathsf{P}_G$ and $\mathsf{P}_G/[\mathsf{P}_G]=\textnormal{O}(n)$ is a maximal compact subgroup in $C([\mathsf{P}_G])/[\mathsf{P}_G]=\textnormal{GL}_n(\mathbf{R})/\mathbf{R}_+$. The interested reader can check the other cases.

Hence assuming $H\to G$ is copci, and $H$ is special, let $H\to G_1$ be a copci homomorphism with $G_1$ non-focal. By Proposition \ref{specalt}, $G_1$ can be chosen to be the isometry group $\Isom(X)$ of a symmetric space $X$ of rank 1 and noncompact type. By amenability, $G_1$ fixes a boundary point, and hence falls into $\Isom(X)_\omega$ for some $\omega$. By the case-by-case verification, $\Isom(X)_\omega$ is focal universal. So by Theorem \ref{classcomcon}, there exists a copci homomorphism $G\to\Isom(X)_\omega$, and hence by composition we get a copci homomorphism $G\to\Isom(X)$. Thus $G$ is special. 
\end{proof}

The following lemma is essentially contained in \cite{CCMT}.

\begin{lem}\label{g12f}
Let $G_1,G_2$ be focal LC-groups. Equivalences:
\begin{enumerate}
\item\label{h12} there exists a hyperbolic LC-group $G$ with copci homomorphisms $G_1\to G\leftarrow G_2$. 
\item\label{hf12} there exists a focal LC-group $G$ with copci homomorphisms $G_1\to G\leftarrow G_2$. 
\end{enumerate}
\end{lem}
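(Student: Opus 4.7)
The implication (\ref{hf12})$\Rightarrow$(\ref{h12}) is immediate from the CCMT theorem quoted just above (focal $=$ focal-hyperbolic), so the content lies in (\ref{h12})$\Rightarrow$(\ref{hf12}). Suppose we are given copci homomorphisms $f_i\colon G_i\to G$ with $G$ hyperbolic. Since $G_1$ is non-elementary hyperbolic and $f_1$ is a quasi-isometry, $G$ is itself non-elementary hyperbolic. If $G$ is already focal there is nothing to prove; so the essential case is when $G$ is of general type, and my plan is to produce a common boundary fixed point $\omega\in\bd G$ for the two images and take $H:=G_\omega$ as the desired focal envelope.

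The key geometric input I will invoke is the minimality of the action of a non-elementary, non-focal hyperbolic LC-group on its boundary, which is part of the boundary dynamics of hyperbolic LC-groups developed in \cite{CCMT}. Granting this, each focal $G_i$ fixes a point on $\bd G_i$, and via the boundary homeomorphism induced by the cocompact inclusion $f_i$ this yields a point $\omega_i\in\bd G$ fixed by $f_i(G_i)$. The stabilizer $G_{\omega_i}$ contains the cocompact subgroup $f_i(G_i)$, hence is cocompact in $G$, so the orbit $G\cdot\omega_i\cong G/G_{\omega_i}$ is compact; by minimality it must be all of $\bd G$. In particular $\omega_2=g\omega_1$ for some $g\in G$, and after replacing $f_2$ by its composition with conjugation by $g^{-1}$ (still copci) I may assume $\omega_1=\omega_2=:\omega$.

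I then take $H:=G_\omega$, which is a closed, hence locally compact, subgroup of $G$ containing both $f_i(G_i)$. The standard sandwich argument---writing $G=Kf_i(G_i)$ with $K$ compact yields $H=(K\cap Hf_i(G_i)^{-1})f_i(G_i)$---shows that each $f_i(G_i)$ is cocompact in $H$, so the $f_i$ factor as copci homomorphisms $G_i\to H$. Since $G_1$ is cocompact in $H$, the group $H$ is hyperbolic and non-elementary; and as it fixes $\omega$ it is focal. This yields the copci homomorphisms $G_1\to H\leftarrow G_2$ claimed in (\ref{hf12}). The only nontrivial ingredient is the minimality input for the boundary action in general type; everything else is a routine cocompactness chase.
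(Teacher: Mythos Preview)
Your proof is correct and follows essentially the same route as the paper's: reduce to the general-type case, show the two boundary fixed points $\omega_1,\omega_2$ lie in a single $G$-orbit, conjugate, and pass to the point stabilizer $G_\omega$. The only difference is in how transitivity of $G$ on $\bd G$ is obtained---the paper uses \cite[Proposition~5.5(c)]{CCMT} (each focal $G_i$ is transitive on $\bd G\smallsetminus\{\omega_i\}$, and $G$ moves $\omega_i$), whereas you use minimality of the $G$-action together with compactness of the orbit $G/G_{\omega_i}$; both are short consequences of the boundary dynamics in \cite{CCMT}.
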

\begin{proof}
Let us prove the nontrivial implication. 
Assuming (\ref{h12}), if $G$ is focal, there is nothing to prove. Otherwise the  action of $G_i$ on the boundary of $G$ has a unique fixed point $\omega_i$ and is transitive on the complement \cite[Proposition 5.5(c)]{CCMT}; the action of $G$ on $\partial G$ extends the latter action and does not fix $\omega_i$, so $G$ is transitive on $\partial G$. Thus the stabilizers $G_{\omega_1}$ and $G_{\omega_2}$ are closed cocompact focal subgroups of $G$ and are conjugate. So after conjugation in $G$, we can suppose that $G_1$ and $G_2$ are mapped into a single closed cocompact focal subgroup of $G$, proving (\ref{hf12}).
\end{proof}

\begin{prop}\label{tdck}
Let $G_1,G_2$ be focal LC-groups of totally disconnected type. Then $G_1$ and $G_2$ are commable through $\nearrow\nwarrow\nearrow\nwarrow$, more precisely there exists $k\ge 2$ such that both $G_1$ and $G_2$ are commable to a finitely generated free group of rank $k$ through $\nearrow\nwarrow$.
\end{prop}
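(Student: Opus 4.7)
The plan is to realize each $G_i$ as a cocompact closed subgroup of $\Aut(T_{d_i})$ for a suitable regular tree $T_{d_i}$, and then to produce a free group $F_k$ that sits as a uniform lattice in both $\Aut(T_{d_1})$ and $\Aut(T_{d_2})$ simultaneously. This would immediately yield the commation
\[
G_1 \to \Aut(T_{d_1}) \leftarrow F_k \to \Aut(T_{d_2}) \leftarrow G_2,
\]
which is of the form $\nearrow\nwarrow\nearrow\nwarrow$, and moreover displays $G_1$ and $G_2$ as both commable to the same $F_k$ through $\nearrow\nwarrow$, as required.

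For the first step, Proposition \ref{copfo} applied with $m=s_{G_i}$ and exponent $1$ produces a copci homomorphism $G_i \to \FT_{m_i}$, with $m_i = s_{G_i} \ge 2$. Setting $d_i = m_i + 1 \ge 3$, the subgroup $\FT_{m_i}$ is the stabilizer of a boundary point in $\Aut(T_{d_i})$, and since $\Aut(T_{d_i})$ acts transitively on the compact space $\partial T_{d_i}$, this inclusion is cocompact, hence copci. Composition gives a copci homomorphism $G_i \to \Aut(T_{d_i})$.

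For the second step, I would invoke the standard Bass-Serre picture: $F_k$ acts freely and cocompactly on $T_d$ exactly when $T_d$ is the universal cover of a finite connected $d$-regular (multi)graph $X$ of first Betti number $k$, in which case $F_k = \pi_1(X)$ acts via deck transformations. An Euler-characteristic count shows that such an $X$ with $V$ vertices satisfies $V(d-2) = 2(k-1)$; conversely, given $d \ge 3$ and $k \ge 2$ satisfying this divisibility (and a mild size/parity constraint), a connected $d$-regular (multi)graph of rank $k$ exists by elementary combinatorics. I would then choose
\[
k - 1 = t \cdot \mathrm{lcm}(m_1 - 1, m_2 - 1)
\]
for a sufficiently large positive integer $t$: this ensures $(d_i - 2) \mid 2(k-1)$ for $i=1,2$, makes both $V_i = 2(k-1)/(d_i-2)$ as large as needed, and gives $k \ge 2$. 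The resulting free uniform lattice structures provide copci homomorphisms $F_k \to \Aut(T_{d_i})$ for both $i$, completing the commation.

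The main technical point is the elementary existence of these finite $d_i$-regular (multi)graphs with the prescribed first Betti number; the only subtlety is a parity constraint when $d_i$ is odd, but in that case $(d_i - 2)$ is odd, so $(d_i - 2) \mid 2(k-1)$ already forces $(d_i - 2) \mid (k-1)$, which is enough. Enlarging $t$ if necessary bypasses any small-scale pathology. Everything else is immediate from Proposition \ref{copfo} and the transitivity of $\Aut(T_d)$ on the compact boundary $\partial T_d$.
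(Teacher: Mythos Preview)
Your proposal is correct and follows essentially the same approach as the paper: embed each $G_i$ cocompactly into the full automorphism group of a regular tree (via $\FT_{m_i}$), and then choose a single rank $k$ so that $F_k$ sits as a uniform lattice in both $\Aut(T_{d_1})$ and $\Aut(T_{d_2})$. The only difference is cosmetic: the paper picks $k=1+d!$ for $d$ large, whereas you choose $k-1$ to be a large multiple of $\mathrm{lcm}(m_1-1,m_2-1)$ and spell out the Euler-characteristic and parity details that the paper leaves implicit.
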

\begin{proof}
There is a copci homomorphism $G_i\to\Aut(T_i)$ for some regular tree $T_i$ of finite valency at least 3. For $d$ large enough, $\Aut(T_i)$ ($i=1,2$) contains a cocompact lattice isomorphic to a free group of rank $k=1+d!$. Thus we have copci homomorphisms $G_1\to\Aut(T_1)\leftarrow F_k\to \Aut(T_2)\leftarrow G_2$.
\end{proof}

Thus there exists a single commability class containing all focal LC-groups of totally disconnected type and all virtually free discrete groups.

\begin{que}\label{qtre}
Let $G$ be a locally compact group having a continuous, proper cocompact action on a locally finite tree $T$. Assume that $T$ has no vertex of valency 1 and has at least one vertex of degree $\ge 3$. Is $G$ commable to $F_2$ (or equivalently to a focal group of totally disconnected type)?
\end{que}

By a result of Bass and Kulkarni if $G$ is as in Question \ref{qtre} and is unimodular, then it has a cocompact lattice; in particular, the answer to Question \ref{qtre} is positive if $G$ is unimodular.

A thorough study of cocompact isometry groups of bounded valency trees is carried out in \cite{MSW02}. 

Note that the converse of Question \ref{qtre} holds: if a locally compact group $G$ is commable to $F_2$, then it admits a continuous, proper cocompact action on a locally finite tree $T$ with no vertex of valency 1 and has at least one vertex of degree $\ge 3$. This fact is used throughout \cite{MSW02} and is explicitly stated in the survey \cite{CQI}.

Using Lemma \ref{g12f}, we can improve Lemma \ref{copft}.

\begin{prop}\label{gimp}
For any two distinct integers $m,n\ge 2$, the groups $\FT_n$ and $\FT_m$ are not commable through $\nearrow\nwarrow$.
\end{prop}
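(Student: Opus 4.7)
The plan is to suppose for contradiction the existence of copci homomorphisms $f\colon \FT_n \to G$ and $g\colon \FT_m \to G$, reduce $G$ to a focal totally disconnected LC-group, and then funnel both $f$ and $g$ through a common $\FT_{s_G}$ where Lemma \ref{copft} immediately supplies the contradiction.

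Since $\FT_n$ and $\FT_m$ are non-elementary focal-hyperbolic and since copci homomorphisms preserve (non-elementary) hyperbolicity, $G$ is non-elementary hyperbolic. By Lemma \ref{g12f} I may then replace $G$ by a focal LC-group still receiving copci homomorphisms from both $\FT_n$ and $\FT_m$ (concretely, the stabilizer in $G$ of one of the fixed boundary points of $\FT_n$ or $\FT_m$); so I assume $G$ itself is focal. Applying Lemma \ref{focoty} to the closed cocompact subgroup $f(\FT_n) \subset G$, the focal group $G$ must have the same type as $\FT_n$, namely totally disconnected type.

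Let $s_G \geq 2$ be the generator of $\Delta_G(G)$ furnished by Lemma \ref{l_sg}. Since $s_G$ is trivially an integral power of itself, Proposition \ref{copfo} in its primed form (applied with $m := s_G$) supplies a copci homomorphism $h\colon G \to \FT_{s_G^k}$ for some $k\geq 1$; inspecting the HNN/Bass--Serre tree construction in that proof, $k = 1$ is achieved, so we have $h\colon G \to \FT_{s_G}$. Composing, $h \circ f\colon \FT_n \to \FT_{s_G}$ and $h \circ g\colon \FT_m \to \FT_{s_G}$ are copci. Lemma \ref{copft} then forces $n = s_G$ and $m = s_G$, hence $m = n$, contradicting the hypothesis $m \neq n$.

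No step is really delicate once the preparatory tools are in hand: the reduction to $G$ focal of totally disconnected type is a direct application of Lemmas \ref{g12f} and \ref{focoty}, and the conceptual heart is just the observation that the modular image determines a canonical valency $s_G + 1$ tree for $G$ which swallows any focal $\FT_i$ sitting copci inside $G$, reducing the problem to the already-established non-embedding Lemma \ref{copft}.
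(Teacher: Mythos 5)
Your proof is correct and follows essentially the same route as the paper's (which treats Propositions \ref{gimp} and \ref{gimp2} together): reduce to a focal target via Lemma \ref{g12f}, note it has totally disconnected type, map it copci onto some $\FT_s$ via Proposition \ref{copfo}, and conclude with Lemma \ref{copft}. Note that you do not even need the refinement $k=1$ (though it is indeed available from the Bass--Serre construction), since Lemma \ref{copft} applied to $\FT_n\to\FT_{s_G^k}$ and $\FT_m\to\FT_{s_G^k}$ already forces $n=s_G^k=m$.
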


When $m$ and $n$ have common powers, this is in a sense optimal, in view of Remark \ref{rsous}. Otherwise, we have:

\begin{prop}\label{gimp2}
For any two integers $m,n\ge 2$ such that $\sqrt[\max]{m}\neq\sqrt[\max]{n}$, there the groups $\FT_n$ and $\FT_m$ are not commable through $\nwarrow\nearrow\nwarrow$.
\end{prop}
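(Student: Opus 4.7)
The plan is to unpack the hypothetical commation $\FT_m \nwarrow G_1 \nearrow G_2 \nwarrow \FT_n$ and reduce to the focal case, where the $q$-invariant finishes the job. Concretely, suppose for contradiction that $\FT_m$ and $\FT_n$ are commable through $\nwarrow\nearrow\nwarrow$: there exist LC-groups $G_1, G_2$ and copci homomorphisms $G_1 \to \FT_m$, $G_1 \to G_2$, and $\FT_n \to G_2$. By Lemma~\ref{ftrm} both $\FT_m$ and $\FT_n$ are focal of totally disconnected type with $q_{\FT_m} = \sqrt[\max]{m}$ and $q_{\FT_n} = \sqrt[\max]{n}$. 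Since $G_1 \to \FT_m$ is copci, Lemma~\ref{focoty} (together with Lemma~\ref{fow} to mod out the compact kernel) forces $G_1$ to be focal of totally disconnected type, and Proposition~\ref{p_q} then gives $q_{G_1} = \sqrt[\max]{m}$.

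The delicate point is that $G_2$ is not a priori focal: by Proposition~\ref{specalt}, a focal totally disconnected group such as $\FT_n$ can be copci in non-focal LC-groups, namely non-elementary hyperbolic groups of general type (think of tree-automorphism groups such as $\Aut(T_{n+1})$). To neutralize this case I will invoke Lemma~\ref{g12f}, applied to the pair of focal groups $G_1$ and $\FT_n$, both copci into $G_2$. Here $G_2$ is non-elementary hyperbolic, thanks to the copci $\FT_n \to G_2$ and the stability of (non-elementary) hyperbolicity under copci maps recorded at the beginning of Section~\ref{speci}. The lemma therefore produces a focal LC-group $G'$ together with copci homomorphisms $G_1 \to G'$ and $\FT_n \to G'$, exhibiting $G_1$ and $\FT_n$ as commable within focal LC-groups.

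It then suffices to apply Proposition~\ref{p_q} (or equivalently Corollary~\ref{tdckfo2}) to $G_1 \to G' \leftarrow \FT_n$: the $q$-invariant is constant along commability within focal groups, so $\sqrt[\max]{m} = q_{G_1} = q_{G'} = q_{\FT_n} = \sqrt[\max]{n}$, contradicting the hypothesis. The main obstacle addressed by this approach is precisely the a priori possibility that the intermediate LC-group $G_2$ is non-focal; the invocation of Lemma~\ref{g12f} is exactly the tool that lets us replace this non-focal common target by a focal one without losing the copci data for $G_1$ and $\FT_n$. Once that replacement is made, the proof is entirely driven by the $q$-invariant and no further obstacle arises.
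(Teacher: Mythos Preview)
Your proof is correct and follows essentially the same route as the paper's: recognize $G_1$ as focal via the copci into $\FT_m$, invoke Lemma~\ref{g12f} to replace the possibly non-focal common target $G_2$ by a focal one, and then conclude with the commability-invariance of the $q$-invariant (Proposition~\ref{p_q}). The paper handles Propositions~\ref{gimp} and~\ref{gimp2} in a single argument, which leads it to pass through an auxiliary $\FT_s$ and Lemma~\ref{copft}; for Proposition~\ref{gimp2} alone, your direct appeal to the $q$-invariant is a slight streamlining of the same idea.
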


\begin{proof}[Proof of Propositions \ref{gimp} and \ref{gimp2}]
In both cases, arguing by contradiction and up to switching $m$ and $n$, we can suppose we have copci homomorphisms between focal groups $\FT_n\leftarrow G \to H\leftarrow \FT_m$. Then $G$ is focal, so by Lemma \ref{g12f}, we can suppose that $H$ is focal. In view of Proposition \ref{copfo}, we can suppose that $H=\FT_s$ for some $s$, and by Lemma \ref{copft}, we deduce that $s=n$. Thus in both cases, we have copci homomorphisms $\FT_n\leftarrow G\to \FT_m$. 

In the setting of Proposition \ref{gimp}, we can assume from the beginning that $G=\FT_n$ and we obtain a contradiction with Lemma \ref{copft}.

In the setting of Proposition \ref{gimp2}, we deduce that $\FT_n$ and $\FT_m$ are commable within focal groups. Their q-invariants are $\sqrt[\max]{n}$ and $\sqrt[\max]{m}$ (see Lemma \ref{ftrm}), which should be equal by Proposition \ref{p_q}, a contradiction.
\end{proof}

As a corollary of Proposition \ref{gimp}, we obtain

\begin{cor}\label{nouni}
There are no focal-universal LC-groups of mixed or totally disconnected type.
\end{cor}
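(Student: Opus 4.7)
I will argue by contradiction in each of the two cases. Suppose $G$ is focal-universal of totally disconnected or mixed type, and set $q = q_G \geq 2$ (this is a non-power integer, well-defined since $G$ is not of connected type by Proposition \ref{p_q}). The strategy is to exhibit, for every integer $n \geq 1$, a focal group $G_n$ commable to $G$ within focal groups and arranged so that the copci homomorphism $G_n \to G$ provided by focal-universality can be routed into the totally disconnected quotient to yield a copci homomorphism $\FT_{q^n} \to G/G^\circ$ (or simply $\FT_{q^n} \to G$ in the totally disconnected case). Taking $n \neq m$ then contradicts Proposition \ref{gimp}.

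\textbf{Totally disconnected case.} For each $n \geq 1$, the group $\FT_{q^n}$ is focal of totally disconnected type with $q_{\FT_{q^n}} = \sqrt[\max]{q^n} = q$ by Lemma \ref{ftrm}. Corollary \ref{tdckfo2} then asserts that $\FT_{q^n}$ and $G$ are commable within focal groups, and by focal-universality of $G$ there is a copci homomorphism $f_n : \FT_{q^n} \to G$. For any distinct $n, m \geq 1$ this produces the commation $\FT_{q^n} \nearrow G \nwarrow \FT_{q^m}$ within focal groups, contradicting Proposition \ref{gimp}.

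\textbf{Mixed case.} By Corollary \ref{c_comix} the group $G$ is commable within focal groups to some $H[\varpi, q]$, with $H$ a purely real Heintze group (of dimension $\geq 2$) and $\varpi > 0$. For each $n \geq 1$ the group $H[\varpi, q^n]$ is focal of mixed type with the same invariants: its q-invariant is $\sqrt[\max]{q^n} = q$, and the $H$ and $\varpi$ data are unchanged by construction. Proposition \ref{micom} therefore implies that $H[\varpi, q^n]$ is commable to $G$ within focal groups, so focal-universality yields a copci homomorphism $g_n : H[\varpi, q^n] \to G$. Since $\FT_{q^n}$ is totally disconnected and the kernel of the second projection $H[\varpi, q^n] \to \FT_{q^n}$ is the connected subgroup $N \times \{1\}$ (where $N = \Nilc(H)$), one has $(H[\varpi, q^n])^\circ = N$ and the quotient by the identity component is identified with $\FT_{q^n}$. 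Applying Lemma \ref{facgo}(\ref{i_noco}) to $g_n$ then produces a copci homomorphism $\bar g_n : \FT_{q^n} \to G/G^\circ$. For $n \neq m$ we obtain the commation $\FT_{q^n} \nearrow G/G^\circ \nwarrow \FT_{q^m}$ within focal groups, once again contradicting Proposition \ref{gimp}.

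\textbf{Main obstacle.} The only nonformal point is the identification $(H[\varpi,q^n])^\circ = N$ with quotient $\FT_{q^n}$, but this is immediate from the explicit exact sequence $1 \to N \to H[\varpi,q^n] \to \FT_{q^n} \to 1$ coming from Definition \ref{dhpa} together with the connectedness of $N$ and total disconnectedness of $\FT_{q^n}$. Given this, the argument is a short application of focal-universality, Corollary \ref{tdckfo2} and Proposition \ref{micom} to produce the required family of copci homomorphisms, and of Lemma \ref{facgo}(\ref{i_noco}) and Proposition \ref{gimp} to conclude.
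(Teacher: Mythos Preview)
Your proof is correct and follows essentially the same approach as the paper's: exploit focal-universality to produce copci homomorphisms from $\FT_{q}$ and $\FT_{q^2}$ (or more generally $\FT_{q^n}$) into a common target, contradicting Proposition~\ref{gimp}. The paper's proof is terse and leaves the mixed case to the reader; your routing through the totally disconnected quotient $G/G^\circ$ via Lemma~\ref{facgo}(\ref{i_noco}) is exactly the natural way to fill in that gap, since a direct copci $\FT_{q^n}\to G$ is ruled out by Lemma~\ref{focoty} when $G$ has mixed type.
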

\begin{proof}
Let $G$ be a counterexample. First assume that $G$ is of totally disconnected type. Then for $q=q_G$, there are copci homomorphisms $\FT_q\to G\leftarrow \FT_{q^2}$. This contradicts Proposition \ref{gimp}.

The case of mixed type is based on the same idea and left to the reader.
\end{proof}


\section{QI-invariance of the $\varpi$-invariant}\label{qivpi}

In this section, we prove the following theorem.

\begin{thm}\label{th_mi}
Let $G_1,G_2$ be focal quasi-isometric LC-groups. Then $G_1$ and $G_2$ have the same type and
\begin{enumerate}
\item\label{com1} $G_1/G_1^\sharp$ and $G_2/G_2^\sharp$ are quasi-isometric;
\item\label{com2} 
$\varpi_{G_1}=\varpi_{G_2}$.
\end{enumerate}
\end{thm}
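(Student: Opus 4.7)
My plan follows the two ingredients highlighted after the statement: the topology of the boundary, and a double use of the quasi-isometry invariance of the vanishing of reduced first $L^p$-cohomology. The same-type assertion is the easiest: the Gromov boundary $\partial G$ of a focal LC-group $G$ is a quasi-isometry invariant up to homeomorphism, and it is a Cantor space when $G$ is of totally disconnected type, a topological sphere when $G$ is of connected type, and in the mixed case is neither (having both non-trivial connected components and a non-trivial totally disconnected Hausdorff quotient). For the QI-invariance of $G_i/G_i^\sharp$ in part (1), the pure connected and pure totally disconnected cases are trivial, so assume both $G_i$ are of mixed type; by Corollary~\ref{c_comix}, $G_i$ is commable (hence quasi-isometric) to $H_i[\varpi(G_i),q_{G_i}]$ for a unique purely real Heintze group $H_i$. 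A quasi-isometry $G_1 \to G_2$ induces a quasisymmetry of the pointed boundaries $(\partial G_i, \omega_{G_i})$ that carries connected components to connected components; via the parabolic visual metric these connected components are identified with translates of $\partial H_i$, giving a quasisymmetry $\partial H_1 \to \partial H_2$ which lifts to a quasi-isometry $H_1 \to H_2$, and thence $G_1/G_1^\sharp \to G_2/G_2^\sharp$.

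For the $\varpi$-invariance in part (2), set
\[
p_*(G) \;=\; \inf\bigl\{\,p \ge 1 : \overline{H}^1_{(p)}(G) = 0\,\bigr\}.
\]
The number $p_*$ is a quasi-isometry invariant (invariance of vanishing of the reduced first $L^p$-cohomology, cf.\ Pansu). The \emph{double use} consists in computing $p_*$ both for $G_i$ and for the Heintze quotient $G_i/G_i^\sharp$. For the latter one has the classical Heintze formula $p_*(H_i) = Q_i/\lambda_{\min}(H_i)$, where $Q_i$ is the trace of the derivation and $\lambda_{\min}(H_i)$ its smallest real eigenvalue. For the full mixed-type group $G_i$, an explicit calculation on the parabolic visual metric on $\partial G_i\smallsetminus\{\omega_{G_i}\}$ -- in which the totally disconnected direction contributes ultrametrically, with scale coupled to the Heintze direction by $\varpi(G_i)$ -- produces an additive correction of the form $\varpi(G_i)\,\log q_{G_i}\,/\,\lambda_{\min}(H_i)$. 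Combining these two invariant values with the conclusion of the first step and with Dymarz's QI-invariance of $q_G$, one isolates $\varpi(G_i)$ as an explicit function of quasi-isometry invariants, and concludes.

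The principal obstacle lies in the second $L^p$-cohomology calculation: on the mixed boundary one must perform a harmonic analysis that simultaneously tracks a Carnot--Carath\'eodory-like geometry on the Heintze side and an ultrametric geometry on the totally disconnected side, and determines precisely how the coupling parameter $\varpi$ enters the critical exponent. Without a formula for $p_*(H_i[\varpi,q])$ that is monotone and explicit in $\varpi$, inverting for $\varpi$ from quasi-isometry invariants is impossible; the remainder of the argument is a routine combination of already-invariant quantities, and the boundary-topology considerations of the first step are used only to guarantee that we are in the mixed type throughout.
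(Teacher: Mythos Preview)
Your overall strategy --- boundary topology for the same-type claim and part~(\ref{com1}), then a double use of the QI-invariance of first $L^p$-cohomology for part~(\ref{com2}) --- is exactly the paper's. But your execution of part~(\ref{com2}) contains a genuine error. The correct critical-exponent relation in mixed type (Lemma~\ref{p0c}, derived from Theorem~\ref{lpcote}, quoted from \cite{CoTe}) is purely multiplicative,
\[
p_0(G)\;=\;\bigl(1+\varpi(G)\bigr)\,p_0(G/G^\sharp),
\]
and contains no $q_G$: writing the total volume scaling as $\delta=\delta'\delta''$ (connected factor times totally disconnected factor) gives $p_0(G)=\log\delta/\log\lambda$, $p_0(G/G^\sharp)=\log\delta'/\log\lambda$, and their ratio is $1+\log\delta''/\log\delta'=1+\varpi(G)$. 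Hence $\varpi(G_i)=p_0(G_i)/p_0(G_i/G_i^\sharp)-1$ is a ratio of two QI-invariants, the second available from part~(\ref{com1}), and Dymarz's theorem is \emph{not} invoked. The paper in fact emphasizes (Remark~\ref{dyq}) that the QI-invariance of $\varpi$ and of $q$ are independent inputs; your route would make the former depend on the latter.

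Your proposed additive correction $\varpi(G_i)\log q_{G_i}/\lambda_{\min}(H_i)$ is not the right formula, and even granting Dymarz's $q$-invariance it would not let you solve for $\varpi$, because $\lambda_{\min}(H_i)$ depends on the normalization of the $\mathbf{Z}$-action and is not itself a QI-invariant (only the ratio $\log\delta'/\log\lambda$ is). Two smaller points: your $p_*(G)$ should be $\inf\{p:H^1_p(G)\neq 0\}$ --- with your sign the infimum is identically $1$; and the mixed-type $L^p$ computation you flag as the principal obstacle is not carried out via boundary harmonic analysis but is quoted directly as Theorem~\ref{lpcote} from \cite{CoTe}. In part~(\ref{com1}), note also that a connected component $C$ of $\partial G\smallsetminus\{\omega\}$ is homeomorphic to $\mathbf{R}^{d-1}$, not to $\partial H$; it is $C\cup\{\omega\}$ that one identifies quasi-symmetrically with the sphere $\partial H$.
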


\begin{remark}\label{dyq}
T.~Dymarz proves in \cite{Dy12} that if $G_1$ and $G_2$ are focal LC-groups of mixed type then $q_{G_1}=q_{G_2}$. In combination with Theorem \ref{th_mi}, this provides a reduction of the quasi-isometric classification of focal groups to the connected type case, see the discussion in \cite{CQI}.
\end{remark}

We first need to describe the topology of the boundary.

\begin{prop}\label{bdmi}
Let $G$ be a focal LC-group and $\partial G$ its visual boundary; denote by $\omega$ the unique $G$-fixed point in $\partial G$. 
\begin{itemize}
\item if $G$ is of connected type $\partial G$ is homeomorphic to a $(d-1)$-sphere for some unique integer $d=d_G\ge 2$;
\item if $G$ is of totally disconnected type, $\partial G$ is homeomorphic to a Cantor space $\mathsf{C}$;
\item if $G$ is of mixed type, then $\partial G$ is homeomorphic to the 1-point compactification $\Xi_d$ of $\mathbf{R}^{d-1}\times \mathsf{C}\times \mathbf{Z}$, where $\omega$ corresponds to the compactifying point, for some unique integer $d=d_G\ge 2$. 
\end{itemize}
In particular, if $G$ is of mixed type, then every self-homeomorphism of $\partial G$ fixes $\omega$. If $H$ is a Heintze group and $G=H[\varpi,q]$ then $d_G=d_H=\dim(H)$.
\end{prop}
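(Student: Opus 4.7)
My plan is to identify $\partial G$ with the one-point compactification of $N=\Ker\Delta_G$, with $\omega$ as the added point, and then to describe $N$ topologically according to the type of $G$. Since the visual boundary is invariant (up to canonical homeomorphism) under copci homomorphisms, Lemma \ref{fow} lets me pass to the W-faithful quotient. In this setting, standard horospherical coordinates based at $\omega$ identify $\partial G\smallsetminus\{\omega\}$ equivariantly with $N$ (acting on itself by left translation), and a sequence in $N$ converges to $\omega$ in $\partial G$ if and only if it leaves every compact set of $N$; thus $\partial G$ is the one-point compactification of $N$ with added point $\omega$.

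It then remains to determine $N$ topologically in each case. If $G$ is of connected type, then after reducing to a canonical purely real Heintze model $H=N_H\rtimes\mathbf{R}$ via Corollary \ref{c_comh}, the group $N=N_H$ is a simply connected nilpotent Lie group of dimension $d-1$, hence topologically $\mathbf{R}^{d-1}$; one-point compactification gives $S^{d-1}$, and $d=\dim H$. If $G$ is of totally disconnected type, then by Lemma \ref{l_sg} $N$ is an increasing union $\bigcup_{n\ge 0}K_n$ of compact open subgroups with $[K_{n+1}:K_n]\ge 2$; each difference $K_{n+1}\smallsetminus K_n$ is a non-empty compact open set without isolated points, hence a Cantor space, so $N\cong\mathsf{C}\times\mathbf{Z}$, whose one-point compactification is again a Cantor space. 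If $G$ is of mixed type, Proposition \ref{compde} gives a finite-index open direct product $N^\circ\times N^\sharp\subset N$; the connected factor $N^\circ$ is a simply connected nilpotent Lie group of dimension $d-1$, and $N^\sharp$ is a non-compact totally disconnected elliptic group admitting a compacting automorphism, which by the same argument as above is homeomorphic to $\mathsf{C}\times\mathbf{Z}$. Hence $N\cong\mathbf{R}^{d-1}\times\mathsf{C}\times\mathbf{Z}$, whose one-point compactification is $\Xi_d$.

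For the claim that every self-homeomorphism of $\Xi_d$ fixes $\omega$, I would give a purely topological characterization of $\omega$: every non-$\omega$ point $p$ admits a neighborhood basis of product sets $B\times W$, where $B\subset\mathbf{R}^{d-1}$ is Euclidean and $W$ is a non-singleton clopen piece of the Cantor factor, all of which are disconnected since $W$ is totally disconnected and has more than one point. By contrast, every neighborhood of $\omega$ is of the form $\{\omega\}\cup(\mathbf{R}^{d-1}\times\mathsf{C}\times\mathbf{Z}\smallsetminus K)$ for some compact $K$, and each connected $\mathbf{R}^{d-1}$-leaf in the complement has $\omega$ in its closure, so any such neighborhood is connected. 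Thus $\omega$ is the unique point of $\Xi_d$ at which the space is locally connected, a property preserved by every homeomorphism. The equality $d_G=d_H=\dim H$ is then immediate from the identifications above.

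The main obstacle I anticipate is the horospherical identification $\partial G\smallsetminus\{\omega\}\cong N$ in the first paragraph; in the Heintze case this is classical Riemannian hyperbolic geometry, but the general focal-hyperbolic case (especially in mixed type) rests on the structural work of \cite{CCMT} and requires some care, since $G$ is not a priori a Riemannian manifold. Once this identification is in hand, the rest of the argument is routine topology of the one-point compactification.
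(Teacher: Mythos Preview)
Your overall strategy—identify $\partial G\smallsetminus\{\omega\}$ with $N=\Ker\Delta_G$ after passing to the W-faithful quotient, then analyze $N$ topologically—has a genuine gap, and it is more serious than the ``matter of care'' you anticipate: the identification is actually \emph{false} for a general W-faithful focal group. The action of $N$ on $\partial G\smallsetminus\{\omega\}$ is transitive, but the point stabilizer (at the second fixed point of a compacting element $\xi$) is a compact $\xi$-invariant subgroup of $N$, and this is trivial only when $\xi$ acts as a \emph{contraction}, not merely a compaction. A concrete counterexample is the group of direct similarities of the plane, $G=(\mathbf{R}^2\rtimes\mathrm{SO}(2))\rtimes\mathbf{R}$: this is W-faithful, yet $N=\mathbf{R}^2\rtimes\mathrm{SO}(2)$ is three-dimensional and not nilpotent, while $\partial G\smallsetminus\{\omega\}\cong\mathbf{R}^2$ (the stabilizer being $\mathrm{SO}(2)$). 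The mixed-type variant $((\mathbf{R}^2\rtimes\mathrm{SO}(2))\times\mathbf{Q}_p)\rtimes\mathbf{Z}$ is likewise W-faithful and shows that your claim ``the connected factor $N^\circ$ is a simply connected nilpotent Lie group'' fails in the mixed case too.

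The paper circumvents this by first passing, via commability, to a model where the action \emph{is} contracting: a Heintze group in connected type (which you already do via Corollary~\ref{c_comh}), the action on a regular tree in totally disconnected type, and in mixed type the explicit cocompact subgroup $(N\times\mathbf{Q}_m)\rtimes\mathbf{Z}$ of $H[\varpi,m]$, reached via Corollary~\ref{c_comix}. In that last model the $\mathbf{Z}$-action is contracting on both factors, so the $\xi$-invariant compact stabilizer is forced to be trivial and $N\times\mathbf{Q}_m$ acts simply transitively on $\partial G\smallsetminus\{\omega\}$; the one-point compactification then gives $\Xi_d$. Your characterization of $\omega$ via local connectedness is correct and is a valid alternative to the paper's simpler observation that $\omega$ is the unique point of $\Xi_d$ whose complement is disconnected; either works once the boundary is correctly identified.
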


Note that there are some trivial instances of the topological description of these boundaries: the $(d-1)$-sphere is homeomorphic to the one-point compactification of $\mathbf{R}^{d-1}$, and the Cantor space $\mathsf{C}$ is homeomorphic to the one-point compactification of $\mathsf{C}\times\mathbf{Z}$.

\begin{proof}[Proof of Proposition \ref{bdmi}]
The uniqueness of $d$ follows from the fact that the dimension of a manifold is a topological invariant in the case of the $d$-sphere. In $\Xi_d$, observe that $\omega$ is the unique point whose complement is disconnected, and that any connected component of $\Xi_d\smallsetminus\{\omega\}$ is homeomorphic to $\mathbf{R}^d$. This proves the uniqueness of $d$ as well as the fact that $\omega$ is fixed by any self-homeomorphism of $\Xi_d$.

To prove the main statement, if $G$ has connected type, then by Proposition \ref{chec}, we can suppose that $G$ is Heintze. Hence it  admits a left-invariant Riemannian metric of negative sectional curvature and hence its boundary is homeomorphic to a $(\dim(H)-1)$-dimensional sphere. If $G$ has totally disconnected type, then it admits a continuous proper cocompact action on a regular tree of finite valency $\ge 3$ and hence its boundary is homeomorphic to a Cantor space.

Finally assume that $G$ has mixed type. We can suppose by Corollary \ref{c_comix} that $G=H[\varpi,m]$, with real parameter $\varpi\in\mathopen]0,\infty\mathclose[$ and integer $m\ge 2$. The latter admits, as a cocompact subgroup, a group of the form $(N\times\mathbf{Q}_m)\rtimes\mathbf{Z}$, where the positive generator of $\mathbf{Z}$ acts on the simply connected nilpotent Lie group $N$ by a contracting automorphism, and on $\mathbf{Q}_m$ by multiplication by $m$ (which is also contracting). The action of $N\times \mathbf{Q}_m$ on $\partial G\smallsetminus \{\omega\}$ is transitive and the stabilizer of a given point is a compact subgroup, invariant under the action of $\mathbf{Z}$ (see the proof of \cite[Proposition 5.5]{CCMT}); since the action of $\mathbf{Z}$ is contracting, this stabilizer is trivial. Hence the action is simply transitive and $\partial G\smallsetminus \{\omega\}$ is homeomorphic to $N\times\mathbf{Q}_m$. Since a compact Hausdorff space is homeomorphic to the one-point compactification of the complement of any singleton, we obtain the desired conclusion.
\end{proof}

We now proceed to the proof of Theorem \ref{th_mi}. We first prove (\ref{com1}), which is a simple consequence of the structure of the boundary. We next prove (\ref{com2}), using $L^p$-cohomology computations from \cite{CoTe}. 

\begin{proof}[Proof of Theorem \ref{th_mi}, first part]

Observe that by Proposition \ref{bdmi}, a focal LC-group has connected type if and only if its boundary is locally connected, and has totally disconnected type if and only if its boundary is totally disconnected. This proves that $G_1$ and $G_2$ have the same type, and reduces (\ref{com2}) to the mixed type case. 

Writing $H_i=G_i/G_i^\sharp$, let us prove (\ref{com1}). It is trivial in the connected type case; in the totally disconnected type case, both $H_1$ and $H_2$ are quasi-isometric to $\mathbf{Z}$. So we can suppose that $G_1$ and $G_2$ both have mixed type.

The quasi-isometry from $G_1$ to $G_2$ induces a quasi-symmetric homeomorphism $h:\bd G_1\to\bd G_2$. Let $\omega_i$ be the $G_i$-fixed point in $\bd G_i$. By Proposition \ref{bdmi}, there exists $d\ge 2$ such that $\bd G_i$ is homeomorphic to $\Xi_d$ for $i=1,2$, and by the same proposition, $h(\omega_1)=\omega_2$. Hence $h$ restricts to a quasi-symmetric homeomorphism from $\bd G_1\smallsetminus\{\omega_1\}$ to $\bd G_2\smallsetminus\{\omega_2\}$, which sends a connected component $C_1$ (fixed once and for all) of $\bd G_1\smallsetminus\{\omega_1\}$ to a connected component $C_2$ of $\bd G_2\smallsetminus\{\omega_2\}$. Observe that $C_1\cup\{\omega\}$ is quasi-symmetrically homeomorphic to $\partial H_i$ for $i=1,2$. 
Accordingly, $\bd H_1$ and $\bd H_2$ are quasi-symmetrically homeomorphic. Using \cite[Corollary 7.2.3]{BS07}, we deduce that $H_1$ and $H_2$ are quasi-isometric.

It remains to prove Theorem \ref{th_mi}(\ref{com2}) in the mixed type case; we pause for a little while.
\end{proof}

We use computations of $L^p$-cohomology carried out in \cite{CoTe} (inspired by Pansu's computations for Lie groups). Recall that for any locally compact group $G$ endowed with a left Haar measure, its first $L^p$-cohomology group $H_1^p(G)$ is defined as the 1-cohomology of its right regular representation. It was shown by Pansu in a different context, and in \cite[Appendix~B]{CoTe} in this setting, that for a given real number $p\ge 1$, the vanishing of $H^1_p(G)$ is a quasi-isometry invariant of $G$. In particular, the number $\inf\{p\ge 1:H^1_p(G)\neq 0\}\in [1,\infty]$ is a numerical quasi-isometry invariant of $G$.

The following theorem was obtained by Pansu in \cite{Pan07} in the Riemannian case, using a differentiable definition of $L^p$-cohomology.

\begin{thm}[Theorem 7 in \cite{CoTe}]\label{lpcote}
Consider a focal LC-group of the form $G=(M\times V)\rtimes\mathbf{Z}$, where the negative generator of $\mathbf{Z}$ acts as a compaction on $M\times V$, $M$ is a nontrivial simply connected nilpotent Lie group, and $V=G^\sharp$ is totally disconnected. Let the positive generator of $\mathbf{Z}$ multiply the volume of $M\times V$ by $\delta$ and let $\lambda>1$ be the smallest modulus of an eigenvalue for its action on $M$. Define $p_0(G)=\log(\delta)/\log(\lambda)$. Then for all real $p\ge 1$, we have $H^1_p(G)\neq 0$ if and only $p>p_0(G)$.\qed
\end{thm}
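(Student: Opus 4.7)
The plan is to compute $H^1_p(G)$ directly from the semidirect product structure $G=(M\times V)\rtimes\mathbf{Z}$, adapting Pansu's differential-form technique from the purely real Heintze case to the present mixed (Lie $\times$ totally disconnected) setting. First I would reformulate the question as one about horospherical data on $M\times V$. The natural fibration $G\to\mathbf{Z}$ has fibres isomorphic to $M\times V$, and since $M\times V$ acts properly and transitively on itself, its restriction of any continuous $1$-cocycle with values in $L^p(G)$ is automatically a coboundary; modulo this, a cocycle is determined by a single telescoping datum $\psi\in L^p(M\times V)$ specifying the value on a lift of the positive generator. Writing $\sigma$ for the automorphism of $L^p(M\times V)$ induced by that generator, the existence of a well-defined cohomology class boils down to whether the formal series $\sum_{n\ge 0}\sigma^{-n}\psi$ can be interpreted as an $L^p$-function after correction by a coboundary.

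Next, on the vanishing side $p\le p_0$, I would estimate the relevant sum via two competing scales. The operator $\sigma$ has norm $\delta^{1/p}$ on $L^p(M\times V)$ coming from volume scaling, while a Hardy-type inequality in the direction of slowest contraction on $M$ (the one producing the eigenvalue modulus $\lambda$) yields a gain of $\lambda^{-1}$ when one takes finite differences rather than individual terms. The product $\delta^{1/p}\lambda^{-1}$ is $\le 1$ exactly when $p\le p_0=\log\delta/\log\lambda$, giving absolute convergence and hence vanishing of the cohomology class; at $p=p_0$ one has to pass to reduced cohomology and show that the tail is absorbed in the closure. On the nonvanishing side $p>p_0$, I would produce an explicit nonzero class starting from a compactly supported bump $\chi\in C_c(M\times V)$ with $\int\chi\neq 0$: the sequence $\chi-\sigma^{-n}\chi$ has $L^p$-norms controlled by the now-convergent geometric series $\sum \delta^{-n/p}\lambda^n$, which extends to a cocycle whose class does not vanish, as witnessed by its asymptotic boundary value providing a nonzero element in an appropriate Besov-type space on $\bd G$.

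The main obstacle I anticipate is accommodating the totally disconnected factor $V$ in the cochain complex. Pansu's argument for Heintze groups rests on the de Rham complex with an explicit exterior differential, but here $V$ carries no smooth structure and the analogue must be a combinatorial finite-difference operator along a $\sigma$-stable filtration of $V$ by compact open subgroups. The delicate point is checking that this combinatorial differential meshes correctly with the Hardy inequality on the real factor $M$, so that the critical threshold comes out to be exactly $\log\delta/\log\lambda$ with $\lambda$ depending only on $M$; in particular one must verify that any sharper rate of contraction on $V$ does not improve the exponent. The boundary case $p=p_0$ in reduced cohomology, which requires a quantitative approximation of cocycles by compactly supported ones with uniform $L^p$-control, is the other technically subtle step.
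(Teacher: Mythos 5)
This statement is not proved in the paper at all: it is imported verbatim as Theorem~7 of \cite{CoTe} and stated with an immediate end-of-proof mark, so there is no internal argument to compare yours with; the only benchmark is the proof in \cite{CoTe} itself. Judged on its own terms, your sketch has a structural gap at the very first step. You assert that, because $M\times V$ acts properly and transitively on itself, the restriction to $N=M\times V$ of any continuous cocycle with values in $L^p(G)$ is automatically a coboundary, so that a class is captured by a single ``telescoping datum'' $\psi$. Properness and transitivity give nothing here: the assertion amounts to $H^1\big(N,L^p(G)|_N\big)=0$, where $L^p(G)|_N$ is an $\ell^p$-sum of copies of the regular representation of $N$ (twisted by automorphisms), and vanishing of \emph{unreduced} first cohomology with regular coefficients is exactly the kind of statement that fails for noncompact amenable groups in general (already $H^1_p(\mathbf{Z})\neq 0$ for every finite $p$, since $\delta_0$ is not a difference $g-g(\cdot-1)$ with $g\in\ell^p$). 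Any correct reduction of this type must exploit the compacting dynamics, and that is precisely where the substance of the theorem lies; as written this is a hole, not a lemma.

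The quantitative bookkeeping is also reversed. With $\delta,\lambda>1$ one has $\delta^{1/p}\lambda^{-1}\le 1$ if and only if $p\ge p_0=\log\delta/\log\lambda$, not $p\le p_0$, and the series $\sum_{n}\delta^{-n/p}\lambda^{n}$ converges if and only if $p<p_0$, not $p>p_0$; so the convergence criteria you invoke put vanishing and nonvanishing on the wrong sides of the threshold, and since the entire content of the statement is the exact cut-off and which side gives $H^1_p\neq 0$, this cannot be waved away as a normalization issue --- it has to be redone with a careful choice of which generator acts on which space and with which modular factor. Two further points you flag but do not resolve are in fact the crux rather than technical residue: the theorem asserts vanishing of the \emph{unreduced} cohomology at the endpoint $p=p_0$, so ``passing to reduced cohomology'' cannot close that case; and the claim that the threshold is governed solely by the slowest eigenvalue $\lambda$ on the connected factor $M$, with the totally disconnected factor $V$ entering only through the volume distortion $\delta$, is the heart of the computation and nothing in the sketch establishes it.
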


\begin{lem}\label{p0c}
Under the assumptions of Theorem \ref{lpcote}, we have \[p_0(G)=(1+\varpi(G))p_0(G/G^\sharp).\]
\end{lem}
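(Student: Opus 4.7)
The argument is essentially a bookkeeping computation that separates the contributions of the connected and totally disconnected factors. In the decomposition $G=(M\times V)\rtimes\mathbf{Z}$, one has $G^\circ=M$, and since $V=G^\sharp$ by hypothesis, there are natural isomorphisms $G/G^\sharp\cong M\rtimes\mathbf{Z}$ and $G/G^\circ\cong V\rtimes\mathbf{Z}$.

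Next I would split the volume factor. Since the positive generator $t\in\mathbf{Z}$ acts diagonally on $M\times V$, its modulus decomposes as $\delta=\delta_M\delta_V$, where $\delta_M,\delta_V>1$ are the factors by which $t$ multiplies the Haar measures of $M$ and of $V$ respectively (both exceed $1$ since the negative generator is a compaction). Applying Theorem \ref{lpcote} to the connected-type quotient $G/G^\sharp=M\rtimes\mathbf{Z}$ (which satisfies the hypotheses of that theorem with $V$ trivial), I obtain $p_0(G/G^\sharp)=\log\delta_M/\log\lambda$; note that $\lambda$ is unchanged, since by definition it concerns only the action of $t$ on $M$, and modding out by $V$ does not affect this action.

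Then I would compute $\varpi(G)$ by evaluating the restricted modular functions from Definition \ref{dvarpi} on the element $t$. For a semidirect product by $\mathbf{Z}$, the modular function records, up to a universal sign, the logarithm of the modulus of conjugation by the generator on the normal factor. Consequently, applying this to $G/G^\circ=V\rtimes\mathbf{Z}$ and $G/G^\sharp=M\rtimes\mathbf{Z}$ with the same sign convention yields $\log\Delta_G^{\textnormal{td}}(t)=-\log\delta_V$ and $\log\Delta_G^{\textnormal{con}}(t)=-\log\delta_M$. The defining relation $\log\Delta_G^{\textnormal{td}}=\varpi(G)\log\Delta_G^{\textnormal{con}}$ then gives $\log\delta_V=\varpi(G)\log\delta_M$.

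Combining these ingredients,
\[
p_0(G)=\frac{\log(\delta_M\delta_V)}{\log\lambda}=\frac{\log\delta_M+\log\delta_V}{\log\lambda}=\frac{(1+\varpi(G))\log\delta_M}{\log\lambda}=(1+\varpi(G))\,p_0(G/G^\sharp).
\]
There is no genuine obstacle here; the only care required is in tracking the sign convention relating the modulus of an automorphism to the modular function, but since the same convention governs both $\delta_M$ and $\delta_V$, the sign cancels in the ratio defining $\varpi(G)$.
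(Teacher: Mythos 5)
Your proof is correct and follows essentially the same route as the paper: split $\delta=\delta_M\delta_V$, identify $p_0(G/G^\sharp)=\log\delta_M/\log\lambda$ with the same $\lambda$, and recognize $\varpi(G)=\log\delta_V/\log\delta_M$ from the restricted modular functions. Your extra care about the sign convention is harmless elaboration of what the paper dismisses with ``by definition of $\varpi$''.
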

\begin{proof}
Let the positive generator of $\mathbf{Z}$ multiply the volume of $M$ by $\delta'$ and of $V$ by $\delta''$, so that $\delta=\delta'\delta''$. Then $p_0(G/V)=\log(\delta')/\log(\lambda)$, and thus
\[p_0(G)/p_0(G/V)=\log(\delta)/\log(\delta')=1+\log(\delta'')/\log(\delta').\]
By definition of $\varpi$, we have $\varpi(G)=\log(\delta'')/\log(\delta')$, proving the lemma.
\end{proof}

\begin{proof}[Proof of Theorem \ref{th_mi}(\ref{com2}), conclusion]
First assume that $G_1$ and $G_2$ have the form given in Theorem \ref{lpcote}.
By Pansu's quasi-isometry invariance of $L^p$-cohomology and Theorem \ref{lpcote}, we have $p_0(G_1)=p_0(G_2)$. Besides, by Theorem \ref{th_mi}(\ref{com1}), 
$H$ is quasi-isometric to $H'$, and thus again by Pansu's quasi-isometry invariance of $L^p$-cohomology and Theorem \ref{lpcote} (this time in the connected type case due to Pansu), we have $p_0(H_1)=p_0(H_2)$ (recall that $H_i=G_i/G_i^\sharp$). By Lemma \ref{p0c}, we have
\[\varpi(G_1)=p_0(G_1)/p_0(H_1)=p_0(G_2)/p_0(H_2)=\varpi(G_2).\]
In general, $G_i$ is commable to a group $G'_i$ of the form given in Theorem \ref{lpcote}. So by commability invariance of $\varpi$ and the previous case, we have $\varpi(G_1)=\varpi(G'_1)=\varpi(G'_2)=\varpi(G_2)$. 
\end{proof}

Using Dymarz' result (see Remark \ref{dyq}), this yields the following corollary.

\begin{cor}\label{vqi}
Let $H$ be a Heintze group, and for $i=1,2$, let $\varpi_i$ be positive real numbers and $k_i$ be integers $\ge 2$. Then $G_1=H[\varpi_1,k_1]$ and $G_2=H[\varpi_2,k_2]$ are quasi-isometric if and only if $\varpi_1=\varpi_2$ and $\sqrt[\max]{k_1}=\sqrt[\max]{k_2}$.
\end{cor}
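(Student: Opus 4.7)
The plan is to reduce both implications to machinery already in place: Theorem \ref{th_mi}(\ref{com2}), Dymarz' theorem (Remark \ref{dyq}), and Proposition \ref{micom}. Before dividing into the two directions I would make two preliminary identifications of invariants on $G_i=H[\varpi_i,k_i]$. From Definition \ref{dhpa}, $\varpi(G_i)=\varpi_i$ is immediate. The equality $q_{G_i}=\sqrt[\max]{k_i}$ comes from identifying $G_i/G_i^\circ$ with $\FT_{k_i}$: the second projection $G_i=H\stackrel{\varpi_i}{\times}\FT_{k_i}\to\FT_{k_i}$ is surjective (because $\Delta_H$ is onto $\mathbf{R}_+$) with kernel $\Ker(\Delta_H)\times\{1\}$; since $H$ is a connected Heintze group, $\Ker(\Delta_H)$ is its simply connected nilpotent radical and in particular connected, so this kernel is precisely $G_i^\circ$. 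Hence $G_i/G_i^\circ\simeq\FT_{k_i}$, and Lemma \ref{ftrm} together with Proposition \ref{p_q} give $q_{G_i}=\sqrt[\max]{k_i}$.

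For the ``only if'' direction, assume $G_1$ and $G_2$ are quasi-isometric. Since $\varpi_i>0$ each $G_i$ is focal of mixed type. Theorem \ref{th_mi}(\ref{com2}) then yields $\varpi_1=\varpi(G_1)=\varpi(G_2)=\varpi_2$, and Dymarz' theorem gives $\sqrt[\max]{k_1}=q_{G_1}=q_{G_2}=\sqrt[\max]{k_2}$.

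For the ``if'' direction, the plan is to invoke the implication (\ref{gdgsv})$\Rightarrow$(\ref{mi1}) of Proposition \ref{micom} to deduce commability of $G_1$ and $G_2$ within focal groups, which in particular entails quasi-isometry. Of the three conditions in (\ref{gdgsv}), the $\varpi$-equality and the $q$-equality are the hypothesis via the preliminary computation. The remaining condition, commability within focal groups of $G_1/G_1^\sharp$ and $G_2/G_2^\sharp$, follows by direct inspection of the construction: the elliptic radical of $G_i$ is $\{1\}\times\Ker(\Delta_{\FT_{k_i}})$, so the first projection induces a topological isomorphism of $G_i/G_i^\sharp$ onto the cocompact subgroup $\{x\in H:\Delta_H(x)^{\varpi_i}\in\langle k_i\rangle\}$ of $H$; the inclusion into $H$ is then a copci homomorphism, so both $G_i/G_i^\sharp$ are commable to $H$ and hence to each other.

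The main obstacle is entirely packaged in the two external QI-invariance inputs: Theorem \ref{th_mi} (invariance of $\varpi$, proved just above) and Dymarz' invariance of $q$ (Remark \ref{dyq}). Once these are granted, the corollary reduces to an application of Proposition \ref{micom} together with the two bookkeeping identifications above, neither of which presents any genuine difficulty.
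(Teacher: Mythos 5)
Your proposal is correct and follows essentially the same route as the paper: identify $\varpi(G_i)=\varpi_i$ and $q_{G_i}=\sqrt[\max]{k_i}$, get the ``only if'' direction from Theorem \ref{th_mi} together with Dymarz' theorem, and get the ``if'' direction from Proposition \ref{micom} followed by commability implying quasi-isometry. If anything, your verification of the third condition of Proposition \ref{micom} is slightly more careful than the paper's (which asserts $G_1/G_1^\sharp\cong G_2/G_2^\sharp$, whereas in general they are only both cocompact in $H$, hence commable, exactly as you argue).
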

\begin{proof}
Since $s_{G_i}=k_i$, we have $q_{G_i}=\sqrt[\max]{k_i}$, Theorem \ref{th_mi} together with Dymarz' theorem (Remark \ref{dyq}) proves the hardest part, namely the ``only if" implication.

The ``if" part is easier. Indeed, if the conditions are satisfied, then $q_{G_1}=q_{G_1}$, $\varpi(G_1)=\varpi(G_2)$ and $G_1/G_1^\sharp$ and $G_2/G_2^\sharp$ are isomorphic, so by Proposition \ref{micom}, $G_1$ and $G_2$ are commable, hence quasi-isometric.
\end{proof}

Let us indicate the geometric interpretation of Corollary \ref{vqi}. Let $X$ be a homogeneous negatively curved Riemannian manifold of dimension $\ge 1$. If $k\ge 1$, the {\em millefeuille space} $X[k]$, introduced in \cite{CCMT}, is the space obtained as follows. Let $\omega$ be a boundary point of $X$ such that $\Isom(X)_\omega$ is transitive on $X$ (if $\Isom(X)$ is focal, $\omega$ is taken to be the fixed point; otherwise any boundary point works); let $b$ be a Busemann function on $X$ based at $\omega$. Let the $(k+1)$-regular tree $T_k$ be endowed with a Busemann function $b'$. Then the millefeuille space is defined as
\[X[k]=\{(x,y)\in X\times T_k:\;b(x)=b'(y)\}.\]
It is endowed with a natural geodesic distance, so that for any vertical geodesic $D$ in $T_k$, the subset $X[k]\cap (X\times D)$ is geodesic and the natural projection from $X[k]\cap (X\times D)$ to $X$ is a bijective isometry. If the sectional curvature of $X$ is $\le-\kappa$ then $X[k]$ is CAT($-\kappa$).

Also, for $t>0$, let $X_{\{t\}}$ be the Riemannian manifold obtained from $X$ by multiplying the Riemannian metric by $t^{-2}$ (thus multiplying the Riemannian distance by $t^{-1}$ and the curvature by $t$).

\begin{cor}\label{corvv}
Let $X$ be a homogeneous negatively curved Riemannian manifold of dimension $\ge 2$, and let $k_1,k_2$ be integers $\ge 2$ and let $t_1,t_2$ be positive real numbers. Then $X_{\{t_1\}}[k_1]$ and $X_{\{t_2\}}[k_2]$ are quasi-isometric if and only if $\log(k_1)/t_1=\log(k_1)/t_2$ and $\sqrt[\max]{k_1}=\sqrt[\max]{k_2}$. In particular:
\begin{enumerate}[(a)]
\item\label{vk} 
The millefeuille spaces $X[k]$, for $k\ge 1$, are pairwise not quasi-isometric.
\item\label{vr} When $k\ge 2$ is fixed, the $X_{\{t\}}[k]$, for $t>0$, are pairwise non-quasi-isometric.
\end{enumerate}
\end{cor}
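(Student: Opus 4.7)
The strategy is to realize each $X_{\{t\}}[k]$ up to quasi-isometry as the focal LC-group of mixed type $H_0[\varpi(t,k),k]$ for a purely real Heintze group $H_0$ associated with $X$ and an explicit $\varpi(t,k)>0$, and then invoke Corollary \ref{vqi}.

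First, by Heintze's theorem I would fix a purely real Heintze group $H_0 = N\rtimes\mathbf{R}$ acting simply transitively by isometries on $X$; let $\omega$ be the boundary point it fixes. Normalize the Busemann function $b$ on $X$ based at $\omega$ so that the $\mathbf{R}$-factor $(a_s)_{s\in\mathbf{R}}$ satisfies $b\circ a_s = b-s$, and let $Q>0$ be defined by $\Delta_{H_0}(a_s)=e^{Qs}$; this is the trace of the derivation defining the semidirect product, and is positive since $a_s$ ($s>0$) contracts $N$. Rescaling the metric of $X$ by $t^{-2}$ leaves the $H_0$-action isometric but replaces $b$ by $b_t=t^{-1}b$; hence on $X_{\{t\}}$ one has $b_t\circ a_s = b_t-s/t$.

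Second, fix a Busemann function $b'$ on $T_k$ based at a boundary point $\omega'$, normalized so that the loxodromic isometry of $\FT_k$ along the $\omega'$-axis (as in Lemma \ref{ftrm}) translates $b'$ by $-1$. Every $h\in\FT_k$ then shifts $b'$ by $-\nu(h)$ for some $\nu(h)\in\mathbf{Z}$, and $\Delta_{\FT_k}(h)=k^{\nu(h)}$. Consider the closed subgroup
\[
G_{t,k}=\{(g,h)\in H_0\times\FT_k:\;b_t\circ g-b_t=b'\circ h-b'\}.
\]
It acts continuously, properly and cocompactly by isometries on $X_{\{t\}}[k]$, so by the Schwarz-Milnor lemma $G_{t,k}$ is quasi-isometric to $X_{\{t\}}[k]$. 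For $g=na_s\in H_0$ the coupling equation is $s/t=\nu(h)$, which, re-expressed in terms of modular functions, reads $\Delta_{H_0}(g)^{\log k/(Qt)}=\Delta_{\FT_k}(h)$. Hence $G_{t,k}=H_0[\varpi(t,k),k]$ in the sense of Definition \ref{dhpa}, with $\varpi(t,k)=\log k/(Qt)$.

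Third, apply Corollary \ref{vqi} with $H=H_0$: the groups $H_0[\varpi_1,k_1]$ and $H_0[\varpi_2,k_2]$ are quasi-isometric if and only if $\varpi_1=\varpi_2$ and $\sqrt[\max]{k_1}=\sqrt[\max]{k_2}$. Substituting $\varpi_i=\log k_i/(Qt_i)$ yields the main equivalence. The ``in particular'' statement (a) follows: for $k_1,k_2\ge 2$ distinct with $t_1=t_2=1$ the condition $\log k_1=\log k_2$ fails; the remaining case, $X[1]\simeq X$ (connected type, spherical boundary) versus $X[k]$, $k\ge 2$ (mixed type, boundary $\Xi_d$), is excluded by Proposition \ref{bdmi}. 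Statement (b) is immediate: fixing $k_1=k_2=k\ge 2$ forces $t_1=t_2$.

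The main technical step is the second paragraph --- correctly normalizing the Busemann functions on $X_{\{t\}}$ and $T_k$ so that the geometric coupling defining $X_{\{t\}}[k]$ matches the modular-function coupling of Definition \ref{dhpa}, yielding the precise value $\varpi(t,k)=\log k/(Qt)$. Once this identification is in place, the classification follows mechanically from Corollary \ref{vqi}.
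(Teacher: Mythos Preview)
Your argument is correct and follows essentially the same route as the paper: identify a focal LC-group acting properly cocompactly on $X_{\{t\}}[k]$, compute its $\varpi$-invariant explicitly as $\log(k)/(tQ)$ for a constant $Q>0$ depending only on $X$, and then invoke Corollary~\ref{vqi}. The paper does this by directly tracking how the rescaling $u\mapsto tu$ of the $\mathbf{R}$-action changes the modular data, while you package the same computation as an explicit fiber-product identification $G_{t,k}=H_0[\varpi(t,k),k]$; the content is identical.

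One small correction: drop the qualifier ``purely real'' for $H_0$. Heintze's theorem gives a simply transitive Heintze subgroup of $\Isom(X)$, but it need not be purely real, and a purely real Heintze group in the commability class of $X$ may fail to act simply transitively on $X$. This does not matter for your argument, since Corollary~\ref{vqi} only requires $H$ to be a Heintze group (which automatically has surjective modular function), so simply take $H_0$ to be any Heintze group acting simply transitively on $X$.
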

\begin{proof}
If $Y$ is a proper metric space that admits a continuous proper cocompact action of a focal LC-group $G$, we can define $\varpi(Y)=\varpi(G)$; this does not depend on the choice of $G$. The latter, for $Y=X[k]$, can be chosen to be of the form
$(M\times V)\rtimes\mathbf{Z}$ ($M$ connected, $V$ totally disconnected, the negative generator of $\mathbf{Z}$ acting by compaction) and denoting by $\delta_0$ the multiplication of the volume of the positive generator of $\mathbf{Z}$ on $M$
We see that we have $\delta_0^\varpi=k$, so
\[\varpi(X[k])=\log(k)/\log(\delta_0).\]
If the Riemannian metric is multiplied by $t^{-2}$, the Riemannian distance is multiplied by $t^{-1}$ and the Busemann function (normalized to vanish at a given base-point) is also multiplied by $t^{-1}$; we need to modify the action of $\mathbf{R}$ on $N$ to obtain that the element $1\in\mathbf{R}$ shifts the Busemann function by 1, namely by precomposing the action by the homothety $u\mapsto tu$. This replaces $(\delta_0,\lambda)$ by $(\delta_0^t,\lambda^{t})$ and thus 
we obtain $(\delta_0^{t})^\varpi=k$, so 
\[\varpi(X_{\{t\}}[k])=\frac{\log(k)}{t\log(\delta_0)}\;,\]

Since $\varpi$ is a quasi-isometry invariant, we deduce that $\frac{\log(k)}{\sqrt{t}}$ is a quasi-isometry invariant among the $X_{\{t\}}[k]$; in particular, when either $t$ or $k$ is fixed, this varies injectively in the other variable. This shows the two particular cases (only relying on $L^p$-cohomology, not on Dymarz' theorem).

Now let $G_i$ be a focal LC-group acting properly cocompactly on $X_{\{t_i\}}[k_i]$. Then $q_{G_i}=\sqrt[\max]{k_i}$. So the equivalence follows from Corollary \ref{vqi} (thus relying on both $L^p$-cohomology and Dymarz' theorem).
\end{proof}

\begin{remark}
The computation also shows that $p_0(X_{\{t\}}[k]))=p_0(X)+\frac{\log(k)}{t\log(\lambda)}$.
\end{remark}

Let us also give a consequence of Theorem \ref{th_mi} and Dymarz' theorem in terms of Conjecture \ref{fconj}.

\begin{cor}
Let $G$ be a focal LC-group of mixed type. If the focal LC-group of
connected type $G/G^\sharp$ satisfies Conjecture \ref{fconj}, then so does $G$.
\end{cor}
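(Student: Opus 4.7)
My plan is to verify, for any LC-group $H$ quasi-isometric to $G$, the three conditions of Proposition \ref{micom} characterizing commability within focal LC-groups of mixed type. This will exhibit $H$ as commable to $G$ within focal groups, which (since $G$ is focal of mixed type, whose commability class by Proposition \ref{inconfo} contains no non-focal group) is in particular commability to $G$.

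The first step will be to check that $H$ is itself focal of mixed type. Being quasi-isometric to a compactly generated LC-group, $H$ is compactly generated and hyperbolic, and since $\partial G$ is infinite, $H$ is non-elementary. By Proposition \ref{bdmi}, $\partial G$ is homeomorphic to the one-point compactification $\Xi_d$, in which the point $\omega$ is topologically distinguished as the unique point whose complement is disconnected; in particular every self-homeomorphism of $\partial G$ fixes $\omega$. Transporting across the induced boundary homeomorphism $\partial H \simeq \partial G$, every self-homeomorphism of $\partial H$ must fix the preimage of $\omega$; so the action of $H$ on $\partial H$ has a global fixed point, and $H$ is focal. Theorem \ref{th_mi} then supplies the remaining ingredients: $H$ has the same (mixed) type as $G$, one has $\varpi(H) = \varpi(G)$, and $H/H^\sharp$ is quasi-isometric to $G/G^\sharp$.

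Next I would verify the three conditions of Proposition \ref{micom}. Condition (3) is already in hand. Condition (2), $q_H = q_G$, is precisely Dymarz' quasi-isometry invariance of the $q$-invariant for focal groups of mixed type (Remark \ref{dyq}). For condition (1), I would invoke the standing hypothesis: since Conjecture \ref{fconj} holds for $G/G^\sharp$, and $H/H^\sharp$ is quasi-isometric to $G/G^\sharp$, the two quotients are commable; being both focal of connected type, Corollary \ref{ckco} upgrades this to commability within focal groups, which is exactly what is required.

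The only really substantive step is the first — deducing focality of $H$ from a bare quasi-isometry hypothesis — and it hinges crucially on the mixed-type boundary having a topologically distinguished point (Proposition \ref{bdmi}). Everything afterwards is a clean assembly of quasi-isometry invariants (the boundary topology, Theorem \ref{th_mi}, Dymarz' theorem, and the hypothesis on $G/G^\sharp$) fed into the mixed-type commability criterion.
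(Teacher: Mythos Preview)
Your argument is correct and follows the same route as the paper's proof: match the three invariants of Proposition \ref{micom} via Theorem \ref{th_mi}, Dymarz' theorem, and the hypothesis on $G/G^\sharp$. In fact you are more careful than the paper on one point: the paper's proof simply begins with ``Suppose that $G_1$ and $G_2$ are focal of mixed type and quasi-isometric,'' whereas you explicitly derive focality (and mixed type) for the a priori arbitrary LC-group $H$ from the boundary structure in Proposition \ref{bdmi}.
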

\begin{proof}
Suppose that $G_1$ and $G_2$ are focal of mixed type and quasi-isometric. Then $G_1/G_1^\sharp$ and $G_2/G_2^\sharp$ are quasi-isometric by Theorem \ref{th_mi}. So they are commable. It follows that there exist a Heintze group $H$, positive real numbers $\varpi_i$ and non-square integers $k_i\ge 2$ such that $G_i$ is commable to $H[\varpi_i,k_i]$ for $i=1,2$. By Theorem \ref{th_mi} and Dymarz' theorem, we have $\varpi_1=\varpi_2$ and $k_1=k_2$. Hence $G_1$ and $G_2$ are commable.
\end{proof}

Let us also provide the quasi-isometry classification of millefeuille spaces for which the nilpotent radical of the connected part is abelian. For convenience, we state it in the language of groups. 

If $d\ge 2$, let $A$ be a $(d-1)\times (d-1)$ invertible real matrix all of whose eigenvalues are of modulus $<1$. Write $\FT_k=U_k\rtimes_\alpha\mathbf{Z}$, where $\alpha$ acts by compaction on $U_k$. Define the group
\[G(A,k)=(\mathbf{R}^{d-1}\times U_k)\rtimes_{(A,\alpha)}\mathbf{Z}.\]
Note that $\FT_1=\mathbf{Z}$, so $G(A,1)=\mathbf{R}^{d-1}\rtimes_A\mathbf{Z}$.

Combining our result, Xie's quasi-isometric classification of Heintze groups with abelian derived subgroup, and Dymarz' quasi-isometry invariance of $q$, we obtain the following corollary, also obtained by Dymarz \cite[Corollary 5]{Dy12} with a related but different approach.

\begin{cor}
For $i=1,2$, let $A_i$ be a square invertible matrix of size $d_i\ge 2$, with only real eigenvalues, all in $\mathopen] 0,1\mathclose[$, and let $k_i\ge 2$ be integers. Then the following are equivalent.
\begin{enumerate}[(i)]
\item\label{gaqi} $G(A_1,k_1)$ and $G(A_2,k_2)$ are quasi-isometric;
\item\label{gaco} $G(A_1,k_1)$ and $G(A_2,k_2)$ are commable;
\item\label{numb} there exist integers $n_1,n_2\ge 1$ such that $k_1^{n_1}=k_2^{n_2}$, and $A_1^{n_1}$ and $A_2^{n_2}$ are conjugate.
\end{enumerate}
\end{cor}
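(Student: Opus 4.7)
The implication (ii)$\Rightarrow$(i) is immediate, since commable LC-groups are always quasi-isometric.

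For (iii)$\Rightarrow$(ii), my plan is to verify directly the three conditions of Proposition \ref{micom}(\ref{gdgsv}) for $G_1$ and $G_2$. Assume $k_1^{n_1}=k_2^{n_2}$ and $PA_1^{n_1}P^{-1}=A_2^{n_2}$ with $P\in\GL_{d-1}(\mathbf{R})$, which in particular forces $d_1=d_2=d$. The resulting identity $P(\log A_1)P^{-1}=(n_2/n_1)\log A_2$ in the Lie algebra shows, via Lemma \ref{cckfo}, that the connected-type quotients $H_i=\mathbf{R}^{d-1}\rtimes_{A_i}\mathbf{Z}$ are commable within focal groups. The relation $k_1^{n_1}=k_2^{n_2}$ forces $q_{G_1}=\sqrt[\max]{k_1}=\sqrt[\max]{k_2}=q_{G_2}$. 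Finally, taking determinants in the conjugacy yields $|\det A_1^{n_1}|=|\det A_2^{n_2}|$, which combined with $k_1^{n_1}=k_2^{n_2}$ gives $\log k_1/\log|\det A_1|^{-1}=\log k_2/\log|\det A_2|^{-1}$, that is, $\varpi(G_1)=\varpi(G_2)$. Proposition \ref{micom} then delivers the commability.

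For (i)$\Rightarrow$(iii), I would proceed in three stages. First, by Proposition \ref{bdmi} the boundary $\partial G_i$ is homeomorphic to $\Xi_{d_i}$, and the integer $d_i$ is a topological invariant of $\Xi_{d_i}$; the QI-assumption therefore forces $d_1=d_2=d$. Next, Theorem \ref{th_mi} yields both $\varpi(G_1)=\varpi(G_2)$ and a quasi-isometry between the Heintze quotients $H_i=G_i/G_i^\sharp=\mathbf{R}^{d-1}\rtimes_{A_i}\mathbf{Z}$. Since each $H_i$ is cocompact in the purely real Heintze group $\mathbf{R}^{d-1}\rtimes_{A_i^t}\mathbf{R}$, Xie's quasi-isometric classification of Heintze groups with abelian derived subgroup provides $c>0$ and $P\in\GL_{d-1}(\mathbf{R})$ with $PA_1P^{-1}=A_2^c$. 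Dymarz's theorem (Remark \ref{dyq}) gives $q_{G_1}=q_{G_2}=:q$, so $k_i=q^{m_i}$ for unique positive integers $m_i$.

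The crucial arithmetic step, which ties the three invariants together, is to identify $c$ as $m_1/m_2$. Taking determinants in $PA_1P^{-1}=A_2^c$ gives $|\det A_1|=|\det A_2|^c$; substituting into the equality $\log k_1/\log|\det A_1|^{-1}=\log k_2/\log|\det A_2|^{-1}$ yields $\log k_1=c\log k_2$, i.e., $q^{m_1}=q^{cm_2}$, hence $c=m_1/m_2$. Setting $n_1=m_2$ and $n_2=m_1$, one then obtains $k_1^{n_1}=q^{m_1m_2}=k_2^{n_2}$ and $PA_1^{n_1}P^{-1}=A_2^{cn_1}=A_2^{n_2}$, verifying (iii). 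The main obstacle is to invoke Xie's Heintze QI-classification in the form required here (one works a priori with $H_i=N\rtimes\mathbf{Z}$, but the cocompact embedding into the purely real Heintze extension $N\rtimes\mathbf{R}$ transfers Xie's result); once that is granted, the short arithmetic---which uses $q$ and $\varpi$ in tandem---is precisely what pins down $c$ as a rational with the correct numerator and denominator.
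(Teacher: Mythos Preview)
Your proof is correct and follows essentially the same route as the paper: both directions invoke Theorem~\ref{th_mi} for the QI-invariance of $\varpi$ and of the connected-type quotient, Dymarz' theorem for the invariance of $q$, and Xie's classification for the Heintze part, followed by the same arithmetic identification of the rational exponent. Your (iii)$\Rightarrow$(ii) via Proposition~\ref{micom} is more explicit than the paper's one-line remark, and your preliminary check that $d_1=d_2$ via Proposition~\ref{bdmi} is harmless though redundant (it already follows from Xie's conjugacy conclusion).
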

\begin{proof}
Write $G_i=G(A_i,k_i)$. We have $\varpi(G_i)=\log(k_i)/\log(\det(A_i))$ and $q(G_i)=\sqrt[\max]{k_i}$, and $G_i/G_i^\sharp=G(A_i,1)$.

(\ref{numb})$\Rightarrow$(\ref{gaco}) is straightforward (by a direct argument, or using Proposition \ref{micom}).

(\ref{gaco})$\Rightarrow$(\ref{gaqi}) is trivial.

Assume (\ref{gaqi}). Then by Dymarz' theorem, $q(G_1)=q(G_2)$, we denote it by $q$ and write $k_1=q^{n_2}$, $k_2=q^{n_1}$. By Theorem \ref{th_mi}, $\varpi(G_1)=\varpi(G_2)$, and $G_1/G_1^\sharp$ and $G_2/G_2^\sharp$ are quasi-isometric. By Xie's theorem \cite{xx}, there exists a unique $t>0$ such that $A_2$ is conjugate to $A_1^t$ (recall that $A_1$ is contained in a unique one-parameter subgroup consisting of matrices with positive eigenvalues). We can thus rewrite the equality $\varpi(G_1)=\varpi(G_2)$ as
\[\frac{n_2\log(q)}{\log(\det(A_1)}=\frac{n_1\log(q)}{t\log(\det(A_1)}.\]
Thus $t=n_1/n_2$. So $k_1^{n_1}=k_2^{n_2}$ and $A_1^{n_1}$ and $A_2^{n_2}$ are conjugate. Thus (\ref{numb}) holds.
\end{proof}


%

\end{document}